\providecommand{\openbox}{\leavevmode
  \hbox to.77778em{%
  \hfil\vrule
  \vbox to.675em{\hrule width.6em\vfil\hrule}%
  \vrule\hfil}}
\DeclareRobustCommand{\qed}{%
  \ifmmode
    \eqno \def\@badmath{$$}%$$
    \let\eqno\relax \let\leqno\relax \let\veqno\relax
    \hbox{\openbox}%
  \else
    \leavevmode\unskip\penalty9999 \hbox{}\nobreak\hfill
    \quad\hbox{\openbox}%
  \fi
}
\numberwithin{equation}{section}
\newcommand{\norm}[1]{{\left\|{#1}\right\|}}
\newcommand{\mD}{{\mathbf D}}
\newcommand{\mB}{{\mathbf B}}
\newcommand{\mI}{{\mathbf I}}
\newcommand{\mT}{{\mathbf T}}
\newcommand{\mK}{{\mathbf K}}
\newcommand{\bR}{{\mathbb R}}
\newcommand{\bN}{{\mathbb N}}
\begin{document}

\begin{sloppypar}

\title{Inexact Fixed-Point Proximity Algorithm for the $\ell_0$ Sparse Regularization Problem

\thanks{Y. Xu is supported in part by the US National Science Foundation under grants DMS-1912958 and DMS-2208386, and by the US National Institutes of Health under grant R21CA263876.}

%\thanks{Grants or other notes
%about the article that should go on the front page should be
%placed here. General acknowledgments should be placed at the end of the article.}
}
% \subtitle{Do you have a subtitle?\\ If so, write it here}

\titlerunning{Inexact FPPA for the $\ell_0$ Sparse Regularization Problem}        % if too long for running head

\author{Ronglong Fang  \and Yuesheng Xu \and Mingsong Yan}

\authorrunning{R. Fang, Y. Xu, and M. Yan} % if too long for running head

\institute{R. Fang \at Department of Mathematics and Statistics,  Old Dominion University, Norfolk, VA 23529, USA. \email{rfang002@odu.edu}           %  \\
%             \emph{Present address:} of F. Author  %  if needed
\and
           Y. Xu \at
              Department of Mathematics and Statistics,  Old Dominion University, Norfolk, VA 23529, USA. \email{y1xu@odu.edu}. Correspondence author
\and
           M. Yan \at
              Department of Mathematics and Statistics,  Old Dominion University, Norfolk, VA 23529, USA. \email{myan007@odu.edu}
}

\date{Received: date / Accepted: date}
% The correct dates will be entered by the editor

\maketitle
\begin{abstract}
We study {\it inexact} fixed-point proximity algorithms for solving a class of sparse regularization problems involving the $\ell_0$ norm. Specifically, the $\ell_0$ model has an objective function that is the sum of a convex fidelity term and a Moreau envelope of the $\ell_0$ norm regularization term. Such an $\ell_0$ model is non-convex. Existing exact algorithms for solving the problems require the availability of closed-form formulas for the proximity operator of convex functions involved in the objective function. When such formulas are not available, numerical computation of the proximity operator becomes inevitable. This leads to inexact iteration algorithms. We investigate in this paper how the numerical error for every step of the iteration should be controlled to ensure global convergence of the resulting inexact algorithms. We establish a theoretical result that guarantees the sequence generated by the proposed inexact algorithm converges to a local minimizer of the optimization problem. We implement the proposed algorithms for three applications of practical importance in machine learning and image science, which include regression, classification, and image deblurring. The numerical results demonstrate the convergence of the proposed algorithm and confirm that local minimizers of the $\ell_0$ models found by the proposed inexact algorithm outperform global minimizers of the corresponding $\ell_1$ models, in terms of  approximation accuracy and sparsity of the solutions.

\keywords{inexact fixed-point proximity algorithm \and non-convex optimization \and the $\ell_0$ norm \and sparse regularization}
% \PACS{PACS code1 \and PACS code2 \and more}
\subclass{65K10, 68U10, 90C26 }
\end{abstract}
 
 %{ Please double check this MSC}

\section{Introduction}

Sparse learning plays a pivotal role in today's era of big data due to its capacity for significantly decreasing the computational burden through sparse solution representation. Natural images or data streams are often inherently sparse in certain bases or dictionaries, under which they can be approximately expanded by only few significant elements carrying the most relevant information \cite{4016283,mallat1999wavelet,suter1997multirate,Xu2023Sparse}. Sparsity-promoting regularization is a common and powerful tool to capture the most important features hidden in the enormous data. Such a technique has been successfully applied in many areas, for example, compressed sensing \cite{boche2015survey,donoho2006compressed}, image processing \cite{elad2006image,hansen2006deblurring,li2015multi,micchelli2011proximity,shen2014approximate}, and machine learning \cite{li2018sparse,song2013reproducing}. 

The most intuitive choice of the regularization term for promoting sparsity is the $\ell_0$-norm which counts the number of the nonzero components of a vector. The geometric viewpoint of the $\ell_0$-norm has been explored in \cite{2021Sparse}, demonstrating that the $\ell_0$ regularization term results in lifting the graph of the target function according to the level of sparsity. Nevertheless, it is challenging to develop an efficient algorithm to solve the $\ell_0$-norm based regularization problem due to the discontinuity and non-convexity of the $\ell_0$ norm.
This challenge has been discussed in \cite{1542412,garey1979computers,natarajan1995sparse} that finding a global minimizer of the $\ell_0$-norm based regularization problem is an NP-hard problem. A widely adopted approach to address this issue is replacing the $\ell_0$-norm by the $\ell_1$-norm which transfers a non-convex problem to a convex one. There are plenty of feasible algorithms from convex optimization for solving the $\ell_1$-norm based regularization problem \cite{chambolle2011first,li2015multi,micchelli2011proximity}. However, the $\ell_1$ norm whose proximity operator is the well-known soft thresholding operator, could bring up unnecessary bias on the components of the solution even if some of them are dominant \cite{fan2001variable,zou2006adaptive} and such a bias may jeopardize the performance of the solution, especially the accuracy on the data. The mentioned disadvantage of the $\ell_1$-norm in fact can be avoided by the $\ell_0$-norm, which takes the hard thresholding operator as its proximity operator and potentially elevates the accuracy of the solution applying to the data. Consequently, there arises a need to devise algorithms that directly address the challenge of solving the $\ell_0$-norm based regularization problem.

Recently, there was notable advancement in the development of convergent algorithms designed for the $\ell_0$-norm based regularization problem.  It was proposed in \cite{shen2016wavelet} a fixed-point proximity algorithm in the context of image inpainting, 
with guaranteed convergence to a local minimizer of the objective function involving the $\ell_0$-norm.
Its proof reveals that the support of the sequence generated by the algorithm remains unchanged after a finite number of iterations, and thus on the fixed support the non-convex optimization problem is reduced to a convex one. This idea has also been applied to the problems of the seismic wavefield
modeling \cite{wu2022inverting} and medical image reconstruction \cite{zheng2019sparsity}. Moreover, the numerical examples provided in \cite{shen2016wavelet,wu2022inverting,zheng2019sparsity}  affirm that solutions obtained from the $\ell_0$-norm based regularization model exhibit superior performance compared to those from the $\ell_1$-norm based regularization model. The algorithm studied in \cite{shen2016wavelet,wu2022inverting} usually requires the evaluation of the proximity operator of the composition of a convex function and a matrix for each iteration. The precise value of that proximity operator is difficult to obtain except for the case of the matrix having some special property, such as the matrix being reduced to be an identity matrix or having circulant blocks structure \cite{hansen2006deblurring}. This requirement significantly constrains the applicability of the algorithm in some areas, for example, regression and classification problems in machine learning and deblurring problems in image processing.  The fixed-point iterative algorithms proposed in \cite{li2015multi,micchelli2011proximity} facilitate computing the proximity operator of a convex function composite with a general matrix, in which errors are unavoidable with a finite number of iterations. This motivates us to consider an inexact version of the algorithm proposed in \cite{shen2016wavelet,wu2022inverting}, namely the inexact fixed-point proximity algorithm. 

In the field of optimization, inexact methods have attracted considerable attention.
Inexact methods for solving convex optimization problems related to monotone operators and the proximal point algorithm were developed in \cite{rockafellar1976monotone}. Moreover, the inexact forward-backward algorithm, the inexact variant of the Krasnosel’skiĭ-Mann iteration, and the inexact implicit fixed-point proximity algorithm were proposed in \cite{solodov1999hybrid,villa2013accelerated}, \cite{combettes2004solving,liang2016convergence} and  \cite{ren2023}, respectively. In \cite{attouch2013convergence}, inexact descent methods were developed for non-convex optimization problems with the objective function satisfying the Kurdyka–Łojasiewicz (KL) property \cite{kurdyka1998gradients,lojasiewicz1963propriete} and their convergence to a critical point of the objective function was established. %Subsequently, \cite{gu2018inexact} introduced an inexact proximal gradient method and its accelerated version whose convergence analysis were based on the assumption of some modified KL property. 

The goal of this paper is to propose an inexact fixed-point proximity algorithm to solve the $\ell_0$-norm based regularization problem and prove its convergence to a local minimizer of the objective function without assuming the KL property. Specifically, we show that the proposed algorithm first pursues the support of a sparse solution of the non-convex model involving the $\ell_0$-norm and then searches for the solution on the resulting support set, and based on this understanding, we establish the convergence result. 
We apply the inexact fixed-point proximity algorithm to three application problems including regression and classification and image deblurring. The convergence and effectiveness of the proposed algorithm are confirmed by numerical experiments.

This paper is organized in six sections. In Section \ref{section: model}, we describe the $\ell_0$-norm based regularization model and characterize a solution of the model as the fixed-point problem. In Section \ref{section: inexact iteration}, we propose the inexact fixed-point proximity algorithm to solve the $\ell_0$-norm based regularization model and demonstrate its feasibility.  Section \ref{section: convergence} is devoted to convergence analysis of the proposed inexact algorithm. In Section \ref{section: Experiments} we apply the proposed algorithm to solve regression, classification and image deblurring problems  and demonstrate the performance of the proposed algorithm. Our conclusions are drawn in Section \ref{section: Conclusion}.

\section{Sparse Regularization Model}\label{section: model}

In this section, we describe the sparse regularization model and present a necessary condition for a global minimizer of the resulting minimization problem.

Many application problems may be modeled as sparse regularization problems. The sparsity of a vector is naturally measured by the number of its nonzero components. Specifically, for $\mathbf{x}\in \mathbb{R}^n$,
$\norm{\mathbf{x}}_0$ counts the number of nonzero components of $\mathbf{x}$ and we call $\norm{ \mathbf{x}}_0$ the $\ell_0$-norm of $\mathbf{x}$, even though it is not really a norm. A sparse regularization problem may be described as
\begin{equation}\label{model: 0 l0}
    \text{argmin} \left\{ \psi(\mB \mathbf{v}) + \lambda \norm{\mD \mathbf{v}}_0  : \mathbf{v}\in \bR^m \right\},
\end{equation}
where the function $\psi:\mathbb{R}^p\to\mathbb{R}$ is proper, convex,  continuously differentiable, and bounded below, $\mB \in \bR^{p \times m}$, $\mD \in \bR^{n \times m}$ are matrices, and $\lambda$ is a positive regularization parameter. 
The matrix $\mD$ appearing in \eqref{model: 0 l0} is often a redundant system chosen as a mathematical transformation, such as a discrete cosine transform \cite{strang1999discrete}, a wavelet transforms \cite{daubechies1992ten,lian2011filters,mallat1999wavelet}, or a framelet
transform \cite{chan2004tight,ron1997affine}, depending on specific applications. In this paper, we are mainly interested in the case of $\mD$ being a discrete tight framelet system, that is, $\mD^{\top}\mD = \mI$ with $\mD^{\top}$ being the transpose of $\mD$ and $\mI$ the identity matrix.

%The constrained problem \eqref{ADMM form} introduce an auxiliary variable $\mathbf{u}$ to free $\mathbf{D}\mathbf{v}$ from norm $\norm{\cdot}_0$. However, it is still changing to solve \eqref{ADMM form} deu

The discontinuity and non-convexity of the $\ell_0$-norm are major barriers to developing efficient algorithms for solving the optimization problem \eqref{model: 0 l0}. 
Practically the $\ell_0$-norm in the model \eqref{model: 0 l0} is often replaced by the $\ell_1$-norm \cite{chambolle2011first,li2015multi,micchelli2011proximity} which is both continuous and convex. The resulting model is 
\begin{equation}\label{model: l1}
    \mathrm{argmin}\left\{\psi(\mathbf{B} \mathbf{v})+\lambda\|\mathbf{D} \mathbf{v}\|_1: \mathbf{v}\in\mathbb{R}^m\right\}.
\end{equation}
{Model \eqref{model: l1} is a special case of the model in \cite{chambolle2011first,li2015multi},  which may be solved by the first-order primal-dual algorithm \cite{chambolle2011first} and multi-step fixed-point proximity algorithm \cite{li2015multi}. Both of these algorithms are required to compute the proximity operator of $\lambda \norm{\mathbf{D}\mathbf{v}}_1$, which often has no closed-form. Alternatively, one may introduce an auxiliary variable $\mathbf{u}$ to free $\mathbf{D}\mathbf{v}$ from the non-differentiable norm $\norm{\cdot}_1$ and add the difference between $\mathbf{u}$ and $\mathbf{D}\mathbf{v}$ as a penalized term. This yields the model
\begin{equation}\label{model: l1-1}
    \text{argmin} \left\{   \psi(\mB \mathbf{v})  + \frac{\lambda}{2\gamma}\norm{\mathbf{u} - \mD \mathbf{v}}_2^2+ \lambda\norm{\mathbf{u}}_1: (\mathbf{u}, \mathbf{v}) \in \bR^n \times\bR^m \right\}.
\end{equation}  
This falls into the quadratic penalty approach which can be tracked back to \cite{Courant1943VariationalMF} and has been widely used in image reconstruction \cite{tan2014smoothing,wang2008new,zeng2016convergent}.
However, the $\ell_1$-norm may promote biases \cite{fan2001variable}. For this reason, we prefer using the $\ell_0$-norm by overcoming difficulties brought by it. Specifically, we adopt the quadratic penalty approach to free $\mathbf{D}\mathbf{v}$ from $\norm{\cdot}_0$.}
%Specifically, we adopt the tactic utilized in \cite{zeng2016convergent} to approximate the $\ell_0$-norm by its Moreau envelop, which is continuous and locally convex, and which is a nice approximation of the  $\ell_0$-norm. 
This leads to the following two variables $\ell_0$-norm based regularization problem
\begin{equation}\label{model: l0-0}
    \text{argmin} \left\{   \psi(\mB \mathbf{v})  + \frac{\lambda}{2\gamma}\norm{\mathbf{u} - \mD \mathbf{v}}_2^2+ \lambda\norm{\mathbf{u}}_0: (\mathbf{u}, \mathbf{v}) \in \bR^n \times\bR^m \right\}
\end{equation}
where $\gamma>0$ is the envelope parameter. We remark that the sum of the last two terms in \eqref{model: l0-0} approaches to $\norm{\mD \mathbf{v}}_0$ as $\gamma$ goes to zero. 
For notation simplicity, we let
\begin{equation}\label{target function in model: l0}
F(\mathbf{u}, \mathbf{v}) :=  \psi\left(\mB \mathbf{v}\right)+ \frac{\lambda}{2\gamma}\norm{\mathbf{u} - \mD \mathbf{v}}_2^2 + \lambda\norm{\mathbf{u}}_0,\quad(\mathbf{u}, \mathbf{v}) \in \bR^{n} \times \bR^m.
\end{equation}
In this notation, problem \eqref{model: l0-0} may be restated as
\begin{equation}\label{model: l0}
    \text{argmin} \left\{  F(\mathbf{u},\mathbf{v}):  (\mathbf{u}, \mathbf{v}) \in \bR^n \times\bR^m \right\}.
\end{equation}

We next present a necessary condition for a solution of \eqref{model: l0}. To this end, we recall the definition of the proximity operator \cite{bauschke2011convex}. For a proper, lower semi-continuous function $f:\bR^m\to\bR \cup \{\infty\}$ and a positive number $t$, the proximity operator $\text{prox}_{tf} : \bR^m \to \bR^m $ at $\mathbf{y}\in\bR^{m}$ is defined as
\begin{equation}\label{def: general proximity operator}
    \text{prox}_{tf}(\mathbf{y}) := \text{argmin} \left\{\frac{1}{2t}\norm{\mathbf{x} - \mathbf{y}}_2^2 +  f(\mathbf{x}) :  \mathbf{x}\in \bR^m\right\}.
\end{equation}
We remark that the proximity operator $\text{prox}_{tf}$ defined by \eqref{def: general proximity operator} is set-valued and if $f$ is further assumed to be convex, the corresponding proximity operator will be single-valued. In particular, if $f$ is continuously differentiable and convex, then 
$$
\text{prox}_{tf}(\mathbf{y}) = \left(\mI + t\nabla f\right)^{-1}(\mathbf{y}).
$$

The remaining part of this section is to establish a necessary condition for a solution of \eqref{model: l0} in the form of a fixed-point of the proximity operators.
To prepare for this, we present three technical lemmas. 

\begin{lemma}\label{lemma: a,b}
If $\mathbf{a}, \mathbf{b}\in \bR^m$ and $q\in [0,1]$, then
\begin{equation}\label{ineq in Rn}
        q\|\mathbf{a}\|_2^2-q\|\mathbf{b}\|_2^2\leq \left\|\mathbf{a}-(1-q)\mathbf{b}\right\|_2^2-\left\|q\mathbf{b}\right\|_2^2.
\end{equation}
\end{lemma}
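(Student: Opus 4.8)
The inequality \eqref{ineq in Rn} is a pure identity-plus-sign statement in $\bR^m$, so the plan is to reduce it to an elementary algebraic manipulation and then verify the sign of the remaining term. First I would expand the right-hand side using the bilinearity of the inner product: writing $\norm{\mathbf{a}-(1-q)\mathbf{b}}_2^2 = \norm{\mathbf{a}}_2^2 - 2(1-q)\langle \mathbf{a},\mathbf{b}\rangle + (1-q)^2\norm{\mathbf{b}}_2^2$ and subtracting $\norm{q\mathbf{b}}_2^2 = q^2\norm{\mathbf{b}}_2^2$, the RHS becomes
\[
\norm{\mathbf{a}}_2^2 - 2(1-q)\langle \mathbf{a},\mathbf{b}\rangle + \bigl((1-q)^2 - q^2\bigr)\norm{\mathbf{b}}_2^2
= \norm{\mathbf{a}}_2^2 - 2(1-q)\langle \mathbf{a},\mathbf{b}\rangle + (1-2q)\norm{\mathbf{b}}_2^2 .
\]
Subtracting the left-hand side $q\norm{\mathbf{a}}_2^2 - q\norm{\mathbf{b}}_2^2$ from this, the claimed inequality \eqref{ineq in Rn} is equivalent to showing that the difference (RHS $-$ LHS) is nonnegative, i.e.
\[
(1-q)\norm{\mathbf{a}}_2^2 - 2(1-q)\langle \mathbf{a},\mathbf{b}\rangle + (1-q)\norm{\mathbf{b}}_2^2 \;\ge\; 0 .
\]

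The second step is to notice that the three terms on the left share the common factor $(1-q)$, which is nonnegative precisely because $q\in[0,1]$; factoring it out leaves $(1-q)\bigl(\norm{\mathbf{a}}_2^2 - 2\langle \mathbf{a},\mathbf{b}\rangle + \norm{\mathbf{b}}_2^2\bigr) = (1-q)\norm{\mathbf{a}-\mathbf{b}}_2^2$, which is a product of two nonnegative quantities and hence $\ge 0$. This proves \eqref{ineq in Rn}. I would then present the argument more cleanly in reverse: start from the obvious inequality $(1-q)\norm{\mathbf{a}-\mathbf{b}}_2^2 \ge 0$, expand, and rearrange into the desired form, so that no ``equivalent to'' bookkeeping is needed in the final write-up.

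There is essentially no obstacle here — the only thing to be careful about is the bookkeeping of the coefficients of $\norm{\mathbf{b}}_2^2$ (the cancellation $(1-q)^2 - q^2 = 1-2q$ and then $1-2q - (-q) = 1-q$) and making explicit where the hypothesis $q\in[0,1]$ enters, namely only through $1-q\ge0$ (the condition $q\ge0$ is not actually needed for this particular inequality, though it is presumably needed elsewhere in the paper). I expect the author's proof to be a two- or three-line computation along exactly these lines.
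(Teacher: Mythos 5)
Your proof is correct and is essentially identical to the paper's: both reduce the claim to showing that the difference (RHS $-$ LHS) equals $(1-q)\left\|\mathbf{a}-\mathbf{b}\right\|_2^2\geq 0$ by direct expansion. Your remark that only $1-q\geq 0$ (and not $q\geq 0$) is actually used is a fair observation but does not change anything.
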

\begin{proof}
A direct computation confirms that
\begin{align*}
\left\|\mathbf{a}-(1-q)\mathbf{b}\right\|_2^2-\left\|q \mathbf{b}\right\|_2^2-q\|\mathbf{a}\|_2^2+q\|\mathbf{b}\|_2^2=&(1-q)[\|\mathbf{a}\|_2^2-2\left<\mathbf{a}, \mathbf{b}\right>+\|\mathbf{b}\|_2^2]\\
=&(1-q)\|\mathbf{a}- \mathbf{b}\|_2^2\geq 0.
\end{align*}
This establishes the desired inequality \eqref{ineq in Rn}. 
\qed
\end{proof}
%%%%%%%%%%%%%%%%%%%%%%%%%%%%%%%%%%%%%%%%%%%%%
\begin{lemma}\label{lemma: fixed-point iteration lemma}
Suppose that $f:\mathbb{R}^n\to\mathbb{R} \cup \left\{\infty\right\}$ is a proper, lower semi-continuous function. If $p>0$ and $\mathbf{x}^*, \mathbf{y}^*\in\mathbb{R}^n$ such that 
\begin{equation}\label{assume ineq fx* - fx}
    p\|\mathbf{x}^*-\mathbf{y}^*\|_2^2+f(\mathbf{x}^*)\leq p\|\mathbf{x}-\mathbf{y}^*\|_2^2+f(\mathbf{x}), \ \  \mbox{for all}\ \ \mathbf{x}\in\mathbb{R}^n, 
\end{equation}
then for all $q\in[0,1]$,
\begin{equation*}
    \mathbf{x}^* \in \mathrm{prox}_{\frac{q}{2p}f}\left(q \mathbf{y}^*+(1-q)\mathbf{x}^*\right).
\end{equation*}
\end{lemma}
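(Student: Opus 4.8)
The plan is to unwind the definition \eqref{def: general proximity operator} of the proximity operator, reduce the claimed inclusion to the variational inequality that characterizes it, and then obtain that inequality directly from the hypothesis \eqref{assume ineq fx* - fx} by a single application of Lemma \ref{lemma: a,b}.

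First I would handle the trivial case $q=0$: there $\frac{q}{2p}f\equiv 0$, $q\mathbf{y}^*+(1-q)\mathbf{x}^*=\mathbf{x}^*$, and $\mathrm{prox}_{0}(\mathbf{x}^*)=\{\mathbf{x}^*\}$, so the inclusion holds. For $q\in(0,1]$ set $t:=\frac{q}{2p}>0$ and $\mathbf{z}:=q\mathbf{y}^*+(1-q)\mathbf{x}^*$. By \eqref{def: general proximity operator}, proving $\mathbf{x}^*\in\mathrm{prox}_{tf}(\mathbf{z})$ is equivalent to showing
\begin{equation*}
\frac{p}{q}\norm{\mathbf{x}^*-\mathbf{z}}_2^2+f(\mathbf{x}^*)\leq \frac{p}{q}\norm{\mathbf{x}-\mathbf{z}}_2^2+f(\mathbf{x}),\qquad\text{for all }\mathbf{x}\in\mathbb{R}^n.
\end{equation*}
Since $\mathbf{x}^*-\mathbf{z}=q(\mathbf{x}^*-\mathbf{y}^*)$, the leading term on the left equals $pq\norm{\mathbf{x}^*-\mathbf{y}^*}_2^2$.

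Next I would use the hypothesis \eqref{assume ineq fx* - fx} in the form $f(\mathbf{x})\geq f(\mathbf{x}^*)+p\norm{\mathbf{x}^*-\mathbf{y}^*}_2^2-p\norm{\mathbf{x}-\mathbf{y}^*}_2^2$ to bound the right-hand side from below; after cancelling $f(\mathbf{x}^*)$ and dividing by $p/q>0$, the desired inequality reduces to
\begin{equation*}
q\norm{\mathbf{x}-\mathbf{y}^*}_2^2-q\norm{\mathbf{x}^*-\mathbf{y}^*}_2^2\leq \norm{\mathbf{x}-\mathbf{z}}_2^2-q^2\norm{\mathbf{x}^*-\mathbf{y}^*}_2^2,\qquad\text{for all }\mathbf{x}\in\mathbb{R}^n.
\end{equation*}
This is exactly inequality \eqref{ineq in Rn} of Lemma \ref{lemma: a,b} with $\mathbf{a}:=\mathbf{x}-\mathbf{y}^*$ and $\mathbf{b}:=\mathbf{x}^*-\mathbf{y}^*$, because then $\mathbf{a}-(1-q)\mathbf{b}=\mathbf{x}-q\mathbf{y}^*-(1-q)\mathbf{x}^*=\mathbf{x}-\mathbf{z}$ and $\norm{q\mathbf{b}}_2^2=q^2\norm{\mathbf{x}^*-\mathbf{y}^*}_2^2$. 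Tracing the implications back yields the variational inequality above, hence the claim.

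The computations are routine; the only real choices are recognizing the substitution $\mathbf{a}=\mathbf{x}-\mathbf{y}^*$, $\mathbf{b}=\mathbf{x}^*-\mathbf{y}^*$ that makes Lemma \ref{lemma: a,b} apply verbatim, and carrying the scaling factor $t=\frac{q}{2p}$ that converts the $p$-weighted optimality condition \eqref{assume ineq fx* - fx} into a statement about $\mathrm{prox}_{tf}$. No convexity or smoothness of $f$ is used — only that $\mathrm{prox}_{tf}$ is well defined as a (possibly set-valued) map for proper, lower semi-continuous $f$, which is the standing assumption; this is why the conclusion is an inclusion rather than an equality.
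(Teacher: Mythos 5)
Your proof is correct and follows essentially the same route as the paper's: reduce the claimed inclusion to the variational inequality defining $\mathrm{prox}_{\frac{q}{2p}f}$ and then obtain it from \eqref{assume ineq fx* - fx} via Lemma \ref{lemma: a,b} with exactly the substitution $\mathbf{a}:=\mathbf{x}-\mathbf{y}^*$, $\mathbf{b}:=\mathbf{x}^*-\mathbf{y}^*$. The only cosmetic differences are that you rescale by $2p/q$ (forcing the separate, trivial $q=0$ case), whereas the paper keeps the factor $\tfrac{q}{2p}$ and so treats all $q\in[0,1]$ uniformly.
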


\begin{proof}
From the definition of the proximity operator, it suffices to prove that
\begin{equation}\label{equi proximity q/2pf}
\frac{1}{2}\left\|q(\mathbf{x}^*-\mathbf{y}^*)\right\|_2^2+\frac{q}{2p}f(\mathbf{x}^*)\leq\frac{1}{2}\left\|\mathbf{x}-\left(q\mathbf{y}^*+(1-q)\mathbf{x}^*\right)\right\|_2^2+\frac{q}{2p}f(\mathbf{x}), \ \  \mbox{for all}\ \ \mathbf{x}\in\mathbb{R}^n.
\end{equation}
By employing inequality \eqref{ineq in Rn} of Lemma \ref{lemma: a,b} with $\mathbf{a}:=\mathbf{x}- \mathbf{y}^*$ and $\mathbf{b}:= \mathbf{x}^*-\mathbf{y}^*$, we obtain that 
\begin{equation}\label{transformed equi proximity q/2pf}
    q\|\mathbf{x}- \mathbf{y}^*\|_2^2-q\|\mathbf{x}^*-\mathbf{y}^*\|_2^2\leq \left\|\mathbf{x}-\left(q\mathbf{y}^*+(1-q)\mathbf{x}^*\right)\right\|_2^2-\left\|q(\mathbf{x}^*-\mathbf{y}^*)\right\|_2^2, \ \  \mbox{for all}\ \ \mathbf{x}\in\mathbb{R}^n.
\end{equation}
Combining \eqref{transformed equi proximity q/2pf} with the assumption \eqref{assume ineq fx* - fx} leads to estimate \eqref{equi proximity q/2pf}. 
\qed
\end{proof}

\begin{lemma}\label{lemma: relation u-Dv and v-DTu}
If $\mD \in \bR^{n \times m}$ such that $\mD^{\top}\mD = \mI$, then
\begin{equation}\label{eq: relation u-Dv and v-DTu}
\norm{\mathbf{u} - \mD \mathbf{v}}_2^2 = \norm{\mathbf{v} - \mD^{\top}\mathbf{u}}_2^2 + \mathbf{u}^{\top}(\mI - \mD \mD^{\top})\mathbf{u}, \ \   \mbox{for all}\ \ \mathbf{v} \in \bR^m,\ \mathbf{u}\in \bR^n.
\end{equation}
\end{lemma}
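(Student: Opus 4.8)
The plan is to verify the identity by expanding the squared norms on both sides and matching terms, using only the hypothesis $\mathbf{D}^{\top}\mathbf{D} = \mathbf{I}$. First I would expand the left-hand side as
\begin{equation*}
\norm{\mathbf{u} - \mD\mathbf{v}}_2^2 = \norm{\mathbf{u}}_2^2 - 2\mathbf{u}^{\top}\mD\mathbf{v} + \mathbf{v}^{\top}\mD^{\top}\mD\mathbf{v},
\end{equation*}
and then invoke $\mD^{\top}\mD = \mI$ to replace $\mathbf{v}^{\top}\mD^{\top}\mD\mathbf{v}$ by $\norm{\mathbf{v}}_2^2$, so that the left-hand side becomes $\norm{\mathbf{u}}_2^2 - 2\mathbf{u}^{\top}\mD\mathbf{v} + \norm{\mathbf{v}}_2^2$.

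Next I would expand the first term on the right-hand side,
\begin{equation*}
\norm{\mathbf{v} - \mD^{\top}\mathbf{u}}_2^2 = \norm{\mathbf{v}}_2^2 - 2\mathbf{v}^{\top}\mD^{\top}\mathbf{u} + \mathbf{u}^{\top}\mD\mD^{\top}\mathbf{u},
\end{equation*}
and add the second term $\mathbf{u}^{\top}(\mI - \mD\mD^{\top})\mathbf{u} = \norm{\mathbf{u}}_2^2 - \mathbf{u}^{\top}\mD\mD^{\top}\mathbf{u}$. The two occurrences of $\mathbf{u}^{\top}\mD\mD^{\top}\mathbf{u}$ cancel, leaving the right-hand side equal to $\norm{\mathbf{v}}_2^2 - 2\mathbf{v}^{\top}\mD^{\top}\mathbf{u} + \norm{\mathbf{u}}_2^2$. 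Finally, since $\mathbf{u}^{\top}\mD\mathbf{v}$ is a scalar it equals its own transpose $\mathbf{v}^{\top}\mD^{\top}\mathbf{u}$, so the two expressions coincide for all $\mathbf{v}\in\bR^m$ and $\mathbf{u}\in\bR^n$.

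This argument is entirely a routine algebraic manipulation, so I do not anticipate a genuine obstacle; the only point that requires a moment's care is noting that the cross term $\norm{\mD^{\top}\mathbf{u}}_2^2 = \mathbf{u}^{\top}\mD\mD^{\top}\mathbf{u}$ is exactly what is cancelled by the $-\mathbf{u}^{\top}\mD\mD^{\top}\mathbf{u}$ coming from the second summand on the right, and that no assumption on $\mD\mD^{\top}$ (which need not be the identity) is needed. Note also that the quadratic form $\mathbf{u}^{\top}(\mI - \mD\mD^{\top})\mathbf{u}$ is nonnegative, since $\mI - \mD\mD^{\top}$ is the orthogonal projection onto the complement of the range of $\mD$ when $\mD^{\top}\mD = \mI$; this positivity is not needed for the identity itself but is the reason the lemma is useful later.
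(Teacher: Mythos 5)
Your proposal is correct and follows essentially the same route as the paper: both expand the two squared norms using $\mD^{\top}\mD = \mI$ and observe that their difference is exactly $\mathbf{u}^{\top}(\mI - \mD\mD^{\top})\mathbf{u}$. The extra remarks on the cancellation of the cross terms and the nonnegativity of the quadratic form are accurate but not needed for the identity.
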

\begin{proof}
We prove \eqref{eq: relation u-Dv and v-DTu} by direct computation. Since by assumption $\mD^\top\mD=\mI$,  we may expand  $\norm{\mathbf{u} - \mD \mathbf{v}}_2^2$ as 
\begin{equation}\label{expand u - D v}
    \norm{\mathbf{u} - \mD \mathbf{v}}_2^2= \mathbf{u}^{\top} \mathbf{u}-2\mathbf{u}^{\top}\mD \mathbf{v}+\mathbf{v}^{\top} \mathbf{v}
\end{equation}
and  $\norm{\mathbf{v} - \mD^{\top} \mathbf{u}}_2^2$ as
\begin{equation}\label{expand v - Dtop u}
    \norm{\mathbf{v} - \mD^{\top}\mathbf{u}}_2^2= \mathbf{v}^{\top} \mathbf{ v}-2 \mathbf{u}^{\top}\mD \mathbf{v}+\mathbf{u}^{\top}\mD\mD^\top \mathbf{u}.
\end{equation}
Subtracting \eqref{expand v - Dtop u} from \eqref{expand u - D v} yields that 
\begin{equation}
    \norm{\mathbf{u} - \mD \mathbf{v}}_2^2-\norm{\mathbf{v} - \mD^{\top}\mathbf{u}}_2^2=\mathbf{u}^{\top} \mathbf{u}-\mathbf{u}^\mathbf{\top}\mD\mD^{\top} \mathbf{u}= \mathbf{u}^{\top}(\mI - \mD \mD^{\top})\mathbf{u},
\end{equation}
which leads to equation \eqref{eq: relation u-Dv and v-DTu}. \qed
\end{proof}

% {
% Let $\mathcal{P}$ be a operator from $\bR^{m}$ to $\bR^m$ and $(x, y)$ be a pair in $\bR^m \times \bR^m$, then inverse of $\mathcal{P}$, denoted as $\mathcal{P}^{-1}$, is defined as
% $$
% x = \mathcal{P}^{-1}(y) \text{ if and only if } y= \mathcal{P}(x).
% $$

% \begin{lemma}\label{lemma: proximity and gradient}
%     If $f: \bR^{m} \to \bR$ be a continuously differentiable convex function and $(x^*, y^*)$ be a pair in $\bR^{m} \times \bR^m$, then
% \begin{equation}\label{eq: x^* argmin}
%  x^* = \mathrm{argmin}\left\{\frac{1}{2t}\norm{x - y^*}_2^2 + f(x): x \in \bR^m\right\}   
% \end{equation}
% if and only if $
%     x^* = \left(\mI + t\nabla f\right)^{-1}(y^*).
%     $
% \end{lemma}
% \begin{proof}
% The Fermat's rule (see, Theorem 16.3 of \cite{bauschke2011convex}) yields $x^*$ is a global minimizer of the right hand side of \eqref{eq: x^* argmin} if and only if
% $$
% (x^* - y^*) + t\nabla f(x^*) = 0.
% $$
% Simplifying the above equation, we can obtain that $x^* = \left(\mI + t\nabla f\right)^{-1}(y^*)$.
% \end{proof}
% }

The next proposition presents a necessary condition for a solution of \eqref{model: l0} as a fixed-point of the proximity operator.  For notation simplicity, we define the operator $\mathcal{S}: \bR^m \to \bR^m$ as
$$
\mathcal{S}(\mathbf{v}):=  \left(\mI + \frac{\gamma}{\lambda} \nabla \left(\psi \circ \mB \right)\right)^{-1}(\mathbf{v}), \ \ \text{for all} \ \ \mathbf{v}\in \bR^m.
$$

\begin{proposition}\label{thm: fixed-point outer loop} 
If the pair $(\mathbf{u}^*, \mathbf{v}^*) \in \bR^n \times \bR^m$ is a global minimizer of problem \eqref{model: l0}, then for any {$\alpha \in (0, 1]$}, 
{
\begin{align}
    \mathbf{u}^* &\in \mathrm{prox}_{\alpha\gamma \norm{\cdot}_0}\left((1-\alpha)\mathbf{u}^*+\alpha \mD \mathbf{v}^*\right), \label{eq: fix-point 1}\\
    \mathbf{v}^* &= \mathcal{S}\left(\mD^{\top} \mathbf{u}^* \right). \label{eq: fix-point 2}
\end{align}
}
\end{proposition}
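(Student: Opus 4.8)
The plan is to start from the defining minimization property of the global minimizer $(\mathbf{u}^*,\mathbf{v}^*)$ and decompose it into two separate variational inequalities, one in the $\mathbf{u}$-variable and one in the $\mathbf{v}$-variable, then feed each into Lemma~\ref{lemma: fixed-point iteration lemma}. First I would fix $\mathbf{v}=\mathbf{v}^*$ and observe that $\mathbf{u}^*$ minimizes $\mathbf{u}\mapsto F(\mathbf{u},\mathbf{v}^*)$, i.e.
\begin{equation*}
\frac{\lambda}{2\gamma}\norm{\mathbf{u}^*-\mD\mathbf{v}^*}_2^2+\lambda\norm{\mathbf{u}^*}_0\le \frac{\lambda}{2\gamma}\norm{\mathbf{u}-\mD\mathbf{v}^*}_2^2+\lambda\norm{\mathbf{u}}_0,\quad\text{for all }\mathbf{u}\in\bR^n.
\end{equation*}
Dividing by $\lambda$, this is exactly hypothesis \eqref{assume ineq fx* - fx} of Lemma~\ref{lemma: fixed-point iteration lemma} with $f:=\norm{\cdot}_0$, $p:=\tfrac{1}{2\gamma}$, $\mathbf{x}^*:=\mathbf{u}^*$, and $\mathbf{y}^*:=\mD\mathbf{v}^*$. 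Applying the lemma with $q:=\alpha\in(0,1]$ gives $\mathbf{u}^*\in\mathrm{prox}_{\alpha\gamma\norm{\cdot}_0}\bigl(\alpha\mD\mathbf{v}^*+(1-\alpha)\mathbf{u}^*\bigr)$, which is precisely \eqref{eq: fix-point 1}, since $\tfrac{q}{2p}=\alpha\gamma$.

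For \eqref{eq: fix-point 2}, I would fix $\mathbf{u}=\mathbf{u}^*$ so that $\mathbf{v}^*$ minimizes $\mathbf{v}\mapsto\psi(\mB\mathbf{v})+\tfrac{\lambda}{2\gamma}\norm{\mathbf{u}^*-\mD\mathbf{v}}_2^2$. Here I would invoke Lemma~\ref{lemma: relation u-Dv and v-DTu}: using $\mD^\top\mD=\mI$, the term $\norm{\mathbf{u}^*-\mD\mathbf{v}}_2^2$ equals $\norm{\mathbf{v}-\mD^\top\mathbf{u}^*}_2^2+(\mathbf{u}^*)^\top(\mI-\mD\mD^\top)\mathbf{u}^*$, and the second summand does not depend on $\mathbf{v}$. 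Hence $\mathbf{v}^*$ minimizes $\psi(\mB\mathbf{v})+\tfrac{\lambda}{2\gamma}\norm{\mathbf{v}-\mD^\top\mathbf{u}^*}_2^2$ over $\bR^m$; equivalently $\mathbf{v}^*=\mathrm{prox}_{\frac{\gamma}{\lambda}(\psi\circ\mB)}(\mD^\top\mathbf{u}^*)$. Because $\psi\circ\mB$ is convex and continuously differentiable, this proximity operator equals $\bigl(\mI+\tfrac{\gamma}{\lambda}\nabla(\psi\circ\mB)\bigr)^{-1}$, which is exactly $\mathcal{S}(\mD^\top\mathbf{u}^*)$, yielding \eqref{eq: fix-point 2}. (Equivalently, one could obtain \eqref{eq: fix-point 2} from Lemma~\ref{lemma: fixed-point iteration lemma} with $q=1$, $f:=\tfrac{\gamma}{\lambda}\psi\circ\mB$ composed appropriately, but the direct first-order-optimality route via Lemma~\ref{lemma: relation u-Dv and v-DTu} is cleaner.)

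The one genuinely delicate point is the first step: since $\norm{\cdot}_0$ is neither convex nor continuous, I must be careful that a \emph{global} minimizer of the joint problem \eqref{model: l0} indeed restricts to a global minimizer in each variable separately — this is immediate from the definition of argmin, but it is essential that Lemma~\ref{lemma: fixed-point iteration lemma} was stated for merely proper, lower semi-continuous $f$ (which $\norm{\cdot}_0$ is), so no convexity is needed there, and the resulting proximity operator is set-valued, hence the membership "$\in$" rather than equality in \eqref{eq: fix-point 1}. I expect the $\mathbf{v}$-step to be routine once Lemma~\ref{lemma: relation u-Dv and v-DTu} is applied; the only care needed is to verify the standard identity $\mathrm{prox}_{tg}=(\mI+t\nabla g)^{-1}$ for $g=\psi\circ\mB$ convex and $C^1$, which is already recalled in the text preceding the proposition.
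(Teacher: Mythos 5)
Your proposal is correct and follows essentially the same route as the paper: fix $\mathbf{v}=\mathbf{v}^*$ and apply Lemma~\ref{lemma: fixed-point iteration lemma} (your normalization $f:=\norm{\cdot}_0$, $p:=\tfrac{1}{2\gamma}$ is equivalent to the paper's $f:=\lambda\norm{\cdot}_0$, $p:=\tfrac{\lambda}{2\gamma}$, both giving $\tfrac{q}{2p}f=\alpha\gamma\norm{\cdot}_0$), then fix $\mathbf{u}=\mathbf{u}^*$ and use Lemma~\ref{lemma: relation u-Dv and v-DTu} to identify $\mathbf{v}^*$ with $\mathcal{S}(\mD^{\top}\mathbf{u}^*)$. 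No gaps.
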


\begin{proof}
Since $(\mathbf{u}^*, \mathbf{v}^*)$ is a global minimizer of \eqref{model: l0}, we obtain that 
\begin{equation}\label{uv is global minimizer in the proof}
F(\mathbf{u}^*, \mathbf{v}
^*) \leq F(\mathbf{u}, \mathbf{v}), \ \  \mbox{for all}\ \ (\mathbf{u}, \mathbf{v})\in\bR^n\times \bR^m.
\end{equation}
By noting the definition of $F$ and choosing $\mathbf{v}:= \mathbf{v}^*$ in \eqref{uv is global minimizer in the proof}, we obtain that 
\begin{equation}\label{leq u*}
\frac{\lambda}{2\gamma}\norm{\mathbf{u}^* - \mD \mathbf{v}^*}_2^2+\lambda \norm{\mathbf{u}^*}_0\leq \frac{\lambda}{2\gamma}\norm{\mathbf{u} - \mD \mathbf{v}^*}_2^2+\lambda \norm{\mathbf{u}}_0, \ \ \text{for all} \ \mathbf{u} \in \bR^n.
\end{equation}
Applying Lemma \ref{lemma: fixed-point iteration lemma} with $p:=\frac{\lambda}{2\gamma}$, $q:=\alpha$, $\mathbf{x}^*:= \mathbf{u}^*$,  $\mathbf{y}^*:= \mD \mathbf{v}^*$ and $f:=\lambda\|\cdot\|_0$ and employing the definition of the proximity operator, we get \eqref{eq: fix-point 1}. We next take $\mathbf{u}:= \mathbf{u}^*$ in \eqref{uv is global minimizer in the proof} and find that
\begin{equation}\label{leq v*}
\psi(\mB \mathbf{v}^*)+\frac{\lambda}{2\gamma}\norm{\mathbf{u}^* - \mD \mathbf{v}^*}_2^2 \leq \psi(\mB \mathbf{v})+\frac{\lambda}{2\gamma}\norm{\mathbf{u}^* - \mD \mathbf{v}}_2^2, \ \ \text{for all} \ \mathbf{v} \in \bR^m.
\end{equation}
It follows from Lemma \ref{lemma: relation u-Dv and v-DTu} that 
\begin{equation}\label{replace u-Dv*}
    \norm{\mathbf{u}^* - \mD \mathbf{v}^*}_2^2 = \norm{\mathbf{v}^* - \mD^{\top}\mathbf{u}^*}_2^2 + (\mathbf{u}^*)^{\top}(\mI - \mD \mD^{\top})\mathbf{u}^*
\end{equation}
and 
\begin{equation}\label{replace u-Dv}
    \norm{\mathbf{u}^* - \mD \mathbf{v}}_2^2 = \norm{\mathbf{v} - \mD^{\top}\mathbf{u}^*}_2^2 + (\mathbf{u}^*)^{\top}(\mI - \mD \mD^{\top})\mathbf{u}^*.
\end{equation}
Substituting \eqref{replace u-Dv*} and \eqref{replace u-Dv} into \eqref{leq v*} yields the inequality
\begin{equation}\label{leq v*-DTu*}
\psi(\mB \mathbf{v}^*)+\frac{\lambda}{2\gamma}\norm{\mathbf{v}^* - \mD^{\top} \mathbf{u}^*}_2^2  \leq \psi(\mB \mathbf{v})+\frac{\lambda}{2\gamma}\norm{\mathbf{v} - \mD^{\top} \mathbf{u}^*}_2^2, \ \ \text{for all} \ \mathbf{v} \in \bR^m.
\end{equation}
Inequality \eqref{leq v*-DTu*} is equivalent to the equation $\mathbf{v}^* = \mathrm{prox}_{\frac{\gamma}{\lambda}\psi \circ \mB}\left(\mD^{\top} \mathbf{u}^*\right)$, which is reduced to  $\mathbf{v}^* = \mathcal{S}\left(\mD^{\top}\mathbf{u}^*\right)$ due to the  continuous differentiability of $\frac{\gamma}{\lambda}\psi\circ \mB$.
\qed
\end{proof}

The necessary condition for a global minimizer of optimization problem \eqref{model: l0} presented in Proposition \ref{thm: fixed-point outer loop} suggests an inexact fixed-point proximity algorithm for solving  \eqref{model: l0}, which will be discussed in the next section.

\section{Inexact Fixed-Point Proximity Algorithm}
\label{section: inexact iteration}

In this section, we first describe an {\it exact} fixed-point proximity algorithm for the model \eqref{model: l0}, and discuss a computational issue of the exact algorithm. We then propose an {\it inexact} fixed-point proximity algorithm to address the computational issue. 

The fixed-point equations \eqref{eq: fix-point 1}-\eqref{eq: fix-point 2} introduced in Proposition \ref{thm: fixed-point outer loop} suggest an exact fixed-point proximity algorithm for model \eqref{model: l0}. That is,
\begin{align}
    &\mathbf{u}^{k+1} \in \text{prox}_{\alpha \gamma \norm{\cdot}_0}\left( \left(1 - \alpha\right)\mathbf{u}^k+\alpha \mD \mathbf{v}^k \right), \label{iter: prox u ell0}\\
    & \mathbf{v}^{k+1} = \mathcal{S}\left(\mD^{\top} \mathbf{u}^{k+1} \right), \label{iter: prox v ell0}
\end{align}
{where $\alpha \in (0, 1]$.} 
{
The exact fixed-point proximity algorithm \eqref{iter: prox u ell0}-\eqref{iter: prox v ell0} has been introduced in \cite{zeng2016convergent} for $\alpha \in (0, 1)$. 

Comments for the case  $\alpha = 1$ are in order.
In this case, iteration \eqref{iter: prox u ell0} reduces to
\begin{align}
    &\mathbf{u}^{k+1} \in \text{prox}_{ \gamma \norm{\cdot}_0}\left( \mD \mathbf{v}^k \right). \label{iter: prox u ell0 alpha = 1}
\end{align}
The sequence $\left\{ \left(\mathbf{u}^k, \mathbf{v}^k\right)\right\}_{k=1}^{\infty}$ generated from \eqref{iter: prox u ell0 alpha = 1} and \eqref{iter: prox v ell0} is well-defined. Moreover, it can be verified from inequalities (2.25) and (2.28) of \cite{zeng2016convergent} that
the sequence of objective function values $\left\{F(\mathbf{u}^k, \mathbf{v}^k)\right\}_{k=1}^{\infty}$ is non-increasing, indicating convergence of the  sequence $\left\{F(\mathbf{u}^k, \mathbf{v}^k)\right\}_{k=1}^{\infty}$, when the function $\psi$ has a lower bound. Furthermore,
for the special case where $\psi(\mathbf{B}\mathbf{v}) = \frac{1}{2}\norm{\mathbf{B}\mathbf{v} - \mathbf{r}}_2^2$ for a given $\mathbf{r} \in \bR^p$, with $\mathbf{B}$ satisfying $\mathbf{B}\mathbf{B}^{\top} = \mathbf{I}$, and $\mathbf{D} = \mathbf{I}$, the sequence $\left\{\left(\mathbf{u}^k, \mathbf{v}^k\right)\right\}_{k=1}^{\infty}$ generated by algorithm \eqref{iter: prox u ell0 alpha = 1} and \eqref{iter: prox v ell0}  was proved in \cite{wu2022inverting} to be convergent under the condition $0<\frac{\gamma}{\lambda}<\frac{\sqrt{5}-1}{2}$.  However, it remains uncertain whether the sequence $\left\{\left(\mathbf{u}^k, \mathbf{v}^k\right)\right\}_{k=1}^{\infty}$ generated by algorithm \eqref{iter: prox u ell0 alpha = 1} and \eqref{iter: prox v ell0} in general is convergent. %Hence, we will restrict our discussion to the case $\alpha\in (0,1)$. 
}

{ One may apply the alternating direction method of multipliers (ADMM) to solve optimization problem \eqref{model: 0 l0}.
ADMM was originally proposed for solving convex optimization \cite{boyd2011distributed,gabay1976dual,glowinski1975approximation}, and was later extended to solving nonconvex nonsmooth optimization \cite{wang2019global}. Convergence of ADMM was established in \cite{wang2019global} by requiring that the augmented Lagrangian function be of Kurdyka-Łojasiewicz and that the penalty parameter $\lambda/\gamma$ in the augmented Lagrangian function defined by \eqref{augmented Lagrangian function} below be sufficiently large.}

%The theoretical results established in \cite{wang2019global} proved the convergence of ADMM by requiring that the augmented Lagrangian function be a Kurdyka-Łojasiewicz function and that the penalty parameter within the augmented Lagrangian function be sufficiently large.} 

% under the requirements that the penalty term in the augmented Lagrangian function is s 
% Paper \cite{wang2019ell_0} applied ADMM to the $\ell_0$ norm regularization with a quadratic data fidelity term. Specifically, in \cite{wang2019ell_0} the $\ell_0$ norm was approximated by a minimax concave penalty function and then ADMM was used to solve the resulting problem. Its convergence analysis follows from the theoretical results established in \cite{wang2019global}, by requiring the penalty parameter $\frac{\lambda}{\gamma}$ to be sufficiently large and the augmented Lagrangian function $L$ to be a Kurdyka-Łojasiewicz function.   \textbf{Professor Xu: I find an issue in the proof of the reference 48. We should not use this reference.}

{ To describe ADMM for  \eqref{model: 0 l0}, we rewrite problem \eqref{model: 0 l0} in an equivalent form
\begin{equation}\label{ADMM form}
\begin{array}{ll}
\text{minimize} &\psi(\mathbf{B}\mathbf{v})+\lambda\norm{\mathbf{u}}_0 \\
\text{subject to} & \mathbf{D}\mathbf{v} - \mathbf{u} = 0
\end{array}
\end{equation}
where $(\mathbf{u}, \mathbf{v}) \in \bR^{n} \times \bR^{m}$. We then define the augmented Lagrangian function for \eqref{ADMM form} as
\begin{equation}\label{augmented Lagrangian function}
    L(\mathbf{u}, \mathbf{v}, \mathbf{w}) := \psi(\mathbf{B}\mathbf{v})+\lambda\norm{\mathbf{u}}_0 + \mathbf{w}^{\top}\left(\mathbf{D}\mathbf{v} - \mathbf{u}\right) + \frac{\lambda}{2\gamma}\norm{\mathbf{D}\mathbf{v} - \mathbf{u}}_2^2
\end{equation}
where $\mathbf{w} \in \bR^n$ is a dual variable.}
{ ADMM minimizes the augmented Lagrangian function \eqref{augmented Lagrangian function} with respect to variables $\mathbf{u}$ and $\mathbf{v}$ alternatively and then updates the dual variable $\mathbf{w}$, 
leading to the following iteration scheme
\begin{align}
    &\mathbf{u}^{k+1} = \text{argmin}_{\mathbf{u}} \quad L(\mathbf{u}, \mathbf{v}^k, \mathbf{w}^k), \label{admm u}\\
    &\mathbf{v}^{k+1} = \text{argmin}_{\mathbf{v}} \quad L(\mathbf{u}^{k+1}, \mathbf{v}, \mathbf{w}^k), \label{admm v}\\
    &\mathbf{w}^{k+1} = \mathbf{w}^k + \frac{\lambda}{\gamma}\left(\mathbf{D}\mathbf{v}^{k+1} - \mathbf{u}^{k+1}\right). \label{admm w}
\end{align}
%where $L$ is the augmented Lagrangian function \eqref{augmented Lagrangian function}. 
Note that the gradient of the augmented Lagrangian function $L$ with respect to the dual variable $\mathbf{w}$ is given by $\mathbf{D}\mathbf{v} - \mathbf{u}$. ADMM employs the gradient ascent method to update the dual variable $\mathbf{w}$, utilizing a fixed step-size $\frac{\lambda}{\gamma}$. Consequently, for each iteration, the function value $L(\mathbf{u}, \mathbf{v}, \mathbf{w})$ decreases when updating the primal variables $\mathbf{u}$ and $\mathbf{v}$ and increases when updating the dual variable $\mathbf{w}$. 
In other words, the gradient ascent step that updates the dual variable in ADMM may lead to oscillations in values of the augmented Lagrange function. As such, ADMM may fail to converge for the augmented Lagrangian function $L$ involving the $\ell_0$ norm, % rather than ensuring a monotonic decrease. 
since due to the non-convexity of the $\ell_0$ norm, ensuring the monotonic decreasing behavior of the objective function at every iteration step is crucial for its convergence. 
Unlike ADMM, the exact fixed-point proximity algorithm \eqref{iter: prox u ell0}-\eqref{iter: prox v ell0} consists only the $\mathbf{u}$-minimization step \eqref{iter: prox u ell0} and $\mathbf{v}$-minimization step \eqref{iter: prox v ell0}. It was proved in Theorem 2.1 of \cite{zeng2016convergent} that the objective function $F$ defined by \eqref{target function in model: l0} is monotonically decreasing at every iteration step of \eqref{iter: prox u ell0}-\eqref{iter: prox v ell0}, leading to its convergence.  For this reason, we prefer minimizing the objective function $F$ defined by \eqref{target function in model: l0} rather than the augmented Lagrangian function $L$. Our goal is to develop a convergence guaranteed inexact fixed-point algorithm to solve the optimization problem \eqref{model: l0} without requiring the objective function to be a Kurdyka-Łojasiewicz function and the penalty parameter $\frac{\lambda}{\gamma}$ to be sufficiently large, as required in \cite{wang2019global}.

}

%The exact fixed-point proximity algorithm \eqref{iter: prox u ell0}-\eqref{iter: prox v ell0} requires evaluating the proximity operator of $\|\cdot\|_0$ and evaluating the
%operator $\mathcal{S}$. 
We now consider the implementation of the exact fixed-point proximity algorithm \eqref{iter: prox u ell0}-\eqref{iter: prox v ell0}.
It is well-known that for $\mathbf{u} \in \bR^n$, the proximity operator of the $\ell_0$-norm is given by
\begin{equation}\label{eq:prox_ell0_norm 1}
    {\rm prox}_{t \norm{\cdot}_0}(\mathbf{u}) := [{\rm prox}_{t |\cdot|_0}(u_1), {\rm prox}_{t |\cdot|_0}(u_2), \dots, {\rm prox}_{t |\cdot|_0}(u_n)]^\top
\end{equation}
with $t>0$, where 
\begin{equation}\label{eq:prox_ell0_abs 1}
\text{prox}_{t |\cdot|_0}(u_i) := \begin{cases}
\{u_i\}, & \ \text{if} \ |u_i|>\sqrt{2t},\\
\{u_i, 0\}, & \ \text{if} \ |u_i| = \sqrt{2t},\\
\{0\}, & \ \text{otherwise}.
\end{cases}    
\end{equation}
Note that $\text{prox}_{t|\cdot|_0}$ is the hard thresholding operator. The closed-form formula \eqref{eq:prox_ell0_norm 1} facilitates the implementation of \eqref{iter: prox u ell0}.  However, the implementation of \eqref{iter: prox v ell0} encounters a computational issue of 
evaluating the operator $\mathcal{S}$. We assume that the operator $\left(\mI + \lambda/\gamma\nabla  \psi \right)^{-1}$ has a closed form, which is true in many problems of practical importance.  In the special case when the matrix $\mB$ in \eqref{iter: prox v ell0} satisfies $\mB \mB^{\top} = \mI$, the operator $\mathcal{S}$ also has a closed form. It is known (for example, Theorem 6.15 of \cite{beck2017first}) that if  $\mB \mB^{\top} = \mI$, then for any $\mathbf{x} \in \bR^m$, 
$$
\mathcal{S}(\mathbf{x}) = (\mI - \mB^{\top} \mB)\mathbf{x} + \mB^{\top}\left(\mI + \gamma/\lambda \nabla \psi \right)^{-1}(\mB \mathbf{x}).
$$
In this case, the update $\mathbf{v}^{k+1}$ in \eqref{iter: prox v ell0} can be computed exactly by using the above formula and a closed-form formula of $\left(\mI + \gamma/\lambda \nabla  \psi \right)^{-1}$. For general cases where $\mB \mB^{\top} \neq \mI$, there is no closed form for $\mathcal{S}$ and thus, the update $\mathbf{v}^{k+1}$ in \eqref{iter: prox v ell0} cannot be computed exactly. According to \cite{micchelli2011proximity}, the operator $\mathcal{S}$ may be computed by solving a fixed-point equation.
Therefore, we can only obtain approximate values of $\mathbf{v}^{k+1}$. This leads to an inexact fixed-point approach: Suppose that $\mathbf{\mathbf{\tilde \epsilon}}^{k+1} \in \bR^{m}$ is a predetermined  error and {$\alpha \in (0, 1]$}.  Assuming that $\mathbf{\mathbf{\tilde u}}^k$ and $\mathbf{\tilde v}^k$ are given, we find
\begin{align}
    &\mathbf{\mathbf{\tilde u}}^{k+1} \in \text{prox}_{\alpha \gamma \norm{\cdot}_0}\left( \left(1 - \alpha\right) \mathbf{\mathbf{\tilde u}}^k+\alpha \mD \mathbf{\tilde v}^k \right), \label{iter: prox u ell0 inexact}\\
    &\mathbf{\tilde v}^{k+1} = \mathbf{\tilde v}^{k+1}_* + \mathbf{\tilde{ \mathbf{\epsilon}}}^{k+1},  \label{iter: prox v ell0 inexact} 
\end{align}
where 
\begin{equation}\label{eq: inexact exact v}
\mathbf{\tilde v}^{k+1}_* := \mathcal{S}\left(\mD^{\top} \mathbf{\mathbf{\tilde u}}^{k+1} \right)
\end{equation}
and define $\mathbf{\tilde v}^{1}_{*}:= \mathbf{\tilde v}^1$. 
The equation \eqref{iter: prox v ell0 inexact} is uncomputable since it is not feasible to calculate $\mathbf{\tilde v}^{k+1}_*$ exactly. In practice, computing an approximation of $\mathbf{\tilde v}^{k+1}_*$ requires an inner loop, which leads to an inexact algorithm.

We now describe a specific algorithm to compute an approximation of $\mathbf{\tilde v}_*^{k+1}$ at the $(k+1)$-step, which will serve as an inner loop for the inexact algorithm for solving model \eqref{model: l0}. By Theorem 3.1 of \cite{micchelli2011proximity}, for an $\mathbf{x}\in \bR^m$, we have that 
\begin{equation}\label{eq: prox chain rule}
\mathcal{S}(\mathbf{x}) = \mathbf{x} - \frac{\gamma}{\lambda}\mB^{\top} \mathbf{w},
\end{equation}
where  $\mathbf{w}$ satisfies the fixed-point equation
\begin{equation}\label{eq: fixed-point equation for v}
   \mathbf{w} = \frac{1}{q}\left(\mI - \left(\mI + q \nabla \psi \right)^{-1}\right)\circ \left( q \mathbf{w} + \mB \left(\mathbf{x} - \frac{\gamma}{\lambda}\mB^{\top} \mathbf{w}\right)  \right),
\end{equation}
for any positive parameter $q$. Note that introduction of the parameter $q$ allows us to choose its values so that the corresponding fixed-point algorithm for finding $\mathbf{w}$ converges.
When $\mathbf{\tilde{u}}^{k+1}$ is available, we can find an update $\mathbf{\tilde{v}}^{k+1}$ via employing the Fixed-Point Proximity Algorithm (FPPA) proposed in \cite{li2015multi,micchelli2011proximity} to solve equations \eqref{eq: prox chain rule} and \eqref{eq: fixed-point equation for v} numerically. 
For given positive constants $p$, $q$ and initial points $\mathbf{\tilde v}^{k+1}_1$, $\mathbf{\tilde w}^{k+1}_1$, the FPPA is described as
\begin{equation}\label{inner: FPPA}
\left\{\begin{array}{l}
\mathbf{\tilde v}^{k+1}_{l+1} = \frac{\lambda}{p\gamma + \lambda}\mD^{\top}\mathbf{\tilde u}^{k+1} + \frac{p\gamma}{p\gamma + \lambda}\left(\mathbf{\tilde v}^{k+1}_l - \frac{1}{p}\mB^{\top}\mathbf{\tilde w}^{k+1}_l\right),\\
\mathbf{\tilde w}^{k+1}_{l+1} = \frac{1}{q}\left(\mI- \left(\mI + q\nabla \psi\right)^{-1}\right)\left(q \mathbf{\tilde w}^{k+1}_{l} + \mB \left(2 \mathbf{\tilde v}^{k+1}_{l+1}- \mathbf{\tilde v}^{k+1}_{l}\right)\right).
\end{array}\right.
\end{equation}
From \cite{li2015multi,micchelli2011proximity}, iteration \eqref{inner: FPPA} converges if $\|\mathbf{B}\|_2^2<pq$. Computing an exact value of the proximity operator of $\psi \circ \mB$ requires executing infinite times of iteration \eqref{inner: FPPA}.  While, in practical implementation, iteration \eqref{inner: FPPA} can only be executed finite number of times. This means that we can only obtain an approximate surrogate of $\mathbf{\tilde v}^{k+1}_*$. The error between  $\mathbf{\tilde v}^{k+1}_*$ and the approximation will influence the global error and thus appropriately choosing a stopping criterion for the inner loop is essential.

We next motivate the choice of the stopping criterion for the inner loop.
To this end, we define
\begin{equation}\label{subproblem function}
 H(\mathbf{v}; \mathbf{\tilde u}^{k+1}) := \frac{\lambda}{2\gamma}\norm{ \mathbf{v} - \mD^{\top} \mathbf{\tilde u}^{k+1}}_2^2 + \psi(\mB \mathbf{v}), \ \  \mathbf{v} \in \bR^m.  
\end{equation}
Note that for fixed $\mathbf{\tilde u}^{k+1}$, $H(\cdot;  \mathbf{\tilde u}^{k+1})$ is a continuously differentiable convex function. From the definition of operator $\mathcal{S}$, we observe that $\mathbf{\tilde{v}}_*^{k+1}$ is equal to %the right hand side of equation \eqref{eq: inexact exact v} is the same as
\begin{equation}\label{eq: RHS prox v}
\text{argmin} \left\{H(\mathbf{v}; \mathbf{\tilde u}^{k+1}) : \mathbf{v} \in \bR^m\right\}.    
\end{equation}
It is advantageous to have an equivalent form of \eqref{eq: RHS prox v}. Recalling the definition of $F$, we define the optimization problem
\begin{equation} \label{iter: prox v}
    \text{argmin}\left\{F\left(\mathbf{\tilde u}^{k+1}, \mathbf{v}\right) :  \mathbf{v}\in \bR^m\right\}.
\end{equation}
For the sake of simplicity,  we introduce the similarity notation of two functions. We say two functions $f$ and $g$ are {\it similar} if their difference is a constant and we write it as $f \sim g$. Hence, if $f \sim g$, then
$$
\text{argmin} \left\{f(\mathbf{x}): \mathbf{x} \in \bR^m\right\}  = \text{argmin}\left\{g(\mathbf{x}): \mathbf{x} \in \bR^m\right\}.
$$
Two minimization problems are said to be equivalent if they have the same set of global minimizers.

\begin{lemma}\label{lemma: prox equivance}
The minimization problems  \eqref{eq: RHS prox v} and \eqref{iter: prox v} are equivalent.
\end{lemma}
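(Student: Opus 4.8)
The plan is to show that, for fixed $\mathbf{\tilde u}^{k+1}$, the two objective functions $H(\cdot\,;\mathbf{\tilde u}^{k+1})$ and $F(\mathbf{\tilde u}^{k+1},\cdot)$ differ only by an additive constant (i.e. are \emph{similar} in the sense introduced above), whence they share the same set of global minimizers and the two problems \eqref{eq: RHS prox v} and \eqref{iter: prox v} are equivalent.

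First I would write out $F(\mathbf{\tilde u}^{k+1},\mathbf{v})$ explicitly from the definition \eqref{target function in model: l0}, namely
\[
F(\mathbf{\tilde u}^{k+1},\mathbf{v}) = \psi(\mB\mathbf{v}) + \frac{\lambda}{2\gamma}\norm{\mathbf{\tilde u}^{k+1} - \mD\mathbf{v}}_2^2 + \lambda\norm{\mathbf{\tilde u}^{k+1}}_0 ,
\]
and observe that the term $\lambda\norm{\mathbf{\tilde u}^{k+1}}_0$ does not depend on $\mathbf{v}$, hence is a constant for the minimization in $\mathbf{v}$.

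Next I would apply Lemma \ref{lemma: relation u-Dv and v-DTu} with $\mathbf{u}:=\mathbf{\tilde u}^{k+1}$, which gives
\[
\norm{\mathbf{\tilde u}^{k+1} - \mD\mathbf{v}}_2^2 = \norm{\mathbf{v} - \mD^{\top}\mathbf{\tilde u}^{k+1}}_2^2 + (\mathbf{\tilde u}^{k+1})^{\top}(\mI - \mD\mD^{\top})\mathbf{\tilde u}^{k+1},
\]
where again the last summand is independent of $\mathbf{v}$. Substituting this into the expression for $F(\mathbf{\tilde u}^{k+1},\mathbf{v})$ and recalling the definition \eqref{subproblem function} of $H$, I obtain
\[
F(\mathbf{\tilde u}^{k+1},\mathbf{v}) = H(\mathbf{v}\,;\mathbf{\tilde u}^{k+1}) + \frac{\lambda}{2\gamma}(\mathbf{\tilde u}^{k+1})^{\top}(\mI - \mD\mD^{\top})\mathbf{\tilde u}^{k+1} + \lambda\norm{\mathbf{\tilde u}^{k+1}}_0 ,
\]
so that $F(\mathbf{\tilde u}^{k+1},\cdot)\sim H(\cdot\,;\mathbf{\tilde u}^{k+1})$. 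By the remark on similar functions, the two argmin sets coincide, which is exactly the equivalence claimed.

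There is essentially no serious obstacle here: the argument is a direct computation once Lemma \ref{lemma: relation u-Dv and v-DTu} is in hand. The only point requiring a little care is bookkeeping of which terms are constant in $\mathbf{v}$ — in particular noticing that both $\lambda\norm{\mathbf{\tilde u}^{k+1}}_0$ and the cross term $(\mathbf{\tilde u}^{k+1})^{\top}(\mI-\mD\mD^{\top})\mathbf{\tilde u}^{k+1}$ drop out — and stating the equivalence in terms of the $\sim$ relation rather than claiming the functions themselves are equal.
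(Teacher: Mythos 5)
Your proposal is correct and follows essentially the same route as the paper's proof: both invoke Lemma \ref{lemma: relation u-Dv and v-DTu} to convert $\norm{\mathbf{\tilde u}^{k+1}-\mD\mathbf{v}}_2^2$ into $\norm{\mathbf{v}-\mD^{\top}\mathbf{\tilde u}^{k+1}}_2^2$ plus a $\mathbf{v}$-independent term, note that $\lambda\norm{\mathbf{\tilde u}^{k+1}}_0$ is also constant in $\mathbf{v}$, and conclude via the $\sim$ relation. No gaps.
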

\begin{proof}
From Lemma \ref{lemma: relation u-Dv and v-DTu}, the difference between $\norm{\mathbf{\tilde u}^{k+1} - \mD \mathbf{v}}_2^2$ and $ \norm{\mathbf{v} - \mD^{\top} \mathbf{\tilde u}^{k+1}}_2^2$ is $\left(\mathbf{\tilde u}^{k+1}\right)^{\top}(\mI - \mD \mD^{\top}) \mathbf{\tilde u}^{k+1}$, which is a constant with respect to $\mathbf{v}$. Notice that the term $\lambda \norm{\mathbf{\tilde u}^{k+1}}_0$ is also a constant  with respect to $\mathbf{v}$. Hence, 
$$
\frac{\lambda}{2\gamma}\norm{\mathbf{v} - \mD^{\top}\mathbf{\tilde u}^{k+1}}_2^2 + \psi(\mB \mathbf{v})  
 \sim  \psi(\mB \mathbf{v}) + \frac{\lambda}{2\gamma}\norm{\mD \mathbf{v} - \mathbf{\tilde u}^{k+1}}_2^2 + \lambda \norm{\mathbf{\tilde u}^{k+1}}_0.
$$
By the definition of $F$, the minimization problems \eqref{eq: RHS prox v} and \eqref{iter: prox v} are equivalent.  \qed
\end{proof}

We next review the notion of the strictly convex function (see, for example, Chapters 8 and 11 of \cite{bauschke2011convex}). 
Let $f: \bR^d \to \bR \cup \left\{\infty\right\}$ be a proper function. A function $f$ is strictly convex if
$$
f\left(t\mathbf{x} + (1 - t)\mathbf{y} \right) < t f(\mathbf{x}) + (1 - t) f(\mathbf{y}), \ \ \mbox{for all}\ \ t \in (0, 1) \ \mbox{and}\  \mathbf{x}, \mathbf{y}\in\bR^d \ \mbox{with}\ \mathbf{x} \neq \mathbf{y}.
$$
It is well-known that the minimizer of a proper strictly convex function is unique. We use $\mathbb{N}$ to denote the set of all positive integers.

\begin{lemma}\label{lemma: nonzero condition}
       Suppose that {$\alpha \in (0, 1]$} and $\left\{\left(\mathbf{\tilde u}^{k}, \mathbf{\tilde v}^{k}, \mathbf{\tilde v}^{k}_{*}\right)\right\}_{k=1}^{\infty}$ is the sequence defined by \eqref{iter: prox u ell0 inexact}-\eqref{eq: inexact exact v}. Let $k\in \bN$ be fixed.
   \begin{enumerate}
       \item [(a)] If  $\norm{\mathbf{\tilde u}^{k+1} - \mathbf{\tilde u}^{k}}_2^2 + \norm{\nabla H(\mathbf{\tilde v}^{k}; \mathbf{\tilde u}^{k+1})}_2^2 = 0$, then $(\mathbf{\tilde u}^k, \mathbf{\tilde v}^k)$ is a fixed-point of \eqref{eq: fix-point 1}-\eqref{eq: fix-point 2}.

       \item [(b)]
       
       If $\norm{\mathbf{\tilde u}^{k+1} - \mathbf{\tilde u}^{k}}_2^2 + \norm{\nabla H(\mathbf{\tilde v}^{k}; \mathbf{\tilde u}^{k+1})}_2^2 > 0$, then either $\norm{\mathbf{\tilde u}^{k+1} - \mathbf{\tilde u}^{k}}_2^2 >0$ or $\norm{\mathbf{\tilde u}^{k+1} -
       \mathbf{\tilde u}^{k}}_2^2  = 0$ and 
    
\begin{equation}\label{eq: strict greater}
   F(\mathbf{\tilde u}^{k+1}, \mathbf{\tilde v}^k) > F(\mathbf{\tilde u}^{k+1}, \mathbf{\tilde v}^{k+1}_*). 
\end{equation}   \end{enumerate}
\end{lemma}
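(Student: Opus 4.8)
The plan is to analyze the two cases separately, exploiting the structure of the iterations \eqref{iter: prox u ell0 inexact}-\eqref{eq: inexact exact v} and the variational characterizations behind the proximity operators.

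For part (a), suppose $\norm{\mathbf{\tilde u}^{k+1} - \mathbf{\tilde u}^{k}}_2^2 + \norm{\nabla H(\mathbf{\tilde v}^{k}; \mathbf{\tilde u}^{k+1})}_2^2 = 0$. Then both summands vanish, so $\mathbf{\tilde u}^{k+1} = \mathbf{\tilde u}^k$ and $\nabla H(\mathbf{\tilde v}^{k}; \mathbf{\tilde u}^{k+1}) = 0$. The first equality lets us rewrite the defining inclusion \eqref{iter: prox u ell0 inexact} as $\mathbf{\tilde u}^{k} \in \text{prox}_{\alpha\gamma\norm{\cdot}_0}\left((1-\alpha)\mathbf{\tilde u}^k + \alpha \mD \mathbf{\tilde v}^k\right)$, which is exactly \eqref{eq: fix-point 1} with $(\mathbf{u}^*, \mathbf{v}^*) = (\mathbf{\tilde u}^k, \mathbf{\tilde v}^k)$. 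For the second equality, I would use that $H(\cdot; \mathbf{\tilde u}^{k+1})$ is continuously differentiable and convex (as noted after \eqref{subproblem function}), so $\nabla H(\mathbf{\tilde v}^{k}; \mathbf{\tilde u}^{k+1}) = 0$ means $\mathbf{\tilde v}^k$ is the global minimizer of $H(\cdot; \mathbf{\tilde u}^{k+1})$, i.e. $\mathbf{\tilde v}^k = \text{argmin}\{H(\mathbf{v}; \mathbf{\tilde u}^{k+1}) : \mathbf{v}\in\bR^m\}$. By the discussion around \eqref{eq: RHS prox v} this argmin equals $\mathcal{S}(\mD^\top \mathbf{\tilde u}^{k+1})$; combined with $\mathbf{\tilde u}^{k+1} = \mathbf{\tilde u}^k$ this gives $\mathbf{\tilde v}^k = \mathcal{S}(\mD^\top \mathbf{\tilde u}^k)$, which is \eqref{eq: fix-point 2}. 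Hence $(\mathbf{\tilde u}^k, \mathbf{\tilde v}^k)$ is a fixed point of \eqref{eq: fix-point 1}-\eqref{eq: fix-point 2}.

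For part (b), assume the sum is strictly positive. If $\norm{\mathbf{\tilde u}^{k+1} - \mathbf{\tilde u}^{k}}_2^2 > 0$ we are in the first alternative and there is nothing to prove. Otherwise $\norm{\mathbf{\tilde u}^{k+1} - \mathbf{\tilde u}^{k}}_2^2 = 0$, and then necessarily $\norm{\nabla H(\mathbf{\tilde v}^{k}; \mathbf{\tilde u}^{k+1})}_2^2 > 0$, so $\mathbf{\tilde v}^k$ is not a critical point of the convex function $H(\cdot; \mathbf{\tilde u}^{k+1})$ and therefore not its minimizer. Since $\mathbf{\tilde v}^{k+1}_* = \mathcal{S}(\mD^\top \mathbf{\tilde u}^{k+1})$ is the unique minimizer of $H(\cdot; \mathbf{\tilde u}^{k+1})$ — here I would invoke strict convexity of $H(\cdot;\mathbf{\tilde u}^{k+1})$, which follows because $\frac{\lambda}{2\gamma}\norm{\mathbf{v} - \mD^\top\mathbf{\tilde u}^{k+1}}_2^2$ is strictly convex and $\psi\circ\mB$ is convex, so the minimizer is unique — we get $H(\mathbf{\tilde v}^k; \mathbf{\tilde u}^{k+1}) > H(\mathbf{\tilde v}^{k+1}_*; \mathbf{\tilde u}^{k+1})$, the inequality being strict because $\mathbf{\tilde v}^k \ne \mathbf{\tilde v}^{k+1}_*$. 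Finally, by Lemma \ref{lemma: prox equivance} the functions $H(\cdot; \mathbf{\tilde u}^{k+1})$ and $F(\mathbf{\tilde u}^{k+1}, \cdot)$ differ by a constant, so the strict inequality transfers verbatim to $F(\mathbf{\tilde u}^{k+1}, \mathbf{\tilde v}^k) > F(\mathbf{\tilde u}^{k+1}, \mathbf{\tilde v}^{k+1}_*)$, which is \eqref{eq: strict greater}.

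The main obstacle, such as it is, is bookkeeping rather than depth: one must carefully justify that a vanishing gradient of the (merely) convex function $H(\cdot;\mathbf{\tilde u}^{k+1})$ forces a global minimizer and that this minimizer is precisely $\mathcal{S}(\mD^\top\mathbf{\tilde u}^{k+1})$, and in part (b) that $\mathbf{\tilde v}^k\neq\mathbf{\tilde v}^{k+1}_*$ genuinely follows from $\nabla H(\mathbf{\tilde v}^k;\mathbf{\tilde u}^{k+1})\neq 0$ (uniqueness of the minimizer is what makes "$\mathbf{\tilde v}^k$ is not the minimizer" equivalent to "$\mathbf{\tilde v}^k \neq \mathbf{\tilde v}^{k+1}_*$"). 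Everything else is a direct translation through the definitions and Lemma \ref{lemma: prox equivance}.
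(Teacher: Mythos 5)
Your proposal is correct and follows essentially the same route as the paper: in (a) you split the vanishing sum into $\mathbf{\tilde u}^{k+1}=\mathbf{\tilde u}^k$ and $\nabla H(\mathbf{\tilde v}^k;\mathbf{\tilde u}^{k+1})=0$ and read off the two fixed-point relations, and in (b) you reduce to the case $\mathbf{\tilde u}^{k+1}=\mathbf{\tilde u}^k$, deduce $\mathbf{\tilde v}^k\neq\mathbf{\tilde v}^{k+1}_*$ from the nonvanishing gradient, and obtain the strict inequality from strict convexity plus Lemma \ref{lemma: prox equivance}, exactly as the paper does (the paper phrases the strict-convexity step in terms of $F(\mathbf{\tilde u}^{k+1},\cdot)$ rather than $H(\cdot;\mathbf{\tilde u}^{k+1})$, but these differ by a constant, so the arguments coincide).
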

\begin{proof}
We first prove (a). The hypothesis of (a) is equivalent to $\mathbf{\tilde u}^{k+1} = \mathbf{\tilde u}^k$ and $\nabla H(\mathbf{\tilde v}^k; \mathbf{\tilde u}^{k+1}) = 0$. It follows from \eqref{iter: prox u ell0 inexact} with $\mathbf{\tilde u}^{k+1} = \mathbf{\tilde u}^k$ that
\begin{equation}\label{eq: zero condition proof 1}
     \mathbf{\tilde u}^{k} \in \text{prox}_{\alpha \gamma \norm{\cdot}_0}\left((1 
 -\alpha)\mathbf{\tilde u}^k + \alpha \mD \mathbf{\tilde v}^k\right).    
    \end{equation}
    Since $H(\cdot; \mathbf{\tilde u}^{k+1}  )$ is a  continuously differentiable convex function and $\nabla H(\mathbf{\tilde v}^k ; \mathbf{\tilde u}^{k+1})=0$, we observe that $\mathbf{\tilde v}^{k}$ is the global minimizer of problem \eqref{eq: RHS prox v}. 
Since $\psi \circ \mB$ is continuously differentiable, we have that
$
\mathbf{\tilde v}^k = \mathcal{S}\left(\mD^{\top} \mathbf{\tilde u}^{k+1} \right)    
$.
This together with $\mathbf{\tilde u}^{k+1} = \mathbf{\tilde u}^k$ yeilds that
\begin{equation}\label{eq: zero condition proof 2}
\mathbf{\tilde v}^k = \mathcal{S}\left(\mD^{\top} \mathbf{\tilde u}^{k} \right).
\end{equation}
Equations \eqref{eq: zero condition proof 1} and \eqref{eq: zero condition proof 2} show that $(\mathbf{\tilde u}^k, \mathbf{\tilde v}^k)$ is a fixed-point of \eqref{eq: fix-point 1}-\eqref{eq: fix-point 2}.

We next establish (b).  It suffices to show that $\norm{\mathbf{\tilde u}^{k+1} - \mathbf{\tilde u}^k}_2^2 = 0$ implies the inequality \eqref{eq: strict greater}.
The hypothesis of (b) together with $\norm{\mathbf{\tilde u}^{k+1} - \mathbf{\tilde u}^k}_2^2 = 0$ leads to $\nabla H(\mathbf{\tilde v}^k; \mathbf{\tilde u}^{k+1}) \neq 0$. On the other hand, $\mathbf{\tilde v}^{k+1}_*$ is the global minimizer of \eqref{eq: RHS prox v}. Therefore, $\nabla H(\mathbf{\tilde v}^{k+1}_*; \mathbf{\tilde u}^{k+1}) = 0$, which implies that $\mathbf{\tilde v}^{k+1}_{*} \neq \mathbf{\tilde v}^k$. We next show \eqref{eq: strict greater}. Noting that $\mathbf{\tilde v}^{k+1}_*$ is the global minimizer of \eqref{eq: RHS prox v},  Lemma \ref{lemma: prox equivance} ensures that
$
\mathbf{\tilde v}^{k+1}_* = \text{argmin} \left\{F(\mathbf{\tilde u}^{k+1},  \mathbf{v}): \mathbf{v} \in \bR^m\right\}.
$
By employing the identity in Lemma \ref{lemma: relation u-Dv and v-DTu}, we observe that
$$
F(\mathbf{\tilde u}^{k+1},  \mathbf{v}) = \psi(\mB \mathbf{v}) + \frac{\lambda}{2\gamma}\norm{\mathbf{v} - \mD^{\top} \mathbf{\tilde u}^{k+1}}_2^2 + \frac{\lambda}{2\gamma}\mathbf{\tilde u}^{k+1}(\mI - \mD \mD^{\top})\mathbf{\tilde u}^{k+1} + \lambda \norm{\mathbf{\tilde u}^{k+1}}_0.
$$
The convexity of $\psi$ ensures that $F(\mathbf{\tilde u}^{k+1}, \cdot)$ is strictly convex for any fixed $\mathbf{\tilde u}^{k+1}$. Hence, the minimizer of $F(\mathbf{\tilde u}^{k+1}, \cdot)$ is unique. Therefore, the fact $\mathbf{\tilde v}^k \neq \mathbf{\tilde v}^{k+1}_* $ leads to  \eqref{eq: strict greater}. \qed
\end{proof}

Now, we provide insights leading to the stopping criterion for the inner loop in the inexact algorithm to be described.
Let { $\alpha \in (0, 1]$} and $\left\{\left(\mathbf{\tilde u}^{k}, \mathbf{\tilde v}^{k}, \mathbf{\tilde v}^{k}_{*}\right)\right\}_{k=1}^{\infty}$ be the sequence defined by \eqref{iter: prox u ell0 inexact}-\eqref{eq: inexact exact v}. We define a subset of $\bN$ by
\begin{equation}\label{eq: set K}
\mathbb{K}:= \left\{k\in \mathbb{N}: \norm{\mathbf{\tilde u}^{k+1} - \mathbf{\tilde u}^k}_2^2 + \norm{\nabla H(\mathbf{\tilde v}^k; 
\mathbf{\tilde u}^{k+1})}_2^2 > 0 \right\}.    
\end{equation}

\begin{lemma}\label{lemma: fullied condition modified}
    Suppose that {$\alpha \in (0, 1]$}, $\left\{\left(\mathbf{\tilde u}^{k}, \mathbf{\tilde v}^{k}, \mathbf{\tilde v}^{k}_{*}\right)\right\}_{k=1}^{\infty}$ is the sequence defined by \eqref{iter: prox u ell0 inexact}-\eqref{eq: inexact exact v}. Let $\rho'$ be a given positive constant and $\left\{e^k\right\}_{k=1}^{\infty}$ a given sequence of positive numbers. If for each $k \in \mathbb{K}$,  $\left\{\mathbf{x}^{k+1}_l\right\}_{l=1}^{\infty}$ is a sequence convergent to $\mathbf{\tilde v}^{k+1}_*$, 
then there exists a positive integer $L_{k}$ such that for all $l \geq L_k$, 
\begin{equation}\label{ineq: condition for v inexact modified}
F\left(\mathbf{\tilde u}^{k+1}, \mathbf{x}^{k+1}_{l}\right)  - F\left(\mathbf{\tilde u}^{k+1}, \mathbf{\tilde v}^{k}\right)\leq \frac{\rho'}{2}\norm{\mathbf{\tilde u}^{k+1} - \mathbf{\tilde u}^k}_2^2
\end{equation}
and
\begin{equation}\label{error control modified} 
    \norm{\nabla H\left( \mathbf{x}^{k+1}_{l}; \mathbf{\tilde u}^{k+1}\right)}_2 \leq e^{k+1}. 
\end{equation}
\end{lemma}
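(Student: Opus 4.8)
The plan is to combine the case analysis of Lemma~\ref{lemma: nonzero condition}(b) with two elementary continuity arguments: one for the objective value $F(\mathbf{\tilde u}^{k+1},\cdot)$ and one for the gradient $\nabla H(\cdot;\mathbf{\tilde u}^{k+1})$. First I would record the two properties of the target point $\mathbf{\tilde v}^{k+1}_*$ that drive everything. By the definition of $\mathcal{S}$ together with Lemma~\ref{lemma: prox equivance}, $\mathbf{\tilde v}^{k+1}_*$ is the (unique) global minimizer of the strictly convex function $F(\mathbf{\tilde u}^{k+1},\cdot)$; in particular $F(\mathbf{\tilde u}^{k+1},\mathbf{\tilde v}^{k+1}_*)\le F(\mathbf{\tilde u}^{k+1},\mathbf{\tilde v}^{k})$. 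Moreover, since $H(\cdot;\mathbf{\tilde u}^{k+1})$ is a continuously differentiable convex function whose minimizer is $\mathbf{\tilde v}^{k+1}_*$ by \eqref{eq: RHS prox v}, we have $\nabla H(\mathbf{\tilde v}^{k+1}_*;\mathbf{\tilde u}^{k+1})=0$. Both $F(\mathbf{\tilde u}^{k+1},\cdot)$ and $\nabla H(\cdot;\mathbf{\tilde u}^{k+1})$ are continuous, being built from $\psi\circ\mB$ (which is continuously differentiable) together with quadratic and constant terms; hence from $\mathbf{x}^{k+1}_l\to\mathbf{\tilde v}^{k+1}_*$ we obtain $F(\mathbf{\tilde u}^{k+1},\mathbf{x}^{k+1}_l)\to F(\mathbf{\tilde u}^{k+1},\mathbf{\tilde v}^{k+1}_*)$ and $\nabla H(\mathbf{x}^{k+1}_l;\mathbf{\tilde u}^{k+1})\to 0$.

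Next I would invoke Lemma~\ref{lemma: nonzero condition}(b). Since $k\in\mathbb{K}$, either (i) $\norm{\mathbf{\tilde u}^{k+1}-\mathbf{\tilde u}^{k}}_2^2>0$, or (ii) $\norm{\mathbf{\tilde u}^{k+1}-\mathbf{\tilde u}^{k}}_2^2=0$ and $F(\mathbf{\tilde u}^{k+1},\mathbf{\tilde v}^{k})>F(\mathbf{\tilde u}^{k+1},\mathbf{\tilde v}^{k+1}_*)$. In case (i) the right-hand side of \eqref{ineq: condition for v inexact modified}, namely $\tfrac{\rho'}{2}\norm{\mathbf{\tilde u}^{k+1}-\mathbf{\tilde u}^{k}}_2^2$, is strictly positive, while the left-hand side converges to $F(\mathbf{\tilde u}^{k+1},\mathbf{\tilde v}^{k+1}_*)-F(\mathbf{\tilde u}^{k+1},\mathbf{\tilde v}^{k})\le 0$. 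In case (ii) the right-hand side is $0$ and the left-hand side converges to the strictly negative number $F(\mathbf{\tilde u}^{k+1},\mathbf{\tilde v}^{k+1}_*)-F(\mathbf{\tilde u}^{k+1},\mathbf{\tilde v}^{k})$. In either case the limit of the left-hand side is strictly smaller than the (fixed) right-hand side, so there is an integer $L_k'$ such that \eqref{ineq: condition for v inexact modified} holds for all $l\ge L_k'$. Separately, since $e^{k+1}>0$ and $\norm{\nabla H(\mathbf{x}^{k+1}_l;\mathbf{\tilde u}^{k+1})}_2\to 0$, there is an integer $L_k''$ such that \eqref{error control modified} holds for all $l\ge L_k''$. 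Setting $L_k:=\max\{L_k',L_k''\}$ yields the claim.

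I do not expect a serious obstacle: the argument is essentially continuity plus the global-minimality of $\mathbf{\tilde v}^{k+1}_*$. The only point requiring care is keeping the two sub-cases of Lemma~\ref{lemma: nonzero condition}(b) straight — specifically, in the degenerate case $\mathbf{\tilde u}^{k+1}=\mathbf{\tilde u}^{k}$ one must use the \emph{strict} inequality $F(\mathbf{\tilde u}^{k+1},\mathbf{\tilde v}^{k})>F(\mathbf{\tilde u}^{k+1},\mathbf{\tilde v}^{k+1}_*)$ supplied by that lemma, so that the limit of the left-hand side of \eqref{ineq: condition for v inexact modified} is genuinely below the then-vanishing right-hand side, allowing the inequality to be achieved for all sufficiently large $l$.
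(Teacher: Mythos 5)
Your proposal is correct and follows essentially the same route as the paper's proof: the same two-case split from Lemma~\ref{lemma: nonzero condition}(b), the same use of the global minimality of $\mathbf{\tilde v}^{k+1}_*$ (so that the limit of the left-hand side of \eqref{ineq: condition for v inexact modified} is at most, respectively strictly below, the fixed right-hand side), and the same continuity argument for $\nabla H(\cdot;\mathbf{\tilde u}^{k+1})$ to obtain \eqref{error control modified}, finishing by taking the maximum of the two thresholds. No gaps.
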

\begin{proof}

Let $k\in \mathbb{K}$ be fixed. We first show inequality \eqref{ineq: condition for v inexact modified}. Since $\psi$ is continuous, so is $F\left(\mathbf{\tilde u}^{k+1}, \cdot\right)$. Because $\left\{\mathbf{x}^{k+1}_l\right\}_{l=1}^{\infty}$ is a sequence convergent to $\mathbf{\tilde v}^{k+1}_*$, we find that 
\begin{equation}\label{eq: F converges modified}
    \lim_{l \to \infty} F(\mathbf{\tilde u}^{k+1},  \mathbf{x}^{k+1}_l) = F(\mathbf{\tilde u}^{k+1}, \mathbf{\tilde v}^{k+1}_*).
 \end{equation}
 From (b) of Lemma \ref{lemma: nonzero condition}, we need to show \eqref{ineq: condition for v inexact modified} by considering two cases: (i) $\norm{\mathbf{\tilde u}^{k+1} - \mathbf{\tilde u}^k}_2^2 >0$ and (ii) $\norm{\mathbf{\tilde u}^{k+1} - \mathbf{\tilde u}^k}_2^2 = 0$ with \eqref{eq: strict greater}. For case (i),
 by \eqref{eq: F converges modified}, there exists a positive integer $L_{k,1}$ such that for all $l \geq L_{k,1}$, 
\begin{equation}\label{eq: condition of v in proof modified}
    F\left(\mathbf{\tilde u}^{k+1}, \mathbf{x}^{k+1}_{l}\right) - F\left(\mathbf{\tilde u}^{k+1}, \mathbf{\tilde v}^{k+1}_*\right) \leq \frac{\rho'}{2}\norm{\mathbf{\tilde u}^{k+1} - \mathbf{\tilde u}^{k}}_2^2.
\end{equation}
Since $\mathbf{\tilde v}^{k+1}_*$ is the global minimizer of \eqref{eq: RHS prox v}, it follows from Lemma \ref{lemma: prox equivance} that 
$$
F\left(\mathbf{\tilde u}^{k+1}, \mathbf{\tilde v}^{k+1}_* \right) - F(\mathbf{\tilde u}^{k+1}, \mathbf{\tilde v}^k) \leq 0.
$$
Adding the above inequality to inequality \eqref{eq: condition of v in proof modified} yields inequality 
\eqref{ineq: condition for v inexact modified} for all $l \geq L_{k_1}$. For case (ii), inequality \eqref{ineq: condition for v inexact modified} is equivalent to
\begin{equation}\label{eq: zero case modified}
F(\mathbf{\tilde u}^{k+1},  \mathbf{x}^{k+1}_l) - F(\mathbf{\tilde u}^{k+1}, \mathbf{\tilde v}^k) \leq 0.
\end{equation}
It suffices to show \eqref{eq: zero case modified} for all $l$ greater than some positive integer. From inequality \eqref{eq: strict greater}, 
$F(\mathbf{\tilde u}^{k+1}, \mathbf{\tilde v}^{k}) - F(\mathbf{\tilde u}^{k+1}, \mathbf{\tilde v}^{k+1}_*)$ is positive.
Therefore, equation \eqref{eq: F converges modified} ensures that there exists a positive integer $L_{k, 2}$ such that
$$
F(\mathbf{\tilde u}^{k+1},  \mathbf{x}^{k+1}_l) - F(\mathbf{\tilde u}^{k+1}, \mathbf{\tilde v}^{k+1}_*)  \leq 
F(\mathbf{\tilde u}^{k+1}, \mathbf{\tilde v}^{k}) - F(\mathbf{\tilde u}^{k+1}, \mathbf{\tilde v}^{k+1}_*)
$$
for all $l\geq L_{k, 2}$. This is equivalent to inequality \eqref{eq: zero case modified} for all $l \geq L_{k,2}$, which completes the proof of inequality \eqref{ineq: condition for v inexact modified}.

We next show inequality \eqref{error control modified}. Since $\mathbf{\tilde v}^{k+1}_*$ is the global minimizer of \eqref{eq: RHS prox v}, $\nabla H\left( \mathbf{\tilde v}^{k+1}_*; \mathbf{\tilde u}^{k+1}\right) = 0$. Noting that $\psi$ is continuously differentiable, we conclude that $\nabla H\left(\cdot; \mathbf{\tilde u}^{k+1}\right)$ is continuous. Again, because $\left\{\mathbf{x}^{k+1}_l\right\}_{l=1}^{\infty}$ is a sequence convergent to $\mathbf{\tilde v}^{k+1}_*$, we have that $\lim_{l \to \infty }\norm{\nabla H\left( \mathbf{x}^{k+1}_l; \mathbf{\tilde u}^{k+1}\right)}_2 = 0$. Consequently, the positivity of $e^{k+1}$
ensures that
there exists a positive integer $L_{k,3}$ such that for all $l\geq L_{k,3}$, inequality \eqref{error control modified} holds true.

The proof is completed by choosing $L_k:= \max\{L_{k,1}, L_{k,3}\}$ if $\norm{\mathbf{\tilde u}^{k+1} - \mathbf{\tilde u}^k}_2^2 > 0$ and $L_k:= \max\{L_{k,2}, L_{k,3}\}$ if $\norm{\mathbf{\tilde u}^{k+1} - \mathbf{\tilde u}^k}_2^2 = 0$.  \qed
\end{proof}

Inequalities \eqref{ineq: condition for v inexact modified} and \eqref{error control modified} in Lemma  \ref{lemma: fullied condition modified} may be used as a stopping criterion for the inner loop iteration  \eqref{inner: FPPA}.
Combining  \eqref{iter: prox u ell0 inexact}-\eqref{iter: prox v ell0 inexact}, iteration \eqref{inner: FPPA} for finding the proximity operator of $\psi \circ \mB$, with the stopping criterion provided by Lemma \ref{lemma: fullied condition modified}, we obtain an executable inexact fixed-point proximity algorithm, for model \eqref{model: l0}. We summarize it in Algorithm \ref{algo: inexact FPPA l0}.

\begin{algorithm}[hbt!]
  \caption{Inexact fixed-point proximity algorithm for problem \eqref{model: l0}.}
  \label{algo: inexact FPPA l0}
  \KwInput{ The function $\psi$, matrices $\mB$ and $\mD$, constants $\lambda$ and $\gamma$ in problem  \eqref{model: l0}; constants {$\alpha \in (0,1]$} and $\rho' \in (0, \infty)$; positive sequence $\left\{e^{k+1}\right\}_{k=1}^{\infty}$; parameters $p>0, q>0$}
  \KwInitialization{$\mathbf{\tilde u}^1 , \mathbf{\tilde v }^1, \mathbf{\tilde w}^1$}

   \For{k = 1, 2, \ldots}
   {
   		$\mathbf{\tilde u}^{k+1} \in \text{prox}_{\alpha \gamma \norm{\cdot}_0}\left( \left(1 - \alpha\right) \mathbf{\tilde u}^k+\alpha \mD^{\top} \mathbf{\tilde v}^k \right)$

     $\mathbf{\tilde v}^{k+1}_1 := \mathbf{\tilde v}^k, \mathbf{\tilde w}^{k+1}_1 := \mathbf{\tilde w}^k, l=1$

        \eIf{ $\norm{\mathbf{\tilde u}^{k+1} - \mathbf{\tilde u}^k}_2^2 + \norm{\nabla H(\mathbf{\tilde v}^k; \mathbf{\tilde u}^{k+1})}_2^2  > 0$}
        {\Do{{\footnotesize $F\left(\mathbf{\tilde u}^{k+1}, \mathbf{\tilde v}^{k+1}_{l}\right)  - F\left(\mathbf{\tilde u}^{k+1}, \mathbf{\tilde v}^{k}\right) > \frac{\rho'}{2}\norm{\mathbf{\tilde u}^{k+1} - \mathbf{\tilde u}^k}_2^2$ or $\norm{\nabla H\left(\mathbf{\tilde v}^{k+1}_{l}; \mathbf{\tilde u}^{k+1}\right)}_2 > e^{k+1}$
        }}
        {
    $\mathbf{\tilde v}^{k+1}_{l+1} = \frac{\lambda}{p\gamma + \lambda}\mD^{\top}\mathbf{\tilde u}^{k+1} + \frac{p\gamma}{p\gamma + \lambda}\left(\mathbf{\tilde v}^{k+1}_l - \frac{1}{p}\mB^{\top}\mathbf{\tilde w}^{k+1}_l\right)$

    $\mathbf{\tilde w}^{k+1}_{l+1} = \frac{1}{q}\left(\mI- \left(\mI + q\nabla \psi\right)^{-1}\right)\left(q \mathbf{\tilde w}^{k+1}_{l} + \mB \left(2 \mathbf{\tilde v }^{k+1}_{l+1}- \mathbf{\tilde v }^{k+1}_{l}\right)\right)$
         
         $l:=l+1$
        }
        $\mathbf{\tilde{v}}^{k+1}:=\mathbf{\tilde v}^{k+1}_{l}$, $\mathbf{\tilde w}^{k+1} := \mathbf{\tilde w}^{k+1}_{l}$
        }
        {$\mathbf{\tilde v}^{k+1}:=\mathbf{\tilde v}^k$, $\mathbf{\tilde w}^{k+1} :=\mathbf{\tilde w}^{k}$}
        }

\KwOutput{$\left\{\left(\mathbf{\tilde u}^{k}, \mathbf{\tilde v}^k\right)\right\}_{k=1}^{\infty}$.}
\end{algorithm}

The next proposition confirms that Algorithm \ref{algo: inexact FPPA l0} is executable in the sense that it will generate 
a sequence $\left\{\left(\mathbf{\tilde u}^k, \mathbf{\tilde v}^k\right)\right\}_{k=1}^{\infty}$.

\begin{proposition}\label{prop: executable}
Suppose that $\psi$ is a proper,  continuously differentiable,  and bounded below convex function, $\mB$ is a $p\times m$ matrix, $\mD$ is an $n\times m$ matrix, and $\lambda, \gamma$ are positive constants. If {$\alpha \in (0, 1]$} and 
$\rho'$ is a positive constant, $\left\{e^k\right\}_{k=1}^{\infty}$ is a positive sequence,
and $pq > \norm{\mB}_2^2$, then  Algorithm \ref{algo: inexact FPPA l0} is executable.
\end{proposition}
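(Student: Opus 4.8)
The plan is to argue by induction on the outer loop index $k$. Given a well-defined triple $\left(\mathbf{\tilde u}^k, \mathbf{\tilde v}^k, \mathbf{\tilde w}^k\right)$, I will show that the $(k+1)$-th pass through the outer loop of Algorithm \ref{algo: inexact FPPA l0} terminates after finitely many elementary operations and returns a well-defined triple $\left(\mathbf{\tilde u}^{k+1}, \mathbf{\tilde v}^{k+1}, \mathbf{\tilde w}^{k+1}\right)$. Since the initialization provides $\left(\mathbf{\tilde u}^1, \mathbf{\tilde v}^1, \mathbf{\tilde w}^1\right)$, this induction shows that the algorithm generates the full sequence $\left\{\left(\mathbf{\tilde u}^k, \mathbf{\tilde v}^k\right)\right\}_{k=1}^{\infty}$, which is exactly the assertion of executability.

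First I would handle the $\mathbf{\tilde u}$-update. The set $\mathrm{prox}_{\alpha\gamma\norm{\cdot}_0}\left((1-\alpha)\mathbf{\tilde u}^k + \alpha\mD^{\top}\mathbf{\tilde v}^k\right)$ is nonempty because $\norm{\cdot}_0$ is nonnegative and lower semi-continuous, so the objective in the defining minimization \eqref{def: general proximity operator} is coercive and lower semi-continuous and hence attains its infimum; one simply selects any minimizer, which in practice is given componentwise by the hard-thresholding formula \eqref{eq:prox_ell0_norm 1}. Thus $\mathbf{\tilde u}^{k+1}$ is well-defined. The algorithm then evaluates the branching test. If $\norm{\mathbf{\tilde u}^{k+1} - \mathbf{\tilde u}^k}_2^2 + \norm{\nabla H(\mathbf{\tilde v}^k; \mathbf{\tilde u}^{k+1})}_2^2 = 0$, it sets $\mathbf{\tilde v}^{k+1} := \mathbf{\tilde v}^k$ and $\mathbf{\tilde w}^{k+1} := \mathbf{\tilde w}^k$, so there is nothing more to verify in this branch.

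The only case requiring real work is $k \in \mathbb{K}$, in which the inner do-while loop running the FPPA iteration \eqref{inner: FPPA} is executed; here I must show the inner loop exits after finitely many iterations. Observe that $H(\cdot\,; \mathbf{\tilde u}^{k+1})$ is strictly convex, since its quadratic part $\mathbf{v} \mapsto \frac{\lambda}{2\gamma}\norm{\mathbf{v} - \mD^{\top}\mathbf{\tilde u}^{k+1}}_2^2$ is strictly convex while $\psi\circ\mB$ is convex; consequently problem \eqref{eq: RHS prox v} has the \emph{unique} minimizer $\mathbf{\tilde v}^{k+1}_* = \mathcal{S}\left(\mD^{\top}\mathbf{\tilde u}^{k+1}\right)$. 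Because $pq > \norm{\mB}_2^2$, the convergence result for the FPPA established in \cite{li2015multi,micchelli2011proximity} applies to \eqref{inner: FPPA}, so the sequence $\left\{\mathbf{\tilde v}^{k+1}_l\right\}_{l=1}^{\infty}$ it generates converges, through the relations \eqref{eq: prox chain rule}--\eqref{eq: fixed-point equation for v}, to this unique minimizer $\mathbf{\tilde v}^{k+1}_*$. Applying Lemma \ref{lemma: fullied condition modified} with $\mathbf{x}^{k+1}_l := \mathbf{\tilde v}^{k+1}_l$ (legitimate precisely because $k\in\mathbb{K}$), there is a positive integer $L_k$ such that both \eqref{ineq: condition for v inexact modified} and \eqref{error control modified} hold for every $l \geq L_k$. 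These two inequalities are exactly the negation of the continuation condition of the do-while loop, so the loop exits at some finite index $l$ and the algorithm assigns $\mathbf{\tilde v}^{k+1} := \mathbf{\tilde v}^{k+1}_l$, $\mathbf{\tilde w}^{k+1} := \mathbf{\tilde w}^{k+1}_l$, completing the inductive step.

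I expect the finite termination of the inner loop to be the only genuine obstacle: one must correctly identify problem \eqref{eq: RHS prox v} with the composite proximity-operator computation that the FPPA \eqref{inner: FPPA} is designed to carry out, invoke the stepsize condition $pq > \norm{\mB}_2^2$ to guarantee convergence of $\left\{\mathbf{\tilde v}^{k+1}_l\right\}$ to the unique minimizer $\mathbf{\tilde v}^{k+1}_*$, and then use Lemma \ref{lemma: fullied condition modified} to translate that convergence into fulfilment of the stopping criterion after finitely many steps. The remaining ingredients---nonemptiness of the $\ell_0$ proximity set and the trivial branch of the test---are routine.
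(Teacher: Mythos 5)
Your proposal is correct and follows essentially the same route as the paper's proof: the $\mathbf{\tilde u}$-update is handled by the closed form of the $\ell_0$ proximity operator, the trivial branch is immediate, and finite termination of the inner loop follows from the FPPA convergence under $pq>\norm{\mB}_2^2$ combined with Lemma \ref{lemma: fullied condition modified}. The extra remarks on coercivity of the $\ell_0$ subproblem and strict convexity of $H(\cdot;\mathbf{\tilde u}^{k+1})$ are harmless elaborations of the same argument.
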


\begin{proof}
To show the executability of Algorithm \ref{algo: inexact FPPA l0}, it suffices to establish that for each $k \in \mathbb{N}$, given $\left(\mathbf{\tilde u}^{k}, \mathbf{\tilde v}^{k}\right)$, the pair $(\mathbf{\tilde u}^{k+1}, \mathbf{\tilde v}^{k+1})$ can be obtained within a finite number of iteration steps. Since the proximity operator of $\norm{\cdot}_0$ has a closed form, $\mathbf{\tilde u}^{k+1}$ in Line 2 of Algorithm \ref{algo: inexact FPPA l0} can be computed directly. If $\norm{\mathbf{\tilde u}^{k+1} - \mathbf{\tilde u}^{k}}_2^2 + \norm{\nabla H(\mathbf{\tilde v}^{k}; \mathbf{\tilde u}^{k+1})}_2^2 > 0$, then Algorithm \ref{algo: inexact FPPA l0} will enter the do-while inner loop.
Since $pq > \norm{\mB}_2^2$, it follows from Theorem 6.4 of \cite{li2015multi} that the sequence $\left\{\mathbf{\tilde v}^{k+1}_{l}\right\}_{l=1}^{\infty}$ generated by
the inner loop converges to $\mathbf{\tilde v}^{k+1}_*$.
Hence, according to Lemma \ref{lemma: fullied condition modified}, we conclude that conditions \eqref{ineq: condition for v inexact modified} and \eqref{error control modified} with $\mathbf{x}_l^{k+1}:=\mathbf{\tilde v}_l^{k+1}$ are satisfied within a finite number $l$ of steps.
Therefore, $\mathbf{\tilde v}^{k+1}$ can be computed within a finite number of the inner iterations. If $\norm{\mathbf{\tilde u}^{k+1} - \mathbf{\tilde u}^{k}}_2^2 + \norm{\nabla H(\mathbf{\tilde v}^{k}; \mathbf{\tilde u}^{k+1})}_2^2 = 0$, then from Line 12 of Algorithm \ref{algo: inexact FPPA l0} we have that  $\mathbf{\tilde v}^{k+1} = \mathbf{\tilde v}^k$. Thus, Algorithm \ref{algo: inexact FPPA l0} is executable.  \qed
\end{proof}

Proposition \ref{prop: executable} guarantees that Algorithm \ref{algo: inexact FPPA l0} will generate a sequence $\left\{\left(\mathbf{\tilde u}^k, \mathbf{\tilde v}^k\right)\right\}_{k=1}^{\infty}$, which is for sure to satisfy the conditions
\begin{equation}\label{condition for v inexact general}
    F(\mathbf{\tilde u}^{k+1}, \mathbf{\tilde v}^{k+1}) - F(\mathbf{\tilde u}^{k+1},\mathbf{\tilde v}^k) \leq \frac{\rho'}{2}\norm{\mathbf{\tilde u}^{k+1} - \mathbf{\tilde u}^k}_2^2
\end{equation}
and 
\begin{equation}\label{condition for v inexact general-2}
\norm{\nabla H\left(\mathbf{\tilde v}^{k+1}; \mathbf{\tilde u}^{k+1}\right)}_2 \leq e^{k+1}.
\end{equation}
These two inequalities will play a crucial role in convergence analysis, of Algorithm  \ref{algo: inexact FPPA l0}, which will be provided in the next section.

\section{Convergence Analysis}\label{section: convergence}

In this section we analyze the convergence of the inexact fixed-point proximity algorithm. Specifically, we will show that a sequence $\left\{\left(\mathbf{\tilde u}^k, \mathbf{\tilde v}^k\right)\right\}_{k=1}^{\infty}$ generated by Algorithm \ref{algo: inexact FPPA l0} converges to a local minimizer of \eqref{model: l0}. Since the sequence $\left\{(\mathbf{\tilde u}^{k}, \mathbf{\tilde v}^{k})\right\}_{k=1}^{\infty}$ generated from Algorithm \ref{algo: inexact FPPA l0} is a sequence defined by \eqref{iter: prox u ell0 inexact}-\eqref{iter: prox v ell0 inexact} satisfying conditions \eqref{condition for v inexact general} and \eqref{condition for v inexact general-2}, we shall study the convergence property of sequences defined by \eqref{iter: prox u ell0 inexact} and \eqref{iter: prox v ell0 inexact}. For this purpose, we reformulate the iterations \eqref{iter: prox u ell0 inexact} and \eqref{iter: prox v ell0 inexact} as an inexact Picard sequence of a nonlinear map. We then show their convergence by studying the property of the map. Finally, we show the convergence property of inexact fixed-point proximity algorithm (Algorithm \ref{algo: inexact FPPA l0}) based on the convergence of iterations \eqref{iter: prox u ell0 inexact} and \eqref{iter: prox v ell0 inexact}. %Throughout this section, we assume without further mentioning that $\psi: \bR^p \to \bR$ is a proper, continuously differentiable,  and bounded below convex function, $\mB$ is a $p \times m$ matrix, {$\mD$ is an $n \times m$ matrix satisfying $\mathbf{D}^{\top}\mathbf{D} = \mathbf{I}$}, $\lambda$ and $\gamma$ are positive constants, and $\alpha \in (0, 1)$.

We first state the main results of this section.

\begin{theorem}\label{thm : convergence algo INFPPA}
{ Suppose that $\psi$ is a proper, continuously differentiable, and bounded below convex function, $\mathbf{B}$ is a $p \times m$ matrix, $\mathbf{D}$ is an $n \times m$ matrix satisfying $\mathbf{D}^{\top}\mathbf{D} = \mathbf{I}$, $\lambda$ and $\gamma$ are positive constants, $\alpha \in (0, 1)$,  and the operator $\mathcal{T}$ defined in \eqref{eq: nonlinear map} has at least one fixed-point.}  If $p>0, q>0$, $pq > \norm{\mB}_2^2$,  
$\rho' \in \left(0, \frac{\lambda}{\gamma}\frac{1 - \alpha}{\alpha}\right)$ and $\sum_{k=1}^{\infty}e^{k+1}<\infty$, then $\left\{(\mathbf{\tilde u}^k, \mathbf{\tilde v}^k)\right\}_{k = 1}^{\infty}$ generated from Algorithm \ref{algo: inexact FPPA l0} converges to a local minimizer $(\mathbf{u}^*, \mathbf{v}^*)$ of problem \eqref{model: l0}.% and $S(\mathbf{u}^*) = C$.
\end{theorem}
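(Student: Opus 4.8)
The plan is to follow the roadmap sketched before the theorem: turn the inexact iteration into an inexact Picard sequence of the averaged operator $\mathcal{T}$, show the support of $\mathbf{\tilde u}^k$ freezes after finitely many steps, and then read off convergence from the theory of inexact Krasnosel'ski\u\i--Mann iterations applied to the convex subproblem living on the frozen support.

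First I would derive a sufficient-decrease inequality for $F$ along the iterates. Applying the definition of $\mathrm{prox}_{\alpha\gamma\|\cdot\|_0}$ to the $\mathbf{u}$-update \eqref{iter: prox u ell0 inexact} and invoking Lemma \ref{lemma: a,b} with $q=\alpha$, $\mathbf{a}=\mathbf{\tilde u}^{k+1}-\mathbf{D}\mathbf{\tilde v}^k$, $\mathbf{b}=\mathbf{\tilde u}^k-\mathbf{D}\mathbf{\tilde v}^k$ gives, after adding $\psi(\mathbf{B}\mathbf{\tilde v}^k)$ to both sides, $F(\mathbf{\tilde u}^{k+1},\mathbf{\tilde v}^k)\le F(\mathbf{\tilde u}^{k},\mathbf{\tilde v}^k)-\frac{\lambda(1-\alpha)}{2\alpha\gamma}\|\mathbf{\tilde u}^{k+1}-\mathbf{\tilde u}^k\|_2^2$. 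Combining this with the inner-loop guarantee \eqref{condition for v inexact general} yields $F(\mathbf{\tilde u}^{k+1},\mathbf{\tilde v}^{k+1})\le F(\mathbf{\tilde u}^{k},\mathbf{\tilde v}^k)-\delta\|\mathbf{\tilde u}^{k+1}-\mathbf{\tilde u}^k\|_2^2$ with $\delta:=\frac{\lambda(1-\alpha)}{2\alpha\gamma}-\frac{\rho'}{2}>0$, which is exactly where the hypothesis $\rho'\in(0,\frac{\lambda}{\gamma}\frac{1-\alpha}{\alpha})$ is used. Since $\psi$ is bounded below and the remaining terms of $F$ are nonnegative, $\{F(\mathbf{\tilde u}^k,\mathbf{\tilde v}^k)\}$ is nonincreasing and bounded below, hence convergent; telescoping then gives $\sum_k\|\mathbf{\tilde u}^{k+1}-\mathbf{\tilde u}^k\|_2^2<\infty$, so in particular $\|\mathbf{\tilde u}^{k+1}-\mathbf{\tilde u}^k\|_2\to 0$.

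Next comes support stabilization. By the hard-thresholding formula \eqref{eq:prox_ell0_abs 1}, every nonzero entry of $\mathbf{\tilde u}^k$ ($k\ge 2$) has magnitude at least $\sqrt{2\alpha\gamma}$; hence $\mathrm{supp}(\mathbf{\tilde u}^{k+1})\neq\mathrm{supp}(\mathbf{\tilde u}^k)$ forces $\|\mathbf{\tilde u}^{k+1}-\mathbf{\tilde u}^k\|_2\ge\sqrt{2\alpha\gamma}$. Because $\|\mathbf{\tilde u}^{k+1}-\mathbf{\tilde u}^k\|_2\to0$, there exist $K_0$ and a set $S\subseteq\{1,\dots,n\}$ with $\mathrm{supp}(\mathbf{\tilde u}^k)=S$ for all $k\ge K_0$. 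On the subspace $V_S:=\{\mathbf{u}\in\bR^n:u_i=0\ \text{for}\ i\notin S\}$ the term $\lambda\|\mathbf{u}\|_0$ equals the constant $\lambda|S|$, the $\mathbf{u}$-update reduces to $\mathbf{\tilde u}^{k+1}=(1-\alpha)\mathbf{\tilde u}^k+\alpha P_{V_S}(\mathbf{D}\mathbf{\tilde v}^k)=\mathrm{prox}_{\alpha\gamma\iota_{V_S}}\big((1-\alpha)\mathbf{\tilde u}^k+\alpha\mathbf{D}\mathbf{\tilde v}^k\big)$, and $F$ restricted to $V_S\times\bR^m$ is a jointly convex, continuously differentiable function. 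Thus for $k\ge K_0$ the pair $(\mathbf{\tilde u}^k,\mathbf{\tilde v}^k)$ is precisely the inexact fixed-point proximity iterate for the convex problem $\min\{\psi(\mathbf{B}\mathbf{v})+\frac{\lambda}{2\gamma}\|\mathbf{u}-\mathbf{D}\mathbf{v}\|_2^2:(\mathbf{u},\mathbf{v})\in V_S\times\bR^m\}$, i.e. an inexact Picard iterate of the restriction of $\mathcal{T}$ to $V_S\times\bR^m$; only the $\mathbf{v}$-step carries error.

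Finally I would push this inexact Picard sequence through and identify the limit. Since $H(\cdot;\mathbf{\tilde u}^{k+1})$ is $\frac{\lambda}{\gamma}$-strongly convex, the gradient bound \eqref{condition for v inexact general-2} converts into the iterate bound $\|\mathbf{\tilde v}^{k+1}-\mathcal{S}(\mathbf{D}^{\top}\mathbf{\tilde u}^{k+1})\|_2\le\frac{\gamma}{\lambda}e^{k+1}$, so the perturbations driving the iteration are summable because $\sum_k e^{k+1}<\infty$. Using nonexpansiveness of $\mathbf{D}$ and $\mathbf{D}^\top$ (as $\mathbf{D}^\top\mathbf{D}=\mathbf{I}$), of $P_{V_S}$, and firm nonexpansiveness of $\mathcal{S}=(\mathbf{I}+\frac{\gamma}{\lambda}\nabla(\psi\circ\mathbf{B}))^{-1}$, the restricted operator is averaged; it has a fixed point by hypothesis, so the standard convergence theorem for inexact Krasnosel'ski\u\i--Mann iterations with summable errors gives $(\mathbf{\tilde u}^k,\mathbf{\tilde v}^k)\to(\mathbf{u}^*,\mathbf{v}^*)$ for some fixed point of $\mathcal{T}$. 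A fixed-point argument parallel to Proposition \ref{thm: fixed-point outer loop} (restricted to $V_S\times\bR^m$) shows $(\mathbf{u}^*,\mathbf{v}^*)$ is the global minimizer of $F$ over $V_S\times\bR^m$; and since $\mathbf{u}^*$ has support exactly $S$ with all nonzero entries of magnitude $\ge\sqrt{2\alpha\gamma}$, any $(\mathbf{u},\mathbf{v})$ in a small enough ball around $(\mathbf{u}^*,\mathbf{v}^*)$ either lies in $V_S\times\bR^m$, where $F(\mathbf{u},\mathbf{v})\ge F(\mathbf{u}^*,\mathbf{v}^*)$, or has $\|\mathbf{u}\|_0\ge|S|+1$, in which case the jump of size $\lambda$ in the $\lambda\|\cdot\|_0$ term dominates the continuous variation of the smooth part and again $F(\mathbf{u},\mathbf{v})\ge F(\mathbf{u}^*,\mathbf{v}^*)$; hence $(\mathbf{u}^*,\mathbf{v}^*)$ is a local minimizer of \eqref{model: l0}. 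The main obstacle I expect is the support-stabilization step together with its interface to the inexactness: one must make sure the monotone-decrease estimate genuinely survives the $\mathbf{v}$-error (this dictates the precise admissible range of $\rho'$) so that $\|\mathbf{\tilde u}^{k+1}-\mathbf{\tilde u}^k\|_2\to0$, and then that the reduced iteration really is an inexact averaged iteration with summable errors, so that the classical inexact Krasnosel'ski\u\i--Mann machinery and the fixed-point hypothesis can be invoked cleanly.
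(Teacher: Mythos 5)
Your proposal is correct and follows essentially the same route as the paper: the sufficient-decrease inequality forces $\norm{\mathbf{\tilde u}^{k+1}-\mathbf{\tilde u}^{k}}_2\to 0$, the hard-thresholding gap $\sqrt{2\alpha\gamma}$ freezes the support after finitely many steps, the iteration is then an inexact Picard sequence of the averaged nonexpansive operator $\mathcal{T}$ with summable errors, and the limit is identified as a local minimizer via the reduced convex problem. The only cosmetic differences are that you justify local minimality on the frozen support by a direct perturbation argument where the paper cites Theorem 4.8 of the reference on sparse regularization, and you derive the bound $\norm{\mathbf{\tilde v}^{k+1}-\mathbf{\tilde v}^{k+1}_*}_2\leq \frac{\gamma}{\lambda}\norm{\nabla H(\mathbf{\tilde v}^{k+1};\mathbf{\tilde u}^{k+1})}_2$ from strong convexity of $H(\cdot;\mathbf{\tilde u}^{k+1})$ where the paper cites Rockafellar.
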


{ This section is devoted to providing a proof for Theorem \ref{thm : convergence algo INFPPA}. To this end, we establish a number of preliminary results.} Throughout this section, we assume without further mentioning that $\psi: \bR^p \to \bR$ is a proper, continuously differentiable,  and bounded below convex function, $\mB$ is a $p \times m$ matrix, {$\mD$ is an $n \times m$ matrix satisfying $\mathbf{D}^{\top}\mathbf{D} = \mathbf{I}$}, $\lambda$ and $\gamma$ are positive constants, and $\alpha \in (0, 1)$. 

{ Note that in Theorem \ref{thm : convergence algo INFPPA}, we assume that $\alpha\in (0,1)$ even though Algorithm \ref{algo: inexact FPPA l0} may still converge for $\alpha$ equal to 1 or slightly bigger than 1, see the numerical examples presented in section 5. For $\alpha \geq 1$, convergence of Algorithm \ref{algo: inexact FPPA l0} is not guaranteed since the technical proof presented in this section requires $\alpha\in (0,1)$. Moreover, there are examples exhibiting divergence of Algorithm  \ref{algo: inexact FPPA l0} when $\alpha>1$.}

We prepare for the proof of Theorem \ref{thm : convergence algo INFPPA}.
The iterations \eqref{iter: prox u ell0 inexact} and \eqref{iter: prox v ell0 inexact} have two stages. 
Its first stage basically searches the support of the candidate sparse vectors $\mathbf{u}$, in model \eqref{model: l0}, which involve the $\ell_0$-norm.  Once it is obtained, the support will remain unchanged in future iterations and thus, the original non-convex optimization problem \eqref{model: l0} reduces to one with $\mathbf{u}$ restricted to the fixed support. The restricted optimization problem becomes convex since the term $\lambda\|\mathbf{u}\|_0$ reduces to a fixed number independent of $\mathbf{u}$. For this reason, the second stage of the iterations is to solve the resulting convex minimization problem.

We now show that the first $N$ steps of iterations  \eqref{iter: prox u ell0 inexact} and \eqref{iter: prox v ell0 inexact}, for certain positive integer $N$, is to pursue a support of the sparse vector $\mathbf{u}$. By the definition of the proximity operator, the right hand side of \eqref{iter: prox u ell0 inexact} is
\begin{equation}\label{eq: RHS prox u}
 \text{argmin}\left\{  \frac{1}{2 \alpha \gamma}\norm{\mathbf{u} - (1 - \alpha) \mathbf{\tilde u}^k - \alpha \mD \mathbf{\tilde v}^k}_2^2 + \norm{\mathbf{u}}_0: \mathbf{u}\in \bR^n\right\}. 
\end{equation}
We establish an equivalent form of \eqref{eq: RHS prox u}. To this end, we consider the optimization problem
\begin{equation}\label{iter: prox u}
    \text{argmin}\left\{F\left(\mathbf{u}, \mathbf{\tilde v}^{k}\right) + \frac{\lambda}{2\gamma } \frac{1 - \alpha}{\alpha}\norm{\mathbf{u} - \mathbf{\tilde u}^k}_2^2:  \mathbf{u}\in \bR^n\right\}.
\end{equation}

\begin{lemma}\label{lemma: prox equivance u}
The minimization problems \eqref{eq: RHS prox u} and \eqref{iter: prox u} are equivalent.
\end{lemma}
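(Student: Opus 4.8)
The plan is to show that the two minimization problems have the same objective function up to an additive constant (i.e. they are \emph{similar} in the terminology introduced above), from which equivalence follows immediately. First I would expand the objective function of \eqref{iter: prox u} using the definition of $F$ in \eqref{target function in model: l0}: writing it out gives
\[
\psi(\mB\mathbf{\tilde v}^k) + \frac{\lambda}{2\gamma}\norm{\mathbf{u} - \mD\mathbf{\tilde v}^k}_2^2 + \lambda\norm{\mathbf{u}}_0 + \frac{\lambda}{2\gamma}\frac{1-\alpha}{\alpha}\norm{\mathbf{u} - \mathbf{\tilde u}^k}_2^2 .
\]
The term $\psi(\mB\mathbf{\tilde v}^k)$ is a constant with respect to $\mathbf{u}$, so it may be dropped. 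The heart of the argument is to combine the two remaining quadratic terms in $\mathbf{u}$, namely $\frac{\lambda}{2\gamma}\norm{\mathbf{u} - \mD\mathbf{\tilde v}^k}_2^2$ and $\frac{\lambda}{2\gamma}\frac{1-\alpha}{\alpha}\norm{\mathbf{u} - \mathbf{\tilde u}^k}_2^2$, into a single squared-norm term plus a constant.

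For that combination I would use the standard identity that for positive weights $w_1, w_2$ and vectors $\mathbf{c}_1, \mathbf{c}_2$,
\[
w_1\norm{\mathbf{u} - \mathbf{c}_1}_2^2 + w_2\norm{\mathbf{u} - \mathbf{c}_2}_2^2 = (w_1 + w_2)\norm{\mathbf{u} - \frac{w_1\mathbf{c}_1 + w_2\mathbf{c}_2}{w_1 + w_2}}_2^2 + \frac{w_1 w_2}{w_1 + w_2}\norm{\mathbf{c}_1 - \mathbf{c}_2}_2^2 .
\]
Applying this with $w_1 = \frac{\lambda}{2\gamma}$, $w_2 = \frac{\lambda}{2\gamma}\frac{1-\alpha}{\alpha}$, $\mathbf{c}_1 = \mD\mathbf{\tilde v}^k$, $\mathbf{c}_2 = \mathbf{\tilde u}^k$, the combined weight is $w_1 + w_2 = \frac{\lambda}{2\gamma\alpha}$, and the weighted average of the centers is exactly $\alpha\,\mD\mathbf{\tilde v}^k + (1-\alpha)\mathbf{\tilde u}^k$. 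The cross term $\frac{w_1 w_2}{w_1+w_2}\norm{\mathbf{c}_1 - \mathbf{c}_2}_2^2$ is independent of $\mathbf{u}$ and hence a constant. Therefore the objective of \eqref{iter: prox u}, modulo constants, equals
\[
\frac{\lambda}{2\gamma\alpha}\norm{\mathbf{u} - \bigl((1-\alpha)\mathbf{\tilde u}^k + \alpha\mD\mathbf{\tilde v}^k\bigr)}_2^2 + \lambda\norm{\mathbf{u}}_0 ,
\]
which is exactly $\lambda$ times the objective of \eqref{eq: RHS prox u} (note $\frac{\lambda}{2\gamma\alpha} = \lambda \cdot \frac{1}{2\alpha\gamma}$). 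Since scaling the objective by a positive constant $\lambda$ does not change the argmin, the two problems share the same set of minimizers.

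I do not anticipate any serious obstacle here; the proof is a routine completion-of-square computation. The only mild care needed is bookkeeping with the weights, checking that $\alpha \in (0,1)$ guarantees $w_2 > 0$ so that the identity applies and the combined problem is genuinely of the proximity-operator form, and confirming that the overall positive multiplicative factor $\lambda$ (and the additive constants) are correctly tracked so that the conclusion "equivalent" — same global minimizers — is legitimate. One could alternatively invoke Lemma \ref{lemma: a,b} in the spirit of how Lemma \ref{lemma: fixed-point iteration lemma} was proved, but the direct algebraic identity above is the cleanest route.
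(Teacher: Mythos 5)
Your proof is correct and follows essentially the same route as the paper: both arguments are a direct completion-of-square computation showing the two objectives agree up to additive constants and the positive multiplicative factor $\lambda$, so they share the same minimizers. The weighted-average-of-squares identity you invoke is just a packaged form of the term-by-term expansion the paper carries out, and your bookkeeping of the weights and the center $(1-\alpha)\mathbf{\tilde u}^k+\alpha\mD\mathbf{\tilde v}^k$ is accurate.
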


\begin{proof}
Expanding the quadratic term in the objective function of minimization problem  \eqref{eq: RHS prox u}, we have that
\begin{align*}
     &\frac{1}{2 \alpha \gamma}\norm{\mathbf{u} -  (1 - \alpha)\mathbf{\tilde u}^k - \alpha \mD \mathbf{\tilde v}^k}_2^2 + \norm{\mathbf{u}}_0 \\
     \sim &\frac{1-\alpha}{2\alpha \gamma}\langle \mathbf{u}, \mathbf{u} \rangle + \frac{1}{2 \gamma}\langle \mathbf{u}, \mathbf{u} \rangle - \frac{1-\alpha}{\alpha \gamma}\langle \mathbf{u}, \mathbf{\tilde u}^k \rangle - \frac{1}{ \gamma}\langle \mathbf{u}, \mD \mathbf{\tilde v}^k \rangle + \norm{\mathbf{u}}_0\\
     \sim &\frac{1}{\lambda}\left[\psi(\mB \mathbf{\tilde v}^k) + \frac{\lambda}{2\gamma}\norm{\mathbf{u} - \mD \mathbf{\tilde v}^k}_2^2 + \lambda\norm{\mathbf{u}}_0\right] + \frac{1}{2\gamma}\frac{1-\alpha}{\alpha}\norm{\mathbf{u} - \mathbf{\tilde u}^k}_2^2.
\end{align*}
Note that multiplying an objective function by a constant will not change its minimizer. Recalling the definition of $F$, we find that the minimization problems  \eqref{eq: RHS prox u} and \eqref{iter: prox u} are equivalent. \qed
\end{proof}

We next investigate 
the sequence generated by \eqref{iter: prox u ell0 inexact}-\eqref{iter: prox v ell0 inexact} with inequality \eqref{condition for v inexact general} satisfied.

\begin{lemma}\label{lemma: condition for v inexact}
If the sequence $\left\{\left(\mathbf{\tilde u}^k, \mathbf{\tilde v}^k\right)\right\}_{k=1}^{\infty}$  generated by \eqref{iter: prox u ell0 inexact}-\eqref{iter: prox v ell0 inexact} satisfies inequality \eqref{condition for v inexact general} with $\rho' \in \left(0, \frac{\lambda}{\gamma}\frac{1-\alpha}{\alpha} \right)$, then
\begin{enumerate}
    \item [(a)] $F\left(\mathbf{\tilde u}^{k+1}, \mathbf{\tilde v}^{k+1}\right) \leq F\left(\mathbf{\tilde u}^{k}, \mathbf{\tilde v}^{k}\right)$ and $\lim_{k \to \infty}F\left(\mathbf{\tilde u}^k, \mathbf{\tilde v}^k\right)$ exists;
    
    \item [(b)] $\lim_{k \to \infty} \norm{\mathbf{\tilde u}^{k+1} - \mathbf{\tilde u}^k}_{2} = 0$.
\end{enumerate}
\end{lemma}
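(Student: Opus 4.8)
The plan is to establish a single one-step ``sufficient decrease'' inequality for the objective $F$ along the sequence and then to read off both assertions from it. First I would invoke Lemma \ref{lemma: prox equivance u}: since $\mathbf{\tilde u}^{k+1}$ belongs to the right-hand side of \eqref{iter: prox u ell0 inexact}, it is a minimizer of the function $\mathbf{u} \mapsto F(\mathbf{u}, \mathbf{\tilde v}^k) + \frac{\lambda}{2\gamma}\frac{1-\alpha}{\alpha}\norm{\mathbf{u} - \mathbf{\tilde u}^k}_2^2$ over $\bR^n$. Testing this minimality against the particular competitor $\mathbf{u} = \mathbf{\tilde u}^k$, for which the quadratic penalty term vanishes, yields
\[
F(\mathbf{\tilde u}^{k+1}, \mathbf{\tilde v}^k) + \frac{\lambda}{2\gamma}\frac{1-\alpha}{\alpha}\norm{\mathbf{\tilde u}^{k+1} - \mathbf{\tilde u}^k}_2^2 \le F(\mathbf{\tilde u}^k, \mathbf{\tilde v}^k).
\]
Then I would add the inexactness bound \eqref{condition for v inexact general}, which controls the increase of $F$ caused by the approximate $\mathbf{v}$-update. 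Combining the two inequalities gives
\[
F(\mathbf{\tilde u}^{k+1}, \mathbf{\tilde v}^{k+1}) \le F(\mathbf{\tilde u}^k, \mathbf{\tilde v}^k) - c\,\norm{\mathbf{\tilde u}^{k+1} - \mathbf{\tilde u}^k}_2^2, \qquad c := \frac{\lambda}{2\gamma}\frac{1-\alpha}{\alpha} - \frac{\rho'}{2},
\]
and the hypothesis $\rho' \in \left(0, \frac{\lambda}{\gamma}\frac{1-\alpha}{\alpha}\right)$ is exactly what makes $c > 0$.

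For part (a), the displayed inequality immediately gives monotonicity $F(\mathbf{\tilde u}^{k+1}, \mathbf{\tilde v}^{k+1}) \le F(\mathbf{\tilde u}^k, \mathbf{\tilde v}^k)$; the existence of the limit then follows because $F$ is bounded below. Indeed, from \eqref{target function in model: l0} we have $F(\mathbf{u}, \mathbf{v}) = \psi(\mB \mathbf{v}) + \frac{\lambda}{2\gamma}\norm{\mathbf{u} - \mD \mathbf{v}}_2^2 + \lambda\norm{\mathbf{u}}_0 \ge \inf \psi > -\infty$, since $\psi$ is bounded below and the last two terms are nonnegative; a non-increasing sequence that is bounded below converges.

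For part (b), I would telescope the sufficient-decrease inequality: summing over $k = 1, \dots, N$ gives
\[
c \sum_{k=1}^{N} \norm{\mathbf{\tilde u}^{k+1} - \mathbf{\tilde u}^k}_2^2 \le F(\mathbf{\tilde u}^1, \mathbf{\tilde v}^1) - F(\mathbf{\tilde u}^{N+1}, \mathbf{\tilde v}^{N+1}) \le F(\mathbf{\tilde u}^1, \mathbf{\tilde v}^1) - \inf \psi < \infty.
\]
Letting $N \to \infty$ and using $c > 0$, the series $\sum_{k=1}^{\infty} \norm{\mathbf{\tilde u}^{k+1} - \mathbf{\tilde u}^k}_2^2$ converges, so its terms tend to zero, i.e.\ $\norm{\mathbf{\tilde u}^{k+1} - \mathbf{\tilde u}^k}_2 \to 0$. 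I do not anticipate any genuine obstacle; the only points requiring care are the correct use of Lemma \ref{lemma: prox equivance u} to convert the $\ell_0$-proximity step into an honest descent step for $F$ itself, and keeping track that the quadratic penalty coefficient $\frac{\lambda}{2\gamma}\frac{1-\alpha}{\alpha}$ strictly dominates the error-tolerance coefficient $\frac{\rho'}{2}$, which is precisely the role of the assumption on $\rho'$.
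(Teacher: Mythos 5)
Your proposal is correct and follows essentially the same route as the paper: it combines the minimality of $\mathbf{\tilde u}^{k+1}$ for problem \eqref{iter: prox u} (via Lemma \ref{lemma: prox equivance u}, tested against $\mathbf{u}=\mathbf{\tilde u}^k$) with condition \eqref{condition for v inexact general} to obtain the sufficient-decrease inequality \eqref{ineq: decrease obj inexact}, from which both claims follow. The only cosmetic difference is in part (b), where you telescope the decrease inequality to get summability of $\norm{\mathbf{\tilde u}^{k+1}-\mathbf{\tilde u}^k}_2^2$, while the paper deduces the same conclusion from the convergence of the values $F(\mathbf{\tilde u}^k,\mathbf{\tilde v}^k)$; both arguments are standard and equally valid.
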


\begin{proof}
By the definition of the proximity operator, $\mathbf{\tilde u}^{k+1}$ is  a global minimizer of $\eqref{eq: RHS prox u}$. It follows from Lemma
\ref{lemma: prox equivance u} that
$$
F\left(\mathbf{\tilde u}^{k+1}, \mathbf{\tilde v}^{k}\right) + 
\frac{\lambda}{2\gamma}\frac{1- \alpha}{\alpha}\norm{\mathbf{\tilde u}^{k+1} - \mathbf{\tilde u}^{k}}_2^2 \leq F\left(\mathbf{\tilde u}^k, \mathbf{\tilde v}^k\right).
$$
Adding the above inequality to inequality \eqref{condition for v inexact general} yields that
\begin{equation}\label{ineq: decrease obj inexact}
F\left(\mathbf{\tilde u}^{k+1}, \mathbf{\tilde v}^{k+1}\right) + \frac{1}{2}\left(\frac{\lambda}{\gamma}\frac{1- \alpha}{\alpha} - \rho'\right)\norm{\mathbf{\tilde u}^{k+1} -  \mathbf{\tilde u}^k}_2^2 \leq F\left(\mathbf{\tilde u}^{k}, \mathbf{\tilde v}^{k}\right).
\end{equation}
Noting that $\lambda, \gamma$ are positive, $\alpha \in (0, 1)$, and $\rho' \in \left(0, \frac{\lambda}{\gamma}\frac{1-\alpha}{\alpha}\right)$, the second term on the left hand side of the above inequality is non-negative, which implies the inequality in part (a). Since $F$ is bounded below, the inequality of part (a) leads to the existence of $\lim_{k \to \infty}F\left(\mathbf{\tilde u}^k, \mathbf{\tilde v}^k\right)$. This completes the proof of part (a).

It remains to show part (b). The second statement of part (a) ensures that $\lim_{k\to\infty}\left[F(\mathbf{\tilde u}^k, \mathbf{\tilde v}^k)-F(\mathbf{\tilde u}^{k+1},\mathbf{\tilde v}^{k+1})\right]=0$. This together with \eqref{ineq: decrease obj inexact} and the assumption $\frac{\lambda}{\gamma}\frac{1- \alpha}{\alpha} - \rho'>0$ yields part (b). \qed

\end{proof}

Next, we show that the support of the inexact sequence $\left\{\mathbf{\tilde u}^k\right\}_{k=1}^{\infty}$ will remain unchanged after a finite number of steps, which reduces finding a local minimizer of the non-convex optimization problem to solving a constrained convex optimization problem. We first review the support of a vector. For a positive integer $n$,  let $\mathbb{N}_n:=\{1, 2, \dots, n\}$. For $\mathbf{x} \in \bR^n$, we use $S(\mathbf{x})$ to denote the support of $\mathbf{x}$, which is defined as
$$
S(\mathbf{x}) := \left\{i \in \mathbb{N}_n: x_i \neq 0\right\}.
$$
Clearly, $S(\mathbf{x})$ is a subset of $\mathbb{N}_n$. It is empty if all components of $\mathbf{x}$ are zeros and it is equal to $\mathbb{N}_n$ if all components of $\mathbf{x}$ are nonzeros. It is known (for example, Lemma 3 of \cite{shen2016wavelet}) that if $\mathbf{u} \in \text{prox}_{t\|\cdot\|_0}(\mathbf{x})$, $\mathbf{v} \in \text{prox}_{t\|\cdot\|_0}(\mathbf{y})$, for $\mathbf{x}, \mathbf{y} \in \bR^n$,  then 
\begin{equation}\label{eq: same support}
S(\mathbf{u})\neq S(\mathbf{v}) \implies
\norm{\mathbf{u} - \mathbf{v}}_2\geq\sqrt{2t}. 
\end{equation}

\begin{proposition}\label{prop: explicit unchange support prox inexact}
If $\left\{\left(\mathbf{\tilde u}^k, \mathbf{\tilde v}^k\right)\right\}_{k = 1}^{\infty}$ is the sequence generated by \eqref{iter: prox u ell0 inexact}-\eqref{iter: prox v ell0 inexact} satisfying the condition \eqref{condition for v inexact general} with  $\rho' \in \left(0, \frac{\lambda}{\gamma}\frac{1-\alpha}{\alpha} \right)$, then there exists a positive integer $N$ such that $S\left(\mathbf{\tilde u}^k\right) = S\left(\mathbf{\tilde u}^{N}\right)$ for all $k\geq N$.
\end{proposition}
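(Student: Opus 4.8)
The plan is to combine the two facts already at our disposal: the decay of successive differences established in Lemma~\ref{lemma: condition for v inexact}(b), and the quantitative ``support-jump'' lower bound \eqref{eq: same support} for the proximity operator of the $\ell_0$ norm. First I would note that the hypotheses of the present proposition are exactly those of Lemma~\ref{lemma: condition for v inexact}, so part~(b) of that lemma applies and gives $\lim_{k\to\infty}\norm{\mathbf{\tilde u}^{k+1} - \mathbf{\tilde u}^k}_2 = 0$.

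Next, observe that the update rule \eqref{iter: prox u ell0 inexact} produces, for every $k \geq 2$, a vector $\mathbf{\tilde u}^{k} \in \mathrm{prox}_{\alpha\gamma\norm{\cdot}_0}\!\left((1-\alpha)\mathbf{\tilde u}^{k-1} + \alpha \mD \mathbf{\tilde v}^{k-1}\right)$; in particular, for every $k \geq 2$ both $\mathbf{\tilde u}^{k}$ and $\mathbf{\tilde u}^{k+1}$ are images under the proximity operator $\mathrm{prox}_{\alpha\gamma\norm{\cdot}_0}$. Applying \eqref{eq: same support} with $t := \alpha\gamma$ to the pair $\mathbf{\tilde u}^{k+1}, \mathbf{\tilde u}^{k}$ yields the implication
\begin{equation*}
S(\mathbf{\tilde u}^{k+1}) \neq S(\mathbf{\tilde u}^{k}) \implies \norm{\mathbf{\tilde u}^{k+1} - \mathbf{\tilde u}^{k}}_2 \geq \sqrt{2\alpha\gamma}, \qquad k \geq 2 .
\end{equation*}
Since $\alpha, \gamma > 0$, the threshold $\sqrt{2\alpha\gamma}$ is a fixed positive number, so the convergence of $\norm{\mathbf{\tilde u}^{k+1} - \mathbf{\tilde u}^k}_2$ to zero guarantees a positive integer $N \geq 2$ with $\norm{\mathbf{\tilde u}^{k+1} - \mathbf{\tilde u}^{k}}_2 < \sqrt{2\alpha\gamma}$ for all $k \geq N$. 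The contrapositive of the displayed implication then forces $S(\mathbf{\tilde u}^{k+1}) = S(\mathbf{\tilde u}^{k})$ for every $k \geq N$, and chaining these equalities gives $S(\mathbf{\tilde u}^{k}) = S(\mathbf{\tilde u}^{N})$ for all $k \geq N$, which is the assertion.

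I do not expect a genuine obstacle here; the argument is essentially a two-line consequence of the preceding lemmas. The only points demanding care are bookkeeping: making sure the indices lie in the range ($k \geq 2$, hence $N \geq 2$) where $\mathbf{\tilde u}^{k}$ is genuinely a proximity-operator image rather than merely the initialization, and confirming that the parameter $\alpha\gamma$ playing the role of $t$ in \eqref{eq: same support} is exactly the one appearing in \eqref{iter: prox u ell0 inexact}. One should also record explicitly that the restriction $\rho' \in \left(0, \tfrac{\lambda}{\gamma}\tfrac{1-\alpha}{\alpha}\right)$ is needed precisely so that Lemma~\ref{lemma: condition for v inexact}(b) may be invoked.
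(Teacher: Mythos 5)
Your proposal is correct and follows essentially the same route as the paper: invoke Lemma~\ref{lemma: condition for v inexact}(b) to get $\norm{\mathbf{\tilde u}^{k+1}-\mathbf{\tilde u}^k}_2\to 0$, then apply the support-jump bound \eqref{eq: same support} with $t:=\alpha\gamma$ to freeze the support once the differences drop below $\sqrt{2\alpha\gamma}$. Your extra care about indexing ($k\geq 2$ so both iterates are genuine proximity images) is a minor refinement the paper glosses over, but it does not change the argument.
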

\begin{proof}
From Lemma \ref{lemma: condition for v inexact}, we have that
$
\lim_{k \to \infty}\norm{\mathbf{\tilde u}^{k+1} - \mathbf{\tilde u}^{k}}_2 = 0.
$
Hence, there exists an integer $N$, such that
$$
\norm{\mathbf{\tilde u}^{k+1} - \mathbf{\tilde u}^{k}}_2 < \sqrt{2\alpha \gamma}, \ \ \mbox{for all}\ \  k\geq N. 
$$
The above inequality together with \eqref{eq: same support} leads to $S\left(\mathbf{\tilde u}^{k+1}\right) = S\left(\mathbf{\tilde u}^{k}\right)$ for all $k\geq N$. This further ensures that $S\left(\mathbf{\tilde u}^k\right) = S\left(\mathbf{\tilde u}^{N}\right)$ for all $k\geq N$. \qed
\end{proof}

Proposition \ref{prop: explicit unchange support prox inexact} reveals that the support of $\mathbf{\tilde u}^k$ remains unchanged when $k\geq N$ and it turns out that the influence of $\norm{\cdot}_0$ does not exist anymore when $k\geq N$. Throughout this section, we use $C \subseteq \mathbb{N}_{n}$ to denote the unchanged support. Accordingly, the non-convex optimization problem \eqref{model: l0} reduces to a convex one. To describe the resulting convex optimization problem, we need the notion of the indicator function. For a given index set $\Lambda \subseteq \mathbb{N}_n$, we define the subspace of $\bR^n$ by letting
$$
\mathbf{B}_{\Lambda} := \left\{\mathbf{u} \in \bR^n: S(\mathbf{u}) \subseteq \Lambda \right\},
$$
and define the indicator function $\delta_{\Lambda}: \bR^n \to \bR \cup \left\{\infty\right\}$ as 
\begin{equation}\label{eq: indicator}
\delta_{\Lambda}(\mathbf{u}) := \begin{cases}
0, &\text{if} \ \ \mathbf{u} \in  \mathbf{B}_{\Lambda},\\
\infty, &\text{otherwise}.
\end{cases}    
\end{equation}
We define a convex function by
\begin{equation}\label{model: convex objective function}
G(\mathbf{u}, \mathbf{v}) := \psi(\mB \mathbf{v})+ \frac{\lambda}{2\gamma}\norm{\mathbf{u} - \mD \mathbf{v}}_2^2 + \delta_{C}(\mathbf{u}), \ \  (\mathbf{u}, \mathbf{v})\in\bR^n\times\bR^m.
\end{equation}
Proposition \ref{prop: explicit unchange support prox inexact} implies that when $k\geq N$, the non-convex optimization problem \eqref{model: l0} boils down to the convex optimization problem 
\begin{equation}\label{model: convex model}
 \mathrm{argmin}\{G(\mathbf{u}, \mathbf{v}): (\mathbf{u}, \mathbf{v}) \in \bR^n \times \bR^m\}.
\end{equation}
Specifically, we have the next result which follows directly from Theorem 4.8 of    \cite{2021Sparse}.

\begin{proposition}   \label{prop: global minimizer and local minimizer}
Suppose that $\left(\mathbf{u}^*, \mathbf{v}^*\right)\in\bR^n \times \bR^m$ and $S(\mathbf{u}^*) = C$.
The pair $\left(\mathbf{u}^*, \mathbf{v}^*\right)$ is a local minimizer of minimization problem \eqref{model: l0} if and only if $\left(\mathbf{u}^*, \mathbf{v}^*\right)$ is a global minimizer of minimization problem \eqref{model: convex model}.
\end{proposition}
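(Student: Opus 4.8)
\textbf{Proof proposal for Proposition \ref{prop: global minimizer and local minimizer}.}

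The plan is to invoke Theorem 4.8 of \cite{2021Sparse}, which—phrased in the setting of the present paper—characterizes local minimizers of an objective of the form ``convex part plus $\lambda\|\cdot\|_0$'' in terms of the global minimizers of the convex problem obtained by freezing the support. The whole content of the proposition is therefore a matter of translating the hypothesis $S(\mathbf{u}^*) = C$ and the structure of $F$ in \eqref{target function in model: l0} into the hypotheses of that cited theorem, so the proof is essentially a bookkeeping argument. First I would recall the decomposition
$$
F(\mathbf{u}, \mathbf{v}) = \Phi(\mathbf{u},\mathbf{v}) + \lambda\|\mathbf{u}\|_0, \quad \text{where} \quad \Phi(\mathbf{u},\mathbf{v}) := \psi(\mB \mathbf{v}) + \frac{\lambda}{2\gamma}\norm{\mathbf{u} - \mD \mathbf{v}}_2^2,
$$
noting that $\Phi$ is convex and continuously differentiable because $\psi$ is, and that the $\ell_0$ term depends only on $\mathbf{u}$.

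Next I would make explicit why restricting to the support $C$ turns the $\ell_0$ term into a constant: on the affine set $\{\mathbf{u} : S(\mathbf{u}) \subseteq C\} = \mathbf{B}_C$ one has $\|\mathbf{u}\|_0 \le |C|$, with equality precisely when no coordinate in $C$ vanishes; since $S(\mathbf{u}^*) = C$ exactly, $\mathbf{u}^*$ lies in the relative interior of the face of ``sign patterns'' with support $C$, and in a neighbourhood of $\mathbf{u}^*$ inside $\mathbf{B}_C$ the function $\|\mathbf{u}\|_0$ equals the constant $|C|$. Consequently, $F(\mathbf{u},\mathbf{v})$ and $G(\mathbf{u},\mathbf{v}) + \lambda|C|$ agree locally near $(\mathbf{u}^*,\mathbf{v}^*)$ on the subspace $\mathbf{B}_C \times \bR^m$, where $G$ is the convex function defined in \eqref{model: convex objective function}. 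Then I would argue the two directions of the equivalence. For ``only if'': if $(\mathbf{u}^*,\mathbf{v}^*)$ is a local minimizer of \eqref{model: l0}, then for nearby $(\mathbf{u},\mathbf{v})$ with $S(\mathbf{u}) \subseteq C$ we get $G(\mathbf{u}^*,\mathbf{v}^*) \le G(\mathbf{u},\mathbf{v})$; since $G$ is convex, a local minimizer of $G$ over $\bR^n\times\bR^m$ is automatically a global one, and $G = +\infty$ off $\mathbf{B}_C\times\bR^m$ anyway, so $(\mathbf{u}^*,\mathbf{v}^*)$ globally minimizes $G$. For ``if'': if $(\mathbf{u}^*,\mathbf{v}^*)$ globally minimizes $G$, then it minimizes $F$ over the subspace $\mathbf{B}_C\times\bR^m$; any $(\mathbf{u},\mathbf{v})$ close to $(\mathbf{u}^*,\mathbf{v}^*)$ either has $S(\mathbf{u}) \subseteq C$—handled by the previous sentence—or has some new nonzero coordinate outside $C$, in which case $\|\mathbf{u}\|_0 \ge |C| + 1$ (using $S(\mathbf{u}^*) = C$, small perturbations keep the coordinates in $C$ nonzero), so the jump of $\lambda$ in the $\ell_0$ term dominates the continuous change in $\Phi$, giving $F(\mathbf{u},\mathbf{v}) \ge F(\mathbf{u}^*,\mathbf{v}^*)$.

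The main obstacle—really the only subtle point—is the direction where one must rule out that a perturbation \emph{shrinking} the support (killing a coordinate in $C$) could lower $F$. This cannot happen precisely because $G$ is minimized \emph{globally}: a vector with strictly smaller support still lies in $\mathbf{B}_C$, so its $G$-value is already $\ge G(\mathbf{u}^*,\mathbf{v}^*)$, and since $\|\mathbf{u}\|_0 \le |C|$ there while $\|\mathbf{u}^*\|_0 = |C|$, we even have $F \ge G + \lambda\|\mathbf{u}\|_0 \ge G(\mathbf{u}^*,\mathbf{v}^*) + 0 \ge F(\mathbf{u}^*,\mathbf{v}^*) - \lambda|C| + \lambda\|\mathbf{u}\|_0$, which is consistent but requires care; the clean way is simply to observe $F \ge G$ pointwise (since $\lambda\|\mathbf{u}\|_0 \ge 0$ and $\delta_C \le \lambda\|\cdot\|_0 - $ wait, rather $\delta_C(\mathbf{u}) = 0 \le \lambda\|\mathbf{u}\|_0$ on $\mathbf{B}_C$) together with $F(\mathbf{u}^*,\mathbf{v}^*) = G(\mathbf{u}^*,\mathbf{v}^*) + \lambda|C|$, so that for the support-shrinking perturbations I instead compare against the global minimum of $G$ plus the observation that such points have $\ell_0$ value $< |C|$ does \emph{not} immediately give what I want. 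The honest resolution is that Theorem 4.8 of \cite{2021Sparse} already packages exactly this analysis—lifting the graph of the convex part by $\lambda$ for each additional nonzero—so I would state the reduction carefully and then cite that theorem for the conclusion, rather than reproving the lifting lemma here.
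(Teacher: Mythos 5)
Your proposal lands in the same place as the paper, which likewise gives no self-contained argument and simply states that the result follows directly from Theorem 4.8 of \cite{2021Sparse}, so citing that theorem is exactly the intended proof. The ``only subtle point'' that derails your attempted direct argument is in fact vacuous: since every coordinate of $\mathbf{u}^*$ indexed by $C$ is nonzero, any $\mathbf{u}$ within distance $\min_{i\in C}|u^*_i|$ of $\mathbf{u}^*$ satisfies $S(\mathbf{u})\supseteq C$, so support-shrinking perturbations never occur in a small enough neighbourhood and the case analysis reduces to $S(\mathbf{u})=C$ (where $F=G+\lambda|C|$ and global minimality of $G$ applies) versus $S(\mathbf{u})\supsetneq C$ (where the extra $\lambda$ in the $\ell_0$ term dominates the continuous change in the smooth part).
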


From Proposition \ref{prop: global minimizer and local minimizer}, finding a local minimizer of optimization problem \eqref{model: l0} is equivalent to finding a global minimizer of optimization problem \eqref{model: convex model}.

We now turn to showing that the inexact sequence  $\left\{(\mathbf{\tilde u}^k, \mathbf{\tilde v}^k)\right\}_{k= N}^{\infty}$ converges to a local minimizer of optimization problem \eqref{model: l0}. To this end, we reformulate the sequence as an inexact Picard sequence of a nonlinear operator and then show the nonlinear operator has certain favorable property that guarantees convergence of a sequence generated from the operator. 
For a given index set $\Lambda \subseteq \mathbb{N}_n$, we define the projection operator $\mathcal{P}_{\Lambda} : \bR^n \to \mathbf{B}_{\Lambda}$ as
\begin{equation}\label{proj: bRm to unchange support}
   \left[\mathcal{P}_{\Lambda}(\mathbf{u}) \right]_i := \begin{cases}
   u_i, & \text{if} \ i\in \Lambda,\\
   0, & \text{otherwise},
   \end{cases}
\end{equation}
and the diagonal matrix $\mT_{\Lambda} \in \bR^{n \times n}$ as 
\begin{equation}\label{matrix: T_C}
    \left[\mT_{\Lambda}\right]_{i,i} := \begin{cases}
        1, & \text{if} \ i\in \Lambda,\\
        0, & \text{if} \ i \notin \Lambda.
    \end{cases}
\end{equation}
From the definition of the operator  $\mathcal{P}_{\Lambda}$ and the matrix $\mT_{\Lambda}$, we have that
\begin{equation}\label{lemma: matrix = projection}
\mathcal{P}_{\Lambda}(\mathbf{u})  = \mT_{\Lambda} \mathbf{u}, \ \    \mbox{for any}\ \ \mathbf{u} \in \bR^n.
\end{equation}

We next express the proximity operator of the $\ell_0$-norm in terms of the linear transformation $\mT_C$. To this end, we recall that for $\mathbf{x}, \mathbf{z} \in \bR^n$, if $\mathbf{x} \in \text{prox}_{t\norm{\cdot}_0}(\mathbf{z})$, then $\mathbf{x} = \mathcal{P}_{S(\mathbf{x})}(\mathbf{z})$ (see, Lemma 5 of \cite{wu2022inverting}).

\begin{lemma}\label{prop: prox_norm0 and projection}
If $k \geq  N$ and $\mathbf{\tilde u}^{k+1} \in \mathrm{prox}_{\alpha \gamma \norm{\cdot}_0}(\mathbf{z}^k)$ where $\mathbf{z}^k:=(1 - \alpha) \mathbf{\tilde u}^{k} + \alpha \mD \mathbf{\tilde  v}^{k}$, then $\mathbf{\tilde u}^{k+1} =\mT_{C}(\mathbf{z}^k)$. 
\end{lemma}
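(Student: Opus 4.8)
The plan is to chain together three facts that are already available in the excerpt: (i) the characterization of a point in the range of $\mathrm{prox}_{t\norm{\cdot}_0}$ as a coordinate projection of its argument onto its own support (Lemma 5 of \cite{wu2022inverting}); (ii) Proposition \ref{prop: explicit unchange support prox inexact}, which guarantees that for $k \geq N$ the support $S(\mathbf{\tilde u}^{k+1})$ equals the fixed set $C$; and (iii) the identity $\mathcal{P}_{\Lambda}(\mathbf{u}) = \mT_{\Lambda}\mathbf{u}$ from \eqref{lemma: matrix = projection}. Putting these together should give the claim in a couple of lines.

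Concretely, first I would invoke $\mathbf{\tilde u}^{k+1} \in \mathrm{prox}_{\alpha\gamma\norm{\cdot}_0}(\mathbf{z}^k)$ together with the cited fact that any $\mathbf{x} \in \mathrm{prox}_{t\norm{\cdot}_0}(\mathbf{z})$ satisfies $\mathbf{x} = \mathcal{P}_{S(\mathbf{x})}(\mathbf{z})$, which yields $\mathbf{\tilde u}^{k+1} = \mathcal{P}_{S(\mathbf{\tilde u}^{k+1})}(\mathbf{z}^k)$. Next, since $k \geq N$ implies $k+1 \geq N$, Proposition \ref{prop: explicit unchange support prox inexact} gives $S(\mathbf{\tilde u}^{k+1}) = S(\mathbf{\tilde u}^{N}) = C$, so that $\mathbf{\tilde u}^{k+1} = \mathcal{P}_{C}(\mathbf{z}^k)$. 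Finally, applying \eqref{lemma: matrix = projection} with $\Lambda = C$ converts the projection into the linear map, $\mathcal{P}_{C}(\mathbf{z}^k) = \mT_{C}\mathbf{z}^k = \mT_{C}(\mathbf{z}^k)$, which is exactly the asserted identity $\mathbf{\tilde u}^{k+1} = \mT_{C}(\mathbf{z}^k)$.

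There is essentially no analytic obstacle here; the only point that needs a moment of care is the bookkeeping on indices, namely that $C$ as used in this section is precisely the common support $S(\mathbf{\tilde u}^j)$ for all $j \geq N$ (established just before the lemma), and that $k+1$ indeed falls in that range whenever $k \geq N$. Once that is noted, the three displayed equalities above compose directly, and no further computation is required.
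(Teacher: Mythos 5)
Your proposal is correct and follows exactly the same three-step argument as the paper: applying Lemma 5 of \cite{wu2022inverting} to write $\mathbf{\tilde u}^{k+1}$ as the projection onto its own support, invoking Proposition \ref{prop: explicit unchange support prox inexact} to identify that support with $C$ for $k\geq N$, and then using \eqref{lemma: matrix = projection} to replace $\mathcal{P}_C$ by $\mT_C$. No differences worth noting.
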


\begin{proof}
By the hypothesis of this lemma, according to Lemma 5 of \cite{wu2022inverting}, we see that 
$\mathbf{\tilde u}^{k+1} = \mathcal{P}_{S(\mathbf{\tilde u}^{k+1})}
(\mathbf{z}^k)$.
From Proposition \ref{prop: explicit unchange support prox inexact}, we have that $S(\mathbf{\tilde u}^{k+1}) = C$ for all $k\geq N$. Hence,
$
\mathbf{\tilde u}^{k+1} = \mathcal{P}_{C}
(\mathbf{z}^k).
$
Combining this equation with \eqref{lemma: matrix = projection} completes the proof of this lemma. \qed
\end{proof}

We next reformulate iterations \eqref{iter: prox u ell0 inexact}-\eqref{iter: prox v ell0 inexact} as an inexact Picard sequence of a nonlinear operator for $k \geq N$. We define operator 
$\mathcal{T}:\bR^n\to \bR^n$ by
\begin{equation}\label{eq: nonlinear map}
\mathcal{T}(\mathbf{u}):=(1-\alpha)\mT_C \mathbf{u}+\alpha\mT_C\mD \mathcal{S}\left(\mD^{\top}  \mathbf{u} \right).
\end{equation}

\begin{proposition}\label{prop: inexact Picard sequence}
If $\left\{\left(\mathbf{\tilde u}^k, \mathbf{\tilde v}^k\right)\right\}_{k=N}^{\infty}$ is a sequence generated by \eqref{iter: prox u ell0 inexact}-\eqref{iter: prox v ell0 inexact}, then 
\begin{equation}\label{iter: Picard T inexact}
\mathbf{\tilde u}^{k+1}
= \mathcal{T}\mathbf{\tilde u}^{k} + \alpha\mT_C\mD \mathbf{\tilde\epsilon}^{k}.
\end{equation}
\end{proposition}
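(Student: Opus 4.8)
The plan is to reduce \eqref{iter: Picard T inexact} to a one-line computation using the support stabilization already established. First I would invoke Proposition~\ref{prop: explicit unchange support prox inexact}, which guarantees $S(\mathbf{\tilde u}^{k+1}) = C$ for all $k \ge N$. Setting $\mathbf{z}^k := (1-\alpha)\mathbf{\tilde u}^{k} + \alpha \mD \mathbf{\tilde v}^{k}$, iteration \eqref{iter: prox u ell0 inexact} reads $\mathbf{\tilde u}^{k+1} \in \mathrm{prox}_{\alpha\gamma\norm{\cdot}_0}(\mathbf{z}^k)$, so Lemma~\ref{prop: prox_norm0 and projection} applies and yields the linear identity $\mathbf{\tilde u}^{k+1} = \mT_C(\mathbf{z}^k)$. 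By linearity of $\mT_C$ this gives
\begin{equation*}
\mathbf{\tilde u}^{k+1} = (1-\alpha)\mT_C \mathbf{\tilde u}^{k} + \alpha \mT_C \mD \mathbf{\tilde v}^{k}.
\end{equation*}

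Next I would substitute the inexact $\mathbf{v}$-update. By \eqref{iter: prox v ell0 inexact} and \eqref{eq: inexact exact v}, for the indices in question we have $\mathbf{\tilde v}^{k} = \mathbf{\tilde v}^{k}_* + \mathbf{\tilde\epsilon}^{k} = \mathcal{S}(\mD^{\top}\mathbf{\tilde u}^{k}) + \mathbf{\tilde\epsilon}^{k}$. Plugging this into the last display and distributing $\alpha\mT_C\mD$ over the sum gives
\begin{equation*}
\mathbf{\tilde u}^{k+1} = (1-\alpha)\mT_C \mathbf{\tilde u}^{k} + \alpha \mT_C \mD \mathcal{S}\!\left(\mD^{\top}\mathbf{\tilde u}^{k}\right) + \alpha \mT_C \mD \mathbf{\tilde\epsilon}^{k},
\end{equation*}
and recognizing the first two terms as $\mathcal{T}\mathbf{\tilde u}^{k}$ via the definition \eqref{eq: nonlinear map} of $\mathcal{T}$ yields exactly \eqref{iter: Picard T inexact}.

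There is essentially no hard step here; the computation is immediate once Lemma~\ref{prop: prox_norm0 and projection} is available. The only point demanding care is the restriction $k \ge N$: it is precisely the stabilization of the support $S(\mathbf{\tilde u}^{k+1}) = C$ that lets us replace the (set-valued, hard-thresholding) proximity operator of $\norm{\cdot}_0$ by the genuine linear map $\mT_C$, which is what makes the reformulation as an inexact Picard iteration of the single-valued operator $\mathcal{T}$ legitimate. I would also verify the index bookkeeping, namely that $\mathbf{\tilde v}^{k} = \mathcal{S}(\mD^{\top}\mathbf{\tilde u}^{k}) + \mathbf{\tilde\epsilon}^{k}$ holds throughout the range $k \ge N$ (so that the separate initialization convention $\mathbf{\tilde v}^{1}_* := \mathbf{\tilde v}^{1}$ never intervenes), which is immediate since $N \ge 1$.
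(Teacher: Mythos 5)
Your proposal is correct and follows essentially the same route as the paper: apply Lemma~\ref{prop: prox_norm0 and projection} (via the support stabilization of Proposition~\ref{prop: explicit unchange support prox inexact}) to write $\mathbf{\tilde u}^{k+1}=\mT_C\bigl((1-\alpha)\mathbf{\tilde u}^k+\alpha\mD\mathbf{\tilde v}^k\bigr)$, substitute $\mathbf{\tilde v}^k=\mathcal{S}(\mD^{\top}\mathbf{\tilde u}^k)+\mathbf{\tilde\epsilon}^k$ from \eqref{iter: prox v ell0 inexact}--\eqref{eq: inexact exact v}, and use the linearity of $\mT_C$ to recognize $\mathcal{T}$. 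The only difference is cosmetic ordering of the two substitutions, plus your (welcome) explicit remark about the index range, which the paper handles with the same level of informality.
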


\begin{proof} Suppose that sequence $\left\{\left(\mathbf{\tilde u}^k, \mathbf{\tilde v}^k\right)\right\}_{k=N}^{\infty}$ satisfies \eqref{iter: prox u ell0 inexact}-\eqref{iter: prox v ell0 inexact}. It follows from \eqref{iter: prox v ell0 inexact} and \eqref{eq: inexact exact v} that 
\begin{equation}\label{vk}
\mathbf{\tilde v}^{k} = \mathcal{S}\left(\mD^{\top} \mathbf{\tilde u}^{k} \right) + \mathbf{\mathbf{\tilde \epsilon}}^{k}.
\end{equation}
Since $\left\{\left(\mathbf{\tilde u}^k, \mathbf{\tilde v}^k\right)\right\}_{k=N}^{\infty}$ satisfies \eqref{iter: prox u ell0 inexact}, according to Lemma \ref{prop: prox_norm0 and projection}, we obtain that 
\begin{equation}\label{uk+1 prop5}
\mathbf{\tilde u}^{k+1} =\mT_{C}((1 - \alpha) \mathbf{\tilde u}^{k} + \alpha \mD \mathbf{\tilde v}^{k}).
\end{equation}
Substituting  \eqref{vk} into  the right hand side of  \eqref{uk+1 prop5} and using the linearity of $\mT_{C}$, we observe that
$$
\mathbf{\tilde u}^{k+1} = (1 - \alpha) \mT_{C}\mathbf{\tilde u}^k + \alpha \mT_{C}\mD \mathcal{S}\left(\mD^{\top} \mathbf{\tilde u}^{k} \right) + \alpha \mT_{C}\mD \mathbf{\mathbf{\tilde \epsilon}}^k.
$$
We complete the proof by calling the definition of $\mathcal{T}$. \qed
\end{proof}

The next proposition identifies the relation between a global minimizer of minimization problem \eqref{model: convex model} and a fixed-point of the operator $\mathcal{T}$. We first review the notion of subdifferential \cite{bauschke2011convex}. The subdifferential of a convex function $f:\mathbb{R}^d\to \bR \cup \left\{\infty\right\}$ at $\mathbf{x} \in \bR^d$ is defined by
$$
\partial f(\mathbf{x}) := \left\{\mathbf{y}: \mathbf{y}\in \bR^d \text{ and } f(\mathbf{z}) \geq f(\mathbf{x}) + \langle \mathbf{y}, \mathbf{z} - \mathbf{x} \rangle  \text{ for all } \mathbf{z} \in \bR^d\right\}.
$$
If $f$ is a convex function on $\bR^d$ and $\mathbf{x} \in \bR^d$, then $y \in \partial f(\mathbf{x})$ if and only if $\mathbf{x} = \mathrm{prox}_{f}(\mathbf{x}+\mathbf{y})$ (see, Proposition 2.6 of \cite{micchelli2011proximity}).
The proximity operator of the indicator function $\delta_C$ defined in \eqref{eq: indicator} may be expressed as the projection operator, that is,
\begin{equation}\label{eq: prox and indicator}
  \operatorname{prox}_{\delta_C}(\mathbf{u}) = \mathcal{P}_C(\mathbf{u}), \ \ \mbox{for all}\ \ \mathbf{u} \in \bR^n.
\end{equation}

\begin{proposition}\label{prop: fixed-point solution}
Suppose a pair $(\mathbf{u}^*, \mathbf{v}^*) \in \bR^{n} \times \bR^m$ is given. Then $(\mathbf{u}^*, \mathbf{v}^*)$ is a global minimizer of \eqref{model: convex model} if and only if $\mathbf{u}^*$ is a fixed-point of $\mathcal{T}$ and 
\begin{equation}\label{eq: prox psi inexact}
       \mathbf{v}^* = \mathcal{S}\left(\mD^{\top}  \mathbf{u}^{*} \right). 
\end{equation}
\end{proposition}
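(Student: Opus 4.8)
The strategy is to write down the first-order optimality conditions for the convex problem \eqref{model: convex model} and show they are equivalent, after an algebraic rearrangement, to the pair of conditions ``$\mathbf{u}^*$ is a fixed-point of $\mathcal{T}$'' and \eqref{eq: prox psi inexact}. Since $G$ is convex, $(\mathbf{u}^*,\mathbf{v}^*)$ is a global minimizer of \eqref{model: convex model} if and only if $\mathbf{0}\in\partial G(\mathbf{u}^*,\mathbf{v}^*)$, and because $G$ separates into the smooth term $\psi(\mathbf{B}\mathbf{v})+\frac{\lambda}{2\gamma}\|\mathbf{u}-\mathbf{D}\mathbf{v}\|_2^2$ plus the indicator $\delta_C(\mathbf{u})$, this splits into the two inclusions
\begin{align*}
\mathbf{0} &\in \tfrac{\lambda}{\gamma}(\mathbf{u}^*-\mathbf{D}\mathbf{v}^*) + \partial\delta_C(\mathbf{u}^*),\\
\mathbf{0} &= \mathbf{B}^{\top}\nabla\psi(\mathbf{B}\mathbf{v}^*) - \tfrac{\lambda}{\gamma}\mathbf{D}^{\top}(\mathbf{u}^*-\mathbf{D}\mathbf{v}^*).
\end{align*}

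For the $\mathbf{v}$-condition: using $\mathbf{D}^{\top}\mathbf{D}=\mathbf{I}$, the second equation reads $\mathbf{v}^* + \frac{\gamma}{\lambda}\mathbf{B}^{\top}\nabla\psi(\mathbf{B}\mathbf{v}^*) = \mathbf{D}^{\top}\mathbf{u}^*$, i.e. $\bigl(\mathbf{I}+\frac{\gamma}{\lambda}\nabla(\psi\circ\mathbf{B})\bigr)(\mathbf{v}^*) = \mathbf{D}^{\top}\mathbf{u}^*$; since $\psi\circ\mathbf{B}$ is convex and continuously differentiable, this operator is invertible and the equation is exactly \eqref{eq: prox psi inexact}, i.e. $\mathbf{v}^*=\mathcal{S}(\mathbf{D}^{\top}\mathbf{u}^*)$. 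Conversely \eqref{eq: prox psi inexact} is equivalent to the $\mathbf{v}$-inclusion. For the $\mathbf{u}$-condition: I would use the subdifferential–proximity correspondence (Proposition 2.6 of \cite{micchelli2011proximity}) together with \eqref{eq: prox and indicator}, which gives that $-\frac{\lambda}{\gamma}(\mathbf{u}^*-\mathbf{D}\mathbf{v}^*)\in\partial\delta_C(\mathbf{u}^*)$ holds if and only if $\mathbf{u}^* = \operatorname{prox}_{\delta_C}\bigl(\mathbf{u}^*-\frac{\lambda}{\gamma}(\mathbf{u}^*-\mathbf{D}\mathbf{v}^*)\bigr)=\mathcal{P}_C\bigl(\mathbf{u}^*-\frac{\lambda}{\gamma}(\mathbf{u}^*-\mathbf{D}\mathbf{v}^*)\bigr)=\mathbf{T}_C\bigl(\mathbf{u}^*-\frac{\lambda}{\gamma}(\mathbf{u}^*-\mathbf{D}\mathbf{v}^*)\bigr)$; but one can equally well scale the step inside the projection. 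The cleaner route is to observe that $-t(\mathbf{u}^*-\mathbf{D}\mathbf{v}^*)\in\partial\delta_C(\mathbf{u}^*)$ for one $t>0$ iff it holds for all $t>0$ (since $\partial\delta_C(\mathbf{u}^*)$ is a cone — it is the orthogonal complement $\mathbf{B}_C^{\perp}$ when $\mathbf{u}^*\in\mathbf{B}_C$), so I may take step-size $\alpha$ and get $\mathbf{u}^* = \mathbf{T}_C\bigl((1-\alpha)\mathbf{u}^*+\alpha\mathbf{D}\mathbf{v}^*\bigr)$. Substituting $\mathbf{v}^*=\mathcal{S}(\mathbf{D}^{\top}\mathbf{u}^*)$ into this identity yields $\mathbf{u}^* = (1-\alpha)\mathbf{T}_C\mathbf{u}^* + \alpha\mathbf{T}_C\mathbf{D}\,\mathcal{S}(\mathbf{D}^{\top}\mathbf{u}^*) = \mathcal{T}\mathbf{u}^*$ (using that $\mathbf{u}^*=\mathbf{T}_C\mathbf{u}^*$, which is forced by $\mathbf{u}^*\in\mathbf{B}_C$; and this latter fact in turn follows because $\delta_C(\mathbf{u}^*)$ must be finite for $(\mathbf{u}^*,\mathbf{v}^*)$ to minimize $G$). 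Running the chain of equivalences backwards gives the converse.

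The main obstacle is bookkeeping the step-size scaling correctly: one must justify that the proximity/projection characterization is insensitive to whether one uses step-size $1$, $\gamma/\lambda$, or $\alpha$, which rests on $\partial\delta_C(\mathbf{u}^*)$ being a cone, and one must carefully handle the implicit requirement $\mathbf{u}^*\in\mathbf{B}_C$ (equivalently $\mathbf{u}^*=\mathbf{T}_C\mathbf{u}^*$) that is buried in the finiteness of $\delta_C$. A secondary point is to make sure the two inclusions genuinely decouple; this is immediate because the nonsmooth part of $G$ depends only on $\mathbf{u}$ and the smooth part is everywhere differentiable, so $\partial G(\mathbf{u}^*,\mathbf{v}^*) = \bigl(\partial_{\mathbf{u}}[\text{smooth}]+\partial\delta_C(\mathbf{u}^*)\bigr)\times\{\nabla_{\mathbf{v}}[\text{smooth}]\}$ by the sum rule for subdifferentials. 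Everything else is the routine linear algebra enabled by $\mathbf{D}^{\top}\mathbf{D}=\mathbf{I}$.
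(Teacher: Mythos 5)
Your proposal is correct and follows essentially the same route as the paper's proof: apply the Fermat rule to $G$, decouple the smooth and nonsmooth optimality conditions, use the fact that $\partial\delta_{C}(\mathbf{u}^*)$ is a cone to rescale the $\mathbf{u}$-inclusion to step-size $\alpha$, convert it via the subdifferential–proximity correspondence and $\operatorname{prox}_{\delta_C}=\mathcal{P}_C=\mT_C$, and substitute \eqref{eq: prox psi inexact} to recognize the fixed-point equation for $\mathcal{T}$. Your explicit attention to the finiteness requirement $\mathbf{u}^*\in\mathbf{B}_{C}$ when running the equivalences backwards is a detail the paper leaves implicit, but it does not change the argument.
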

\begin{proof} This proof is similar to that of Theorem 2.1 in \cite{li2015multi}.   
From Proposition 16.8 and Corollary 16.48 of \cite{bauschke2011convex}, for $\mathbf{u} \in \bR^n,  \mathbf{v}\in \bR^m$, we have that
$$
\partial G(\mathbf{u}, \mathbf{v})  = \left(\partial \delta_{C}(\mathbf{u}) + \frac{\lambda}{\gamma}(\mathbf{u} - \mD \mathbf{v}), \nabla (\psi \circ \mB)(\mathbf{v}) - \frac{\lambda}{\gamma}(\mD^{\top}\mathbf{u} - \mathbf{v})\right).
$$
The Fermat rule (see, Theorem 16.3 of \cite{bauschke2011convex}) yields that $(\mathbf{u}^*, \mathbf{v}^*)$ is a global minimizer of \eqref{model: convex model} if and only if
\begin{align}
&0   \in  \partial   \delta_{C}(\mathbf{u}^*)+ \frac{\lambda}{\gamma} (\mathbf{u}^* - \mD \mathbf{v}^*),  \label{eq: partial delta inexact}\\
&0 =   \nabla \left(\psi\circ \mB \right) (\mathbf{v}^*) -  \frac{\lambda}{\gamma} (\mD^{\top} \mathbf{u}^* - \mathbf{v}^*). \label{eq : parial psi inexact}
\end{align}
From the definition of the indicator function $\delta_{C}$, we notice that 
$$
\partial \delta_{C}(\mathbf{u}^*)=b\partial \delta_{C}(\mathbf{u}^*), \ \ \mbox{for any positive constant}\ \ b.
$$
Hence, 
\eqref{eq: partial delta inexact} is equivalent to
\begin{equation}\label{1 alpha partial delta}
    -(\mathbf{u}^* - \mD \mathbf{v}^*) \in \frac{\gamma}{\lambda}\partial\delta_{C}(\mathbf{u}^*)=\frac{1}{\alpha}\partial\delta_{C}(\mathbf{u}^*)
\end{equation}
for the quantity $\alpha$ appearing in \eqref{iter: prox u ell0 inexact}.
We then rewrite \eqref{1 alpha partial delta} as
$
-\alpha(\mathbf{u}^* - \mD \mathbf{v}^*) \in \partial \delta_{C}(\mathbf{u}^*),
$
which is equivalent to $\mathbf{u}^* = \text{prox}_{\delta_{C}}\left((1 - \alpha)\mathbf{u}^* + \alpha \mD \mathbf{v}^*\right)$, according to Proposition 2.6 of \cite{micchelli2011proximity}.
It follows from equations \eqref{eq: prox and indicator}, \eqref{lemma: matrix = projection} and the linearity of $\mT_C$ that
\begin{equation}\label{eq: prox delta inexact}
    \mathbf{u}^* = (1 - \alpha) \mT_{C}\mathbf{u}^* + \alpha \mT_{C} \mD \mathbf{v}^*. 
\end{equation}
Simplifying the relation \eqref{eq : parial psi inexact} and solving for $\mathbf{v}^*$, we can obtain \eqref{eq: prox psi inexact}. We then substitute \eqref{eq: prox psi inexact}
into the right hand side of \eqref{eq: prox delta inexact}
and get that 
\begin{equation*}\label{eq: fixed-point of T}
    \mathbf{u}^* = (1 - \alpha)\mT_{C}\mathbf{u}^* + \alpha \mT_{C}\mD\mathcal{S}\left(\mD^{\top}  \mathbf{u}^{*} \right).
\end{equation*}
Thus, by the definition of $\mathcal{T}$, $\mathbf{u}^*$ is a fixed-point of  $\mathcal{T}$.  \qed
\end{proof}

According to Proposition \ref{prop: fixed-point solution}, obtaining a global minimizer of problem \eqref{model: convex model} is equivalent to finding a fixed-point of $\mathcal{T}$.   We turn to considering the fixed-point of $\mathcal{T}$.
Proposition \ref{prop: inexact Picard sequence} shows that the sequence $\left\{\mathbf{\tilde u}^k\right\}_{k = N}^{\infty}$ generated from iteration  \eqref{iter: Picard T inexact} is an inexact Picard sequence of $\mathcal{T}$. Convergence of inexact Picard iterations of a nonlinear operator, studied in \cite{combettes2004solving,liang2016convergence,rockafellar1976monotone}, may be guaranteed by the averaged nonexpansiveness of the nonlinear operator. 
Our next task is to show the operator $\mathcal{T}$ is averaged nonexpansive.

We now review the notion of the averaged nonexpansive operators which may be found in \cite{bauschke2011convex}.  An operator $\mathcal{P}:\bR^d\to\bR^d$ is said to be nonexpansive if
$$
\norm{\mathcal{P}(\mathbf{x}) - \mathcal{P}(\mathbf{y})}_2 \leq \norm{\mathbf{x} - \mathbf{y}}_2, \ \ \mbox{for all}\ \ \mathbf{x}, \mathbf{y} \in \bR^d.
$$
An operator $\mathcal{F}:\bR^d\to\bR^d$ is said to be averaged nonexpansive if there is $\kappa \in (0, 1)$  and  a nonexpansive operator $\mathcal{P}$ such that $\mathcal{F}= \kappa \mathcal{I} +  (1 - \kappa) \mathcal{P}$, where  $\mathcal{I}$ denotes the identity operator. We also say that $\mathcal{F}$ is $\kappa$-averaged nonexpansive. It is well-known that for any proper lower semi-continuous convex function $f:\mathbb{R}^d\to\mathbb{R} \cup \left\{\infty\right\}$, its proximity operator $\text{prox}_{f}$ is nonexpansive.

We now recall a general result, which concerns the convergence of the inexact Picard sequence
\begin{equation}\label{inexact Picard iteration}
 \mathbf{z}^{k+1} := \mathcal{F} \mathbf{z}^k + \mathbf{\epsilon}^{k+1},    
\end{equation}
for $\mathcal{F}:\bR^d\to \bR^d$. Its proof may be found in \cite{combettes2004solving,liang2016convergence}. See \cite{rockafellar1976monotone} for a special case.

\begin{lemma}\label{lemma: inexact Picard iteration}
Suppose that $\mathcal{F}:\bR^d\to \bR^d$ is averaged nonexpansive and has at least one fixed-point, $\left\{\mathbf{z}^{k}\right\}_{k=1}^{\infty}$ is the sequence defined by \eqref{inexact Picard iteration}, and $\left\{e^{k+1}\right\}_{k=1}^{\infty}$ is a positive sequence of numbers. If $\norm{\mathbf{\epsilon}^{k+1}}_2 \leq e^{k+1}$ for $k \in \mathbb{N}$ and $\sum_{k=1}^{\infty}e^{k+1} < \infty$, then $\left\{ \mathbf{z}^k\right\}_{k=1}^{\infty}$ converges to a fixed-point of $\mathcal{F}$.
\end{lemma}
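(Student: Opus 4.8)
The plan is to run the classical quasi-Fej\'er argument for inexact iterations of averaged operators. Write $\mathcal{F} = \kappa \mathcal{I} + (1-\kappa)\mathcal{P}$ with $\mathcal{P}$ nonexpansive and $\kappa \in (0,1)$; a standard computation then gives the quadratic identity
\begin{equation*}
\norm{\mathcal{F}\mathbf{x} - \mathcal{F}\mathbf{y}}_2^2 \leq \norm{\mathbf{x} - \mathbf{y}}_2^2 - \frac{1-\kappa}{\kappa}\norm{(\mathcal{I} - \mathcal{F})\mathbf{x} - (\mathcal{I} - \mathcal{F})\mathbf{y}}_2^2, \quad \mbox{for all}\ \ \mathbf{x}, \mathbf{y} \in \bR^d.
\end{equation*}
Let $\mathbf{z}^*$ be any fixed-point of $\mathcal{F}$. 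From \eqref{inexact Picard iteration}, the triangle inequality, and nonexpansiveness of $\mathcal{F}$ we get $\norm{\mathbf{z}^{k+1} - \mathbf{z}^*}_2 \leq \norm{\mathbf{z}^k - \mathbf{z}^*}_2 + e^{k+1}$. Since $\sum_{k=1}^\infty e^{k+1} < \infty$, the elementary fact that $a_{k+1}\le a_k+b_k$ with $a_k\ge 0$ and $\sum_k b_k<\infty$ forces $\{a_k\}$ to converge shows that $\norm{\mathbf{z}^k - \mathbf{z}^*}_2$ converges; in particular $\{\mathbf{z}^k\}$ is bounded, so there is $M>0$ with $\norm{\mathcal{F}\mathbf{z}^k - \mathbf{z}^*}_2 \leq M$ for all $k$.

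Next I would square the recursion. Expanding $\norm{\mathbf{z}^{k+1} - \mathbf{z}^*}_2^2 = \norm{\mathcal{F}\mathbf{z}^k - \mathbf{z}^* + \mathbf{\epsilon}^{k+1}}_2^2$ and using $\norm{\mathbf{\epsilon}^{k+1}}_2 \le e^{k+1}$ together with the bound $M$ gives, with $C := 2M + \sup_j e^{j+1}$,
\begin{equation*}
\norm{\mathbf{z}^{k+1} - \mathbf{z}^*}_2^2 \leq \norm{\mathcal{F}\mathbf{z}^k - \mathbf{z}^*}_2^2 + C\, e^{k+1}.
\end{equation*}
Applying the averaged identity above with $\mathbf{x} := \mathbf{z}^k$, $\mathbf{y} := \mathbf{z}^*$ (so that $(\mathcal{I}-\mathcal{F})\mathbf{z}^* = 0$) yields
\begin{equation*}
\norm{\mathbf{z}^{k+1} - \mathbf{z}^*}_2^2 \leq \norm{\mathbf{z}^k - \mathbf{z}^*}_2^2 - \frac{1-\kappa}{\kappa}\norm{(\mathcal{I}-\mathcal{F})\mathbf{z}^k}_2^2 + C\, e^{k+1}.
\end{equation*}
Summing this telescoping inequality over $k\ge 1$ and using $\sum_k e^{k+1} < \infty$ shows $\sum_{k} \norm{(\mathcal{I}-\mathcal{F})\mathbf{z}^k}_2^2 < \infty$, hence $\norm{(\mathcal{I}-\mathcal{F})\mathbf{z}^k}_2 \to 0$, i.e. asymptotic regularity holds.

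To finish, since $\{\mathbf{z}^k\}$ is bounded in $\bR^d$ it has a subsequence $\mathbf{z}^{k_j} \to \bar{\mathbf{z}}$; because $\mathcal{F}$ is nonexpansive and therefore continuous, $\mathcal{I}-\mathcal{F}$ is continuous, so $(\mathcal{I}-\mathcal{F})\bar{\mathbf{z}} = \lim_j (\mathcal{I}-\mathcal{F})\mathbf{z}^{k_j} = 0$, which makes $\bar{\mathbf{z}}$ a fixed-point of $\mathcal{F}$. Applying the first paragraph with $\mathbf{z}^* := \bar{\mathbf{z}}$, the whole sequence $\norm{\mathbf{z}^k - \bar{\mathbf{z}}}_2$ converges, and since it has a subsequence tending to $0$ its limit is $0$. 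Hence $\mathbf{z}^k \to \bar{\mathbf{z}}$, a fixed-point of $\mathcal{F}$, which is the claim.

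I expect the only genuinely delicate point to be the derivation (or citation) of the quadratic descent inequality coming from averaged nonexpansiveness, together with the observation that mere nonexpansiveness of $\mathcal{F}$ would not be enough: the $-\frac{1-\kappa}{\kappa}\norm{(\mathcal{I}-\mathcal{F})\mathbf{z}^k}_2^2$ term is exactly what allows one to extract asymptotic regularity, hence a fixed-point-valued cluster point; in finite dimensions the rest is routine bookkeeping with summable perturbations (no demiclosedness principle beyond continuity is needed).
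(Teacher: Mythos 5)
Your argument is correct and complete, and it is essentially the standard quasi-Fej\'er proof that the paper itself does not reproduce: the paper states this lemma without proof and simply cites \cite{combettes2004solving,liang2016convergence} (and \cite{rockafellar1976monotone} for a special case). So your proposal supplies exactly the content hiding behind that citation: the monotonicity-up-to-summable-perturbation of $\norm{\mathbf{z}^k-\mathbf{z}^*}_2$ for any fixed point $\mathbf{z}^*$, the descent inequality from averagedness yielding summability of $\norm{(\mathcal{I}-\mathcal{F})\mathbf{z}^k}_2^2$ and hence asymptotic regularity, and the finite-dimensional compactness step identifying a fixed-point cluster point to which the whole sequence must then converge. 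Your closing remark is also on target: plain nonexpansiveness would only give quasi-Fej\'er monotonicity, and it is the extra negative term supplied by averagedness that produces asymptotic regularity and hence a fixed-point-valued cluster point.

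One small slip worth fixing: with the paper's convention $\mathcal{F}=\kappa\mathcal{I}+(1-\kappa)\mathcal{P}$ (identity carrying the weight $\kappa$), the descent inequality reads
\begin{equation*}
\norm{\mathcal{F}\mathbf{x}-\mathcal{F}\mathbf{y}}_2^2\leq\norm{\mathbf{x}-\mathbf{y}}_2^2-\frac{\kappa}{1-\kappa}\norm{(\mathcal{I}-\mathcal{F})\mathbf{x}-(\mathcal{I}-\mathcal{F})\mathbf{y}}_2^2,
\end{equation*}
i.e.\ the constant is $\kappa/(1-\kappa)$, not $(1-\kappa)/\kappa$; your stated constant corresponds to the more common convention in which $\kappa$ weights the nonexpansive part. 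Since either constant is positive for $\kappa\in(0,1)$, this does not affect the telescoping argument or the conclusion, but the constant should be made consistent with the decomposition you write down.
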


We next show that the operator $\mathcal{T}$ defined by \eqref{eq: nonlinear map} is averaged nonexpansive. To this end, we make the following observation.

\begin{lemma}\label{lemma: nonexpansive of psi-B-D}
The operator $\mathcal{S} \circ \mD^{\top}$ is nonexpansive.   
\end{lemma}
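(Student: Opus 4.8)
The plan is to exhibit $\mathcal{S}\circ\mD^{\top}$ as a composition of two nonexpansive maps and then invoke the elementary fact that a composition of nonexpansive operators is nonexpansive.

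First I would check that $\mathcal{S}$ itself is nonexpansive. Since $\psi$ is proper, lower semi-continuous, convex and continuously differentiable and $\mB$ is linear, the function $\psi\circ\mB$ is proper, lower semi-continuous, convex and continuously differentiable; hence, as recorded in the excerpt, $\mathcal{S}=\bigl(\mI+\tfrac{\gamma}{\lambda}\nabla(\psi\circ\mB)\bigr)^{-1}=\mathrm{prox}_{\frac{\gamma}{\lambda}(\psi\circ\mB)}$. The proximity operator of a proper, lower semi-continuous convex function is nonexpansive, so
\[
\norm{\mathcal{S}(\mathbf{x})-\mathcal{S}(\mathbf{y})}_2\le\norm{\mathbf{x}-\mathbf{y}}_2,\qquad\text{for all }\mathbf{x},\mathbf{y}\in\bR^m.
\]

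Second I would show that $\mD^{\top}:\bR^n\to\bR^m$ is nonexpansive, which is where the tight-frame hypothesis $\mD^{\top}\mD=\mI$ enters. For $\mathbf{z}\in\bR^n$ one has $\norm{\mD^{\top}\mathbf{z}}_2^2=\mathbf{z}^{\top}\mD\mD^{\top}\mathbf{z}$, so that $\norm{\mathbf{z}}_2^2-\norm{\mD^{\top}\mathbf{z}}_2^2=\mathbf{z}^{\top}(\mI-\mD\mD^{\top})\mathbf{z}$ (this is exactly the identity of Lemma~\ref{lemma: relation u-Dv and v-DTu} taken at $\mathbf{v}=\mathbf{0}$). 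Using $\mD^{\top}\mD=\mI$, the matrix $\mI-\mD\mD^{\top}$ is symmetric and idempotent, hence $\mathbf{z}^{\top}(\mI-\mD\mD^{\top})\mathbf{z}=\norm{(\mI-\mD\mD^{\top})\mathbf{z}}_2^2\ge 0$; therefore $\norm{\mD^{\top}\mathbf{z}}_2\le\norm{\mathbf{z}}_2$ for all $\mathbf{z}\in\bR^n$.

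Finally, combining the two bounds, for all $\mathbf{x},\mathbf{y}\in\bR^n$,
\[
\norm{\mathcal{S}(\mD^{\top}\mathbf{x})-\mathcal{S}(\mD^{\top}\mathbf{y})}_2\le\norm{\mD^{\top}\mathbf{x}-\mD^{\top}\mathbf{y}}_2=\norm{\mD^{\top}(\mathbf{x}-\mathbf{y})}_2\le\norm{\mathbf{x}-\mathbf{y}}_2,
\]
which is the asserted nonexpansiveness of $\mathcal{S}\circ\mD^{\top}$. There is no substantial obstacle: the only point requiring a line of argument is the nonexpansiveness of $\mD^{\top}$, and this is immediate from $\mD^{\top}\mD=\mI$; the rest is the standard nonexpansiveness of proximity operators of convex functions together with closure of nonexpansiveness under composition.
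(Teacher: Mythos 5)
Your proof is correct and follows essentially the same route as the paper: write $\mathcal{S}=\mathrm{prox}_{\frac{\gamma}{\lambda}\psi\circ\mB}$, use nonexpansiveness of proximity operators of convex functions, and combine with $\norm{\mD^{\top}}_2\leq 1$. The only difference is that you supply the short justification that $\mI-\mD\mD^{\top}$ is a symmetric idempotent (hence positive semidefinite) to get $\norm{\mD^{\top}\mathbf{z}}_2\leq\norm{\mathbf{z}}_2$, whereas the paper simply cites $\norm{\mD^{\top}}_2=1$ as a known consequence of the tight-frame condition.
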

\begin{proof}
Since $\frac{\gamma}{\lambda}\psi \circ \mB$ is continuously differentiable and convex, we have that 
$$
\mathcal{S}( \mD^{\top}\mathbf{u}) = \mathrm{prox}_{\frac{\gamma}{\lambda}\psi \circ \mB}\left(\mD^{\top} \mathbf{u}\right), \ \ \text{for all} \ \ \mathbf{u}\in \bR^m.
$$
It follows from the convexity of $\frac{\gamma}{\lambda}\psi \circ \mB$ that the operator $\mathrm{prox}_{\frac{\gamma}{\lambda}\psi \circ \mB}$ is nonexpansive. Therefore, the operator $\mathcal{S}$ is also nonexpansive. That is, for $\mathbf{u}_1, \mathbf{u}_2 \in \bR^n$,
$$
\norm{\mathcal{S}\left(\mD^{\top} \mathbf{u}_1\right) - \mathcal{S}\left(\mD^{\top} \mathbf{u}_2\right) }_2 \leq \norm{\mD^{\top} \mathbf{u}_1 - \mD^{\top} \mathbf{u}_2}_2. 
$$
Using the fact that $\norm{\mD^{\top}}_2 = 1$, we have that
$$
\norm{\mD^{\top}\mathbf{u}_1 - \mD^{\top}\mathbf{u}_2}_2 \leq \norm{\mathbf{u}_1 - \mathbf{u}_2}_2.
$$
The above two inequalities ensure that the operator $\mathcal{S} \circ \mD^{\top}$ is nonexpansive.
  \qed
\end{proof}

\begin{proposition}\label{prop: kappa-averaged nonexpansive}
The operator $\mathcal{T}$ defined by \eqref{eq: nonlinear map} is $\frac{1-\alpha}{2}$-averaged nonexpansive.
\end{proposition}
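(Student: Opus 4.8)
The plan is to exhibit a nonexpansive operator $\mathcal{P}:\bR^n\to\bR^n$ such that $\mathcal{T}=\frac{1-\alpha}{2}\mathcal{I}+\frac{1+\alpha}{2}\mathcal{P}$; since $\alpha\in(0,1)$ forces $\frac{1-\alpha}{2}\in(0,1)$ and $\frac{1+\alpha}{2}=1-\frac{1-\alpha}{2}$, this is exactly the assertion that $\mathcal{T}$ is $\frac{1-\alpha}{2}$-averaged nonexpansive. First I would assemble the building blocks. By Lemma~\ref{lemma: nonexpansive of psi-B-D}, the operator $\mathcal{S}\circ\mD^{\top}$ is nonexpansive. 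Since $\mD^{\top}\mD=\mI$, for every $\mathbf{x}\in\bR^m$ we have $\norm{\mD\mathbf{x}}_2^2=\mathbf{x}^{\top}\mD^{\top}\mD\mathbf{x}=\norm{\mathbf{x}}_2^2$, so $\norm{\mD}_2=1$; moreover $\mT_C$ is an orthogonal projection, whence $\norm{\mT_C}_2\le 1$. Therefore the linear operator $\mT_C\mD$ has operator norm at most $1$ and is in particular nonexpansive, and so is the composition
\begin{equation*}
\mathcal{Q}:=\mT_C\mD\circ\mathcal{S}\circ\mD^{\top},
\end{equation*}
being a composition of nonexpansive maps. With this notation, the definition \eqref{eq: nonlinear map} reads $\mathcal{T}=(1-\alpha)\mT_C+\alpha\mathcal{Q}$.

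Next I would peel off an identity component from $\mT_C$. Set $\mathcal{R}:=2\mT_C-\mathcal{I}$, which is the diagonal matrix with entries in $\{-1,1\}$ (value $1$ on the coordinates in $C$ and $-1$ elsewhere); it is an isometry and hence nonexpansive, and $\mT_C=\frac12\mathcal{I}+\frac12\mathcal{R}$. Substituting this into $\mathcal{T}=(1-\alpha)\mT_C+\alpha\mathcal{Q}$ and collecting the identity term gives
\begin{equation*}
\mathcal{T}=\frac{1-\alpha}{2}\mathcal{I}+\frac{1-\alpha}{2}\mathcal{R}+\alpha\mathcal{Q}
=\frac{1-\alpha}{2}\mathcal{I}+\frac{1+\alpha}{2}\Bigl(\tfrac{1-\alpha}{1+\alpha}\mathcal{R}+\tfrac{2\alpha}{1+\alpha}\mathcal{Q}\Bigr).
\end{equation*}
Define $\mathcal{P}:=\frac{1-\alpha}{1+\alpha}\mathcal{R}+\frac{2\alpha}{1+\alpha}\mathcal{Q}$. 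The two weights are nonnegative (because $\alpha\in(0,1)$) and sum to $1$, and both $\mathcal{R}$ and $\mathcal{Q}$ are nonexpansive, so $\mathcal{P}$ is nonexpansive, since a convex combination of nonexpansive operators is nonexpansive by the triangle inequality.

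Combining the displayed identity with the definition of averaged nonexpansiveness then finishes the proof: $\mathcal{T}=\frac{1-\alpha}{2}\mathcal{I}+\bigl(1-\frac{1-\alpha}{2}\bigr)\mathcal{P}$ with $\frac{1-\alpha}{2}\in(0,1)$ and $\mathcal{P}$ nonexpansive, so $\mathcal{T}$ is $\frac{1-\alpha}{2}$-averaged nonexpansive. There is no serious obstacle in this argument; the only point needing a bit of care is arranging the leftover operator $\frac{1-\alpha}{2}\mathcal{R}+\alpha\mathcal{Q}$ as $\frac{1+\alpha}{2}$ times a genuine convex combination of two nonexpansive maps, and for that one must use the reflection splitting $\mT_C=\frac12\mathcal{I}+\frac12(2\mT_C-\mathcal{I})$ rather than merely invoking nonexpansiveness of $\mT_C$, so that the weights add up to exactly $1$.
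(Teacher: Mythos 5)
Your proof is correct and follows essentially the same route as the paper: both split $\mathcal{T}=\frac{1-\alpha}{2}\mathcal{I}+\frac{1+\alpha}{2}\mathcal{T}_1$ using the reflection $2\mT_C-\mI$, and your $\mathcal{P}=\frac{1-\alpha}{1+\alpha}(2\mT_C-\mI)+\frac{2\alpha}{1+\alpha}\mT_C\mD\,\mathcal{S}\circ\mD^{\top}$ is exactly the paper's $\mathcal{T}_1$. The only cosmetic difference is that you conclude nonexpansiveness of $\mathcal{P}$ by recognizing it as a convex combination of nonexpansive maps, whereas the paper verifies the same bound directly via the triangle inequality and the operator norms $\norm{2\mT_C-\mI}_2=1$, $\norm{\mT_C}_2\le 1$, $\norm{\mD}_2=1$.
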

\begin{proof}
% We first consider the set $C$ is empty. From the definition \eqref{matrix: T_C}, the matrix $\mT_C$ is a zero matrix. Therefore, the operator $\mathcal{T}$ is a zero operator. Noting that
% $$
% 0 = \mathcal{T} = \frac{1}{2}\mI + \frac{1}{2}(-\mI)
% $$
% and $(-\mI)$ is nonexpansive, the operator $\mathcal{T}$ is $\frac{1}{2}$-averaged nonexpansive. It follows from $\alpha \in (0, 1)$ and Lemma \ref{averaged_increasing} that $\mathcal{T}$ is $\frac{1-\alpha}{2}$-averaged nonexpansive. 

% Next, we consider the set $C$ is not empty. 

We write $\mathcal{T}= \frac{1-\alpha}{2} \mathcal{I}+ \frac{1+\alpha}{2}\mathcal{T}_1$ with  
$$
\mathcal{T}_1:=\frac{2}{1+\alpha}\left(\mathcal{T}-\frac{1-\alpha}{2} \mathcal{I}\right)
$$
and show that $\mathcal{T}_1$ is nonexpansive. The definition of $\mathcal{T}$ leads to  that 
\begin{equation}\label{T1u}
    \mathcal{T}_1\mathbf{u}=\frac{2}{1+\alpha}\left(\frac{1-\alpha}{2}\left(2\mT_C - \mI\right) \mathbf{u}+\alpha\mathbf{T}_C\mathbf{D}\mathcal{S}(\mathbf{D}^\top \mathbf{u})\right), \ \ \mbox{for} \ \ \mathbf{u}\in\mathbb{R}^n.
\end{equation}
Then, for any $\mathbf{u}_1, \mathbf{u}_2\in\mathbb{R}^n$, it follows from the triangle inequality that 
\begin{align}
    \left\|\mathcal{T}_1 \mathbf{u}_1-\mathcal{T}_1 \mathbf{u}_2\right\|_2 
    \leq \frac{2}{1+\alpha}\Big(& \frac{1-\alpha}{2}\left\|2\mathbf{T}_C-\mathbf{I}\right\|_2\left\|\mathbf{u}_1-\mathbf{u}_2\right\|_2  + \nonumber\\
    &\alpha\left\|\mathbf{T}_C\right\|_2\left\|\mathbf{D}\right\|_2\left\|\mathcal{S}\left(\mD^{\top} \mathbf{u}_1\right) - \mathcal{S}\left(\mD^{\top} \mathbf{u}_2\right)\right\|_2\Big).\label{triangle ineq} 
\end{align}
Since the matrix $\mathbf{T}_C$ is diagonal with diagonal entries being either $1$ or $0$, we find that the matrix $2\mathbf{T}_C- \mathbf{I}$ is also diagonal with diagonal entries being either $1$ or $-1$. Therefore, 
\begin{equation}\label{norm 2TC - I}
\norm{2\mT_C - \mI}_2 = 1.    
\end{equation}
Noting that $\norm{\mT_C}_2 \leq 1$ and $\norm{\mD}_2 = 1$, we obtain from \eqref{triangle ineq}-\eqref{norm 2TC - I} and Lemma \ref{lemma: nonexpansive of psi-B-D} that
\begin{align*}
    \|\mathcal{T}_1\mathbf{u}_1-\mathcal{T}_1 \mathbf{u}_2\|_2\leq \frac{2}{1+\alpha} &\left(\frac{1-\alpha}{2}\norm{\mathbf{u}_1 - \mathbf{u}_2}_2 +\alpha \norm{\mathbf{u}_1 - \mathbf{u}_2}_2\right)=\norm{\mathbf{u}_1 - \mathbf{u}_2}_2
\end{align*}
which proves that $\mathcal{T}_1$ is nonexpansive. \qed

% Since $\kappa\in(0,\frac{1-\alpha}{2}]$, we have that $1-\alpha-\kappa\geq\kappa>0$ which yields 
% \begin{equation}\label{norm of TC kI}
%     \left\|(1-\alpha)\mathbf{T}_C-\kappa \mathbf{I}\right\|_2=1-\alpha-\kappa.
% \end{equation}
% It follows from nonexpansiveness of the operator $\mathrm{prox}_{\frac{\gamma}{\lambda}\psi \circ \mB}$ and the fact $\|\mD^{\top}\|_2 = 1$ that
% \begin{equation}\label{nonexpansiveness of psi and B *}
% \left\|\mathrm{prox}_{\frac{\gamma}{\lambda}\psi\circ\mathbf{B}}(\mathbf{D}^\top u)-\mathrm{prox}_{\frac{\gamma}{\lambda}\psi\circ\mathbf{B}}(\mathbf{D}^\top v)\right\|_2  \leq \| u - v\|_2.    
% \end{equation}
% Noting that $\|\mathbf{T}_C\|_2=\|\mathbf{D}\|_2=1$, we obtain from \eqref{triangle ineq}-\eqref{nonexpansiveness of psi and B *} that 
% \begin{equation}\label{T1u-T1v 2}
%     \|\mathcal{T}_1u-\mathcal{T}_1v\|_2\leq\frac{1}{1-\kappa}\left(1-\alpha-\kappa+\alpha\right)\cdot\|u-v\|_2=\|u-v\|_2. 
% \end{equation}
% which proves that $\mathcal{T}_1$ is nonexpansive. \qed
\end{proof}

We next establish convergence of the inexact fixed-point iterations \eqref{iter: prox u ell0 inexact}-\eqref{iter: prox v ell0 inexact} satisfying condition \eqref{condition for v inexact general}.

\begin{proposition}\label{prop: ineact ell1}
Suppose that $\left\{\mathbf{\tilde \epsilon}^{k+1}\right\}_{k=1}^{\infty}$ is a predetermined sequence in $\mathbb{R}^m$, $\left\{\left(\mathbf{\tilde u}^k, \mathbf{\tilde v}^k\right)\right\}_{k = 1}^{\infty}$ is the sequence generated by \eqref{iter: prox u ell0 inexact}-\eqref{iter: prox v ell0 inexact} satisfying condition \eqref{condition for v inexact general} with $\rho' \in \left(0, \frac{\lambda}{\gamma}\frac{1 - \alpha}{\alpha}\right)$, and the operator $\mathcal{T}$ has at least one fixed-point. Let $\left\{e^{k+1}\right\}_{k=1}^{\infty}$ be a positive sequence of numbers. If $\norm{\mathbf{ \tilde{\epsilon}}^{k+1}}_2 \leq e^{k+1}$ for $k \in \mathbb{N}$ and $\sum_{k= 1}^{\infty} e^{k+1} < \infty$, then $\left\{\left(\mathbf{\tilde u}^k, \mathbf{\tilde v}^k\right)\right\}_{k =  1}^{\infty}$ converges to a local minimizer $(\mathbf{u}^*, \mathbf{v}^*)$ of optimization problem \eqref{model: l0} and $S(\mathbf{u}^*) = C$.
\end{proposition}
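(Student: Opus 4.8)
The plan is to assemble the proposition from the pieces already in place. First I would invoke Proposition~\ref{prop: explicit unchange support prox inexact} to obtain a positive integer $N$ such that $S(\mathbf{\tilde u}^k) = C$ for all $k \geq N$; this is legitimate because the hypotheses of the present proposition include condition \eqref{condition for v inexact general} with $\rho' \in \left(0, \frac{\lambda}{\gamma}\frac{1-\alpha}{\alpha}\right)$, exactly what that proposition requires. From that point on the $\ell_0$ term is frozen to the constant $\lambda|C|$, so the non-convex problem \eqref{model: l0} has been replaced, for the purpose of the tail of the sequence, by the convex problem \eqref{model: convex model}.

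Next I would identify the tail $\left\{\mathbf{\tilde u}^k\right\}_{k=N}^{\infty}$ as an inexact Picard sequence of $\mathcal{T}$. Proposition~\ref{prop: inexact Picard sequence} gives $\mathbf{\tilde u}^{k+1} = \mathcal{T}\mathbf{\tilde u}^k + \alpha \mT_C \mD \mathbf{\tilde\epsilon}^k$ for $k \geq N$, so the perturbation term is $\bm{\epsilon}^{k+1} := \alpha \mT_C \mD \mathbf{\tilde\epsilon}^k$ (one must be a little careful with the index shift, but since $\norm{\mT_C}_2 \le 1$ and $\norm{\mD}_2 = 1$ we get $\norm{\bm{\epsilon}^{k+1}}_2 \leq \alpha \norm{\mathbf{\tilde\epsilon}^k}_2 \leq \alpha e^k$, and the summability $\sum e^{k+1} < \infty$ is preserved up to reindexing and the factor $\alpha$). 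By Proposition~\ref{prop: kappa-averaged nonexpansive} the operator $\mathcal{T}$ is $\frac{1-\alpha}{2}$-averaged nonexpansive, and by hypothesis it has at least one fixed-point. Hence Lemma~\ref{lemma: inexact Picard iteration}, applied to the shifted sequence starting at $k=N$, yields that $\left\{\mathbf{\tilde u}^k\right\}_{k=N}^{\infty}$, and therefore $\left\{\mathbf{\tilde u}^k\right\}_{k=1}^{\infty}$, converges to some fixed-point $\mathbf{u}^*$ of $\mathcal{T}$.

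Then I would handle the $\mathbf{\tilde v}^k$ component. We have $\mathbf{\tilde v}^{k} = \mathcal{S}\left(\mD^{\top}\mathbf{\tilde u}^{k}\right) + \mathbf{\tilde\epsilon}^{k}$ from \eqref{vk}. Since $\mathbf{\tilde u}^k \to \mathbf{u}^*$, since $\mathcal{S} \circ \mD^{\top}$ is continuous (indeed nonexpansive, by Lemma~\ref{lemma: nonexpansive of psi-B-D}), and since $\norm{\mathbf{\tilde\epsilon}^k}_2 \leq e^k \to 0$ (summability forces the terms to zero), we conclude $\mathbf{\tilde v}^k \to \mathbf{v}^* := \mathcal{S}\left(\mD^{\top}\mathbf{u}^*\right)$. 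Now $\mathbf{u}^*$ is a fixed-point of $\mathcal{T}$ and $\mathbf{v}^* = \mathcal{S}\left(\mD^{\top}\mathbf{u}^*\right)$, so Proposition~\ref{prop: fixed-point solution} shows $(\mathbf{u}^*, \mathbf{v}^*)$ is a global minimizer of \eqref{model: convex model}. Finally I must check $S(\mathbf{u}^*) = C$: since $\mathbf{\tilde u}^k \to \mathbf{u}^*$ and each component $\mathbf{\tilde u}^k_i$ with $i \in C$ is, for $k \geq N$, bounded away from zero (because the hard-thresholding output keeps only entries of magnitude at least $\sqrt{2\alpha\gamma}$, per \eqref{eq:prox_ell0_abs 1}), the limit satisfies $|u^*_i| \geq \sqrt{2\alpha\gamma} > 0$ for $i \in C$, while $u^*_i = 0$ for $i \notin C$; hence $S(\mathbf{u}^*) = C$. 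With $S(\mathbf{u}^*) = C$ in hand, Proposition~\ref{prop: global minimizer and local minimizer} converts the global-minimizer-of-\eqref{model: convex model} property into $(\mathbf{u}^*, \mathbf{v}^*)$ being a local minimizer of \eqref{model: l0}, which is the desired conclusion.

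The main obstacle I anticipate is the bookkeeping around the index shift in the Picard reformulation (the error attached to step $k+1$ is built from $\mathbf{\tilde\epsilon}^k$, not $\mathbf{\tilde\epsilon}^{k+1}$) together with the fact that Lemma~\ref{lemma: inexact Picard iteration} is stated for a sequence indexed from $1$ whereas we apply it from $N$; one should note explicitly that translating the index and discarding finitely many terms affects neither averaged nonexpansiveness nor summability of the error bounds. A secondary point needing care is the argument that $|u^*_i|$ stays bounded below on $C$: this relies on the structure of $\mathrm{prox}_{\alpha\gamma\norm{\cdot}_0}$ via \eqref{eq:prox_ell0_abs 1}, and should be stated cleanly rather than waved at. Everything else is a direct chaining of the previously established propositions.
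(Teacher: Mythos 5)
Your proposal is correct and follows essentially the same route as the paper's own proof: reduce to the tail $k\geq N$ via Proposition~\ref{prop: explicit unchange support prox inexact}, apply Lemma~\ref{lemma: inexact Picard iteration} to the inexact Picard reformulation of Proposition~\ref{prop: inexact Picard sequence} using the averaged nonexpansiveness from Proposition~\ref{prop: kappa-averaged nonexpansive}, pass the convergence to $\mathbf{\tilde v}^k$ through the nonexpansiveness of $\mathcal{S}\circ\mD^{\top}$, and conclude via Propositions~\ref{prop: fixed-point solution} and~\ref{prop: global minimizer and local minimizer}. In fact your handling of $S(\mathbf{u}^*)=C$ --- observing via \eqref{eq:prox_ell0_abs 1} that every nonzero component of $\mathbf{\tilde u}^{k}$ has magnitude at least $\sqrt{2\alpha\gamma}$, so the components of the limit on $C$ cannot vanish --- is more careful than the paper's, which passes from $S(\mathbf{\tilde u}^k)=C$ for $k\geq N$ to $S(\mathbf{u}^*)=C$ without justifying why the limit cannot lose support.
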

\begin{proof}
It suffices to show the convergence of $\left\{\left(\mathbf{\tilde u}^k, \mathbf{\tilde v}^k\right)\right\}_{k = N}^{\infty}$, where $N$ is determined in Proposition \ref{prop: explicit unchange support prox inexact}. Proposition  \ref{prop: inexact Picard sequence} recasts the sequence $\left\{\mathbf{\tilde u}^k\right\}_{k = N}^{\infty}$ as an inexact Picard sequence \eqref{iter: Picard T inexact} of $\mathcal{T}$.  By Proposition \ref{prop: kappa-averaged nonexpansive}, $\mathcal{T}$ is $\frac{1-\alpha}{2}$-averaged nonexpansive. Since $\alpha \in (0, 1)$, $\left\|\mT_{C}\right\|_2 \leq 1$, and $\left\|\mD\right\|_2 = 1$, we obtain that $\left\|\alpha \mT_C\mD \mathbf{\tilde \epsilon}^k \right\|_2 \leq \left\|\mathbf{\tilde \epsilon}^k \right\|_2$. Noting that the operator $\mathcal{T}$ has at least one fixed-point, Lemma \ref{lemma: inexact Picard iteration} ensures that $\left\{\mathbf{\tilde u}^k\right\}_{k =  N}^{\infty}$ converges to a fixed-point $\mathbf{u}^*$ of $\mathcal{T}$, that is, 
\begin{equation}\label{convergence for u}
    \lim_{k\to \infty}\left\|\mathbf{\tilde u}^k - \mathbf{u}^*\right\|_2 = 0.
\end{equation}
We next prove that $\left\{\mathbf{\tilde v}^k\right\}_{k=N}^{\infty}$ converges to $\mathcal{S}\left(\mD^{\top}  \mathbf{u}^{*} \right)$. Notice that by the triangle inequality, for each $k\in\mathbb{N}$, 
\begin{align*}
    \left\|\mathbf{\tilde v}^k-\mathcal{S}\left(\mD^{\top}  \mathbf{u}^{*}\right)\right\|_2 \leq \left\|\mathbf{\tilde v}^k-\mathcal{S}\left(\mD^{\top}  \mathbf{\tilde u}^{k} \right)\right\|_2 + \left\|\mathcal{S}\left(\mD^{\top} \mathbf{\tilde u}^{k} \right)-\mathcal{S}\left(\mD^{\top}  \mathbf{u}^{*} \right)\right\|_2.
\end{align*}
Combining definition \eqref{iter: prox v ell0 inexact} of $\mathbf{\tilde v}^k$, Lemma \ref{lemma: nonexpansive of psi-B-D}, and the above inequality, we have that 
\begin{equation}\label{eq: vk DTU* two part}
    \left\|\mathbf{\tilde v}^k-\mathcal{S}\left(\mD^{\top}  \mathbf{u}^{*} \right)\right\|_2\leq\|\tilde\epsilon^k\|_2+\left\|\mathbf{\tilde u}^k-\mathbf{u}^*\right\|_2.
\end{equation}
According to the assumption that $\norm{\mathbf{\tilde \epsilon}^{k+1}}_2 \leq e^{k+1}$ for $k \in \mathbb{N}$ and $\sum_{k= 1}^{\infty} e^{k+1} < \infty$, we obtain that $\lim_{k\to\infty}\left\|\tilde\epsilon^k\right\|_2=0$.
This together with \eqref{convergence for u} and \eqref{eq: vk DTU* two part} yields that
$$
\lim_{k\to\infty}\left\|\mathbf{\tilde v}^k- \mathcal{S}\left(\mD^{\top}  \mathbf{u}^{*} \right)\right\|_2=0.
$$
That is,  $\left\{\mathbf{\tilde v}^k\right\}_{k=N}^{\infty}$ converges to $\mathcal{S}\left(\mD^{\top}  \mathbf{u}^{*} \right)$.

According to Proposition \ref{prop: fixed-point solution}, $(\mathbf{u}^*, \mathbf{v}^*)$ is a global minimizer of optimization problem \eqref{model: convex model}. Proposition \ref{prop: explicit unchange support prox inexact} shows that $S(\mathbf{\tilde u}^k) = C$, for all $k  \geq N$. Hence, $S( \mathbf{u}^*) = C$. This combined with Proposition \ref{prop: global minimizer and local minimizer} guarantees that $(\mathbf{u}^*, \mathbf{v}^*)$ is a local minimizer of optimization problem \eqref{model: l0}.  \qed
\end{proof}

Proposition \ref{prop: ineact ell1} requires the condition $\norm{\mathbf{\tilde \epsilon}^{k+1}}_2=\|\mathbf{\tilde v}^{k+1}-\mathbf{\tilde v}^{k+1}_*\|_2 \leq e^{k+1}$, which is difficult to verify in general. This condition may be replaced by \eqref{condition for v inexact general-2} due to the known estimate (see,  Proposition 3 of \cite{rockafellar1976monotone})
\begin{equation}\label{eq: upper bdd of v v* differentiable}
\norm{\mathbf{\tilde v}^{k+1} - \mathbf{\tilde v}^{k+1}_*}_2 \leq \frac{ \gamma}{\lambda}  \norm{\nabla H(\mathbf{\tilde v}^{k+1}; \mathbf{\tilde u}^{k+1} )}_2, \ \ \mbox{for all}\ \ k \in \mathbb{N}.
\end{equation}
%We now present the convergence of Algorithm \ref{algo: inexact FPPA l0}.

{ Finally, we present the proof for Theorem \ref{thm : convergence algo INFPPA}}.

% \begin{theorem}\label{thm : convergence algo INFPPA}
% { Suppose that $\psi$ is a proper, continuously differentiable, and bounded below convex function, $\mathbf{B}$ is a $p \times m$ matrix, $\mathbf{D}$ is an $n \times m$ matrix satisfying $\mathbf{D}^{\top}\mathbf{D} = \mathbf{I}$, $\lambda$ and $\gamma$ are positive constants, $\alpha \in (0, 1)$,  and the operator $\mathcal{T}$ has at least one fixed-point.}  If $p>0, q>0$, $pq > \norm{\mB}_2^2$,  
% $\rho' \in \left(0, \frac{\lambda}{\gamma}\frac{1 - \alpha}{\alpha}\right)$ and $\sum_{k=1}^{\infty}e^{k+1}<\infty$, then $\left\{(\mathbf{\tilde u}^k, \mathbf{\tilde v}^k)\right\}_{k = 1}^{\infty}$ generated from Algorithm \ref{algo: inexact FPPA l0} converges to a local minimizer $(\mathbf{u}^*, \mathbf{v}^*)$ of problem \eqref{model: l0} and $S(\mathbf{u}^*) = C$.
% \end{theorem}
\begin{proof} [\textbf{Proof of Theorem \ref{thm : convergence algo INFPPA}}]
Note the sequence $\left\{(\mathbf{\tilde u}^{k}, \mathbf{\tilde v}^{k})\right\}_{k=1}^{\infty}$ generated from Algorithm \ref{algo: inexact FPPA l0} is a sequence defined by \eqref{iter: prox u ell0 inexact}-\eqref{iter: prox v ell0 inexact} satisfying conditions \eqref{condition for v inexact general} and \eqref{condition for v inexact general-2}.  The condition \eqref{condition for v inexact general-2} together with inequality \eqref{eq: upper bdd of v v* differentiable} implies 
$$
\norm{\mathbf{\tilde \epsilon}^{k+1}}_2 \leq \frac{\gamma}{\lambda} e^{k+1},\ \ \mbox{for all}\ \ k \in \mathbb{N}.
$$
Noting that $\lambda, \gamma$ are positive constants,  we have that 
$$
\sum_{k=1}^{\infty}\frac{ \gamma}{\lambda}e^{k+1} < \infty.
$$
Therefore, all conditions of Proposition \ref{prop: ineact ell1} are satisfied. Hence, $\left\{\left(\mathbf{\tilde u}^k, \mathbf{\tilde v}^k\right)\right\}_{k =  1}^{\infty}$ converges to a local minimizer of optimization problem \eqref{model: l0} and $S(\mathbf{u}^*) = C$.
\qed
\end{proof}

\section{Numerical Experiments}\label{section: Experiments}

In this section, we present numerical experiments to demonstrate the effectiveness of the proposed inexact algorithm for solving the $\ell_0$ models for three applications including regression, classification,  and image deblurring. 
The numerical results verify the convergence of Algorithm \ref{algo: inexact FPPA l0} and explore the impact of the inner iterations within the algorithm.
Furthermore, the numerical results 
validate the superiority of the solution obtained from the $\ell_0$ model \eqref{model: l0} solved by the proposed algorithm when compared to the solution from the $\ell_1$ model \eqref{model: l1}, across all three examples.

All the experiments are performed on the First Gen ODU HPC Cluster with Matlab. The computing jobs are randomly placed on an X86\_64 server with the computer nodes Intel(R) Xeon(R) CPU E5-2660 0 @ 2.20GHz (16 slots), Intel(R) Xeon(R) CPU E5-2660 v2 @ 2.20GHz (20 slots), Intel(R) Xeon(R) CPU E5-2670 v2 @ 2.50GHz (20 slots), Intel(R) Xeon(R) CPU E5-2683 v4 @ 2.10GHz (32 slots).

For the regression and classification models, we need a kernel as described in \cite{li2018fixed,li2019two}. In the two examples to be presented, we choose the kernel as
\begin{equation}\label{Gaussian_Kernel}
K(\mathbf{x}, \mathbf{y}):=\mathrm{exp}\left(-\|\mathbf{x}-\mathbf{y}\|_2^2/(2\sigma^2)\right),
\ \ \mathbf{x}, \mathbf{y}\in \mathbb{R}^d,
\end{equation}
where $\sigma$ will be specified later. 

We will use Algorithm \ref{algo: inexact FPPA l0} to solve model \eqref{model: l0} for the three examples.
The model \eqref{model: l1} has a convex objective function, and hence, it can be efficiently solved by many optimization algorithms. Here, we employ Algorithms \ref{algo: exact FPPA l1} and  \ref{algo: Inexact FPPA l1} (described below) to solve \eqref{model: l1} with $\mD$ being the identity matrix and $\mD$ being the tight framelet matrix \cite{shen2016wavelet} (or the first order difference matrix \cite{micchelli2011proximity}), respectively. 
\begin{algorithm}[hbt!]
  \caption{Fixed-point proximity scheme for problem \eqref{model: l1} with $\mD$ being identity matrix.}
  
  \label{algo: exact FPPA l1}
  
  \KwInput{ The functions $\psi$, matrix $\mB$ in problem \eqref{model: l1}, $\varphi(\mathbf{v}):= \lambda \norm{\mathbf{v}}_1$; parameters $p>0, q>0$ satisfying $pq > \norm{\mB}_2^2$  }
  
  \KwInitialization{$ \mathbf{v}_1, \mathbf{w}_1$}

   \For{l = 1, 2, \ldots} 
   {
   		$\mathbf{v}_{l+1} = \text{prox}_{\frac{1}{p}\varphi}\left(\mathbf{v}_{l} -  \frac{1}{p}\mB^{\top}\mathbf{w}_{l}\right)$

   		$\mathbf{w}_{l+1} = \frac{1}{q}\left(\mI-\left(\mI + q\nabla \psi\right)^{-1}\right)\left(q\mathbf{w}_{l} + \mB \left(2\mathbf{v}_{l+1}-\mathbf{v}_{l}\right)\right)$
}	
\KwOutput{$\left\{\left( \mathbf{v}_{l}, \mathbf{w}_l\right)\right\}_{l=1}^{\infty}$.}
\end{algorithm}
\begin{algorithm}[hbt!]
  \caption{Inexact fixed-point proximity scheme for problem \eqref{model: l1}}
  
  \label{algo: Inexact FPPA l1}
  
  \KwInput{ The functions $\psi$, matrix $\mD, \mB$ in problem \eqref{model: l1} and $\varphi(\mD \mathbf{v}) = \lambda \norm{\mD \mathbf{v}}_1$; parameters $p_1>0, q_1>0$ satisfying $p_1q_1 > \norm{\mD}_2^2$, $p_2>0, q_2>0$ satisfying $p_2q_2 > \norm{\mB}_2^2$, positive sequence $\left\{e^{k+1}\right\}_{k=1}^{\infty}$ }
  
  \KwInitialization{$ \mathbf{\tilde v}^1, \mathbf{\tilde z}^1, \mathbf{\tilde w}^1$}

   \For{k =1, 2, \ldots} 
   {

   $T(\mathbf{v}; \mathbf{\tilde v}^k, \mathbf{\tilde w}^k) := \frac{p_1}{2}\norm{\mathbf{v} - \left(\mathbf{\tilde v}^k - \frac{1}{p_1}\mD^{\top}\mathbf{\tilde w}^k\right)}_{2}^2 + (\psi \circ \mB)(\mathbf{v})$

   $\mathbf{\tilde z}^{k+1}_1 = \mathbf{\tilde z}^k, l=1$
   
   \Do{$\norm{\nabla T(\mathbf{\tilde v}^{k+1}_l; \mathbf{\tilde v}^k, \mathbf{\tilde w}^k)}_2 > e^{k+1}$}
   {
   
   $\mathbf{\tilde v}^{k+1}_{l+1} = \frac{p_1}{p_1+p_2}\left(\mathbf{\tilde v}^k - \frac{1}{p_1}\mD^{\top}\mathbf{\tilde w}^k\right) + \frac{p_2}{p_1+p_2}\left(\mathbf{\tilde v}^{k+1}_l - \frac{1}{p_2}\mB^{\top}\mathbf{\tilde z}^{k+1}_l\right)$

   $\mathbf{\tilde z}^{k+1}_{l+1} = \frac{1}{q_2}\left(\mI-\left(\mI + q_2 \nabla \psi\right)^{-1}\right)\left(q_2\mathbf{\tilde z}^{k+1}_l + \mB(2\mathbf{\tilde v}^{k+1}_{l+1} - \mathbf{\tilde v}^{k+1}_l)\right)$

   $l:=l+1$
   }
   }
     
     $\mathbf{\tilde v}^{k+1}= \mathbf{\tilde v}^{k+1}_l, \mathbf{\tilde z}^{k+1} = \mathbf{\tilde z}^{k+1}_l$

     $\mathbf{\tilde w}^{k+1} = \frac{1}{q_1}\left(\mI-\text{prox}_{q_1\varphi}\right)\left(q_1\mathbf{\tilde w}^{k} + \mD \left(2\mathbf{\tilde v}^{k+1}-\mathbf{\tilde v}^{k}\right)\right)$
	
\KwOutput{$\left\{\left(\mathbf{\tilde v}^{k}, \mathbf{\tilde w}^k\right)\right\}_{k=1}^{\infty}$.}
\end{algorithm}
Note that Algorithm \ref{algo: exact FPPA l1} was proposed in \cite{li2015multi,micchelli2011proximity} and  Algorithm \ref{algo: Inexact FPPA l1} was introduced in \cite{ren2023}. According to \cite{ren2023}, the sequence $(\mathbf{\tilde v}^{k}, \mathbf{\tilde w}^{k})$ generated from Algorithm \ref{algo: Inexact FPPA l1} converges to a global minimizer of problem \eqref{model: l1} if the condition $\sum_{k=1}^{\infty}e^{k+1} < \infty$ is satisfied.

We choose the parameter  in Algorithm \ref{algo: inexact FPPA l0} \begin{equation}\label{regular rho '}
\rho':=0.99\cdot(\lambda/\gamma)(1/\alpha-1)    
\end{equation}
with $\alpha, \lambda$, and $\gamma$ to be specified. { The parameter $\lambda$ serves as the regularization parameter in models \eqref{model: l0} and \eqref{model: l1}. If this parameter is set too large, it can lead to a decrease in prediction accuracy. Conversely, if it is set too small, the resulting solution may lack sparsity, potentially leading to overfitting. Thus, identifying an appropriate range for $\lambda$ is crucial. Similarly, the parameter $p$ in Algorithm \ref{algo: inexact FPPA l0} and Algorithm \ref{algo: exact FPPA l1}, where $\frac{1}{p}$ serves as the step size, affects the speed of the algorithm. It is crucial to choose an appropriate step size in both algorithms for optimal performance.}

{ The impact of the choice of the parameter $\alpha$ in the exact fixed-point proximity algorithm \eqref{iter: prox u ell0}-\eqref{iter: prox v ell0} was numerically investigated in \cite{shen2016wavelet}, revealing its influence on the convergence speed of the algorithm. %The study suggests a choice of $\alpha$ to be close to $1$ to gain fast convergence. 
We will also explore the effect of the parameter $\alpha$ for the inexact fixed-point proximity algorithm (Algorithm \ref{algo: inexact FPPA l0}).  For the regression, classification, and Gaussian noise image deblurring problems, we test Algorithm \ref{algo: inexact FPPA l0} with different values of $\alpha$, even those with which the algorithm is not guaranteed to converge ($\alpha\geq 1$).
In the instances where the value of $\alpha$ is greater than or equal to $1$, we set the corresponding $\rho'$ to be $0$.}
Moreover, to explore the impact of the inner iteration in Algorithm \ref{algo: inexact FPPA l0}, we design its two variants. In the first variant, the inner iteration is performed for only one iteration. In this variant, the generated sequence may not satisfy the conditions \eqref{condition for v inexact general} and \eqref{condition for v inexact general-2}. We will use several sets of parameters to explore the convergence of this case. In the second variant, we increase the value of $\rho'$ of Algorithm \ref{algo: inexact FPPA l0} in a way that it fails to satisfy the condition of Theorem \ref{thm : convergence algo INFPPA}. We will use a single set of parameters $\lambda,\gamma$ and only increase the value of $\rho'$, testing it at 10, 100, and 1000 times the choice \eqref{regular rho '}.

We adopt the following stopping criterion for all iteration algorithms tested in this study.
For a sequence $\left\{\mathbf{x}^k\right\}_{k=1}^{\infty}$ generated by an algorithm performed in this section, iterations are terminated if 
\begin{equation}\label{eq: stop}
    \frac{\norm{\mathbf{x}^{k+1} - \mathbf{x}^k}_2}{\norm{\mathbf{x}^{k+1}}_2} < \mathrm{TOL},
\end{equation}
where $\mathrm{TOL}$ denotes a prescribed tolerance value.

\subsection{Regression}

In this subsection, we consider solving the regression problem by using the $\ell_0$ model. Specifically, we explore the convergence behavior of Algorithm \ref{algo: inexact FPPA l0} and the impact of its inner iterations and compare the numerical performance of the $\ell_0$ model with that of the $\ell_1$ model. 

In our numerical experiment, we use the benchmark dataset  ``Mg"  \cite{chang2011libsvm} with 1,385 instances, each of which has 6 features. Among the instances, we take $m:=1,000$ of them as training samples and the remanding 385 instances as testing samples. 

{ 
We describe the setting of regression as follows. Our goal is to obtain a function that regresses the data set $\left\{(\mathbf{x}_j, y_j) : j\in \mathbb{N}_m\right\} \subset \mathbb{R}^6 \times \mathbb{R}$ by using the kernel method detailed in \cite{li2018fixed,li2019two}, where $\mathbf{x}_j$ is the input features and $y_j$ is the corresponding label. 
In this experiment, we choose the kernel $K$ as defined in \eqref{Gaussian_Kernel} with $\sigma:= \sqrt{10}$ and $d:=6$. The regression function $f: \bR^6 \to \bR$ is then defined as
\begin{equation}\label{regression prediction function}
f(\mathbf{x}) := \sum_{ i\in \mathbb{N}_m}v_j K(\mathbf{x}_j, \mathbf{x}),\quad\mathbf{x}\in\mathbb{R}^6.
\end{equation}
The coefficient vector $\mathbf{v} \in \bR^m$ that appears in \eqref{regression prediction function} is learned by solving the optimization problem
\begin{equation}\label{regression optimization problem}
    \text{argmin} \left\{ 
  L_\mathbf{y}(\mathbf{v}) + \frac{\lambda}{2\gamma}\norm{\mathbf{u} - \mathbf{v}}_2^2 + \lambda \norm{\mathbf{u}}_0: (\mathbf{u}, \mathbf{v} 
 ) \in \bR^m \times \bR^m\right\},
\end{equation}
where $\lambda, \gamma$ are positive constants and the fidelity term $L: \bR^m \to \bR$ is defined as
$$
 L_\mathbf{y}(\mathbf{v}) := \frac{1}{2}\sum_{k \in \mathbb{N}_m}\left(\sum_{j \in \mathbb{N}_m}v_j K(\mathbf{x}_j, \mathbf{x}_k)  - y_k\right)^2, \quad \mathbf{v} \in \bR^m.
$$
Optimization problem \eqref{regression optimization problem} may be reformulated in the form of \eqref{model: l0} by choosing
\begin{equation}\label{squared loss svm}
\psi(\mathbf{z}):=\frac{1}{2}\norm{\mathbf{z}- \mathbf{y}}^2, \ \ \mathbf{z}\in\mathbb{R}^m,
\end{equation}
$\mathbf{B}:=\mathbf{K}:=[K(\mathbf{x}_j, \mathbf{x}_k):j,k\in\mathbb{N}_m]$
and $\mathbf{D}:=\mathbf{I}_{m}$.
}

% {\bf I recommend that these sentences be removed, please check:
% Given training labels $\mathbf{y} \in\mathbb{R}^m$, the function $\psi$ is chosen to be the squared function 
% \begin{equation}\label{squared loss svm}
% \psi(\mathbf{z}):=\frac{1}{2}\norm{\mathbf{z}- \mathbf{y}}^2, \ \ \mathbf{z}\in\mathbb{R}^m.
% \end{equation}
% Then, the fidelity term $L$ can be formulated as $\psi \circ \mathbf{B}$ with $\psi$ defined by \eqref{squared loss svm} and $\mathbf{B}:=\mathbf{K}$.
% The $\ell_0$ SVM regression  and $\ell_1$ SVM regression  \cite{li2019two} are then formulated as \eqref{model: l0} and \eqref{model: l1}, respectively, with $\psi$ defined by \eqref{squared loss svm},  $\mathbf{B}:=\mathbf{K}$, and $\mathbf{D}:=\mathbf{I}_{m}$.
%}

The $\ell_0$ model \eqref{model: l0} and the $\ell_1$ model \eqref{model: l1} are solved by 
Algorithm \ref{algo: inexact FPPA l0} and Algorithm \ref{algo: exact FPPA l1},  respectively. { For both of these algorithms, the parameters $p$ and $\lambda$ are varied within the range of $[10^{-1}, 10^3]$ and $[10^{-7}, 10^{-2}]$, respectively, and
$q: =(1+10^{-6})\norm{\mathbf{B}}_2^2/p$. In addition, for Algorithm \ref{algo: inexact FPPA l0}, the parameter $\gamma$ is varied within the range $[10^{-7}, 10^{-3}]$ and $e^{k+1} := M/k^2$ with $M:=10^{16}$.} 
{The stopping criterion for Algorithm \ref{algo: inexact FPPA l0} (resp.  Algorithm  \ref{algo: exact FPPA l1}) is either when \eqref{eq: stop} is satisfied with $\mathbf{x}^k:= \mathbf{\tilde u}^k$ (resp.  $\mathbf{x}^k:= \mathbf{v}^k$) and $\mathrm{TOL}:=10^{-6}$, or when the maximum iteration count of $10^{5}$ is reached.}

%Likewise, the stopping criterion for Algorithm \ref{algo: exact FPPA l1} is  either when \eqref{eq: stop} is satisfied with $\mathbf{x}^k:= \mathbf{v}^k$ and $\mathrm{TOL}:=10^{-6}$, or when the maximum iteration count of $10^{5}$ is attained.

{ 
% We explore the effect of $\alpha$ in the Algorithm \ref{algo: inexact FPPA l0} by
% testing $\alpha$ with different values. The values of the objective function of model \eqref{model: l0} against the number of training iterations for the outer loop of the algorithm with different values of $\alpha$ are displayed in Figure \ref{fig:regression alpha}. We can observe for the same number of iterations the algorithm with a value of $\alpha$ close to $1$ trends to produce a smaller number of the objective function value.  
% For the instances where $\alpha$ is set to either $1$ or $1.01$, the numerical example also exhibits impressive performance. However, our theoretical result requires $\alpha \in (0, 1)$. To adhere to this requirement,  
% we set $\alpha$ to be $0.99$ in the rest of this example.

We first explore the effect of the parameter $\alpha$ in convergence of Algorithm \ref{algo: inexact FPPA l0} by
testing different values of $\alpha$. Values of the objective function of model \eqref{model: l0} against the number of iterations for the outer loop of  Algorithm \ref{algo: inexact FPPA l0} with different values of $\alpha$ are displayed in Figure \ref{fig: L0 convergence divergence}. 
We observe from Figure \ref{fig:regression_case_1_convergence_show} that for $\alpha \in (0, 2]$, Algorithm \ref{algo: inexact FPPA l0} with a larger value of $\alpha$ trends to produce a smaller value of the objective function. This suggests that for $\alpha \in (0, 2]$, increasing the value of $\alpha$ can accelerate the speed of convergence when implementing Algorithm \ref{algo: inexact FPPA l0}. However, for $\alpha > 2$, as shown in Figure \ref{fig:regression_case_1_divergence_show}, Algorithm \ref{algo: inexact FPPA l0} diverges. In the remaining part of this example, we choose $\alpha:=0.99$ so that the hypothesis of Theorem \ref{thm : convergence algo INFPPA} on the range of $\alpha$ is satisfied, which guarantees convergence of Algorithm \ref{algo: inexact FPPA l0}.

}

We then apply Algorithm \ref{algo: inexact FPPA l0} and its two variants to test their convergence for the $\ell_0$ model. We use four sets of parameters for Algorithm \ref{algo: inexact FPPA l0} and the first variant, and a single set of parameters for the second variant. Figure \ref{fig: regression algorithm 1}, \ref{fig: regression algorithm one inner step}, and \ref{fig: regression algorithm extend inner condition}  display values of the objective function generated from Algorithm \ref{algo: inexact FPPA l0}, the first variant, and the second variant, respectively. Figure \ref{fig: regression algorithm 1} confirms convergence of Algorithm \ref{algo: inexact FPPA l0} and Figure \ref{fig: regression algorithm one inner step} reveals divergence of the first variant. From Figure \ref{fig: regression algorithm extend inner condition}, we observe that a slight increase in $\rho'$ maintains a convergence trend in the objective function value, but a significant increase, for example, changing it to $1000 \times \rho'$, leads to divergence.  These results demonstrate that the inclusion of the inner iteration is essential for effectively solving the $\ell_0$ model. One conceivable explanation for this phenomenon could be that in the regression problem, the norm of $\mB$ in the objective function \eqref{target function in model: l0} is as large as 863, and thus even slight variations in the input variable can have a significant impact on the resulting objective function value. Therefore, the inner iteration is critical for controlling the error for each inexact step.

% \begin{figure}
%     \centering
%     \includegraphics[scale=0.7]{fig/regression_case_2_alpha_1_1.01.pdf}
%     \caption{The effect of the choice of parameter $\alpha$ on the convergence speed of the proposed Algorithm \ref{algo: inexact FPPA l0} for the regression problem.}
%     \label{fig:regression alpha}
% \end{figure}

\begin{figure}
      \centering
	   \begin{subfigure}{0.45\linewidth}
                \includegraphics[width=\linewidth]{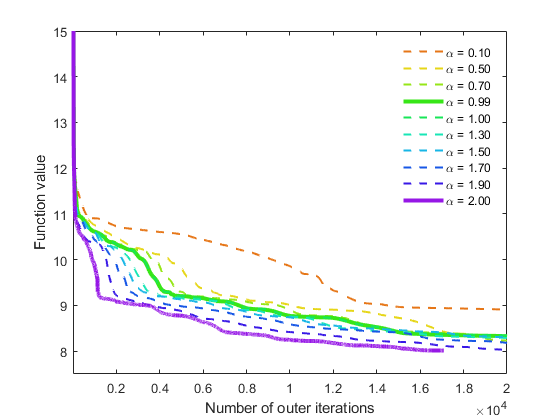}
            	\caption{}
    	       \label{fig:regression_case_1_convergence_show}
	   \end{subfigure}
    \hfill
	   \begin{subfigure}{0.45\linewidth}
        		\includegraphics[width=\linewidth]{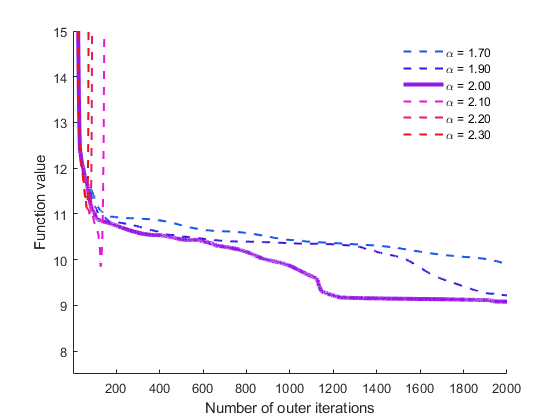}
        		\caption{}
        		\label{fig:regression_case_1_divergence_show}
	    \end{subfigure}
	
	\caption{
 The effect of choices of parameter $\alpha$ on convergence of Algorithm \ref{algo: inexact FPPA l0} for regression.}
	\label{fig: L0 convergence divergence}
\end{figure}

\begin{figure}
      \centering
	   \begin{subfigure}{0.45\linewidth}
                \includegraphics[width=\linewidth]{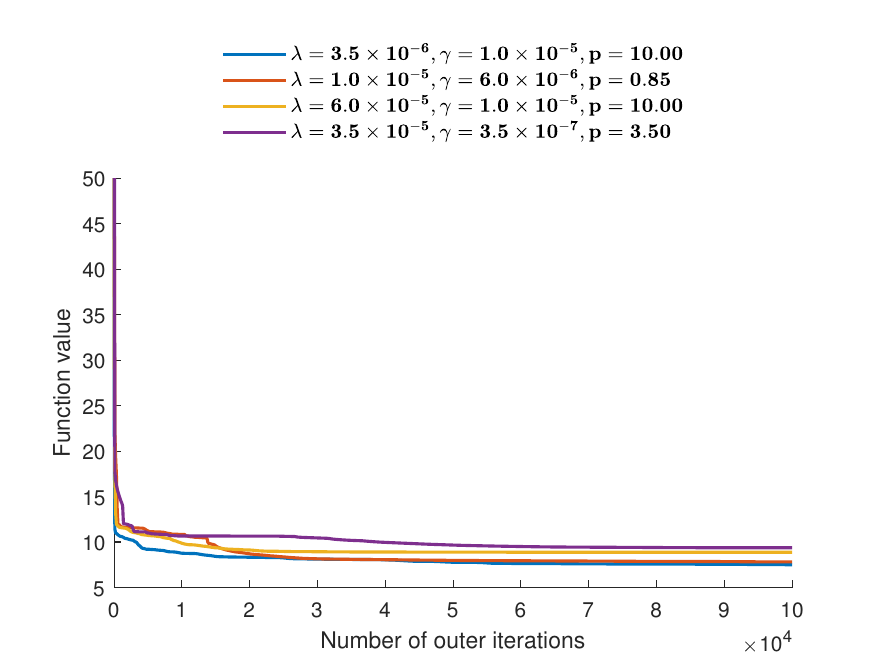}
            	\caption{}
    		  \label{fig: regression algorithm 1}
	   \end{subfigure}
    \hfill
	   \begin{subfigure}{0.45\linewidth}
        		\includegraphics[width=\linewidth]{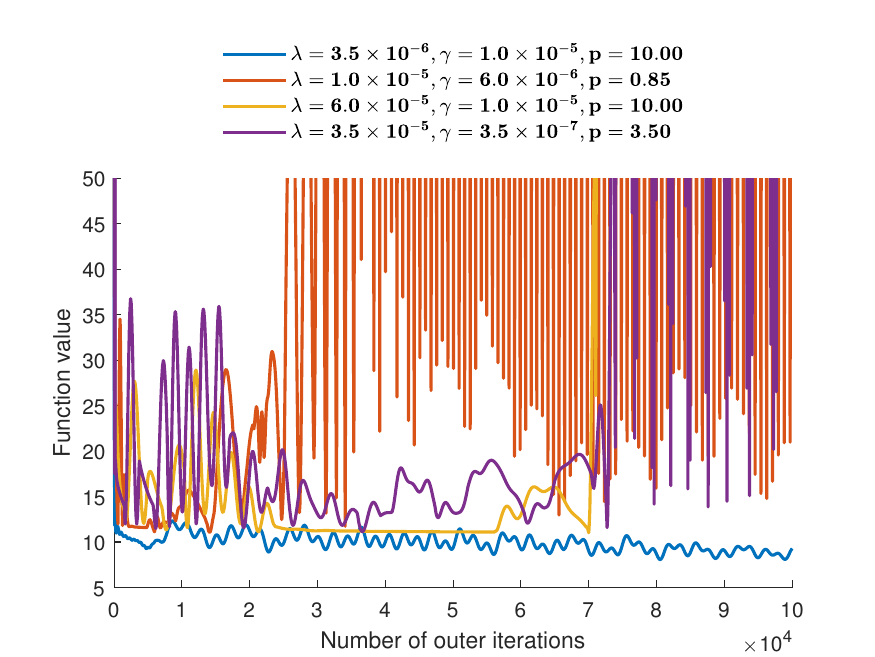}
        		\caption{}
        		\label{fig: regression algorithm one inner step}
	    \end{subfigure}
	\vfill
	     \begin{subfigure}{0.45\linewidth}
		 \includegraphics[width=\linewidth]{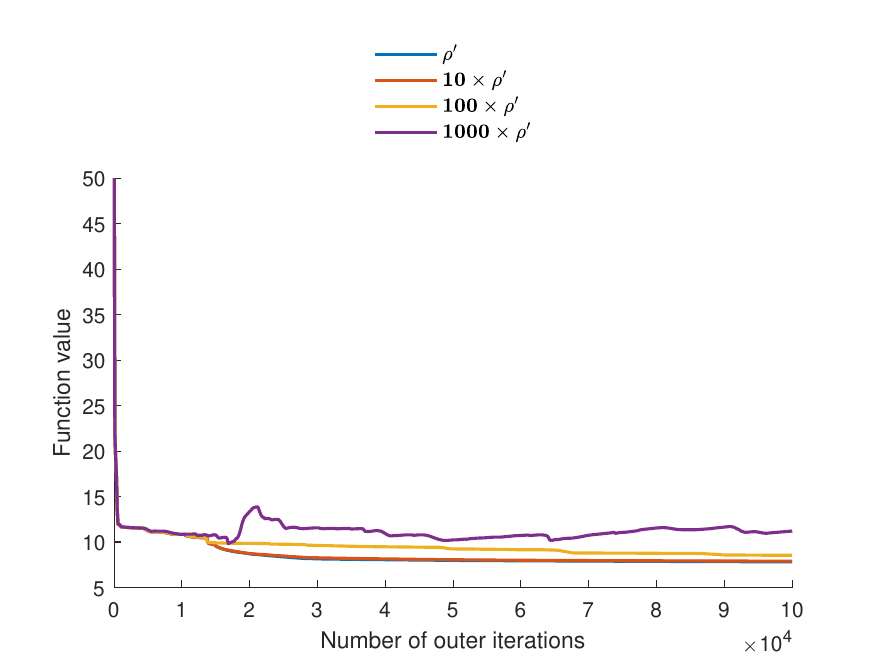}
		 \caption{}
		 \label{fig: regression algorithm extend inner condition}
	      \end{subfigure}
	\caption{
 Convergence of Algorithm \ref{algo: inexact FPPA l0} vs its  first and second variants for regression: (a), (b), and (c) display objective function values generated from Algorithm \ref{algo: inexact FPPA l0}, the first variant, and the second variant, respectively. In (c), we set $\lambda, \gamma$, and $p$  
 the values $1.0\times 10^{-5}, 6.0\times 10^{-6} $, and $0.85$, respectively.}
	\label{fig: L0 regression convergence show}
\end{figure}

In Table \ref{tab: regression same mse}, we compare the number of nonzero (NNZ) components of the solutions obtained by the $\ell_0$ model and the $\ell_1$ model, when they have comparable testing accuracy. It is clear from Table \ref{tab: regression same mse} that when the mean squared error on the testing dataset ($\mathrm{TeMSE}$) falls within varying intervals, the $\ell_0$ model consistently generates solutions sparser than those generated by the $\ell_1$ model, with comparable mean squared errors on the training dataset ($\mathrm{TrMSE}$).

\begin{table}
    \caption{Comparison of the $\ell_0$ model and the $\ell_1$ model for regression: The NNZ and TrMSE results  within fixed TeMSE intervals.} 
    \small
    \centering
    \begin{tabular}{c|cc|cc}
    \hline
     & \multicolumn{2}{c|}{ NNZ} &  \multicolumn{2}{c}{ TrMSE $(\times10^{-2})$}
    \\
    \hline 
     TeMSE $(\times10^{-2})$ & $\ell_0$ model & $\ell_1$ model & $\ell_0$ model & $\ell_1$ model \\\hline
    $[1.60, 1.65)$ & $\mathbf{491}$ & 558  & $\mathbf{1.52}$ & $1.54$ \\ 
    $[1.65, 1.70)$ & $\mathbf{65}$  & 240  & $\mathbf{1.58}$ & $1.67$ \\     
    $[1.70, 1.75)$ & $\mathbf{16}$  & 53   & $\mathbf{1.80}$ & $1.79$ \\
    $[1.75, 1.80)$ & $\mathbf{11}$  & 43   & $\mathbf{1.88}$ & $1.84$ \\
    $[1.80, 1.85)$ & $\mathbf{10}$  & 43   & $\mathbf{1.88}$ & $1.91$ \\
        $[1.85, 1.90)$ & $\mathbf{9}$   & 36   & $\mathbf{1.93}$ & $1.95$
    \\\hline
    \end{tabular}
    \label{tab: regression same mse}
\end{table}

\subsection{Classification}

In this subsection, we consider solving the classification problem by using the $\ell_0$ model. Specifically, we investigate the convergence behavior and the impact of inner iterations on Algorithm \ref{algo: inexact FPPA l0} and compare the numerical performance of the $\ell_0$ model with that of the $\ell_1$ model. 

In our numerical experiment, we use the handwriting digits from the MNIST database \cite{lecun1998gradient}, which is composed of $60,000$ training samples and $10,000$ testing samples of the digits $``0"$ through $``9"$. We consider classifying two handwriting digits ``7" and ``9" taken from MNIST. Handwriting digits ``7" and ``9" are often easy to cause obfuscation in comparison to other pairs of digits. We take $m:=5,000$ instances as training samples and $2,000$ as testing samples of these two digits from the dataset. Specifically, we consider training data set $\left\{(\mathbf{x}_j, y_j) : j\in \mathbb{N}_m\right\} \subset \mathbb{R}^d \times \left\{-1, 1\right\}$, where $\mathbf{x}_j$ are images of the digit $7$ or $9$. 

{ 
The setting of classification is described as follows.
Our goal is to find a function that separates the training data into two groups with label $y_j := 1$ (for digit 7) and $y_j:=-1$ (for digit 9). We employ the kernel method to derive such a function, as detailed in \cite{Xu2023Sparse}. We choose the kernel $K$ as defined in \eqref{Gaussian_Kernel} with $\sigma:=4$ and $d:=784$. The classification function $f: \bR^{784} \to \left\{-1, 1\right\}$  is then defined as
\begin{equation}\label{classification prediction function}
f(\mathbf{x}) := \text{sign}\left(\sum_{ i\in \mathbb{N}_m}v_j K(\mathbf{x}_j, \mathbf{x})\right),\ \mathbf{x}\in\mathbb{R}^{784}, 
\end{equation}
where $\text{sign}$ denotes the sign function,  assigning $-1$ for negative inputs and $1$ for non-negative inputs.
The coefficient vector $\mathbf{v} \in \bR^m$ that appears in \eqref{classification prediction function} is learned by solving the optimization
\begin{equation}\label{classification optimization problem}
    \text{argmin} \left\{L_\mathbf{y}(\mathbf{v}) + \frac{\lambda}{2\gamma}\norm{\mathbf{u} - \mathbf{v}}_2^2 + \lambda\norm{u}_0: (\mathbf{u}, \mathbf{v}) \in \bR^{n} \times \bR^{m}\right\},
\end{equation}
where $\lambda, \gamma$ are positive constants and the fidelity term $L: \bR^m \to \bR$ is defined as
$$
L_\mathbf{y}(\mathbf{v}) := \frac{1}{2}\sum_{k\in \mathbb{N}_m}\left(\text{max}\left\{1 - y_k\left(\sum_{j \in \mathbb{R}^m }v_j K(\mathbf{x}_j, \mathbf{x}_k)\right), 0\right\}\right)^2, \quad  \mathbf{v} \in \bR^m.
$$
Optimization problem \eqref{classification optimization problem} may be reformulated in the form of \eqref{model: l0} by choosing 
\begin{equation}\label{squared hinge loss}
    \psi(\mathbf{z}):=\frac{1}{2}\sum\limits_{j\in\mathbb{N}_m}\left(\max\{1-z_j,0\}\right)^2, \ \ \mathbf{z}\in\mathbb{R}^m,
\end{equation}
$\mathbf{D}:=\mathbf{I}_{m}$ and $\mathbf{B}:=\mathbf{Y}\mathbf{K}$ where $\mathbf{K}:=[K(\mathbf{x}_j,\mathbf{x}_k):j,k\in\mathbb{N}_m]$ and $\mathbf{Y}:=\mathrm{diag}(y_j:j\in\mathbb{N}_m)$.
}
% We set up the label matrix $\mathbf{Y}:=\mathrm{diag}(y_j:j\in\mathbb{N}_m)$ and the kernel matrix $\mathbf{K}:=[K(\mathbf{x}_j,\mathbf{x}_k):j,k\in\mathbb{N}_m]$. We define the function $\psi$ as follows

% Then, the the fidelity term $L$ can be formulated as $\psi \circ \mathbf{B}$ with $\psi$ defined by \eqref{squared hinge loss} and $\mathbf{B}:=\mathbf{Y}\mathbf{K}$.
%  The $\ell_0$ SVM classification and $\ell_1$ SVM classification  \cite{liu2023parameter} are 
%  formulated as \eqref{model: l0} and \eqref{model: l1}, respectively, with $\psi$ defined as \eqref{squared hinge loss}, $\mathbf{B}:=\mathbf{Y}\mathbf{K}$, and $\mathbf{D}:=\mathbf{I}_{m}$. 

The $\ell_0$ model \eqref{model: l0} and the $\ell_1$ model \eqref{model: l1} are solved by Algorithm \ref{algo: inexact FPPA l0} and Algorithm \ref{algo: exact FPPA l1}, respectively.
{

% We next specify parameters in both algorithms. Let
% \begin{align*}
%  &\mathbf{I}_1 := \left\{1, 5, 10, 20, 40, 50, 70, 100
% \right\}, \\
% &\mathbf{I}_2 := \left\{k\times 10^{-3}: k \in \mathbb{N}_{10}\right\}\cup \left\{k\times 10^{-2}: k=1, 2, 5, 10\right\},\\
% &\mathbf{I}_3 := \left\{(5k)\times 10^{-j}:k=1,2,  j=5, 6\right\} \cup \left\{(2k-1) \times 10^{-4}: k\in \mathbb{N}_5\right\}\\
% &\quad \quad \ \cup \left\{k \times 10^{-3}: k=1,3, 5, 10\right\},\\
% &\mathbf{I}_4 :=\left\{(2k-1)\times 10^{-j}: k\in \mathbb{N}_5, j = -1, 0, 1\right\},\\
% &\mathbf{I}_5 :=\left\{(2k-1)\times 10^{-j}: k\in \mathbb{N}_5\right\}\cup \left\{k\times 10^{-1}: k\in \mathbb{N}_{20}\right\}.
% \end{align*}
% For Algorithm \ref{algo: inexact FPPA l0}, we search for parameters $(p, \lambda, \gamma)$ within the set $\left\{(p, \lambda, \gamma): (p, \lambda, \gamma) \in \mathbf{I}_1 \times  \mathbf{I}_2 \times \mathbf{I}_3\right\}$. For Algorithm \ref{algo: exact FPPA l1}, we search for parameters $(p, \lambda)$ within the set  $\left\{(p, \lambda): (p, \lambda) \in \mathbf{I}_4 \times  \mathbf{I}_5 \right\}$.
For both of these algorithms, the parameter $p$ is varied within the range $[1, 100]$ and $q:=(1+10^{-6})\norm{\mathbf{B}}_2^2/p$.   
For Algorithm \ref{algo: inexact FPPA l0}, the parameters $\lambda$ are varied within the range of $[0.001, 1]$, while for Algorithm \ref{algo: exact FPPA l1}, it varies within $[0.01,5]$. In addition, we vary the parameter $\gamma$ within the range $[10^{-6}, 1]$ and fix  $e^{k+1} := M/k^2$ with $M:= 10^{16}$ for Algorithm \ref{algo: inexact FPPA l0}. 
}
% For both algorithms, we choose $q:=(1+10^{-6})\norm{\mathbf{B}}_2^2/p$. 
% In addition, we set  $e^{k+1} := M/k^2$ with $M:= 10^{16}$ in Algorithm \ref{algo: inexact FPPA l0}. 

{ 
We first investigate the impact of the parameter $\alpha$ on convergence of Algorithm \ref{algo: inexact FPPA l0} by testing it with various values of $\alpha$ and display the results in Figure \ref{fig: L0 classification convergence divergence}. When $\alpha$ falls within the range $(0, 1)$, increasing its value accelerates the algorithm's convergence speed. However, for $\alpha \in (1, 2)$, speed-up of convergence when further increasing $\alpha$ becomes limited. Notably, Algorithm \ref{algo: inexact FPPA l0} diverges when $\alpha \geq 3.5$. To adhere the theoretical requirement $\alpha \in (0, 1)$ as established in Theorem \ref{thm : convergence algo INFPPA}, the same as in the regression problem, we set $\alpha:=0.99$ in the rest of this example.
}

\begin{figure}
      \centering
	   \begin{subfigure}{0.45\linewidth}
                \includegraphics[width=\linewidth]{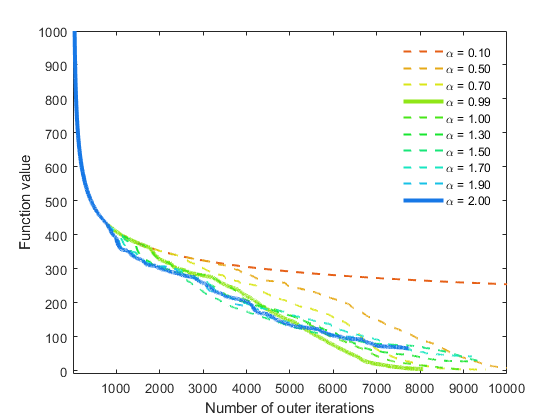}
            	\caption{}
    	       \label{fig:classification_case2_convergence}
	   \end{subfigure}
    \hfill
	   \begin{subfigure}{0.45\linewidth}
        		\includegraphics[width=\linewidth]{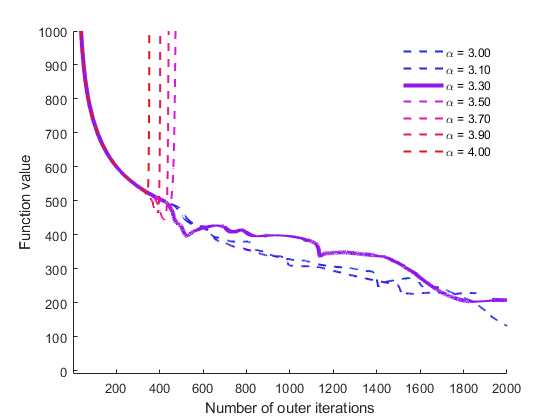}
        		\caption{}
        		\label{fig:classification_case2_divergence}
	    \end{subfigure}
	
	\caption{ The effect of choices of parameter $\alpha$ on convergence of Algorithm \ref{algo: inexact FPPA l0} for  classification.}
	\label{fig: L0 classification convergence divergence}
\end{figure}
% \begin{figure}
%     \centering
%     \includegraphics[scale=0.7]{fig/classification_case_1_1_1.01.pdf}
%     \caption{The effect of the choice of parameter $\alpha$ on the convergence speed of the proposed Algorithm \ref{algo: inexact FPPA l0} for the classification problem.}
%     \label{fig:classification alpha}
% \end{figure}

We then apply Algorithm \ref{algo: inexact FPPA l0} and its two variants to solve the $\ell_0$ model for classification, maintaining the same stopping criterion as used in the previous subsection. Figures \ref{fig: classification algorithm 1}, \ref{fig: classification algorithm one inner step}, and \ref{fig: classification algorithm extend inner condition} display objective function values generated from Algorithm \ref{algo: inexact FPPA l0}, the first variant, and the second variant, respectively. The observed behavior closely resembles that of the previous example, with the key distinction that for the first variant, the objective function value converges for certain parameters and diverges for others. The numerical results demonstrate the critical importance of the inner iteration in solving the $\ell_0$ model, primarily due to the norm of $\mB$ being 592.

We compare the solution of the $\ell_0$ model solved by Algorithm \ref{algo: inexact FPPA l0} with that of the $\ell_1$ model solved by Algorithm \ref{algo: exact FPPA l1}. The stopping criteria for both of these algorithms remains consistent with that in the previous subsection, except for adjusting $\mathrm{TOL}$ to $10^{-4}$ and setting a maximum iteration count of $5\times 10^4$. In Table \ref{tab: classification}, we compare NNZ and the accuracy on the training dataset (TrA) of the solutions obtained from the $\ell_0$ model and the $\ell_1$ model when they have the same accuracy on the testing dataset (TeA). We observe from Table \ref{tab: classification} that the solutions of the $\ell_0$ model have almost half NNZ compared to those of the $\ell_1$ model, and the $\ell_0$ model produces a higher TrA value compared to the $\ell_1$ model. In Table \ref{tab: classification same nnz}, we compare TeA and TrA of the solutions obtained from the $\ell_0$ model and the $\ell_1$ model when they have comparable NNZ. We see from Table \ref{tab: classification same nnz} that the solution of the $\ell_0$ model has slightly better accuracy on the testing set and about one percent superior on the training set.

\begin{figure}
  \centering
   \begin{subfigure}{0.45\linewidth}
            \includegraphics[width=\linewidth]{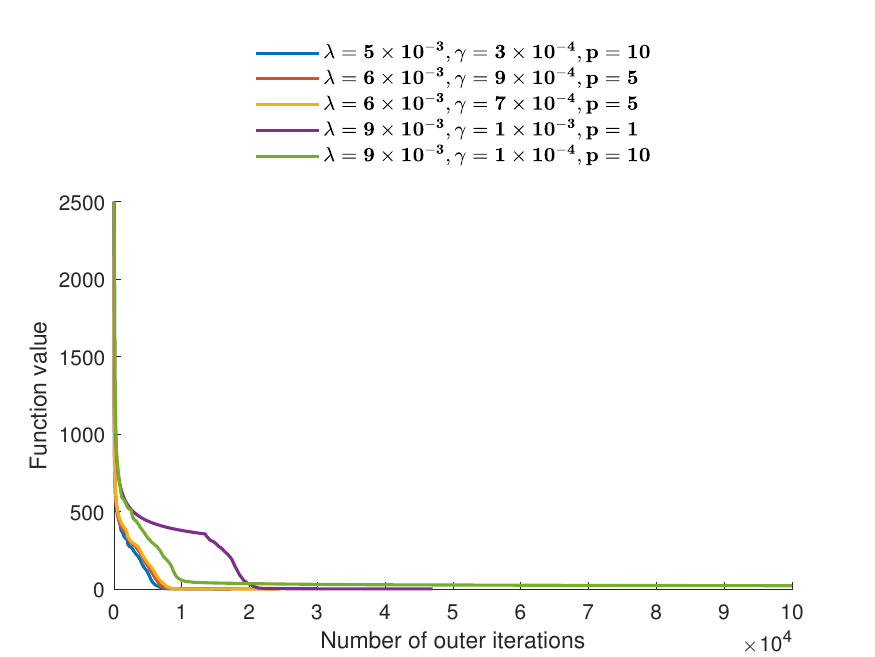}
            \caption{}
          \label{fig: classification algorithm 1}
   \end{subfigure}
\hfill
   \begin{subfigure}{0.45\linewidth}
            \includegraphics[width=\linewidth]{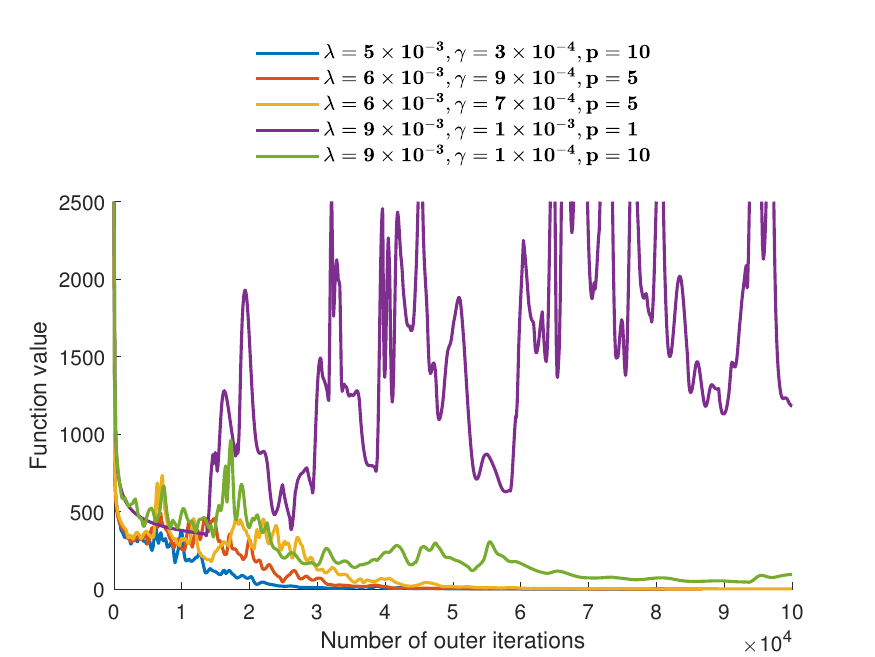}
            \caption{}
            \label{fig: classification algorithm one inner step}
    \end{subfigure}
\vfill
     \begin{subfigure}{0.45\linewidth}
     \includegraphics[width=\linewidth]{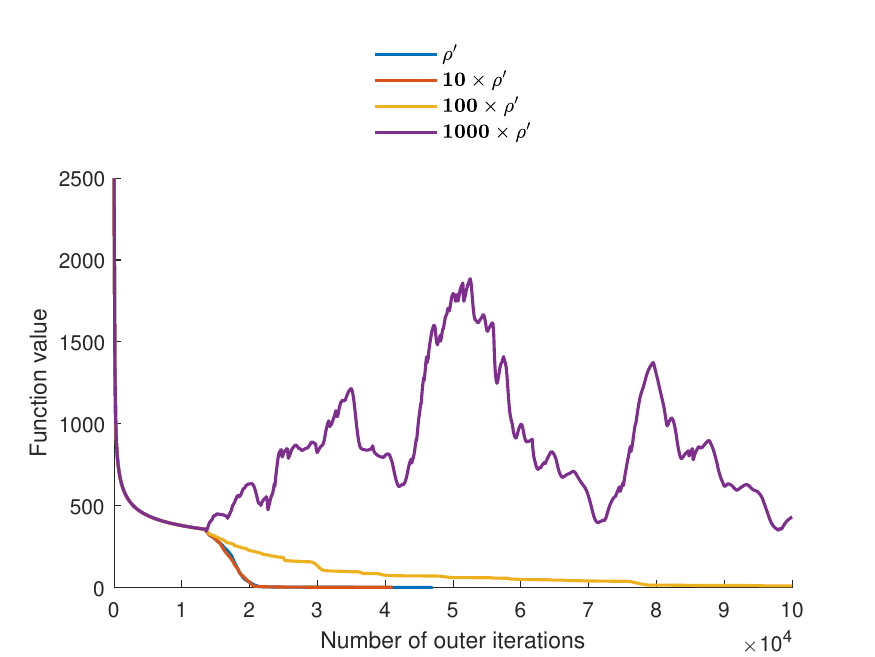}
     \caption{}
     \label{fig: classification algorithm extend inner condition}
      \end{subfigure}
\caption{
 Convergence of Algorithm \ref{algo: inexact FPPA l0} vs its  first and second variants for classification: (a), (b), and (c) display objective function values generated from Algorithm \ref{algo: inexact FPPA l0}, the first variant, and the second variant, respectively. In (c), we set $\lambda, \gamma$, and $p$  
 the values $9.0\times 10^{-3}, 1.0\times 10^{-3} $, and $1$, respectively.}
	\label{fig: L0 classification convergence show}
\end{figure}

\begin{table}
\caption{Comparison of the $\ell_0$ model and the $\ell_1$ model for classification: The NNZ and TrA results  with fixed TeA.} 
    \centering
    \begin{tabular}{c|cc|cc}
    \hline
    &
    \multicolumn{2}{c|}{NNZ} &     \multicolumn{2}{c}{TrA} 
    \\
    \hline 
    
    TeA & $\ell_0$ model & $\ell_1$ model & $\ell_0$ model &  $\ell_1$ model \\
    \hline

    $98.75\%$ & $\mathbf{160}$ & $273$ & $\mathbf{100.00\%}$ & $99.74\%$\\
    
    %$98.75\%$ & $273$ & $99.74\%$ & $160$ & $100.00\%$\\

    $98.65\%$ &  $\mathbf{139}$ &  $296$ & $\mathbf{99.96\%}$& $99.90\%$  \\
    
    $98.55\%$ &  $\mathbf{133}$ &$323$ &$\mathbf{99.98\%}$ &$99.90\%$  \\
    
    $98.45\%$ & $\mathbf{110}$& $264$ & $\mathbf{99.98\%}$& $99.66\%$  \\
    
    $98.35\%$ & $\mathbf{86}$ & $146$ & $\mathbf{99.48\%}$& $99.10\%$ \\
    \hline
    \end{tabular}
    \label{tab: classification}
\end{table}

\begin{table}
\caption{Comparison of the $\ell_0$ model and the $\ell_1$ model for classification: The TeA and TrA results within fixed NNZ intervals.} 
    \centering
    \begin{tabular}{c|cc|cc}
    \hline
    &
        \multicolumn{2}{c|}{TeA} &     \multicolumn{2}{c}{TrA} 
    \\   
    \hline 
    NNZ & $\ell_0$ model  & $\ell_1$ mode & $\ell_0$ model &  $\ell_1$ model\\   \hline 
    $[50, 60]$ & $\mathbf{98.00\%}$ & $97.90\%$ & $\mathbf{98.94\%}$ & $97.48\%$  \\ 
    $[80, 90]$ & $\mathbf{98.35\%}$ & $98.25\%$& $\mathbf{99.48\%}$ & $98.34\%$ \\  
    $[100, 120]$&$\mathbf{98.50\%}$ & $98.25\%$ & $\mathbf{99.76\%}$ &$98.74\%$  \\ 
    $[150, 160]$&$\mathbf{98.75\%}$ &$98.25\%$  &$\mathbf{100.00\%}$&$99.20\%$\\     
    $[190, 200]$ &$\mathbf{98.45\%}$& $98.30\%$ &$\mathbf{99.94\%}$&$99.32\%$  \\  \hline
    \end{tabular}

    \label{tab: classification same nnz}
\end{table}

\subsection{Image Deblurring}

{
In this subsection, we consider the $\ell_0$ model solved by Algorithm \ref{algo: inexact FPPA l0} applied to image deblurring. Specifically, we compare performance of models \eqref{model: l0} and \eqref{model: l1} for image deblurring for images contaminated with two different types of noise: the Gaussian noise and the Poisson noise. We present numerical results obtained by using the $\ell_0$ model with those obtained by using two $\ell_1$ models, which will be described precisely later.
Again, we study the impact of the inner iteration on Algorithm \ref{algo: inexact FPPA l0} when applied to image deblurring. In this regard, we present only the Gaussian noise case since the Poisson noise case is similar. 
}

In our numerical experiments, we choose six clean images of ``clock'' with size $200 \times 200$, ``barbara'' with size $256 \times 256$, ``lighthouse'', ``airplane'', and ``bike'' with size $512 \times 512$, and ``zebra'' with size $321 \times 481$, shown in Figure \ref{fig:clean images}. Each tested image is converted to a vector $\mathbf{v}$ via vectorization. The blurred image is modeled as 
\begin{equation}\label{model: deblur problem}
    \mathbf{x} := \mathcal{P}(\mK \mathbf{v})
\end{equation}
where $\mathbf{x}$ denotes the observed corrupted image, $\mK$ is the motion blurring kernel matrix, and $\mathcal{P}$ is the operation that describes the nature of the measurement noise and how it affects the image acquisition. The matrix $\mK$ is generated by the MATLAB function ``fspecial'' with the mode being motion, the angle of the motion blurring being 45 and the length of the motion blurring (``MBL'') varying from $9,15,21$ and all blurring effects are generated by using the MATLAB function ``imfilter'' with symmetric boundary conditions \cite{hansen2006deblurring}. We choose the operator $\mathcal{P}$ as the Gaussian noise or the Poisson noise. The quality of a reconstructed image $\widetilde{ \mathbf{v}}$ obtained from a specific model is evaluated by the peak signal-to-noise ratio (PSNR)
$$
\text { PSNR }:=20 \log _{10}\left(\frac{255}{\left\| \mathbf{v}-\widetilde{ \mathbf{v}}\right\|_2}\right).
$$

\begin{figure}
  \centering
   \begin{subfigure}{0.3\linewidth}
            \includegraphics[width=\linewidth]{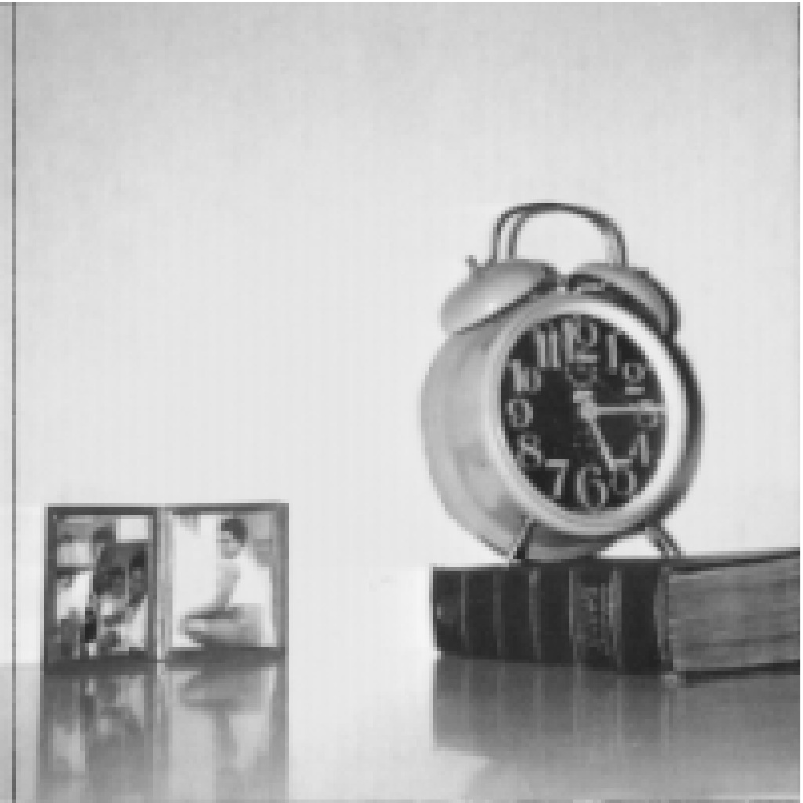}
   \end{subfigure}
   \begin{subfigure}{0.3\linewidth}
            \includegraphics[width=\linewidth]{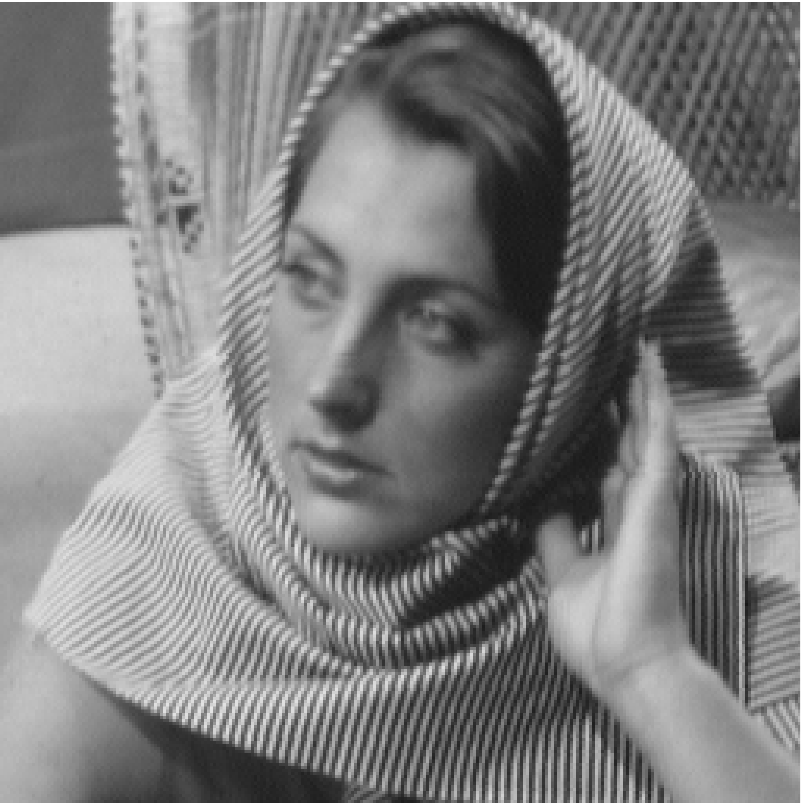}
    \end{subfigure}
     \begin{subfigure}{0.3\linewidth}
     \includegraphics[width=\linewidth]{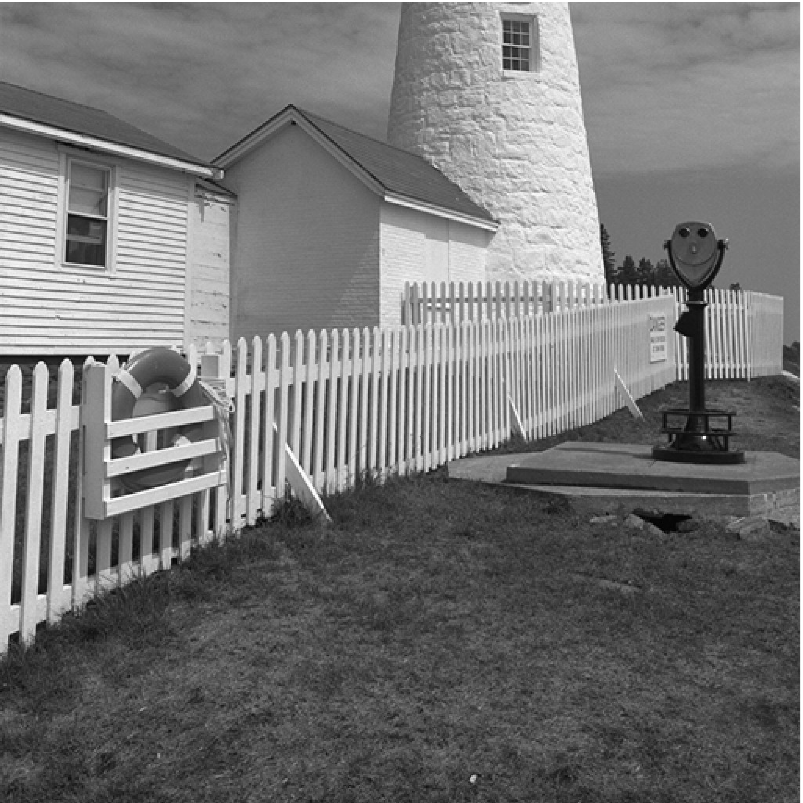}
      \end{subfigure}
   \begin{subfigure}{0.3\linewidth}
            \includegraphics[width=\linewidth]{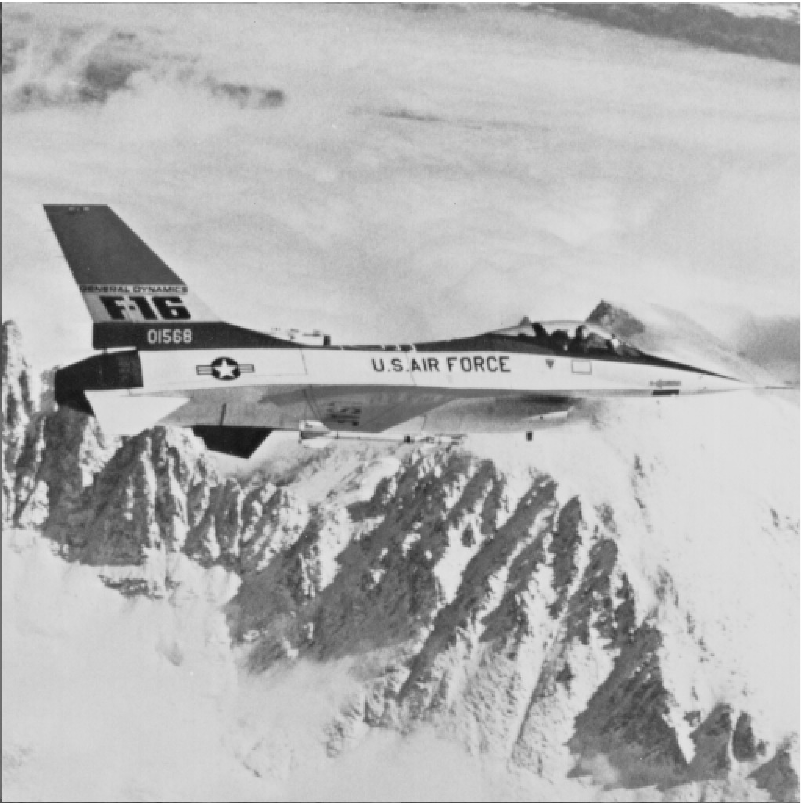}
   \end{subfigure}
   \begin{subfigure}{0.3\linewidth}
            \includegraphics[width=\linewidth]{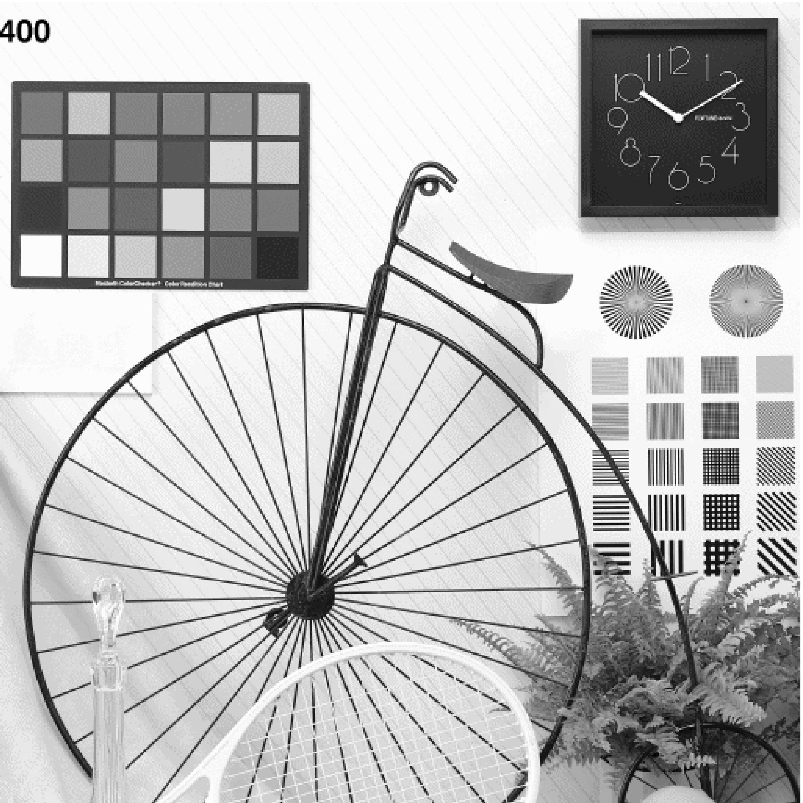}
    \end{subfigure}
     \begin{subfigure}{0.3\linewidth}
     \includegraphics[width=\linewidth]{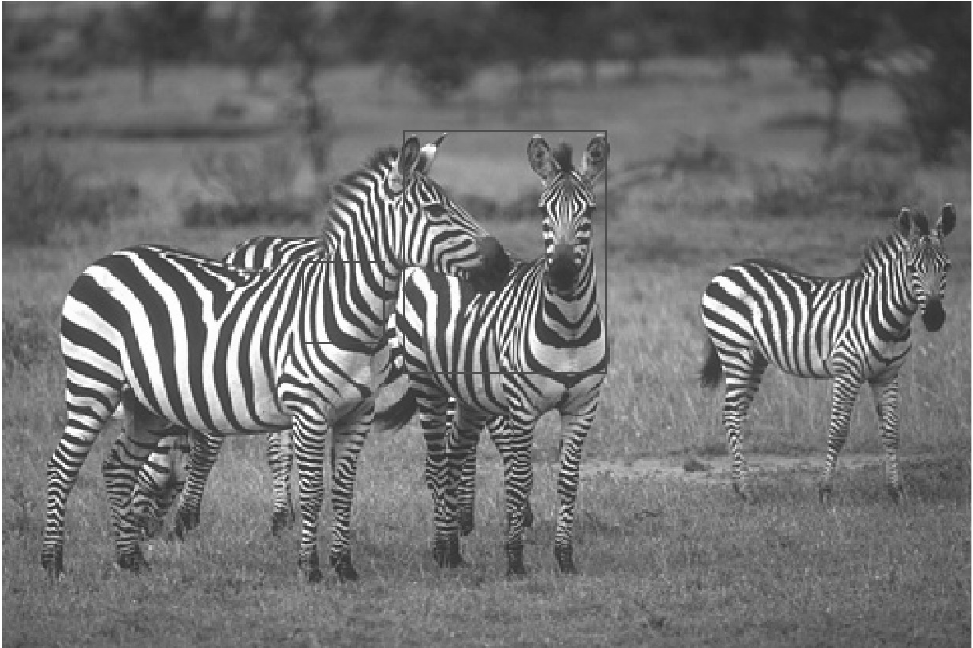}
      \end{subfigure}
	\caption{Clean images.
}
	\label{fig:clean images}
\end{figure}

\subsubsection{Gaussian noise image deblurring}
%\noindent
%\textbf{Gaussian noise image deblurring}

{ In this case, we choose the operation $\mathcal{P}$ as the Gaussian noise}, and model \eqref{model: deblur problem} becomes 
\begin{equation}\label{model: gaussian noise deblur problem}
   \mathbf{x} := \mathcal{N}(\mK \mathbf{v}, \sigma^2 \mathbf{I} ) 
\end{equation}
where $\mathcal{N}(\mathbf{\mu}, \sigma^2 \mathbf{I})$ denotes the Gaussian noise with mean $\mathbf{\mu}$ and covariance matrix $\sigma^2 \mathbf{I}$. The clean images are first blurred by the motion blurring kernel matrix and then contaminated by the Gaussian noise with standard deviation $\sigma := 3$. The resulting corrupted images are shown in Figure \ref{fig:Blurred and noise images}. 
Given an observed image $\mathbf{x} \in \bR^m$, the function $\psi$ in all three models is chosen to be 
\begin{equation}\label{squared loss}
    \psi(\mathbf{z}):=\frac{1}{2}\norm{\mathbf{z}-\mathbf{x}}_{2}^2, \ \ \mathbf{z} \in \bR^m. 
\end{equation}
The matrix $\mB$ appearing in both models \eqref{model: l0} and \eqref{model: l1} is specified as the motion blurring kernel matrix $\mK$. 
Models \eqref{model: l0} and \eqref{model: l1} with $\mD$ being the tight framelet matrix constructed from discrete cosine transforms of size $7\times 7$ \cite{shen2016wavelet} are respectively referred to as ``L0-TF" and ``L1-TF". Model  \eqref{model: l1} with $\mD$ being the first order difference matrix \cite{micchelli2011proximity} is referred to as ``L1-TV".

\begin{figure}
  \centering
   \begin{subfigure}{0.3\linewidth}
            \includegraphics[width=\linewidth]{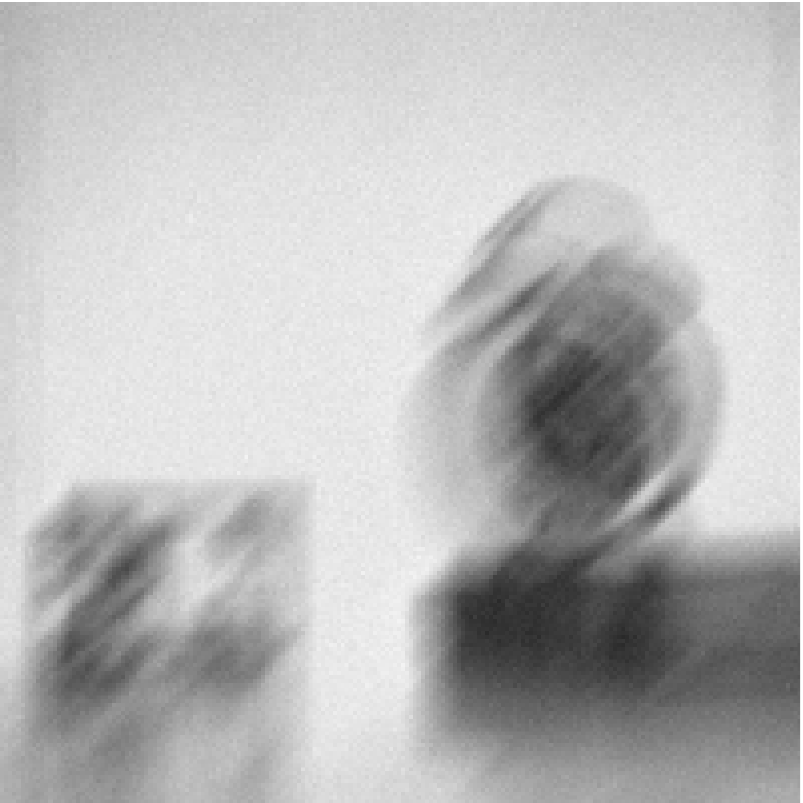}
   \end{subfigure}
   \begin{subfigure}{0.3\linewidth}
            \includegraphics[width=\linewidth]{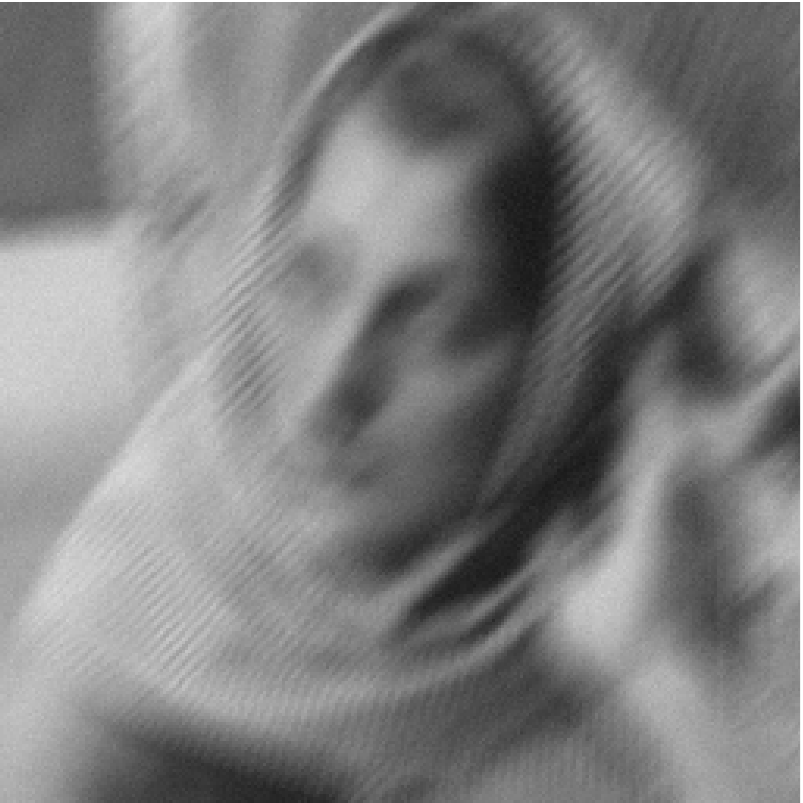}
    \end{subfigure}
     \begin{subfigure}{0.3\linewidth}
     \includegraphics[width=\linewidth]{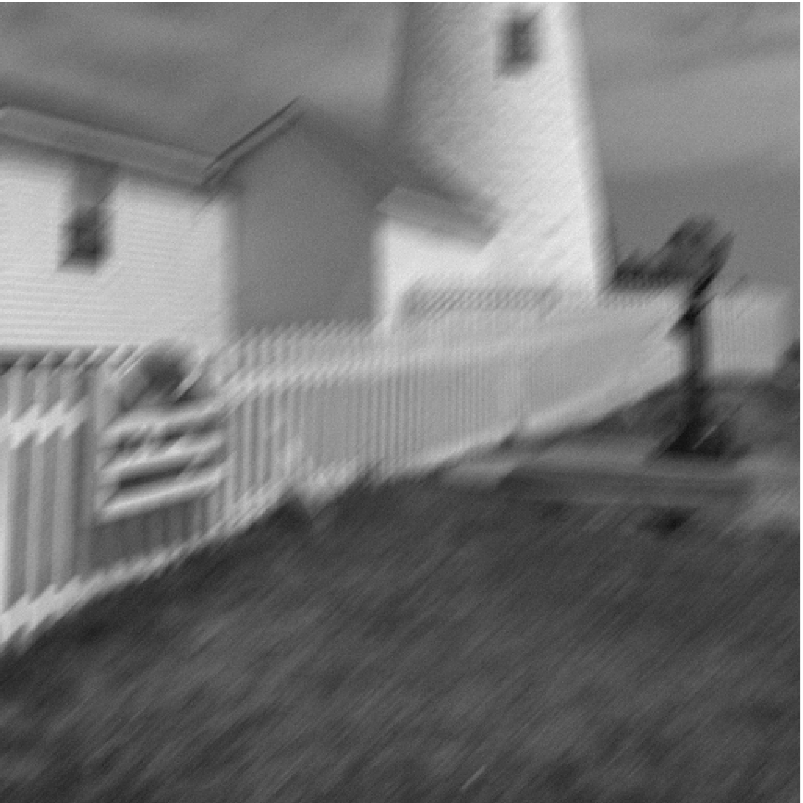}
      \end{subfigure}
   \begin{subfigure}{0.3\linewidth}
            \includegraphics[width=\linewidth]{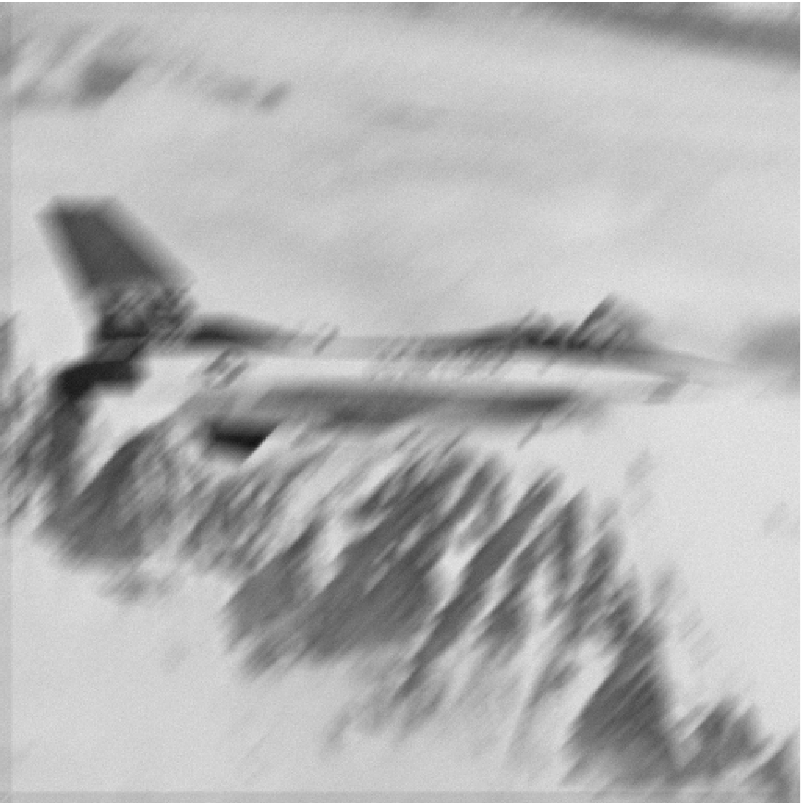}
   \end{subfigure}
   \begin{subfigure}{0.3\linewidth}
            \includegraphics[width=\linewidth]{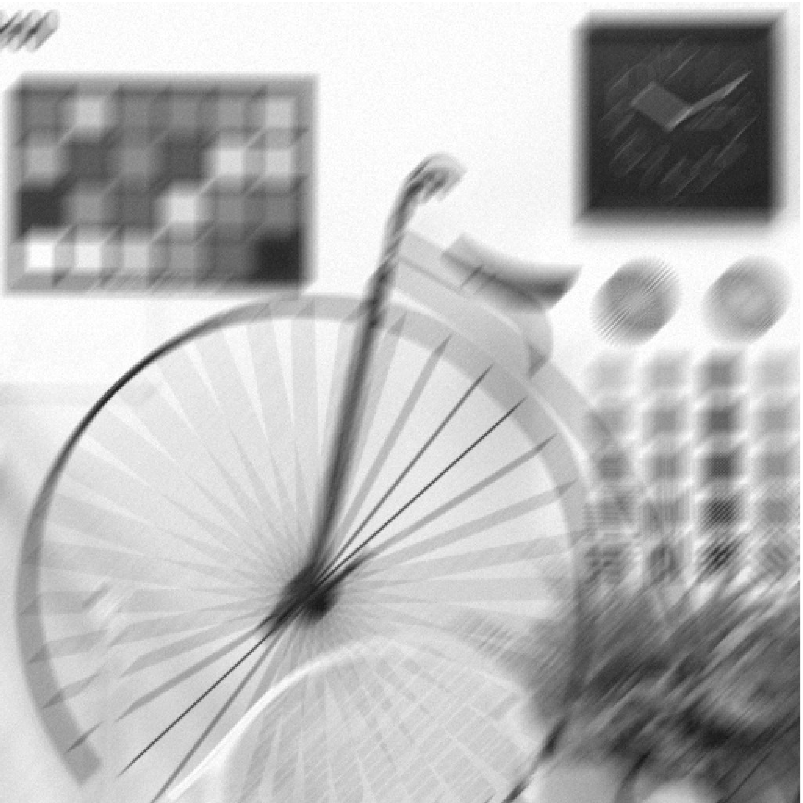}
    \end{subfigure}
     \begin{subfigure}{0.3\linewidth}
     \includegraphics[width=\linewidth]{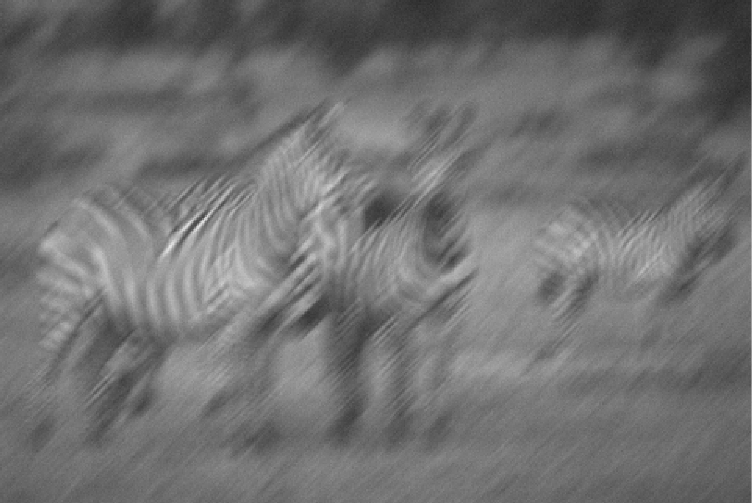}
      \end{subfigure}
	\caption{Corrupted images with MBL being 21 and Gaussian noise.
}
	\label{fig:Blurred and noise images}
\end{figure}

The L0-TF model is solved by Algorithm \ref{algo: inexact FPPA l0}, while both the L1-TF and L1-TV models are solved by Algorithm \ref{algo: Inexact FPPA l1}. In all three models, the parameter $\lambda$ is adjusted within the interval $[0.01, 2]$. For the L0-TF model, we vary the parameter $\gamma$ within the interval $[0.1, 6]$, while maintaining fixed values of $p := 0.1$, $q:=(1+10^{-6}) \times 4 /p$, and $e^{k+1}:= M/k^2$ with $M := 10^{6}$ in Algorithm \ref{algo: inexact FPPA l0}. For both the L1-TF and L1-TV models, the parameter $p_1 = p_2$ is varied within the interval $[0.01, 1]$ and $q_2:=(1+10^{-6}) \times  4/p_2$, while $q_1:=(1+10^{-6})/p_1$ for the L1-TF model and $q_1:=(1+10^{-6}) \times 8 /p_1$ for the L1-TV model.
Additionally, we set $e^{k+1}:= M/k^2$ in Algorithm \ref{algo: Inexact FPPA l1}, where $M := 10^{8}$ for the L1-TF model and $M:=10^{7}$ for the L1-TV model.
The stopping criterion for both algorithms is when \eqref{eq: stop} is satisfied with $\mathbf{x}^k:=  \mathbf{\tilde v}^k$ and $\mathrm{TOL}:=10^{-5}$.

{ 
We explore the effect of $\alpha$ in Algorithm \ref{algo: inexact FPPA l0} for images `clock' and `barbara' by testing different values of $\alpha$ and display the results in Figure \ref{fig: L0 deblurring alpha}. As observed in Figure \ref{fig: L0 deblurring alpha}, when the value of $\alpha$ is close to $1$, Algorithm \ref{algo: inexact FPPA l0} tends to yield smaller objective function values over the same number of iterations.
Figure \ref{fig: L0 deblurring convergence divergence} provides zoomed-in views of values of the objective function for the last 800 iterations for the images of `clock' and `barbara', with $\alpha:=1.00, 1.01$. While  Algorithm \ref{algo: inexact FPPA l0} converges for both the $\alpha$ values for the image `clock', it appears that it converges for $\alpha:=1$ but diverges for $\alpha:=1.01$ for the `barbara' image. These numerical results confirm the condition  $\alpha \in (0, 1)$ for convergence of Algorithm \ref{algo: inexact FPPA l0} established in Theorem \ref{thm : convergence algo INFPPA}.
Consequently, we set $\alpha:=0.99$ in the rest of this example to ensure convergence of Algorithm \ref{algo: inexact FPPA l0}.
}

\begin{figure}
  \centering
   \begin{subfigure}{0.45\linewidth}
            \includegraphics[width=\linewidth]{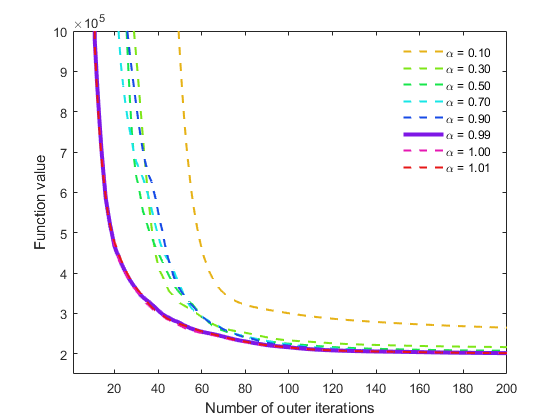}
            \caption{}
          \label{fig: clock alpha}
   \end{subfigure}
\hfill
   \begin{subfigure}{0.45\linewidth}
            \includegraphics[width=\linewidth]{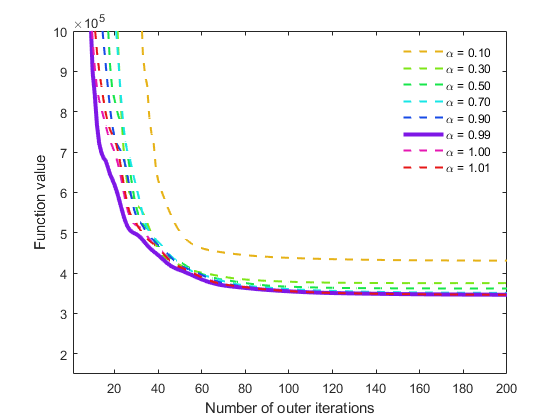}
            \caption{}
            \label{fig: barbara alpha}
    \end{subfigure}
	\caption{The effect of choices of parameter $\alpha$ on convergence of Algorithm \ref{algo: inexact FPPA l0} for the Gaussian noise image deblurring problem of the image ``clock" (left) and ``barbara" (right).
  }
	\label{fig: L0 deblurring alpha}
\end{figure}

\begin{figure}
  \centering
   \begin{subfigure}{0.45\linewidth}
            \includegraphics[width=\linewidth]{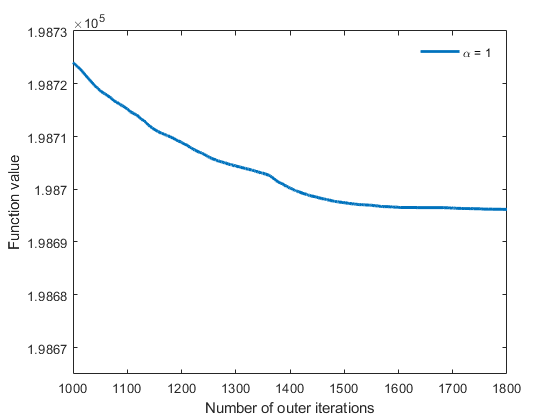}
            \caption{}
          %\label{fig: deblurring algorithm 1 function value}
   \end{subfigure}
\hfill
   \begin{subfigure}{0.45\linewidth}
            \includegraphics[width=\linewidth]{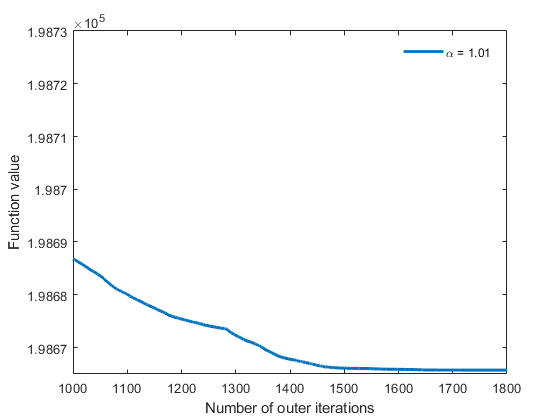}
            \caption{}
            %\label{fig: deblurring algorithm one inner step function value}
    \end{subfigure}
\vfill
     \begin{subfigure}{0.45\linewidth}
     \includegraphics[width=\linewidth]{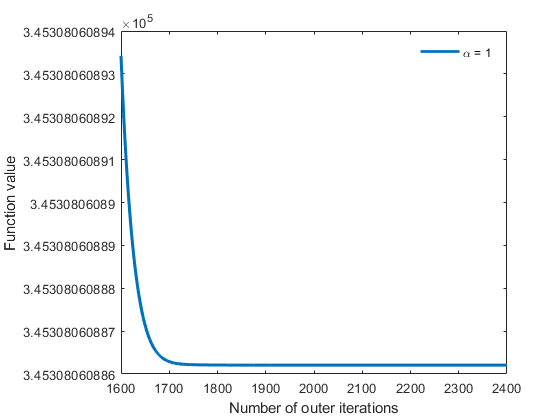}
     \caption{}
     %\label{fig: deblurring algorithm 1 PSNR}
      \end{subfigure}
\hfill
        \begin{subfigure}{0.45\linewidth}
     \includegraphics[width=\linewidth]{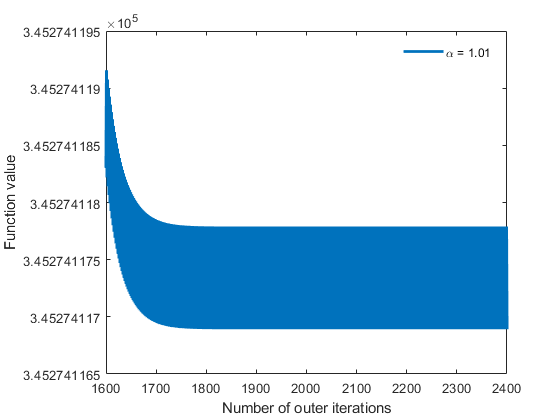}
     \caption{}
     %\label{fig:barbara_1.01_divergence}
      \end{subfigure}
      \caption{Zoomed-in views of the objective function values generated by  Algorithm \ref{algo: inexact FPPA l0}  with $\alpha:=1.00, 1.01$ for the Gaussian noise image deblurring problem of the image ``clock" ((a), (b)) and ``barbara" ((c), (d)).
  }
	\label{fig: L0 deblurring convergence divergence}
\end{figure}

We then apply Algorithm \ref{algo: inexact FPPA l0} and the first variant to solve the L0-TF model. 
Figures \ref{fig: deblurring algorithm 1 function value} and \ref{fig: deblurring algorithm one inner step function value} depict objective function values generated from Algorithm \ref{algo: inexact FPPA l0} and its first variant, respectively, while Figures \ref{fig: deblurring algorithm 1 PSNR} and \ref{fig: deblurring algorithm one inner step function PSNR} illustrate the PSNR values of images reconstructed by Algorithm \ref{algo: inexact FPPA l0} and its first variant, respectively. Figure \ref{fig: deblurring algorithm 1 function value} confirms convergence of Algorithm \ref{algo: inexact FPPA l0} for all six tested images with values of parameters $\lambda$ and $\gamma$ displayed. Figure \ref{fig: deblurring algorithm one inner step function value} shows that the objective function values generated from the first variant 
exhibit significant oscillation for the first 90 steps of iteration, followed by a convergence trend. These observations are also reflected in the PSNR value as depicted in Figures \ref{fig: deblurring algorithm 1 PSNR} and \ref{fig: deblurring algorithm one inner step function PSNR}. The numerical results show that the significance of the inner iteration is limited for image deblurring problems, due to the fact that the norm of matrix $\mathbf{B}$ is not greater than 2.

{
We list in Table \ref{tab: TV-noise-elL1-TF-noise-elL0-TF-deblurred} the highest PSNR values for each of the reconstructed images, with the corresponding values of parameters $\lambda$, $\gamma$, $p$ and $p_1$ showed in Table \ref{tab: best parameter gaussian noise}.}
%We select the values of parameters $\lambda$, $\gamma$, $p$ and $p_1$ based on the highest PSNR values for each individual image. We list the chosen parameter values in Table \ref{tab: best parameter gaussian noise} and the corresponding PSNR values in Table \ref{tab: TV-noise-elL1-TF-noise-elL0-TF-deblurred}. }
The results in Table \ref{tab: TV-noise-elL1-TF-noise-elL0-TF-deblurred} consistently demonstrate that the L0-TF model achieves the highest PSNR-value, surpassing the second-highest value by approximately $1$ dB. For visual comparison, we display the deblurred images in  Figures \ref{fig: TV-L0-L1-deblur part 1} and \ref{fig: TV-L0-L1-deblur part 2}.  One can see that the L0-TF model is more powerful to suppress noise on smooth areas compared to the L1-TF and L1-TV models. In Figure \ref{fig: zebra head and neck}, comparison of two specific local areas in the zebra image is presented. We observe from Figure \ref{fig: zebra head and neck} that the deblurred images obtained from the L0-TF model preserve finer details and edges more effectively than those obtained from the other two models.

\begin{figure}
  \centering
   \begin{subfigure}{0.45\linewidth}
            \includegraphics[width=\linewidth]{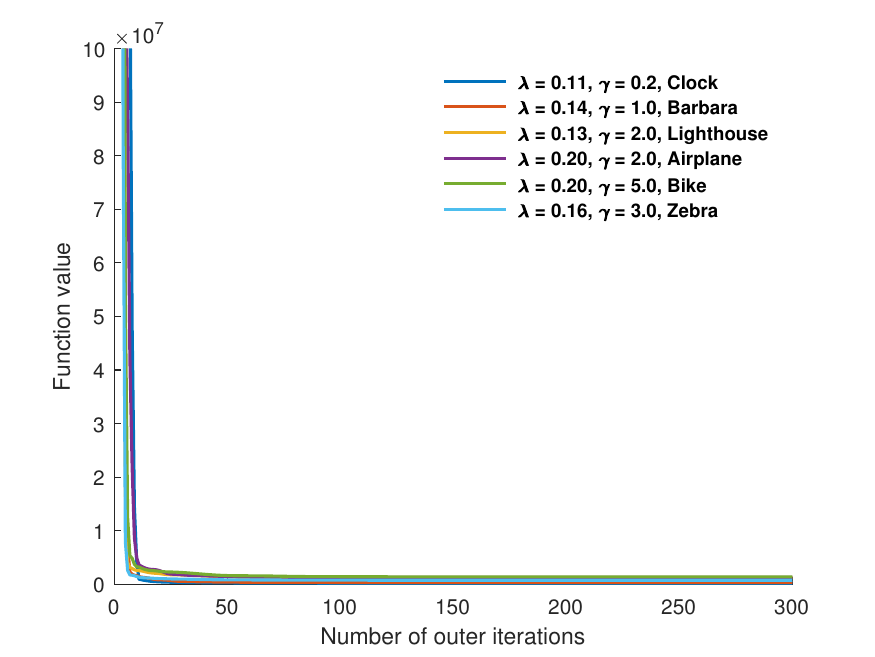}
            \caption{}
          \label{fig: deblurring algorithm 1 function value}
   \end{subfigure}
\hfill
   \begin{subfigure}{0.45\linewidth}
            \includegraphics[width=\linewidth]{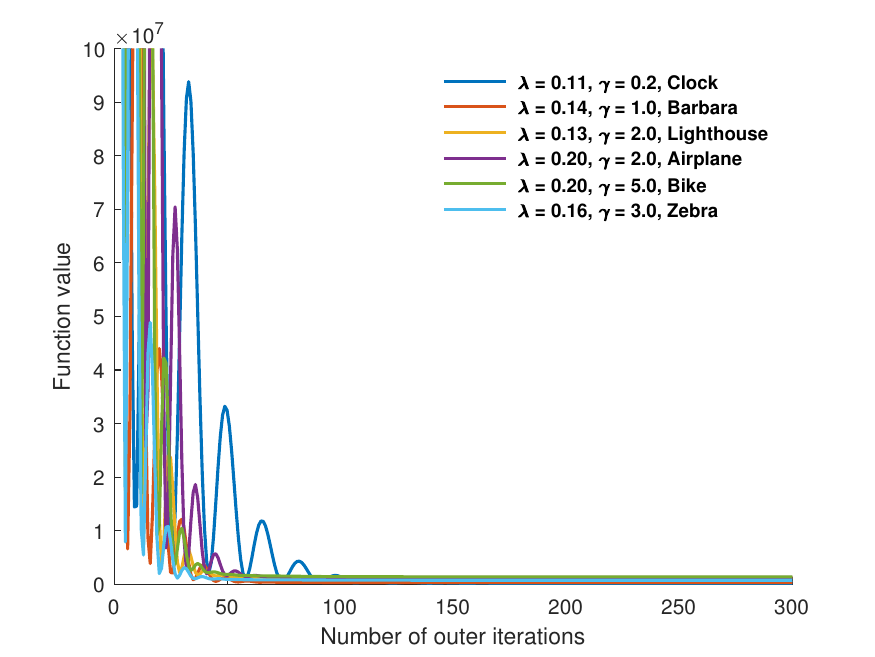}
            \caption{}
            \label{fig: deblurring algorithm one inner step function value}
    \end{subfigure}
\vfill
     \begin{subfigure}{0.45\linewidth}
     \includegraphics[width=\linewidth]{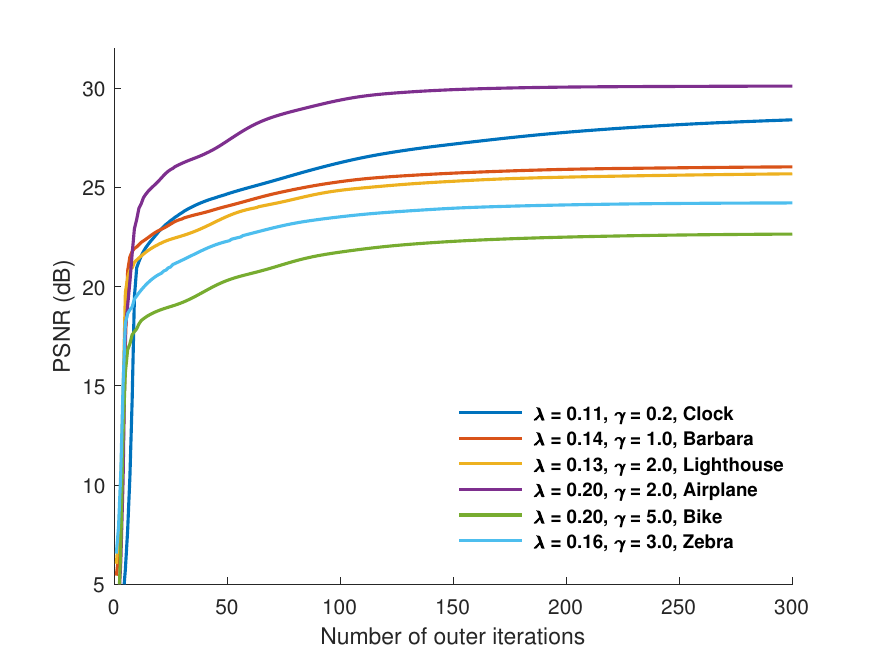}
     \caption{}
     \label{fig: deblurring algorithm 1 PSNR}
      \end{subfigure}
\hfill
        \begin{subfigure}{0.45\linewidth}
     \includegraphics[width=\linewidth]{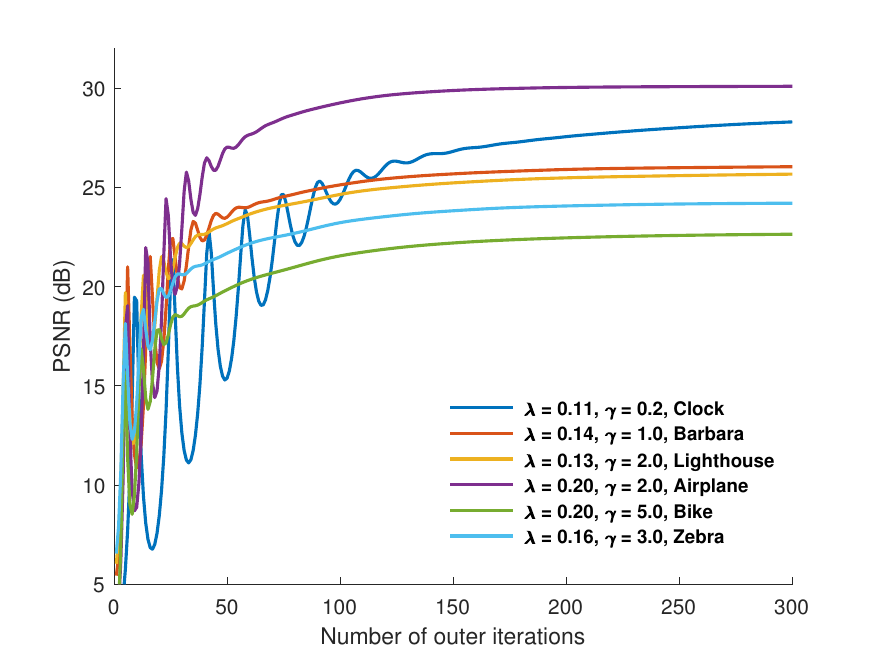}
     \caption{}
     \label{fig: deblurring algorithm one inner step function PSNR}
      \end{subfigure}
	\caption{
 Exploring the convergence of Algorithm \ref{algo: inexact FPPA l0} 
 for image deblurring problems. (a) and (b) depict the function values, while (c) and (d) illustrate the PSNR values for Algorithm \ref{algo: inexact FPPA l0} and its first variant, respectively. 
 }
	\label{fig: L0 deblurring convergence show}
\end{figure}

\begin{table}
    \caption{Gaussian noise image deblurring:
     PSNB values (dB) of L0-TF, L1-TF, and L1-TV} 
    \label{tab: TV-noise-elL1-TF-noise-elL0-TF-deblurred}
% \small
    \centering
    \begin{tabular}{c|ccc|ccc}
    \hline
    &
 
    \multicolumn{3}{c|}{clock $200 \times 200$} &     \multicolumn{3}{c}{barbara $256 \times 256$} 
    \\
    \hline 
    \backslashbox[1cm]{model}{\footnotesize{MBL}}
    & 9 & 15 & 21 &  9 & 15 & 21
    \\
    
    \hline
    L0-TF & \textbf{31.20}  &\textbf{30.21}  & \textbf{28.95} & \textbf{29.64} & \textbf{27.09} & \textbf{26.18} \\
    L1-TF & 30.13 & 29.15 & 28.05 &28.66 & 26.54 &  25.89\\
    L1-TV  &  29.22  & 28.41  & 27.18  &26.98 & 25.09 & 24.40\\
    \hline 
&  \multicolumn{3}{c|}{ lighthouse $512 \times 512$}&
     \multicolumn{3}{c}{airplane $512 \times 512$}
    \\
    \hline 
    \backslashbox[1cm]{model}{\footnotesize{MBL}}
    & 9 & 15 & 21& 9 & 15 & 21
    \\
    \hline
    L0-TF &  \textbf{27.81} & \textbf{26.50} & \textbf{25.85}
    & \textbf{32.72} & \textbf{31.31} & \textbf{30.13}\\
    L1-TF 
    & 26.75  & 25.42& 24.93 
    & 31.88 & 30.39 & 29.39\\
    L1-TV  
    & 26.14 &24.73 & 24.08
    & 31.29 &29.75 & 28.71 \\
\hline
&

    \multicolumn{3}{c|}{ bike $512 \times 512$}  &
    \multicolumn{3}{c}{ zebra $321 \times 481$} 
 
    \\
    \hline 
    \backslashbox[1cm]{model}{\footnotesize{MBL}}
    & 9 & 15 & 21& 9 & 15 & 21
    \\
    \hline
    L0-TF & \textbf{25.04}& \textbf{23.99}&  \textbf{22.73}&  
    \textbf{26.33} &  \textbf{25.04} &  \textbf{24.23} \\
     L1-TF 
    
    &  24.00&  23.01 &  21.91 
    
    &  25.42&  24.32 &  23.52 \\   
    L1-TV  &  24.05&  22.70& 21.70   
    
    &  25.05&  23.78& 23.01 

    \\
\hline
    \end{tabular}

\end{table}

\begin{small}

\begin{table}
    \caption{
    Gaussian noise image deblurring: chosen values of the parameters} 
    \label{tab: best parameter gaussian noise}
% \small
    \centering
    \begin{tabular}{cc|cc|cc|cc}
    \hline
    & &
    \multicolumn{2}{c|}{L0-TF} &     \multicolumn{2}{c|}{ L1-TF} & \multicolumn{2}{c}{L1-TV}     
    \\
\hline
image & MBL & $\lambda$ & $\gamma$ & $\lambda $  & $p_1$ &  $\lambda $  & $p_1$
    \\
    \hline
    \multirow{3}{*}{clock}& 9&0.16 & 0.6 & 0.05 & 0.09 & 0.22& 0.35\\
    &15& 0.13& 0.5&0.05 & 0.25& 0.12& 0.17\\
    &21& 0.11& 0.2&0.03 &0.07 &  0.16& 0.07 \\
    \hline
    \multirow{3}{*}{barbara}& 9&0.17 & 0.4& 0.05& 0.05& 0.12& 2.3\\
    &15&0.14 &1.0 & 0.04& 0.03& 0.10& 1.7\\
    &21&0.14 &1.0 &0.03 &0.06 &0.10 & 0.9\\
    \hline
    \multirow{3}{*}{lighthouse}& 9&0.18 &2.2 & 0.05 & 0.35 & 0.14 & 0.50 \\
    &15&0.16 &2.6 & 0.03 &0.03 & 0.14& 0.50\\
    &21&0.13 &2.0 &0.03 &0.07 & 0.12 & 0.15\\
    \hline
    \multirow{3}{*}{airplane}& 9& 0.20 & 0.6& 0.05& 0.07 & 0.28& 0.40\\
    &15&0.20 &1.0 & 0.05 &0.25 & 0.22 & 0.30\\
    &21&0.20 &2.0 &  0.03& 0.07 & 0.18& 0.20\\
    \hline
    \multirow{3}{*}{bike}& 9& 0.20& 3.0 & 0.03 & 0.03 & 0.12 & 0.09\\
    &15& 0.20 &5.0 & 0.05& 0.15& 0.10& 0.20\\
    &21& 0.20& 5.0& 0.03 & 0.07 & 0.10& 0.03\\
    \hline
    \multirow{3}{*}{zebra}& 9& 0.21& 2.0 & 0.04& 0.15 & 0.12& 0.17\\
    &15& 0.19&2.7 & 0.03& 0.10 & 0.11 & 0.14\\
    &21& 0.16&3.0 & 0.03& 0.17 & 0.10& 0.14\\
    \hline
\end{tabular}
\end{table}

\end{small}
 
\begin{figure}
  \centering
   \begin{subfigure}{0.3\linewidth}
            \includegraphics[width=\linewidth]{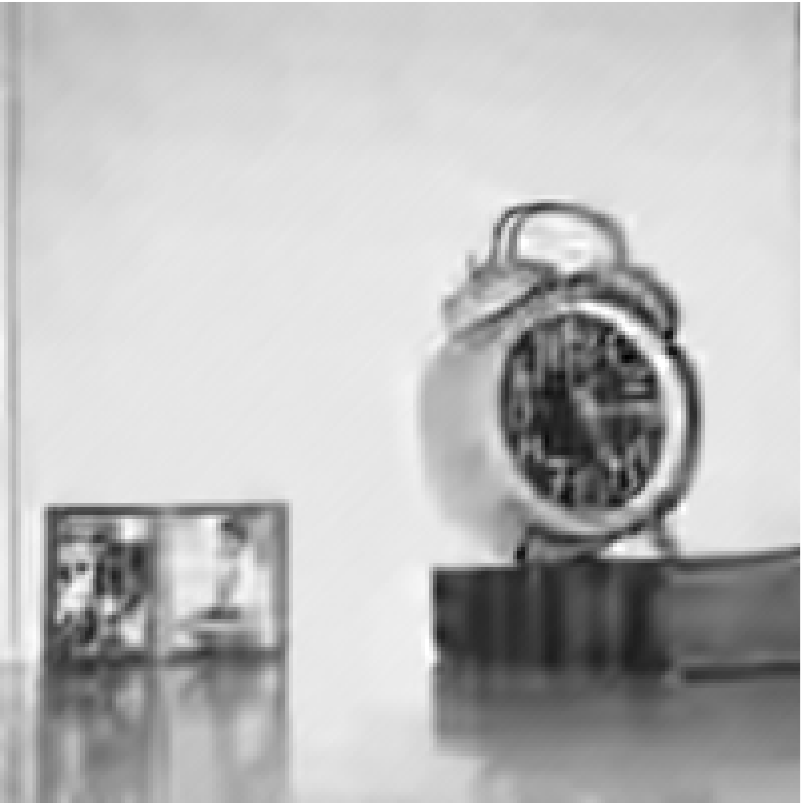}
            \caption*{L0-TF, PSNR: 28.95}
   \end{subfigure}
   \begin{subfigure}{0.3\linewidth}
            \includegraphics[width=\linewidth]{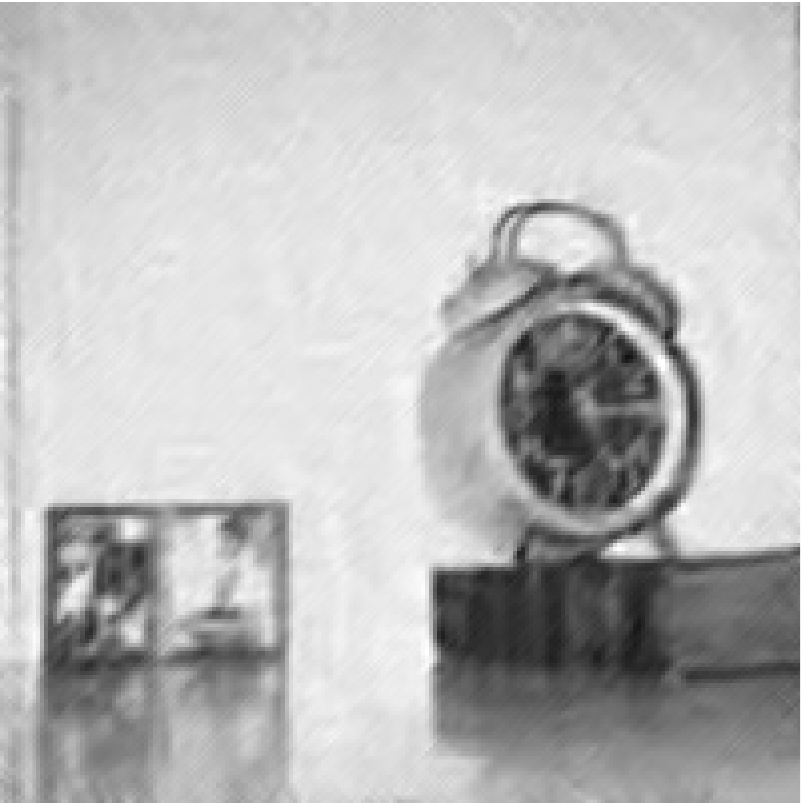}
            \caption*{L1-TF, PSNR: 28.05 }
    \end{subfigure}
     \begin{subfigure}{0.3\linewidth}
     \includegraphics[width=\linewidth]{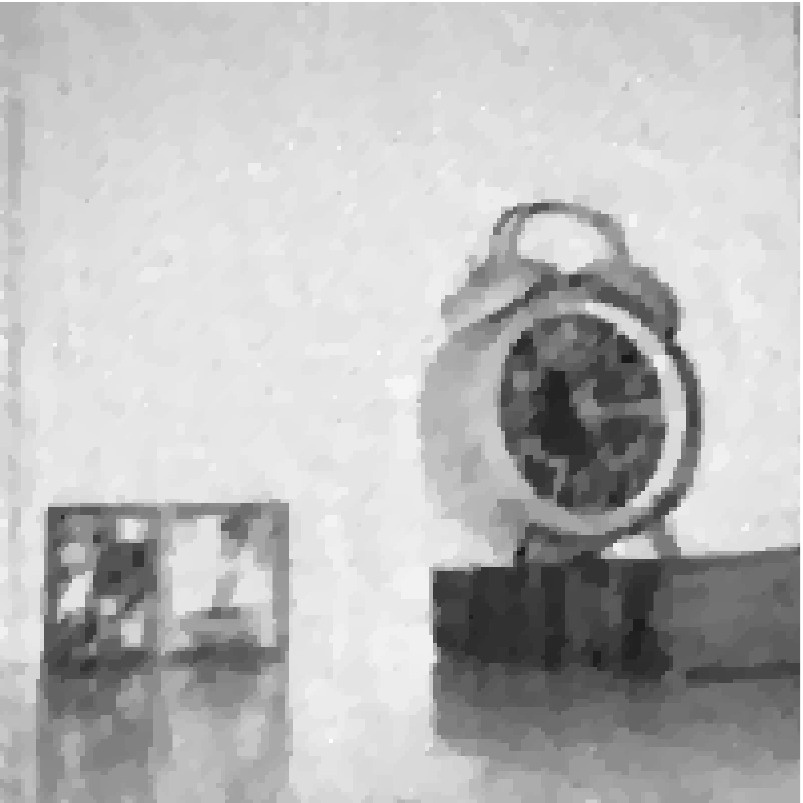}
     \caption*{L1-TV, PSNR: 27.18}
      \end{subfigure}
   \begin{subfigure}{0.3\linewidth}
            \includegraphics[width=\linewidth]{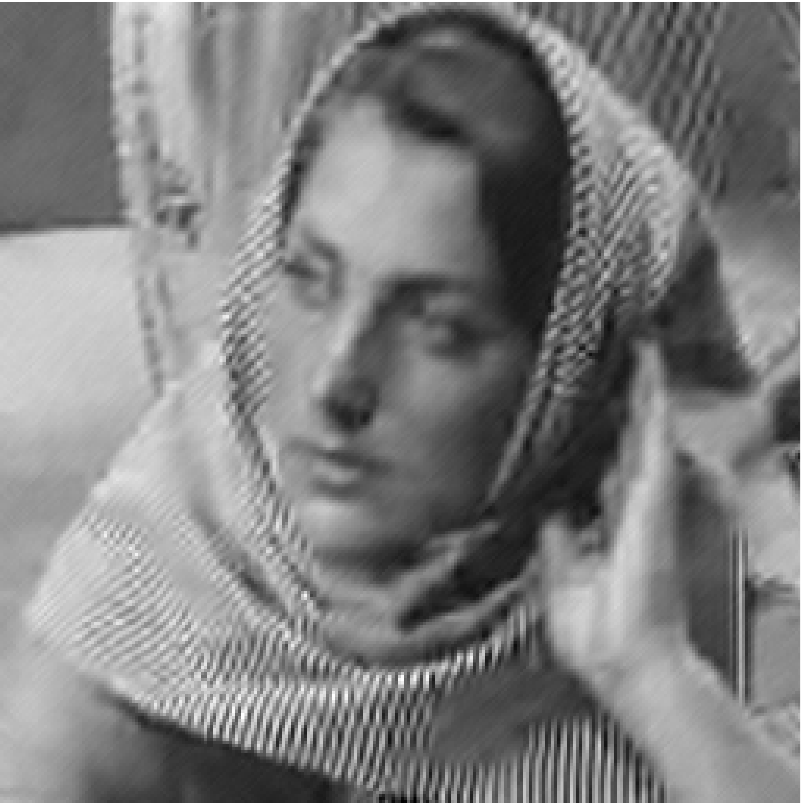}
            \caption*{L0-TF, PSNR: 26.18}
   \end{subfigure}
   \begin{subfigure}{0.3\linewidth}
            \includegraphics[width=\linewidth]{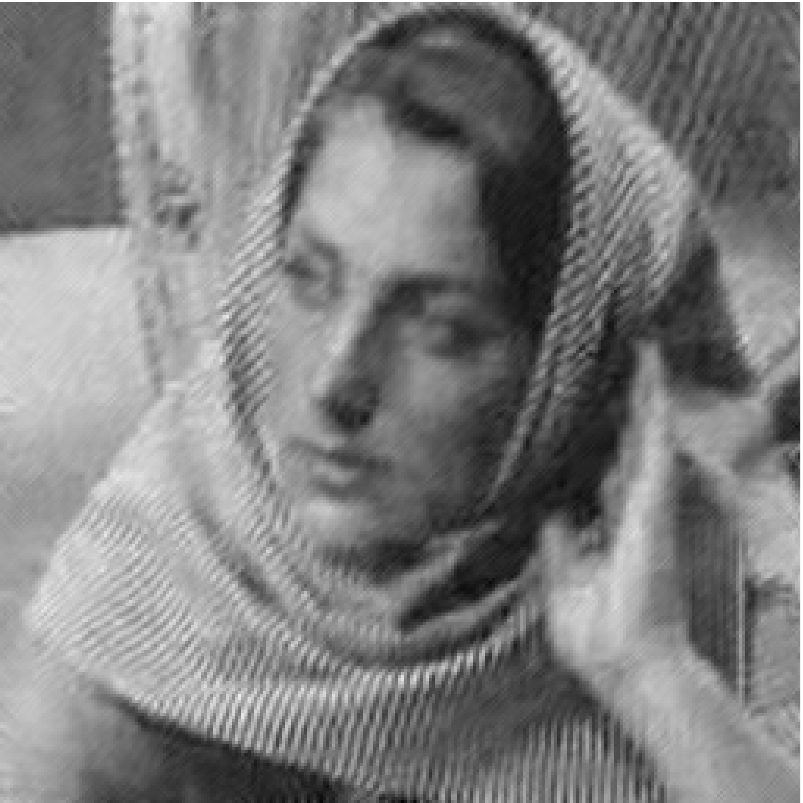}
            \caption*{L1-TF, PSNR: 25.89}
    \end{subfigure}
     \begin{subfigure}{0.3\linewidth}
     \includegraphics[width=\linewidth]{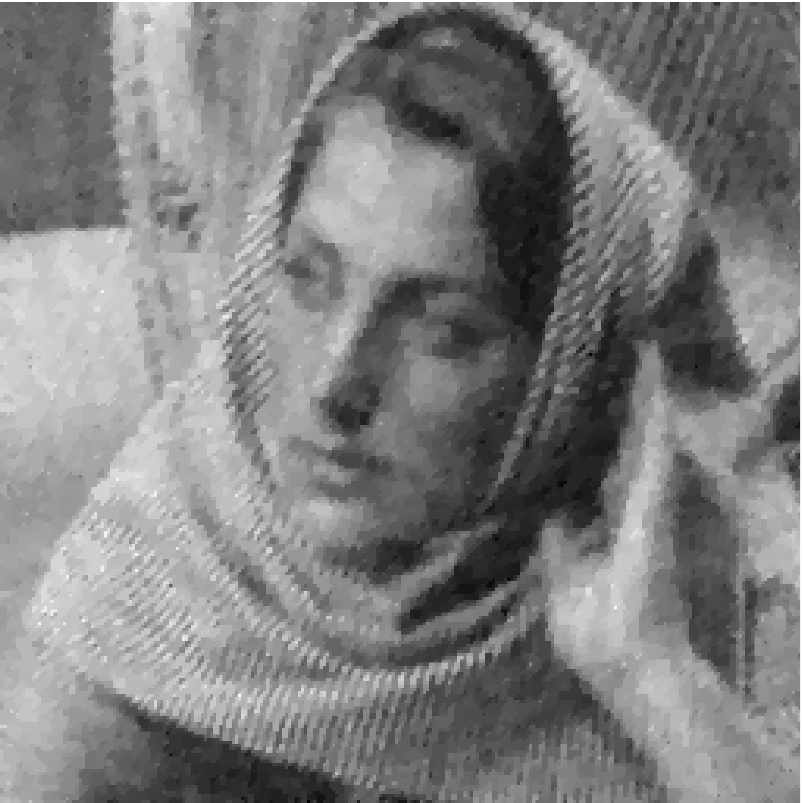}
     \caption*{L1-TV, PSNR: 24.40}
      \end{subfigure}

   \begin{subfigure}{0.3\linewidth}
            \includegraphics[width=\linewidth]{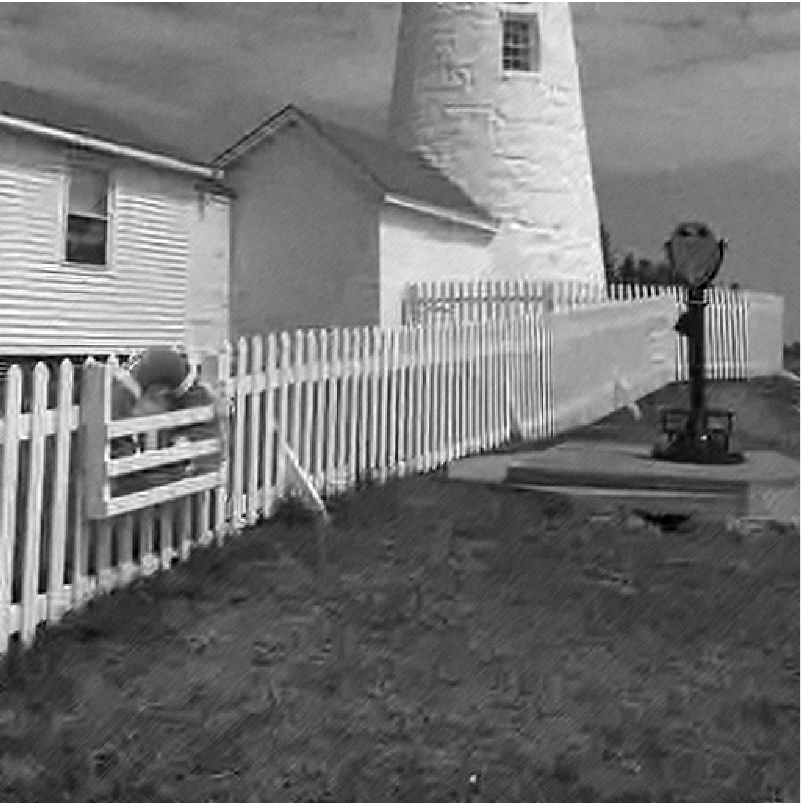}
            \caption*{L0-TF, PSNR: 25.85}
   \end{subfigure}
   \begin{subfigure}{0.3\linewidth}
            \includegraphics[width=\linewidth]{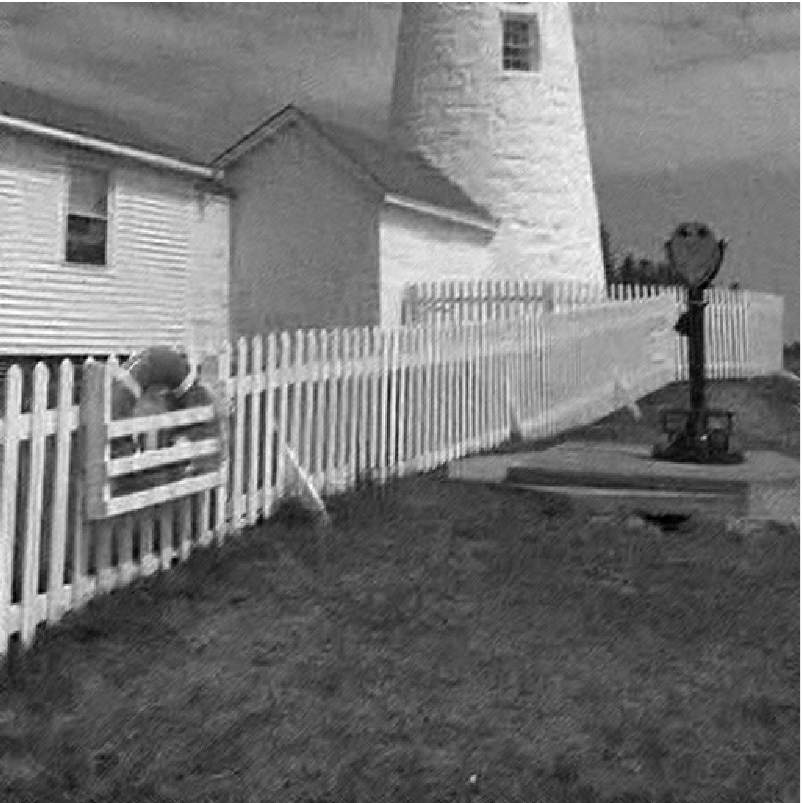}
            \caption*{L1-TF, PSNR: 24.93}
    \end{subfigure}
     \begin{subfigure}{0.3\linewidth}
     \includegraphics[width=\linewidth]{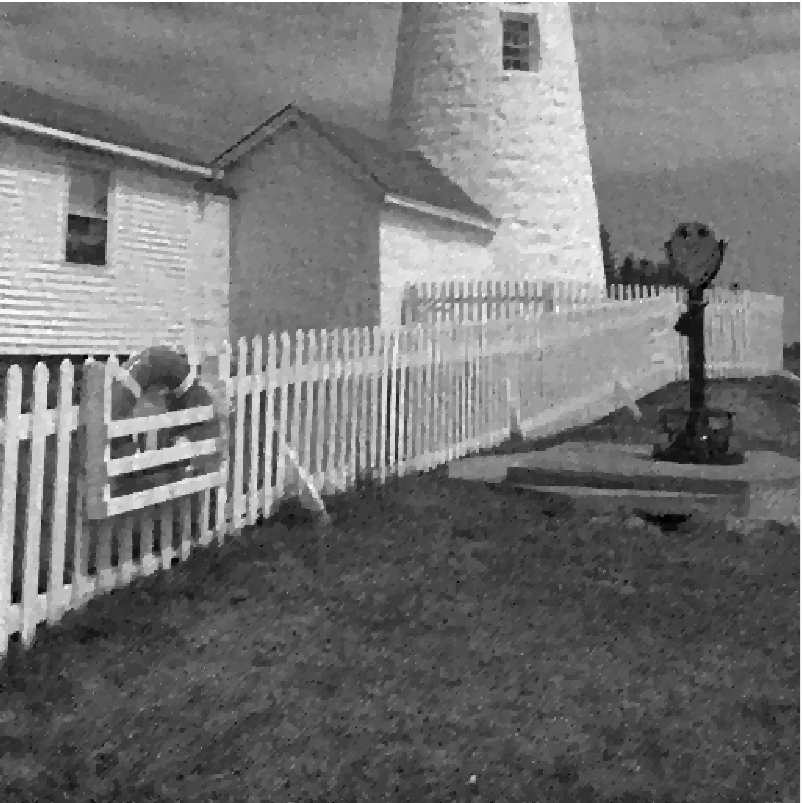}
     \caption*{L1-TV, PSNR: 24.08}
      \end{subfigure}

	\caption{Gaussian noise image deblurring: Deblurred images (clock, barbara, lighthouse) from corrupted images with MBL being 21. 
 %The first, second, and third columns correspond to the deblurred images obtained from L0-TF, L1-TF, and L1-TV, respectively.  
}
	\label{fig: TV-L0-L1-deblur part 1}
\end{figure} 
 
\begin{figure}
  \centering
   \begin{subfigure}{0.3\linewidth}
            \includegraphics[width=\linewidth]{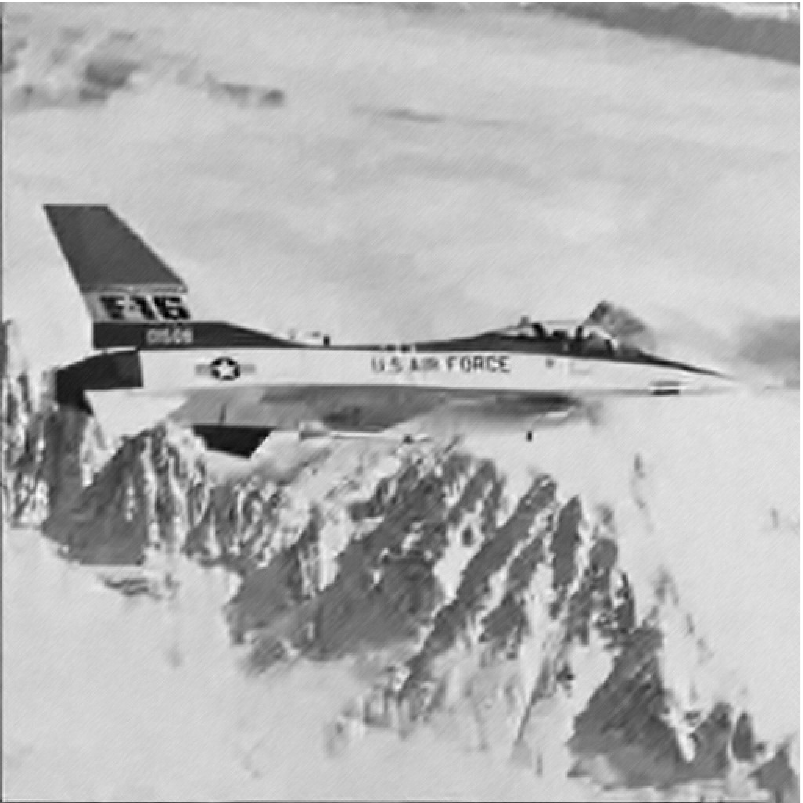}
            \caption*{L0-TF, PSNR: 30.13}
   \end{subfigure}
   \begin{subfigure}{0.3\linewidth}
            \includegraphics[width=\linewidth]{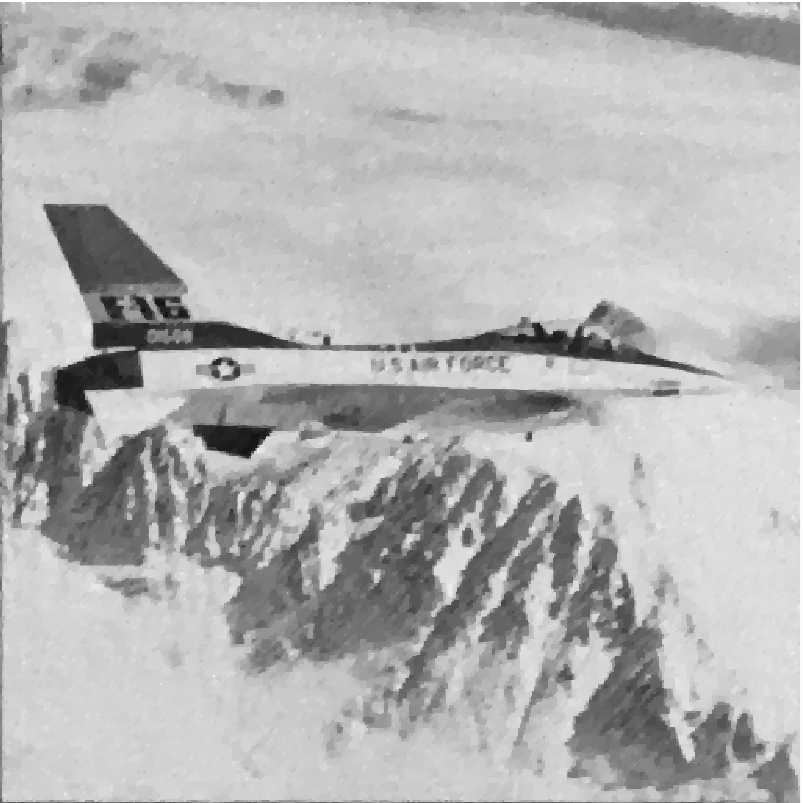}
            \caption*{L1-TF, PSNR: 29.39}
    \end{subfigure}
     \begin{subfigure}{0.3\linewidth}
     \includegraphics[width=\linewidth]{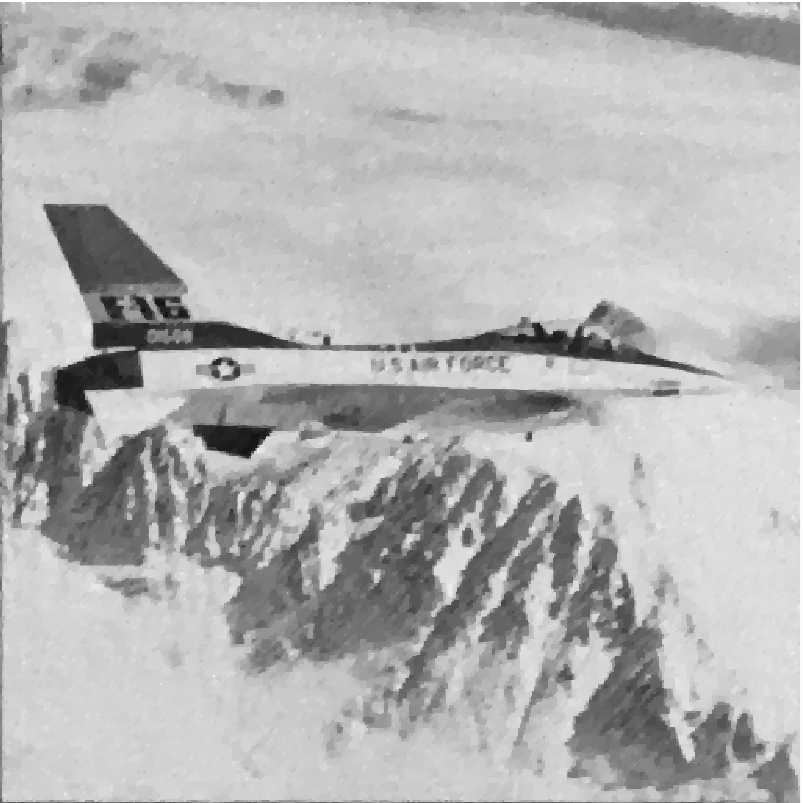}
     \caption*{L1-TV, PSNR: 28.71}
      \end{subfigure}
   \begin{subfigure}{0.3\linewidth}
   
            \includegraphics[width=\linewidth]{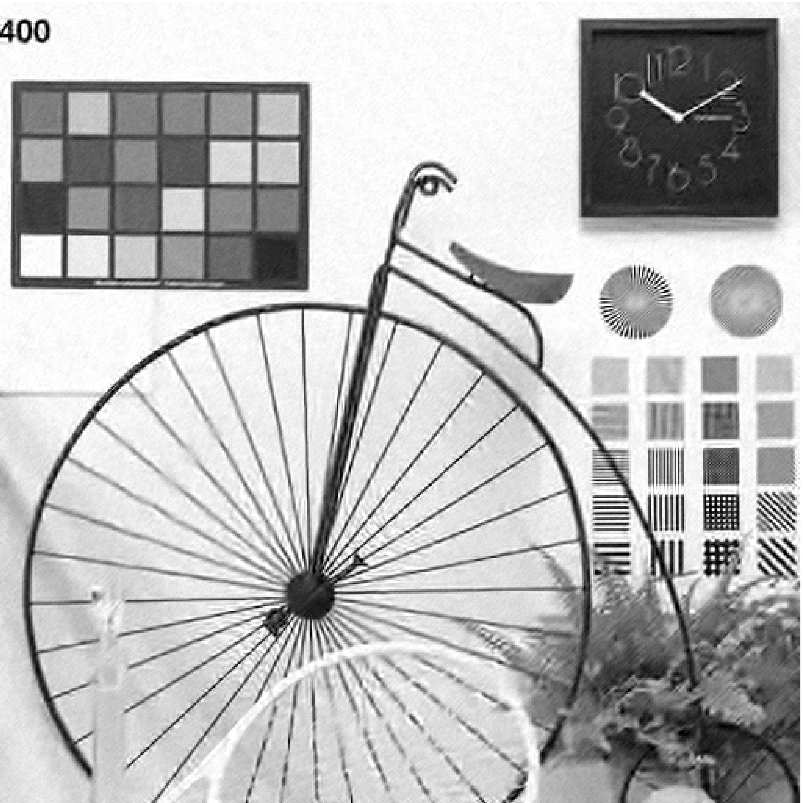}
            \caption*{ L0-TF, PSNR: 22.73}
   \end{subfigure}
   \begin{subfigure}{0.3\linewidth}
            \includegraphics[width=\linewidth]{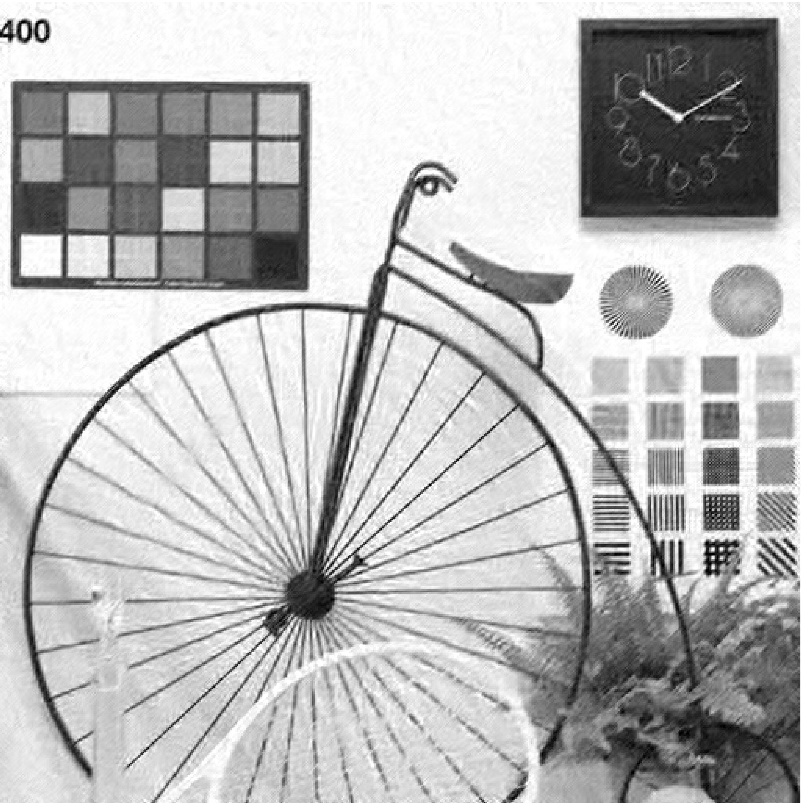}
            \caption*{L1-TF, PSNR: 21.91}
    \end{subfigure}
     \begin{subfigure}{0.3\linewidth}
     \includegraphics[width=\linewidth]{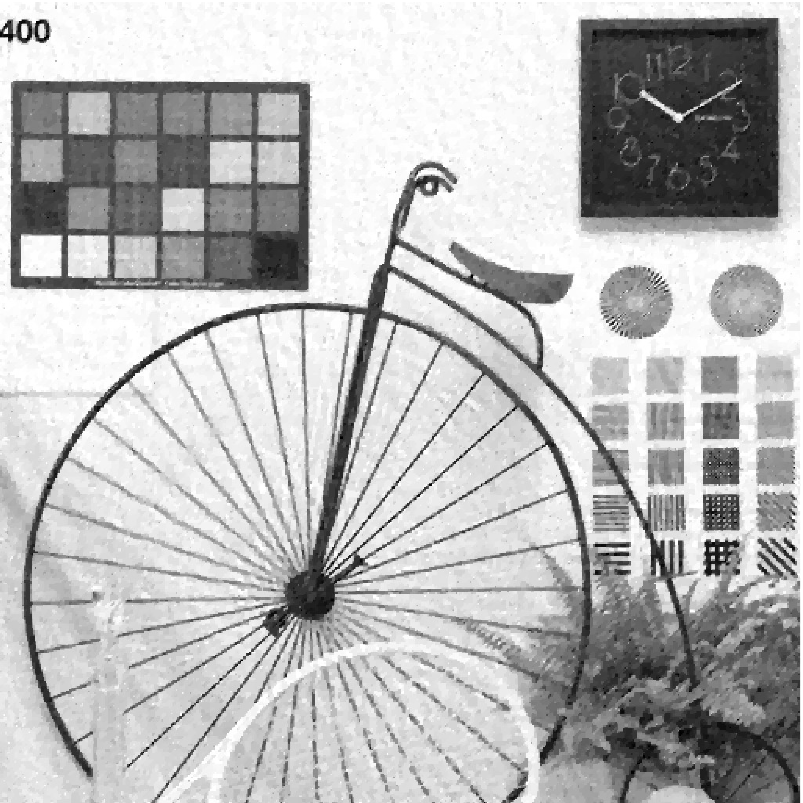}
     \caption*{L1-TV, PSNR: 21.70}
      \end{subfigure}

   \begin{subfigure}{0.3\linewidth}
            \includegraphics[width=\linewidth]{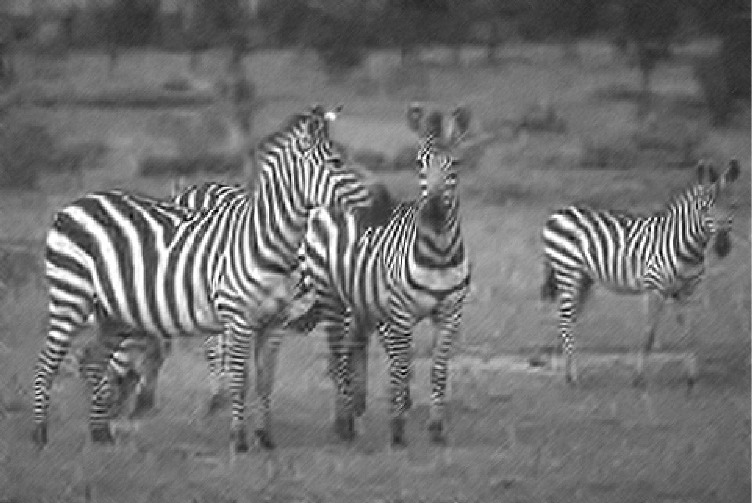}
            \caption*{L0-TF, PSNR: 24.23}
   \end{subfigure}
   \begin{subfigure}{0.3\linewidth}
            \includegraphics[width=\linewidth]{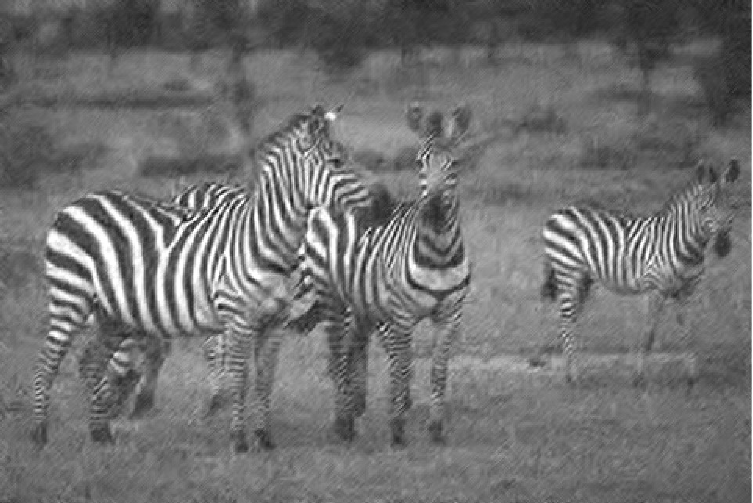}
            \caption*{ L1-TF, PSNR: 23.52}
    \end{subfigure}
     \begin{subfigure}{0.3\linewidth}
     \includegraphics[width=\linewidth]{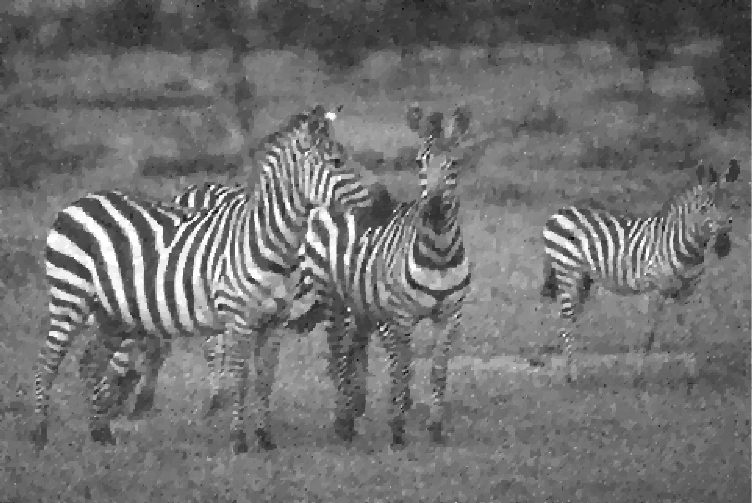}
     \caption*{L1-TV, PSNR: 23.01}
      \end{subfigure}

	\caption{Gaussian noise image deblurring: Deblurred images (airplane, bike, zebra) from corrupted images with MBL being 21. 
 %The first, second, and third columns correspond to the deblurred images obtained from L0-TF, L1-TF, and L1-TV, respectively. 
}
	\label{fig: TV-L0-L1-deblur part 2}
\end{figure} 

\begin{figure}
  \centering
   \begin{subfigure}{0.24\linewidth}
        \includegraphics[width=\linewidth]{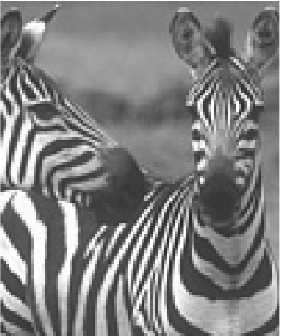}
        \caption*{\footnotesize{clean}}
   \end{subfigure}
   \begin{subfigure}{0.24\linewidth}
        \includegraphics[width=\linewidth]{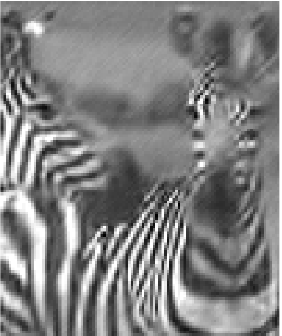}
        \caption*{PSNR: 21.47}
    \end{subfigure}
    \begin{subfigure}{0.24\linewidth}
         \includegraphics[width=\linewidth]{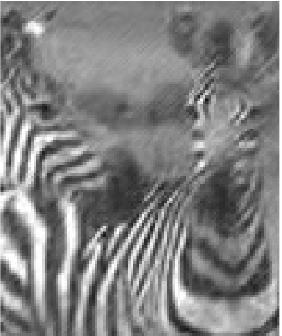}
        \caption*{PSNR: 20.87}
    \end{subfigure}
   \begin{subfigure}{0.24\linewidth}
        \includegraphics[width=\linewidth]{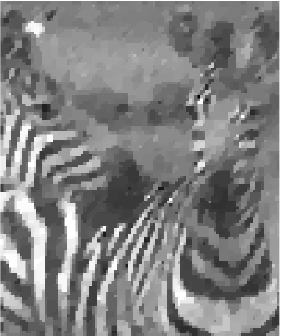}
        \caption*{ PSNR: 20.03}
   \end{subfigure}
   \begin{subfigure}{0.24\linewidth}
       \includegraphics[width=\linewidth]{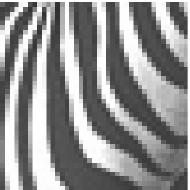}
        \caption*{\footnotesize{clean}}
   \end{subfigure}
   \begin{subfigure}{0.24\linewidth}
        \includegraphics[width=\linewidth]{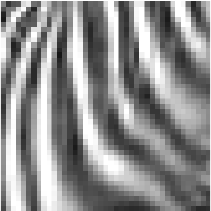}
        \caption*{PSNR: 19.35}
    \end{subfigure}
  \begin{subfigure}{0.24\linewidth}
       \includegraphics[width=\linewidth]{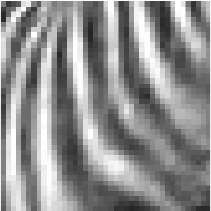}
       \caption*{PSNR: 18.04}
   \end{subfigure}
    \begin{subfigure}{0.24\linewidth}
        \includegraphics[width=\linewidth]{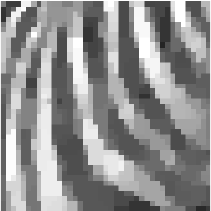}
        \caption*{PSNR: 17.83}
    \end{subfigure}     
	\caption{Gaussian noise image deblurring: Deblurred local zebra images from the corrupted images with MBL being 21. The second, third, and fourth columns correspond to the deblurred local zebra images obtained from L0-TF, L1-TF, and L1-TV, respectively.
}
	\label{fig: zebra head and neck}
\end{figure}

{
\subsubsection{Poisson noise image deblurring}
In this case, we choose the operation $\mathcal{P}$ as Poisson noise \cite{lefkimmiatis2013poisson,willett2010poisson,zheng2019sparsity}, and model \eqref{model: deblur problem} becomes 
\begin{equation}\label{model: poisson deblur problem}
\mathbf{x} := \text{Poisson}\left(\mathbf{K}\mathbf{v}\right)    
\end{equation}
where $\text{Poisson}(\mathbf{c})$ denotes a Poisson distributed random vector with mean $\mathbf{c}$. We generate a corrupted image in three steps: First, we scale a clean image to achieve a preset peak value, denoted as $\text{MAX}_{\mathbf{v}}$, which determines the level of Poisson noise. Second, we apply a motion blurring kernel to the scaled image. Third, we introduce Poisson noise to the blurred image. In this experiment, we set $\text{MAX}_{\mathbf{v}}$ to be 255. The resulting six corrupted images are shown in Figure \ref{fig:Blurred and poisson noise images}.
Given an observed corrupted image $\mathbf{x} \in \bR^{p}$, the function $\psi$ appearing in \eqref{model: l0} is chosen as
$$
\psi(\mathbf{z}):= \langle \mathbf{z}, \mathbf{1}\rangle - \langle \ln(\mathbf{z}), \mathbf{x} \rangle,  \ \ \mathbf{z} \in \bR^p.
$$
We specify the matrix $\mB$ appearing in both models \eqref{model: l0} and \eqref{model: l1} to be the motion blurring kernel matrix $\mK$. 
Models \eqref{model: l0} and \eqref{model: l1} with $\mD$ being the same tight framelet matrix constructed from discrete cosine transforms of size $7\times 7$ \cite{shen2016wavelet} are respectively referred to as ``L0-TF" and ``L1-TF". Model  \eqref{model: l1} with $\mD$ being the first order difference matrix \cite{micchelli2011proximity} is referred to as ``L1-TV".

The L0-TF model is solved by Algorithm \ref{algo: inexact FPPA l0}, while both the L1-TF and L1-TV models are solved by Algorithm \ref{algo: Inexact FPPA l1}. In all the three models, the parameter $\lambda$ is adjusted within the range $[0.001, 1]$. For the L0-TF model, we vary the parameter $\gamma$ within the range $[0.01, 10]$ and $p$ within the range $[0.0001, 0.1]$, while set $q$  to $(1+10^{-6}) \times 4 /p$ and $\alpha$ to $0.99$ in Algorithm \ref{algo: inexact FPPA l0}. For both the L1-TF and L1-TV models, the parameter $p_1 = p_2$ is varied within the range $[0.001, 1]$ and $q_2:=(1+10^{-6}) \times  4/p_2$, while $q_1:=(1+10^{-6})/p_1$ for L1-TF model and $q_1:=(1+10^{-6}) \times 8 /p_1$ for L1-TV model.
Additionally, we set $e^{k+1}:= M/k^{1.01}$ in both  algorithms with $M=10^{8}$. 
%for both L1-TF model and L1-TV model. 
The stopping criterion for the two algorithms is when \eqref{eq: stop} is satisfied with $\mathbf{x}^k:=  \mathbf{\tilde v}^k$ and $\mathrm{TOL}:=10^{-5}$, or when the maximum iteration counts of 2000 is attained.

We show in Table \ref{tab: TV-poissonnoise-elL1-TF-noise-elL0-TF-deblurred} the highest PSNR values for each of the reconstructed images, with the corresponding values of parameters $\lambda$, $\gamma$, $p$ and $p_1$ listed in Table  \ref{tab: best parameter poisson noise}.
%
%We select the values of parameters $\lambda$, $\gamma$, $p$ and $p_1$ based on the highest PSNR values for each individual image. We list the chosen parameter values in Table \ref{tab: best parameter poisson noise} and the corresponding PSNR values in Table \ref{tab: TV-poissonnoise-elL1-TF-noise-elL0-TF-deblurred}.
%In Table \ref{tab: TV-poissonnoise-elL1-TF-noise-elL0-TF-deblurred}, we compare the PSNR for the solutions generated from the L0-TF, L1-TF, and L1-TV models. 
The results in Table \ref{tab: TV-poissonnoise-elL1-TF-noise-elL0-TF-deblurred} demonstrate that the L0-TF model achieves the highest PSNR-value, surpassing the second-highest value by approximately $0.6$ dB expect for the `barbara' image, where the PSNR-values of the L0-TF model are comparable with those of the L1-TF model. For visual comparison, we display the deblurred images in  Figures \ref{fig: Poisson-TV-L0-L1-deblur part 1} and \ref{fig: Poisson-TV-L0-L1-deblur part 2}.  One can see that the L0-TF model is more powerful to suppress noise on smooth areas compared with the L1-TF and L1-TV models.  In Figure \ref{fig: poisson zebra head and neck}, we compare performance of the three models on two specific local areas in the zebra image and observe that the images restored  from the L0-TF model preserve finer details and edges more effectively than those from the other two models.

%In Figure \ref{fig: zebra head and neck}, a comparison of two specific local areas in the zebra image is presented. We observe from Figure \ref{fig: zebra head and neck} that the restored image obtained from L0-TF preserves finer details and edges more effectively than those obtained from the other two models.

}

 \begin{figure}
  \centering
   \begin{subfigure}{0.3\linewidth}
            \includegraphics[width=\linewidth]{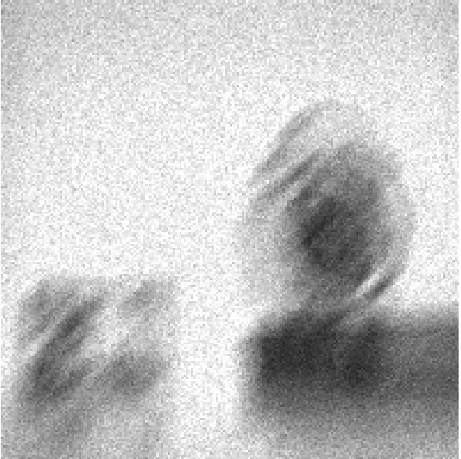}
   \end{subfigure}
   \begin{subfigure}{0.3\linewidth}
            \includegraphics[width=\linewidth]{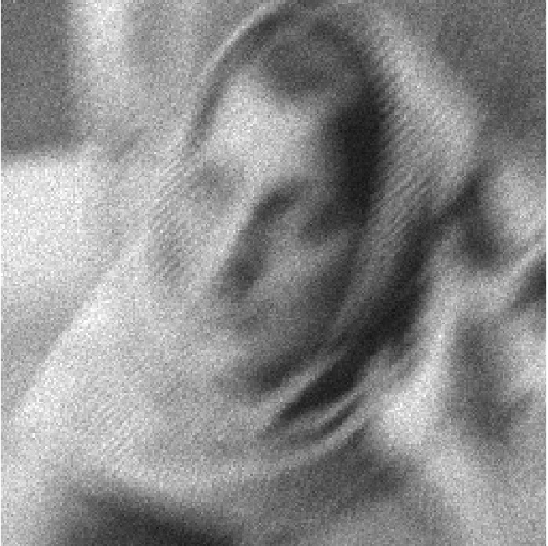}
    \end{subfigure}
     \begin{subfigure}{0.3\linewidth}
     \includegraphics[width=\linewidth]{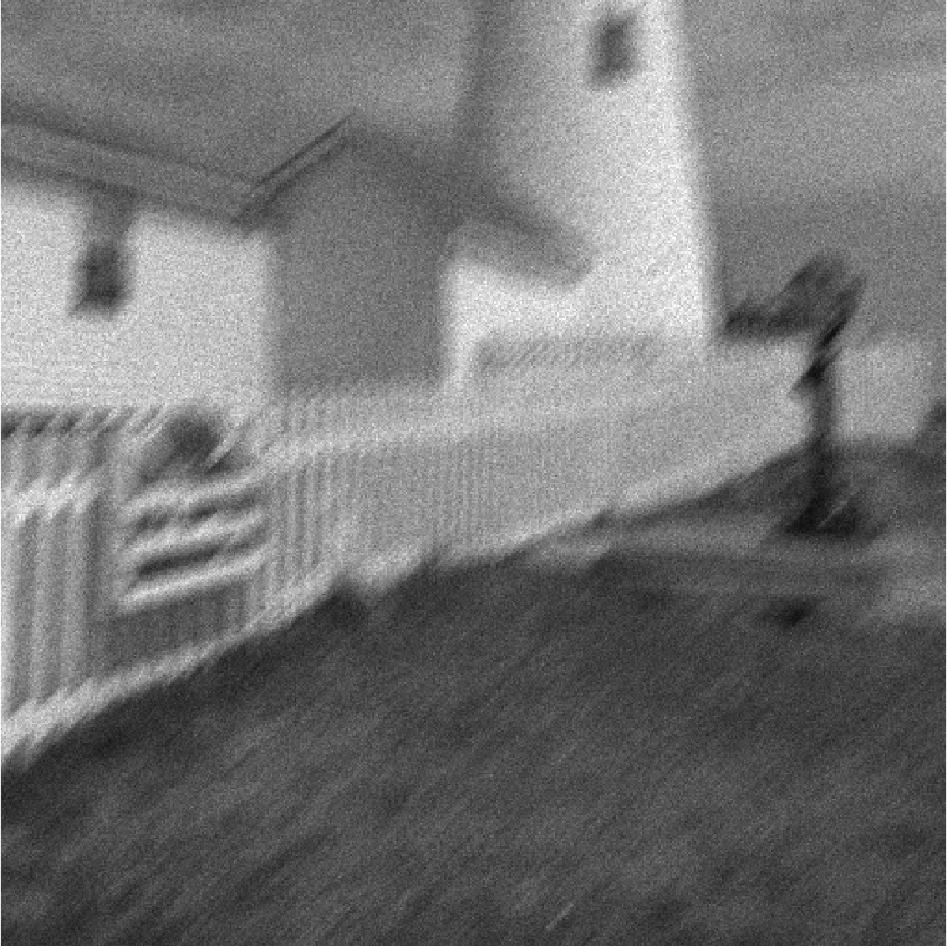}
      \end{subfigure}
   \begin{subfigure}{0.3\linewidth}
            \includegraphics[width=\linewidth]{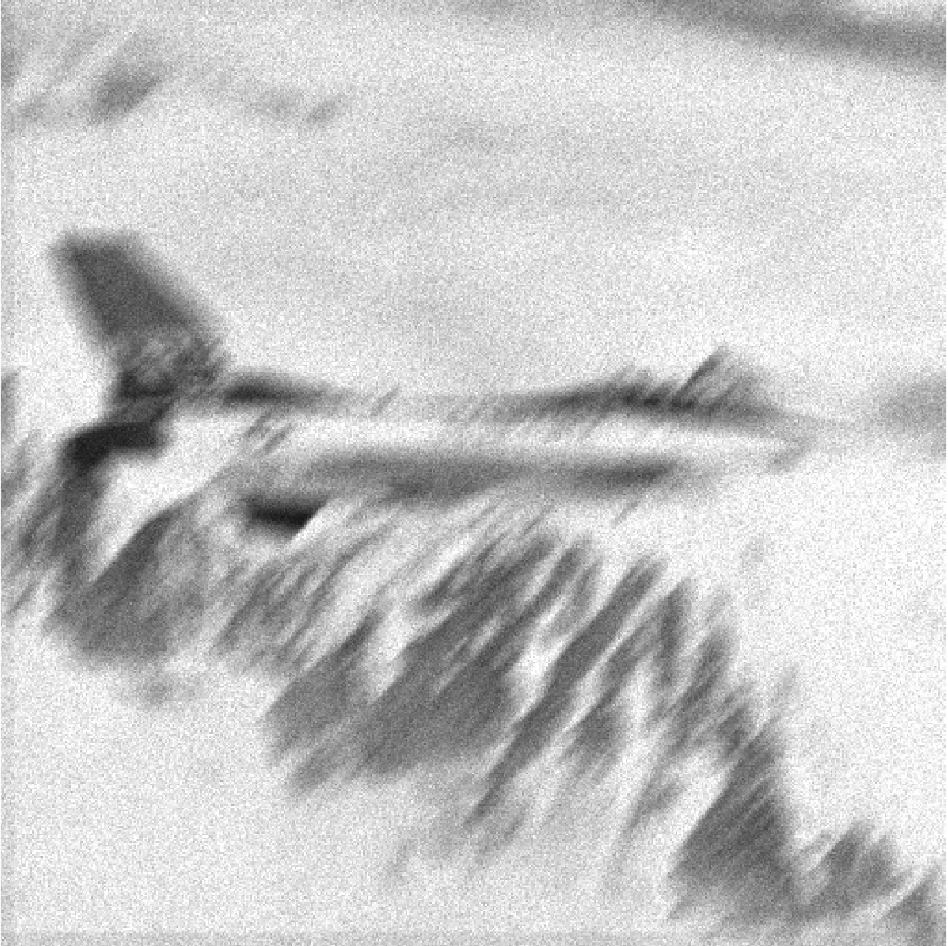}
   \end{subfigure}
   \begin{subfigure}{0.3\linewidth}
            \includegraphics[width=\linewidth]{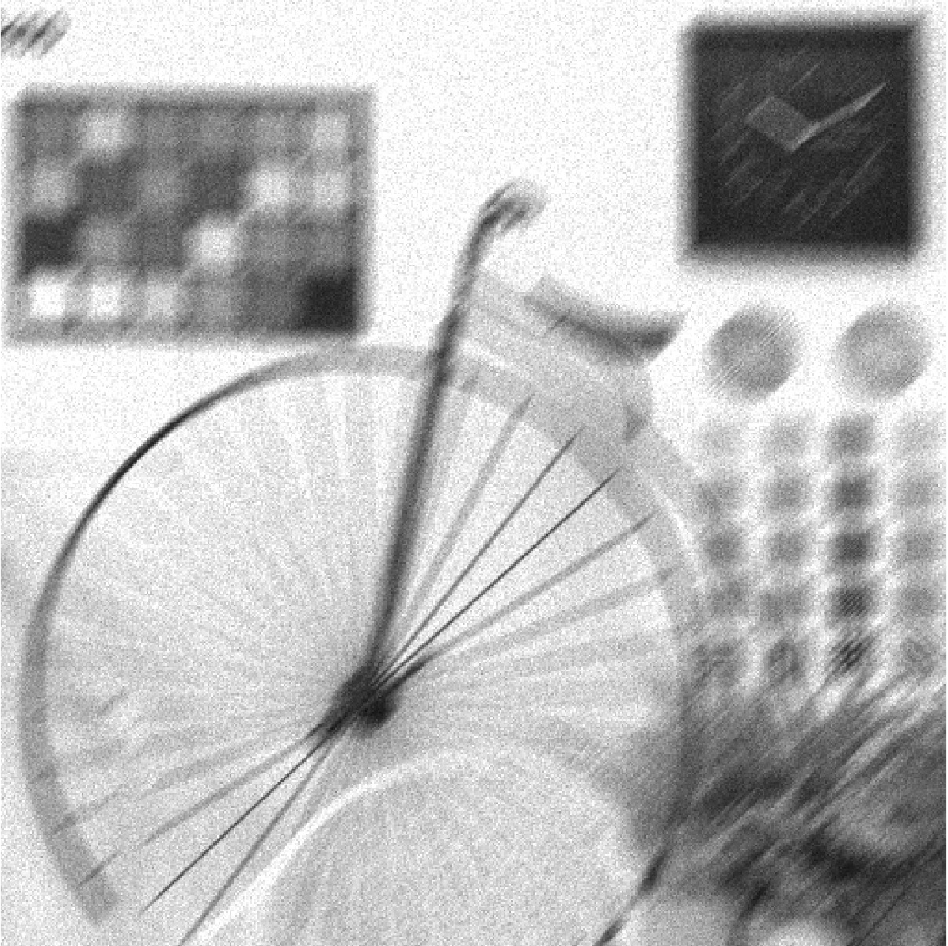}
    \end{subfigure}
     \begin{subfigure}{0.3\linewidth}
     \includegraphics[width=\linewidth]{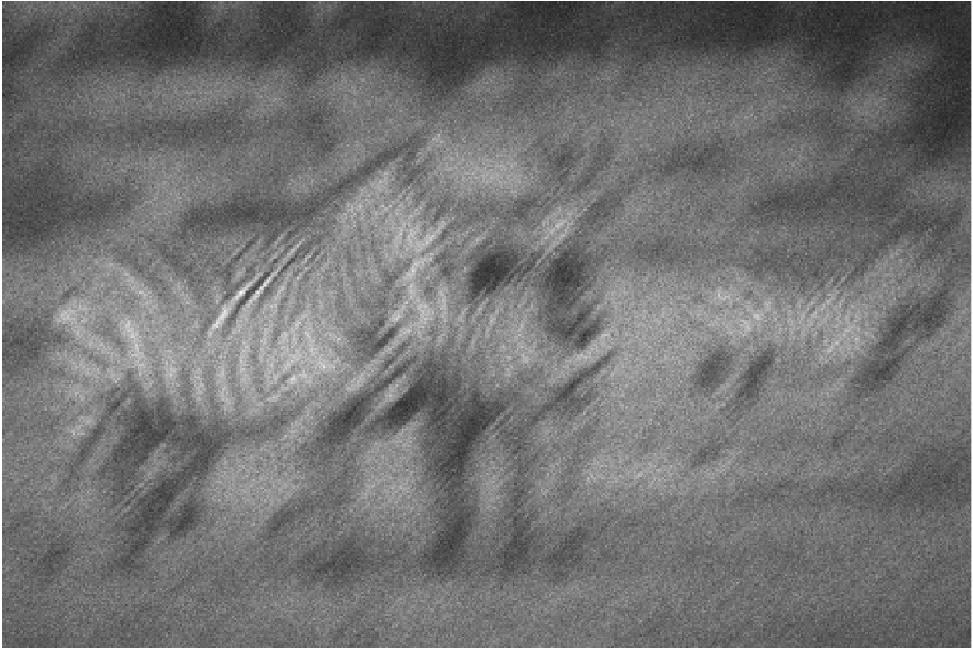}
      \end{subfigure}
	\caption{Corrupted images with MBL being 21 and Poisson noise.
}
	\label{fig:Blurred and poisson noise images}
\end{figure}

\begin{small}

\begin{table}
    \caption{Poisson noise image deblurring:
    PSNB values (dB) of L0-TF, L1-TF, and L1-TV} 
    \label{tab: TV-poissonnoise-elL1-TF-noise-elL0-TF-deblurred}
% \small
    \centering
    \begin{tabular}{c|ccc|ccc}
    \hline
    &
 
    \multicolumn{3}{c|}{clock $200 \times 200$} &     \multicolumn{3}{c}{barbara $256 \times 256$} 
    \\
    \hline 
    \backslashbox[1cm]{model}{\footnotesize{MBL}}
    & 9 & 15 & 21 &  9 & 15 & 21
    \\
    
    \hline
    L0-TF & \textbf{26.52}  &\textbf{25.61}  & \textbf{24.72} & \textbf{24.00} & 22.80 & 22.30 \\
    L1-TF & 25.87 & 24.99 & 23.83&23.97 & \textbf{22.85} &  \textbf{22.36}\\
    L1-TV  &  25.26  & 24.41  & 23.28  &23.41 & 22.53 & 21.98\\
    \hline 
&  \multicolumn{3}{c|}{ lighthouse $512 \times 512$}&
% \textbf{23.84} & \textbf{22.85} & \textbf{22.23}
% \\ 
% 23.25 & 22.49 & 21.86
% \\
% 22.92 & 22.25 & 21.66

     \multicolumn{3}{c}{airplane $512 \times 512$}
    \\
    \hline 
    \backslashbox[1cm]{model}{\footnotesize{MBL}}
    & 9 & 15 & 21& 9 & 15 & 21
    \\
    \hline
    L0-TF & \textbf{23.84} & \textbf{22.85} & \textbf{22.23}
    & \textbf{27.63}& \textbf{26.34}  & \textbf{25.30}\\
    L1-TF 
    & 23.25 & 22.49 & 21.86
    &26.99 & 25.70 & 24.72 \\
    L1-TV  
    &  22.92 & 22.25 & 21.66
    & 26.31 & 25.07 & 24.21\\
\hline
&

    \multicolumn{3}{c|}{ bike $512 \times 512$}  &
    \multicolumn{3}{c}{ zebra $321 \times 481$} 
 
    \\
    \hline 
    \backslashbox[1cm]{model}{\footnotesize{MBL}}
    & 9 & 15 & 21& 9 & 15 & 21
    \\
    \hline
    L0-TF & \textbf{20.52}& \textbf{19.39}&  \textbf{18.77}&  
    \textbf{22.43} &  \textbf{21.30} &  \textbf{20.65} \\
     L1-TF 
    
    &  20.03&  19.01 &  18.43 
    
    &  21.70&  20.64 &  20.07 \\   
    L1-TV  &  19.84&  18.88& 18.20   
    
    &  21.23&  20.30& 19.80 

    \\
\hline
    \end{tabular}

\end{table}

\begin{table}
    \caption{
    Poisson noise image deblurring: chosen values of the parameters} 
    \label{tab: best parameter poisson noise}
% \small
    \centering
    \begin{tabular}{cc|ccc|cc|cc}
    \hline
    & &
    \multicolumn{3}{c|}{L0-TF} &     \multicolumn{2}{c|}{ L1-TF} & \multicolumn{2}{c}{L1-TV}     
    \\
\hline
image & MBL & $\lambda$ & $\gamma$ & $p$ & $\lambda $  & $p_1$ &  $\lambda $  & $p_1$
    \\
    \hline
    % \multirow{3}{*}{clock}& 9& 0.010 & 0.7 & 0.0050 & 0.003 & 0.020& 0.010&0.04\\
     \multirow{3}{*}{clock}& 9& $1\times 10^{-2}$ & $7\times 10^{-1}$ & $5\times 10^{-3}$ & $3\times 10^{-3}$ & $2\times 10^{-2}$& $1\times 10^{-2}$& $4\times 10^{-2}$\\
    % &15& 0.010& 0.9&0.0005 & 0.002& 0.010& 0.009 & 0.02\\
    &15& $1 \times 10^{-2}$& $9 \times 10^{-1}$& $5\times 10^{-4}$ & $2\times 10^{-3}$& $1\times 10^{-2}$& $9\times 10^{-3}$ & $2\times 10^{-2}$\\
    % &21& 0.009& 0.7&0.0001 &0.002 &  0.010& 0.007 &0.02 \\
        &21& $9 \times 10^{-3}$& $7\times 10^{-1} $& $1\times 10^{-4}$ & $2\times 10^{-3}$ &  $1\times 10^{-2}$& $7\times 10^{-3}$ & $2\times 10^{-2}$\\
    \hline
    % \multirow{3}{*}{barbara}& 9& 0.010 & 2.0& 0.1000& 0.002& 0.008& 0.007 & 0.30\\
    \multirow{3}{*}{barbara}& 9& $1\times 10^{-2}$ & $2\times 10^{0}$& $1\times 10^{-1}$& $2\times 10^{-3}$& $8\times 10^{-3}$& $7\times 10^{-3}$ & $3\times 10^{-1}$\\
    % &15& 0.010 &2.0 & 0.1000& 0.002& 0.008& 0.010 & 0.10\\
    &15& $1\times 10^{-2}$ & $2\times 10^{0}$ & $1\times 10^{-1}$& $2\times 10^{-3}$& $8\times 10^{-3}$& $1\times 10^{-2}$ & $1\times 10^{-1}$\\
    % &21& 0.010 &2.0 &0.0500 &0.002 &0.008 & 0.010 & 0.05 \\
    &21& $1\times 10^{-2}$ & $2\times 10^{0}$ & $5\times 10^{-2}$ & $2\times 10^{-3}$ & $8\times 10^{-3}$ & $1\times 10^{-2}$ & $5\times 10^{-2}$ \\
    \hline
    % \multirow{3}{*}{lighthouse}& 9& 0.010 &2.0 & 0.0100 & 0.002 & 0.009 & 0.010 & 0.03 \\
    % &15& 0.010 &2.0 & 0.0100 &0.002 & 0.008& 0.009 & 0.02\\
    \multirow{3}{*}{lighthouse}& 9& $1\times 10^{-2}$ & $2\times 10^0$ & $1\times 10^{-2}$ & $2\times 10^{-3}$ & $9\times 10^{-3}$ & $9\times 10^{-3}$ & $2\times 10^{-2}$ \\
    % &15& 0.010 &2.0 & 0.0100 &0.002 & 0.008& 0.009 & 0.02\\
    &15& $1\times 10^{-2}$ & $2\times 10^0$ & $1\times 10^{-2}$ & $2\times 10^{-3}$ & $8 \times 10^{-3}$& $9\times 10^{-3}$ & $2\times 10^{-2}$\\
    % &21& 0.010 &3.0 &0.0001 & 0.002& 0.008 & 0.009 & 0.01\\
    &21& $1\times 10^{-2}$ & $3\times 10^{0}$ & $1\times 10^{-4}$ & $2\times 10^{-3}$& $8\times 10^{-3}$ & $9\times 10^{-3}$ & $1\times 10^{-2}$\\
    \hline
    % \multirow{3}{*}{airplane}& 9& 0.010 & 0.7& 0.0050  & 0.003& 0.009 & 0.010& 0.06\\
    % &15&0.010 &1.0 & 0.0010 &0.002 & 0.010 & 0.010 & 0.03\\
    \multirow{3}{*}{airplane}& 9& $1\times 10^{-2}$ &  $7\times 10^{-1}$& $5\times 10^{-3}$  & $3\times 10^{-3}$& $9\times 10^{-3}$ & $1\times 10^{-2}$& $6\times 10^{-2}$\\
    % &15&0.010 &1.0 & 0.0010 &0.002 & 0.010 & 0.010 & 0.03\\
    &15& $1\times 10^{-2}$ & $1\times 10^0$ & $1\times 10^{-3}$ & $2\times 10^{-3}$ & $1\times 10^{-2}$ & $1\times 10^{-2}$ & $3\times 10^{-2}$\\
    % &21&0.013 &2.5 &  0.0001& 0.002 & 0.010& 0.009 & 0.02\\
   &21& $1.3 \times 10^{-2}$ & $2.5\times 10^0$&  $1\times 10^{-4}$& $2\times 10^{-3}$ & $1\times 10^{-2}$ & $9\times 10^{-3}$ & $2\times 10^{-2}$\\
    \hline
    % \multirow{3}{*}{bike}& 9& 0.010& 2.0 & 0.0100 & 0.002 & 0.007 & 0.007 & 0.02\\
    \multirow{3}{*}{bike}& 9& $1\times 10^{-2}$ & $2\times 10^0$ & $1\times 10^{-2}$ & $2\times 10^{-3}$ & $7\times 10^{-3}$ & $7\times 10^{-3}$ & $2\times 10^{-2}$\\
    % &15& 0.010 &3.0 & 0.0100& 0.002& 0.009& 0.007 & 0.01\\
    &15& $1\times 10^{-2}$ & $3\times 10^{0}$ & $1\times 10^{-2}$& $2\times 10^{-3}$& $9\times 10^{-3}$& $7\times 10^{-3}$ & $1\times 10^{-2}$\\
    % &21& 0.009& 4.0& 0.0050 & 0.001 & 0.005& 0.007 & 0.01\\
    &21& $9\times 10^{-3}$& $4\times 10^0$& $5\times 10^{-3}$ & $1\times 10^{-3}$ & $5\times 10^{-3}$& $7\times 10^{-3}$ & $1\times 10^{-2}$\\
    \hline
    % \multirow{3}{*}{zebra}& 9& 0.010& 1.0 & 0.0100& 0.002 & 0.010& 0.009 & 0.03\\
    \multirow{3}{*}{zebra}& 9& $1\times 10^{-2}$& $1\times 10^0$ & $1\times 10^{-2}$& $2\times 10^{-3}$ & $1\times 10^{-2}$& $9\times 10^{-3}$ & $3\times 10^{-2}$\\
    % &15& 0.010&2.5 & 0.0100& 0.002 & 0.010 & 0.007 & 0.02\\
    &15& $1\times 10^{-2}$& $2.5\times 10^0$ & $1\times 10^{-2}$& $2\times 10^{-3}$ & $1\times 10^{-2}$ & $7\times 10^{-3}$ & $2\times 10^{-2}$\\
    % &21& 0.010&3.0 & 0.0050& 0.002 & 0.010& 0.007 & 0.01\\
    &21& $1\times 10^{-2}$& $3\times 10^0$ & $5\times 10^{-3}$& $2\times 10^{-3}$ & $1\times 10^{-2}$& $7\times 10^{-3}$ & $1\times 10^{-2}$\\
    \hline
\end{tabular}
\end{table}
\end{small}

\begin{figure}
  \centering
   \begin{subfigure}{0.3\linewidth}
            \includegraphics[width=\linewidth]{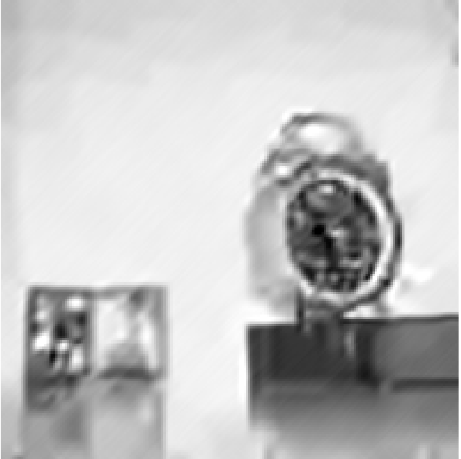}
            \caption*{L0-TF, PSNR: 24.72}
   \end{subfigure}
   \begin{subfigure}{0.3\linewidth}
            \includegraphics[width=\linewidth]{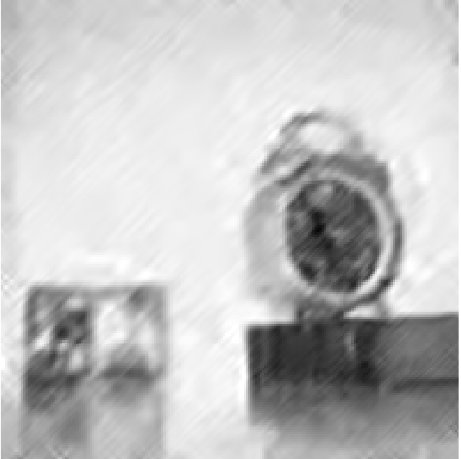}
            \caption*{L1-TF, PSNR: 23.83}
    \end{subfigure}
     \begin{subfigure}{0.3\linewidth}
     \includegraphics[width=\linewidth]{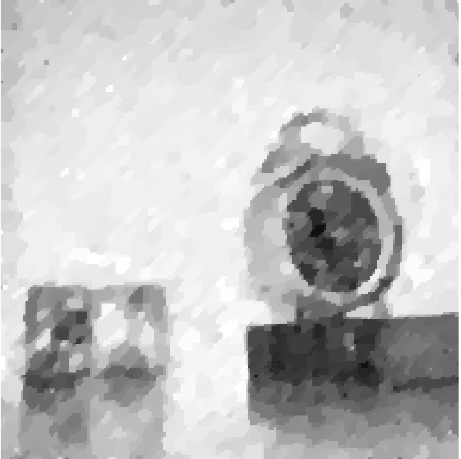}
     \caption*{L1-TV, PSNR: 23.28}
      \end{subfigure}
   \begin{subfigure}{0.3\linewidth}
   
            \includegraphics[width=\linewidth]{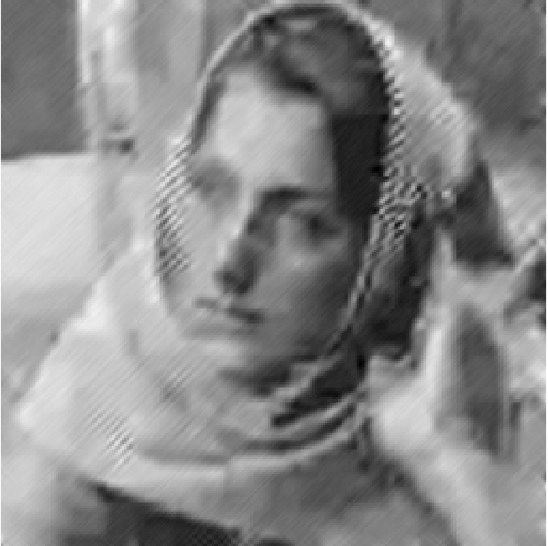}
            \caption*{ L0-TF, PSNR: 22.30}
   \end{subfigure}
   \begin{subfigure}{0.3\linewidth}
            \includegraphics[width=\linewidth]{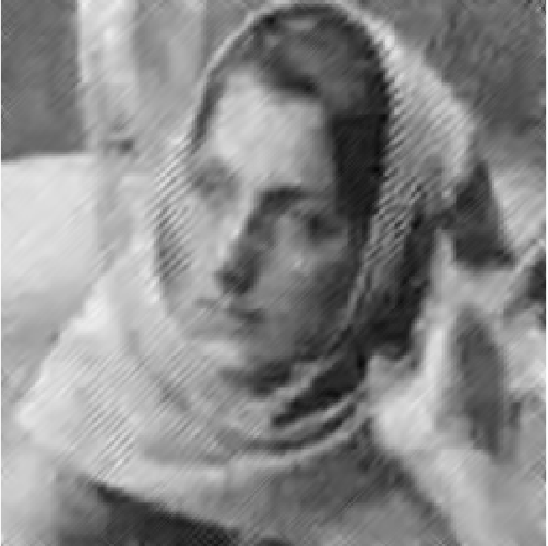}
            \caption*{L1-TF, PSNR: 22.36}
    \end{subfigure}
     \begin{subfigure}{0.3\linewidth}
     \includegraphics[width=\linewidth]{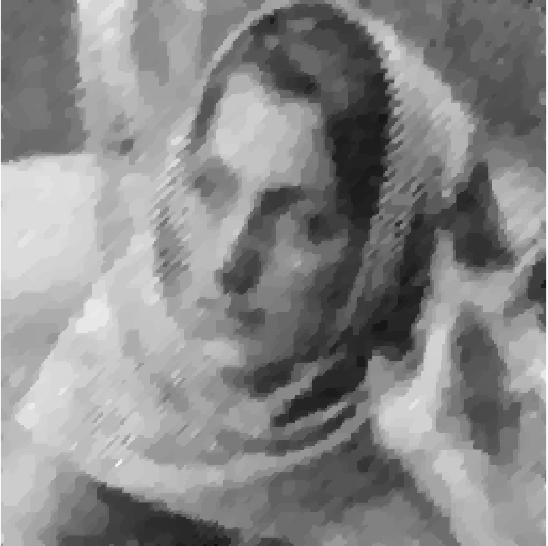}
     \caption*{L1-TV, PSNR: 21.98}
      \end{subfigure}

   \begin{subfigure}{0.3\linewidth}
            \includegraphics[width=\linewidth]{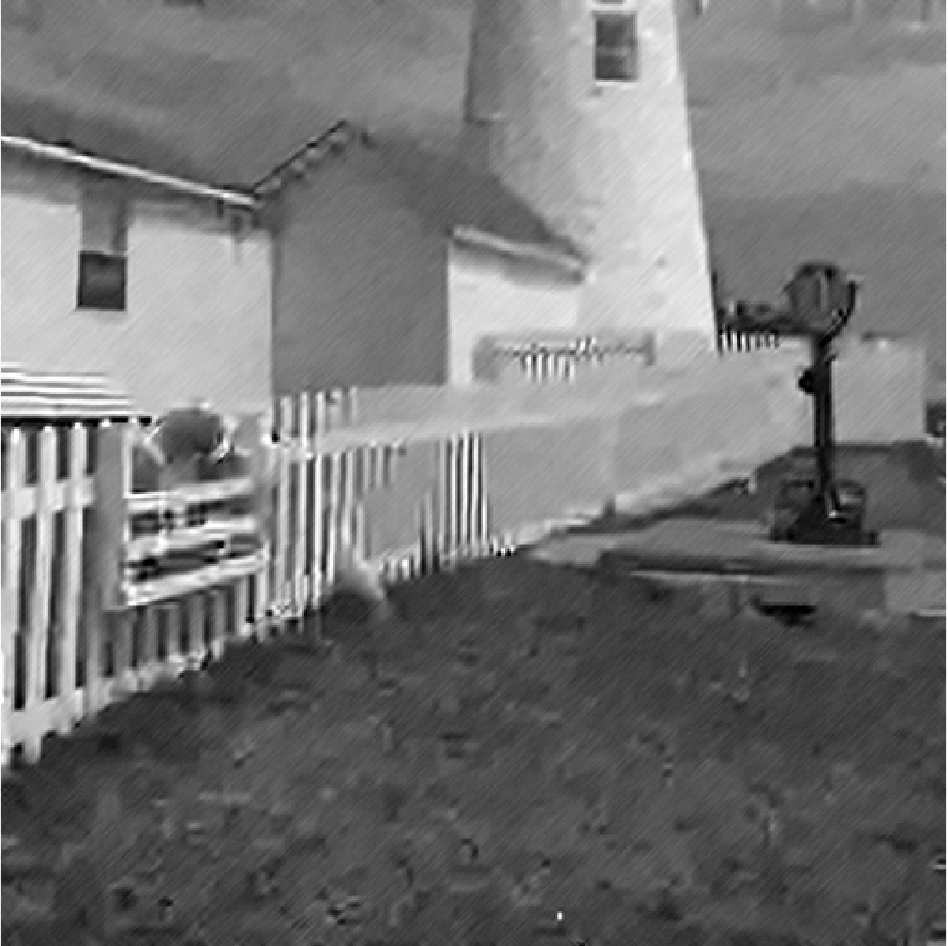}
            \caption*{L0-TF, PSNR: 22.23}
   \end{subfigure}
   \begin{subfigure}{0.3\linewidth}
            \includegraphics[width=\linewidth]{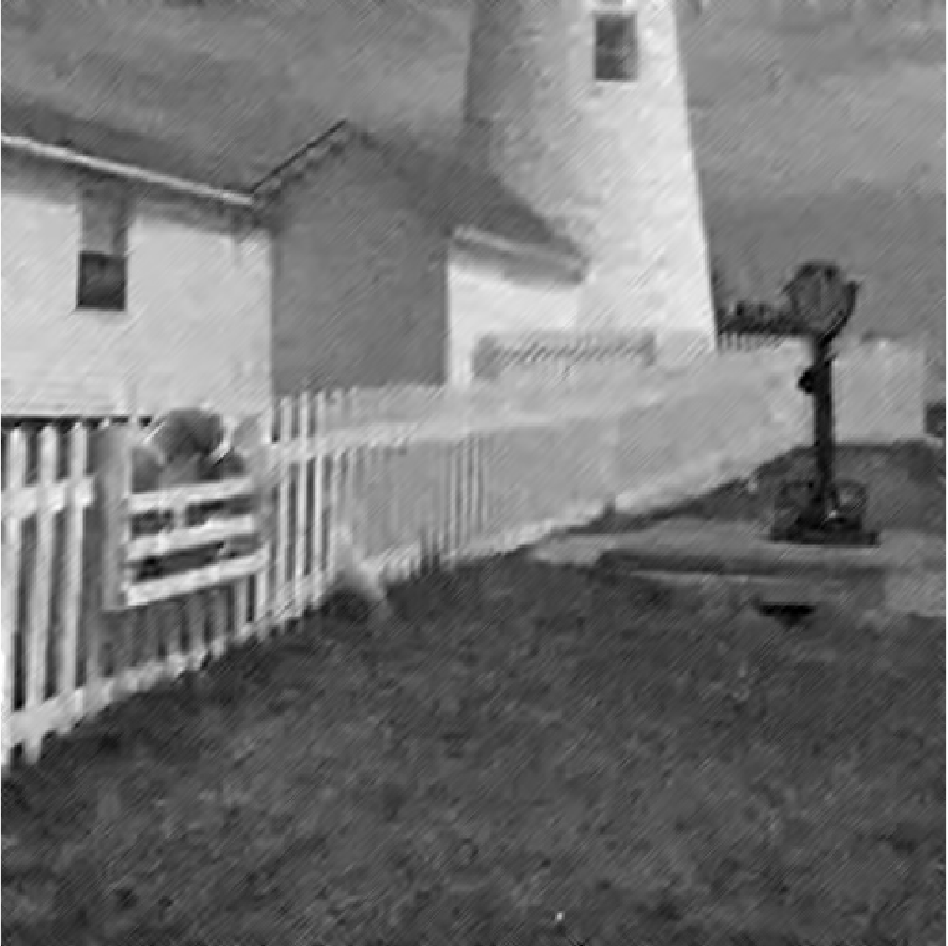}
            \caption*{ L1-TF, PSNR: 21.86}
    \end{subfigure}
     \begin{subfigure}{0.3\linewidth}
     \includegraphics[width=\linewidth]{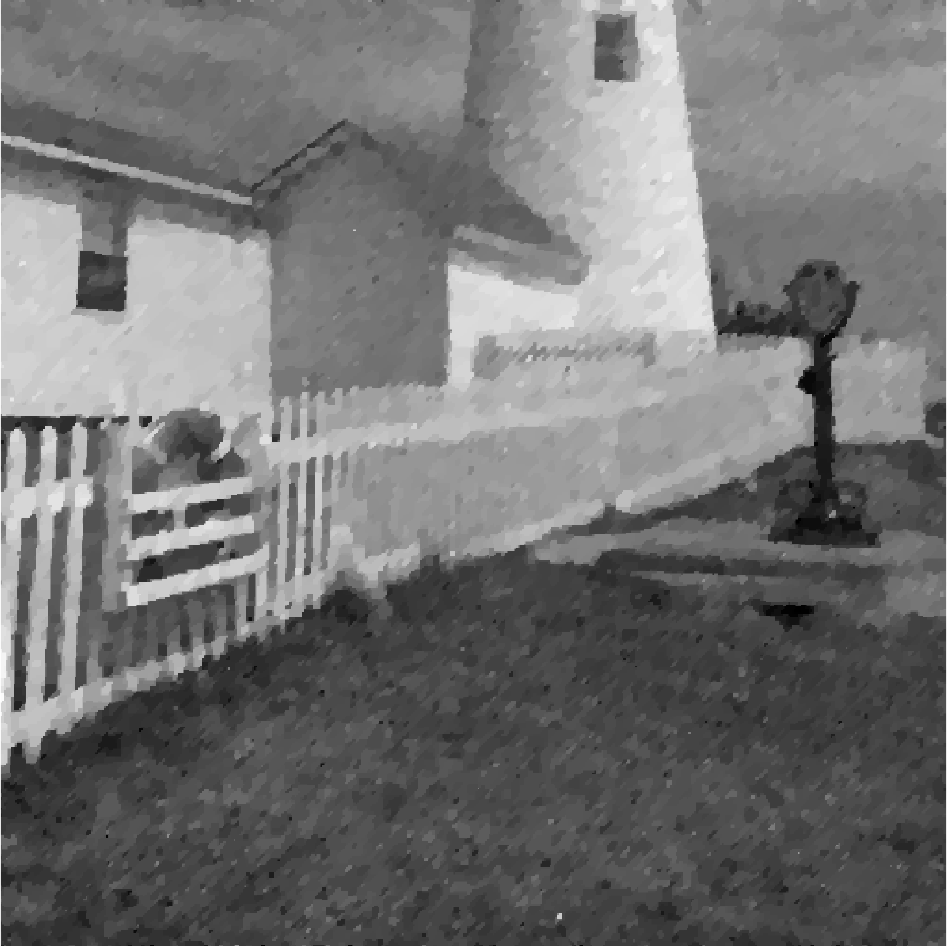}
     \caption*{L1-TV, PSNR: 21.66}
      \end{subfigure}

	\caption{Poisson noise image deblurring:  Deblurred images (clock, barbara, lighthouse) with MBL being 21.  
}
	\label{fig: Poisson-TV-L0-L1-deblur part 1}
\end{figure}

\begin{figure}
  \centering
   \begin{subfigure}{0.3\linewidth}
            \includegraphics[width=\linewidth]{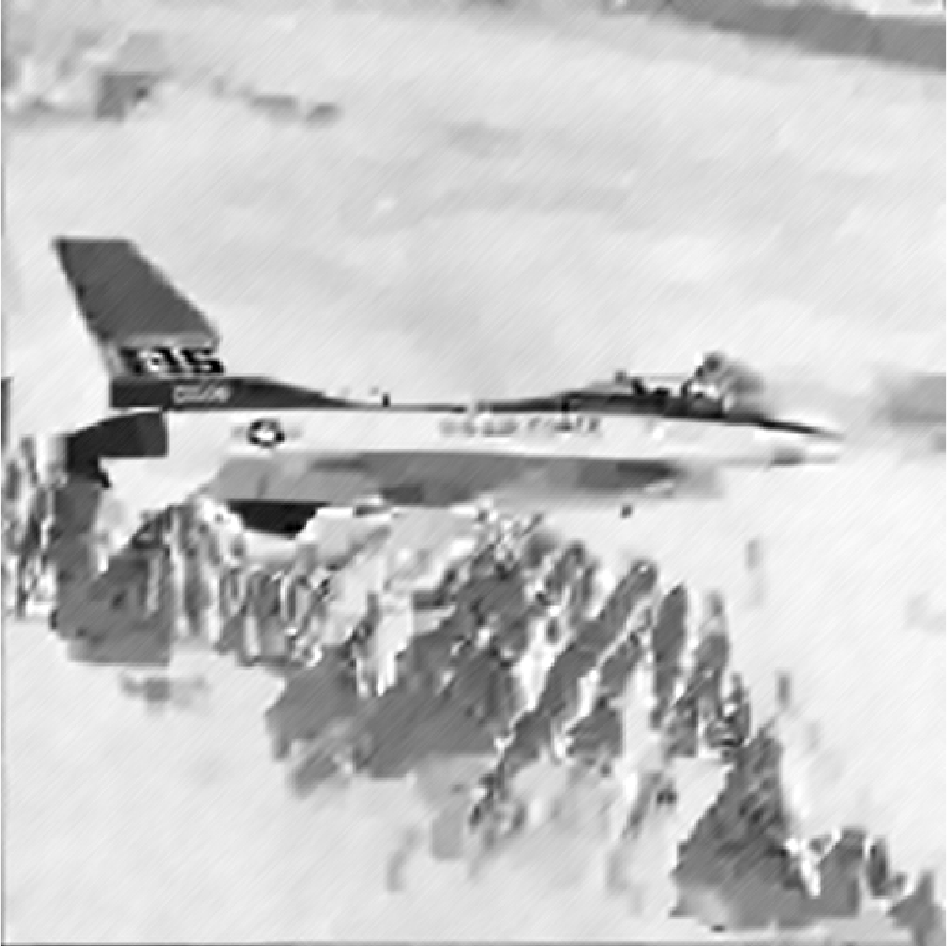}
            \caption*{L0-TF, PSNR: 25.30}
   \end{subfigure}
   \begin{subfigure}{0.3\linewidth}
            \includegraphics[width=\linewidth]{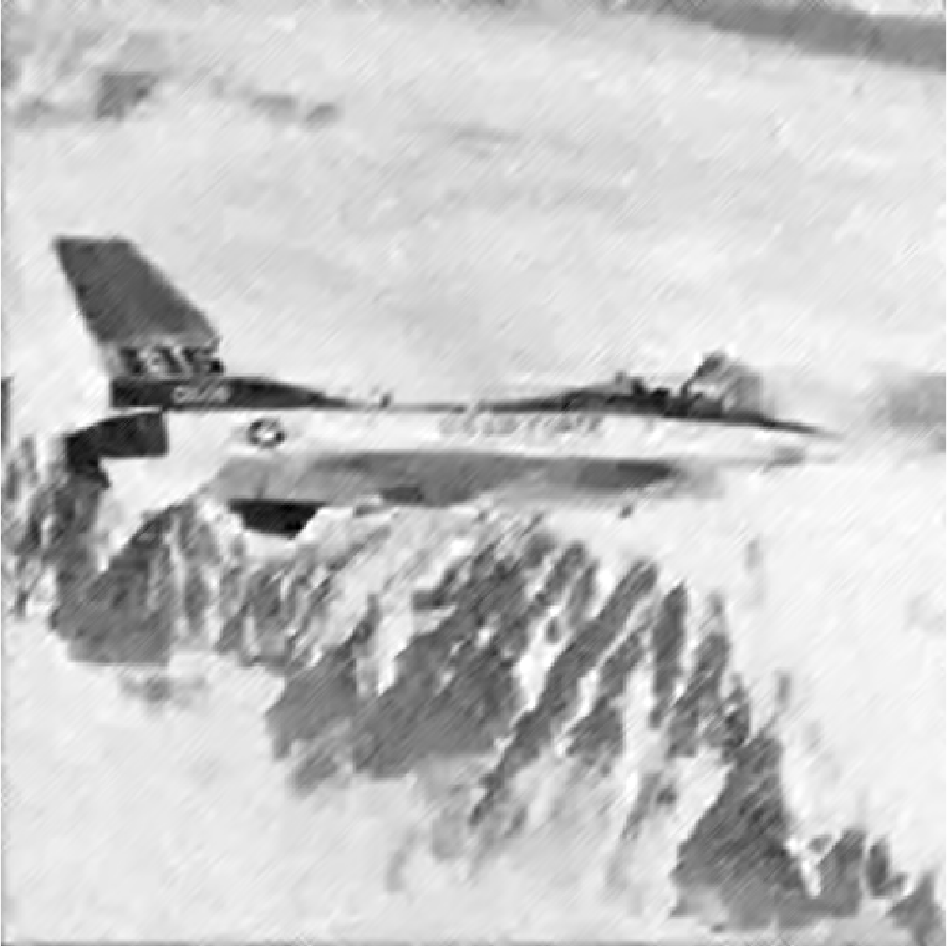}
            \caption*{L1-TF, PSNR: 24.72}
    \end{subfigure}
     \begin{subfigure}{0.3\linewidth}
     \includegraphics[width=\linewidth]{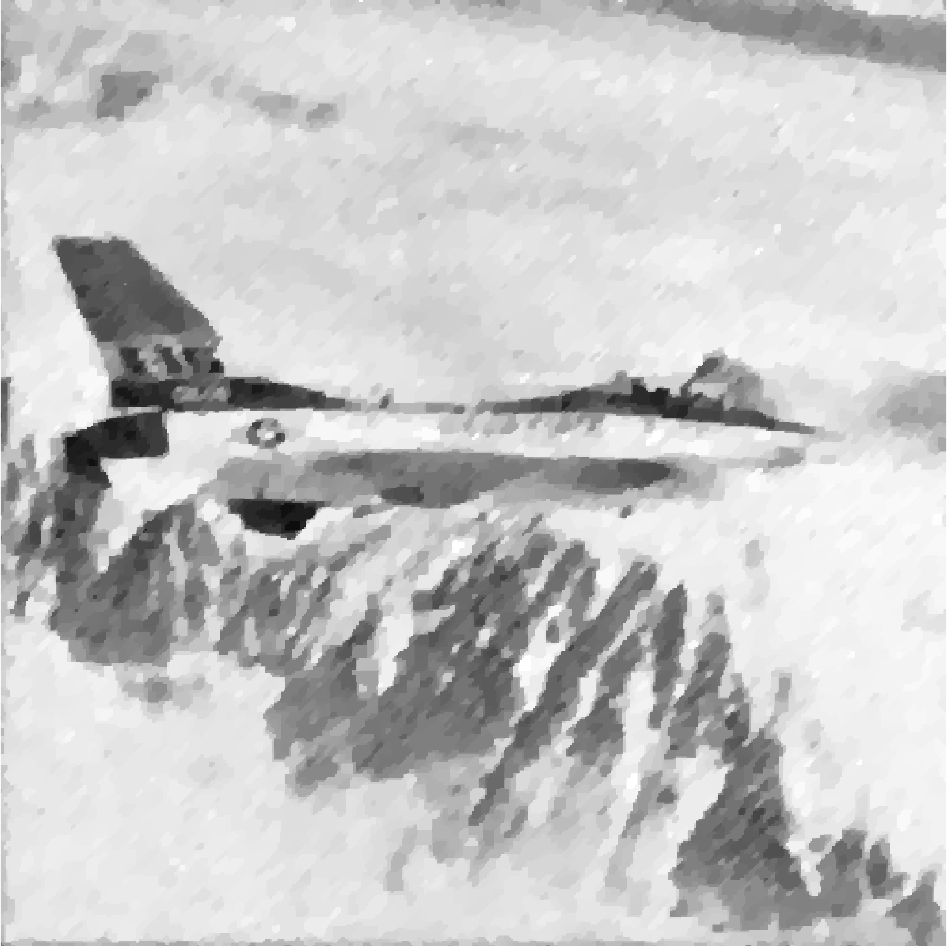}
     \caption*{L1-TV, PSNR: 24.21}
      \end{subfigure}
   \begin{subfigure}{0.3\linewidth}
   
            \includegraphics[width=\linewidth]{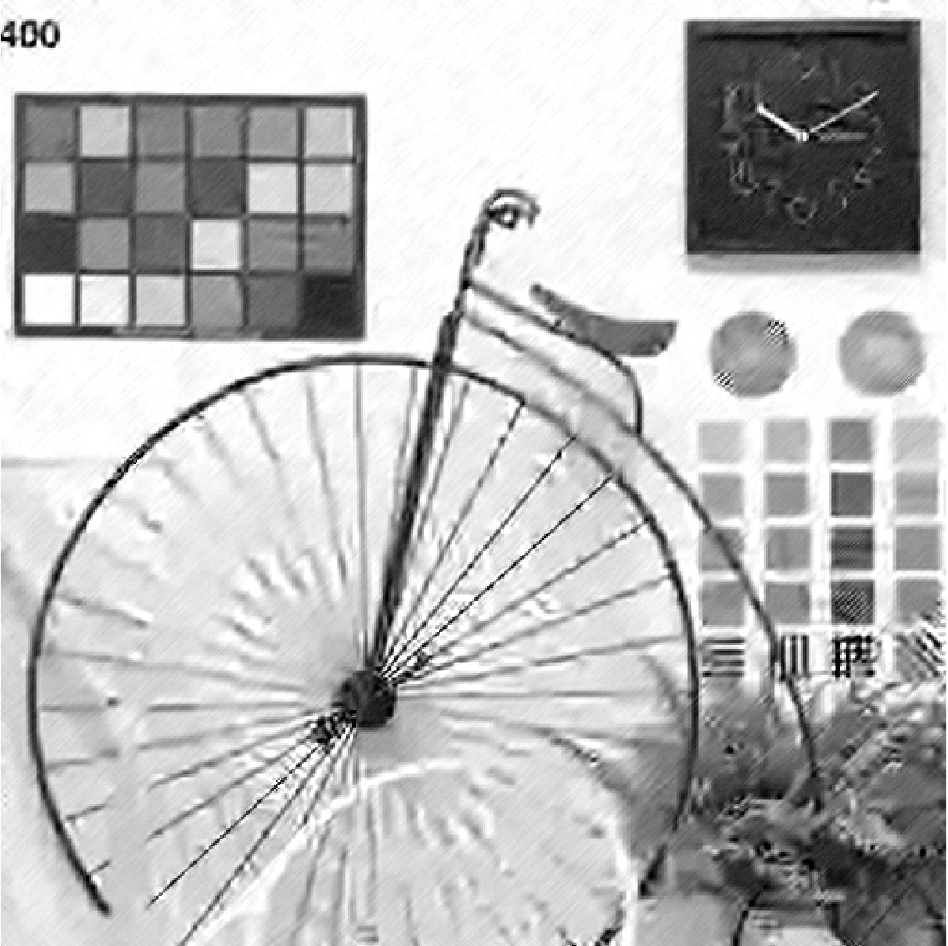}
            \caption*{ L0-TF, PSNR: 18.77}
   \end{subfigure}
   \begin{subfigure}{0.3\linewidth}
            \includegraphics[width=\linewidth]{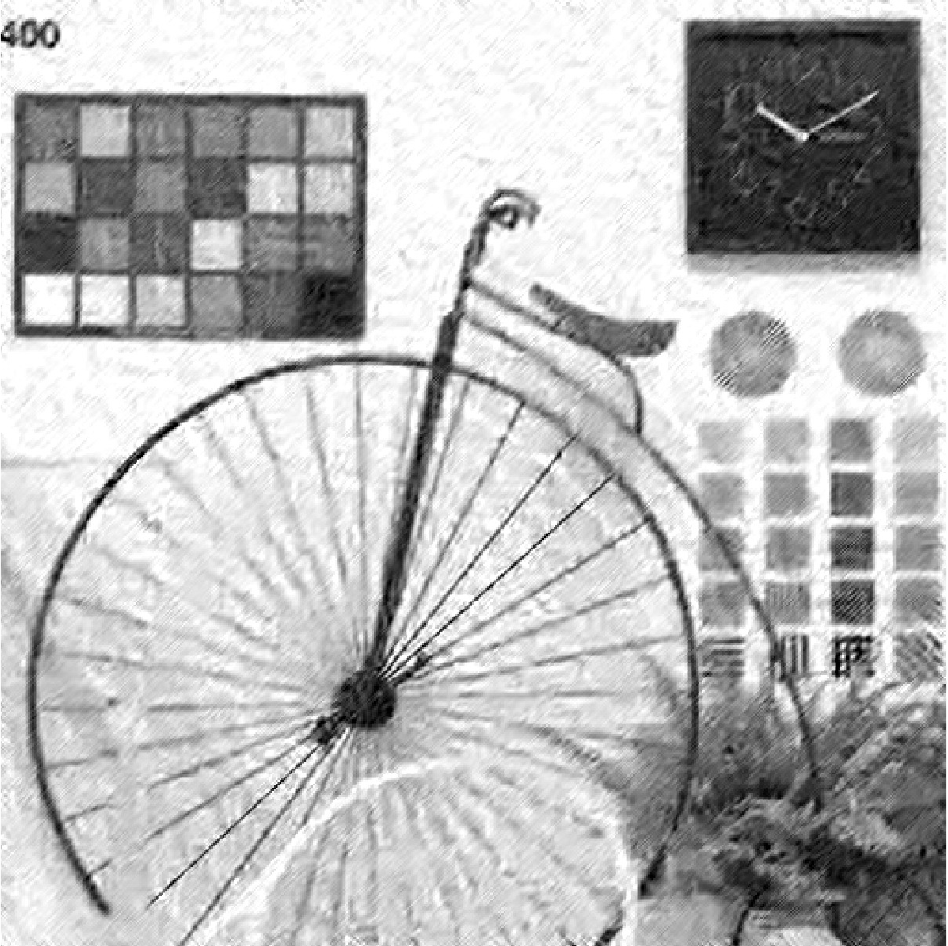}
            \caption*{L1-TF, PSNR: 18.43}
    \end{subfigure}
     \begin{subfigure}{0.3\linewidth}
     \includegraphics[width=\linewidth]{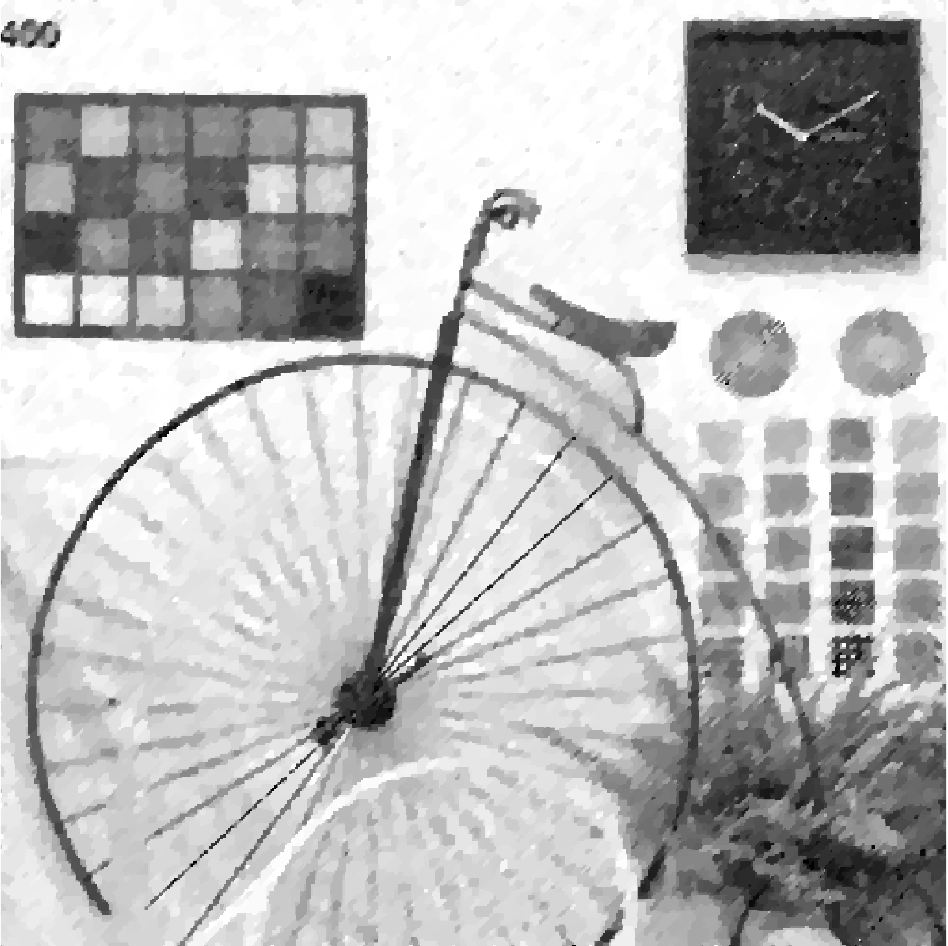}
     \caption*{L1-TV, PSNR: 18.20}
      \end{subfigure}

   \begin{subfigure}{0.3\linewidth}
            \includegraphics[width=\linewidth]{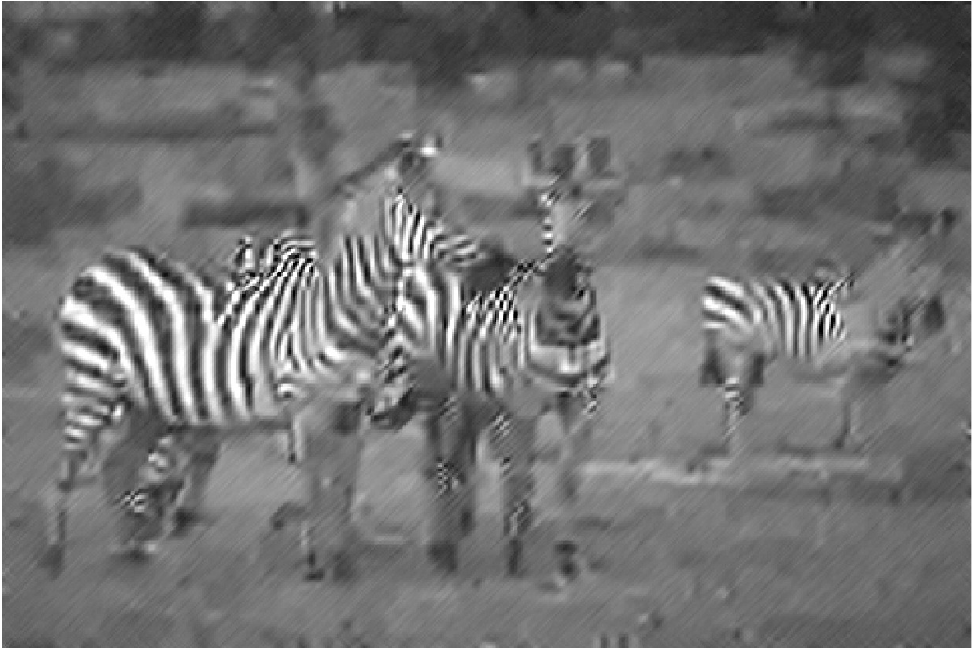}
            \caption*{L0-TF, PSNR: 20.65}
   \end{subfigure}
   \begin{subfigure}{0.3\linewidth}
            \includegraphics[width=\linewidth]{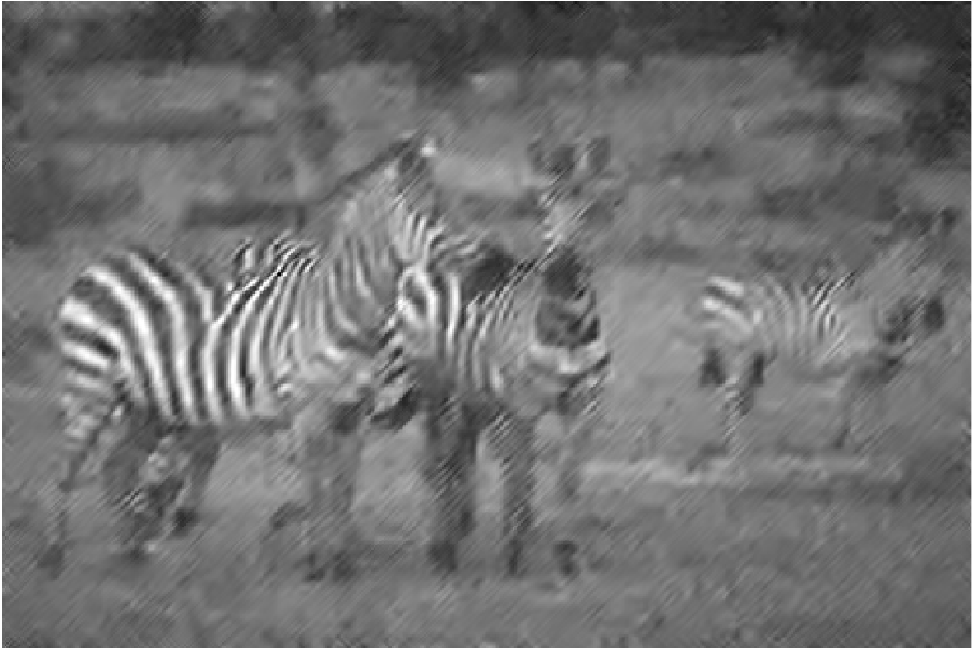}
            \caption*{ L1-TF, PSNR: 20.07}
    \end{subfigure}
     \begin{subfigure}{0.3\linewidth}
     \includegraphics[width=\linewidth]{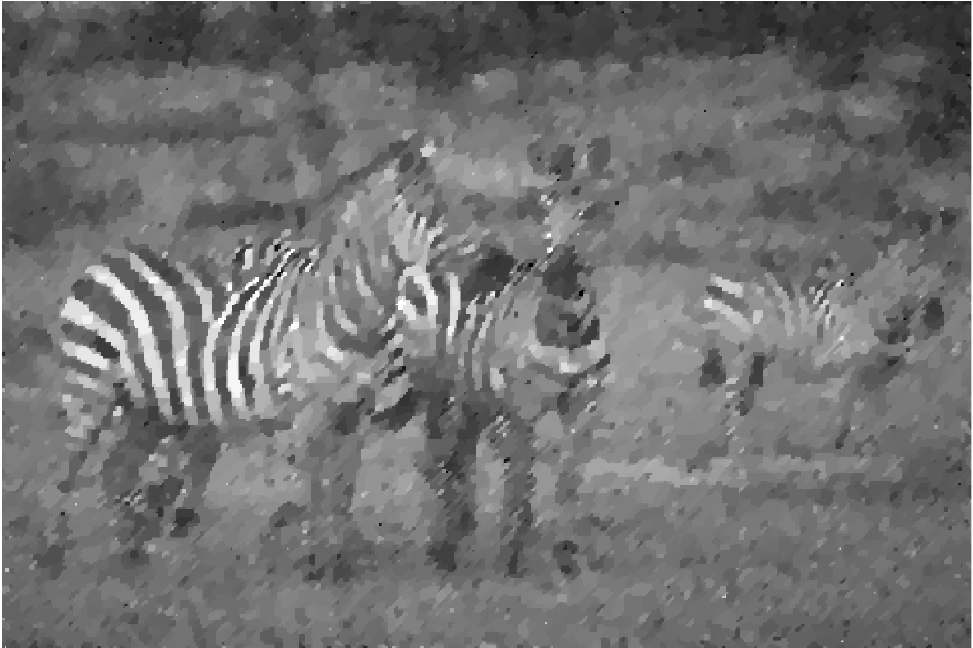}
     \caption*{L1-TV, PSNR: 19.80}
      \end{subfigure}

	\caption{Poisson noise image deblurring:  Deblurred images (airplane, bike, zebra) with MBL being 21. 
}
	\label{fig: Poisson-TV-L0-L1-deblur part 2}
\end{figure}

\begin{figure}
  \centering
   \begin{subfigure}{0.24\linewidth}
        \includegraphics[width=\linewidth]{fig/Clean_Zebra_head.pdf}
        \caption*{\footnotesize{clean}}
   \end{subfigure}
   \begin{subfigure}{0.24\linewidth}
        \includegraphics[width=\linewidth]{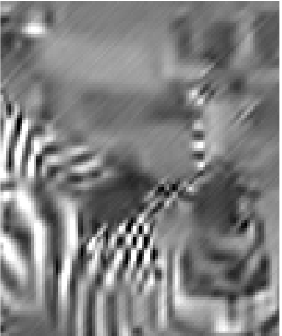}
        \caption*{PSNR: 17.55 }
    \end{subfigure}
    \begin{subfigure}{0.24\linewidth}
         \includegraphics[width=\linewidth]{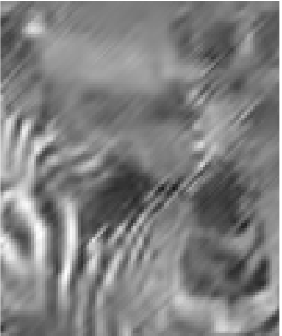}
        \caption*{PSNR: 17.00}
    \end{subfigure}
   \begin{subfigure}{0.24\linewidth}
        \includegraphics[width=\linewidth]{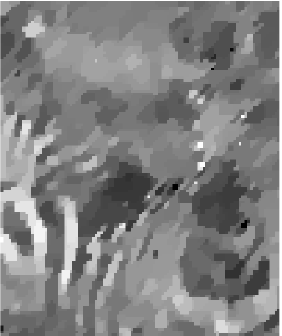}
        \caption*{ PSNR: 16.69}
   \end{subfigure}
   \begin{subfigure}{0.24\linewidth}
       \includegraphics[width=\linewidth]{fig/Clean_Zebra_back.pdf}
        \caption*{\footnotesize{clean}}
   \end{subfigure}
   \begin{subfigure}{0.24\linewidth}
        \includegraphics[width=\linewidth]{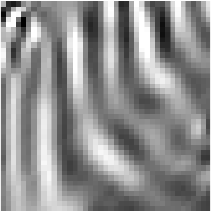}
        \caption*{PSNR: 14.71}
    \end{subfigure}
  \begin{subfigure}{0.24\linewidth}
       \includegraphics[width=\linewidth]{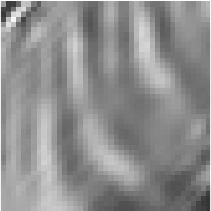}
       \caption*{PSNR: 12.90}
   \end{subfigure}
    \begin{subfigure}{0.24\linewidth}
        \includegraphics[width=\linewidth]{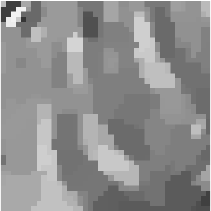}
        \caption*{PSNR: 12.63}
    \end{subfigure}     
	\caption{Poisson noise image deblurring: Deblurred local zebra images from the corrupted images with MBL being 21. The second, third, and fourth columns correspond to the restored local zebra images obtained from L0-TF, L1-TF, and L1-TV, respectively.
}
	\label{fig: poisson zebra head and neck}
\end{figure} 

In summary, the numerical experiments presented above all confirm the convergence of the proposed inexact algorithm and demonstrate the superiority of the $\ell_0$ regularization over the $\ell_1$ regularization, in both approximation accuracy and sparsity.
The numerical results also confirm that the importance of the inner iteration in the proposed inexact algorithm depends on the magnitude of the norm of the matrix $\mathbf{B}$ that appears in the objective function \eqref{target function in model: l0}. In the case of regression and classification problems, inner iterations are crucial due to the norm of matrix $\mathbf{B}$ exceeding 500, while in deblurring problems, the significance of inner iterations is limited as the norm of matrix $\mathbf{B}$ is not greater than 2.  { The numerical results consistently verified that increasing the value of $\alpha$ can speed up convergence of the proposed algorithm when $\alpha$ falls within the range $(0, 1)$, which is guaranteed by Theorem \ref{thm : convergence algo INFPPA}. While for $\alpha>1$, the impact of  $\alpha$ on the algorithm varies depending on a specific application.}

\section{Conclusion}\label{section: Conclusion}

The theoretical study and numerical experiments conducted in this paper have shown that the proposed inexact fixed-point proximity algorithms are effective for solving sparse regularization models involving the $\ell_0$-norm. 
The proposed inexact algorithm incorporates an inner iteration for the evaluation of the proximity operator of a convex function composed with a linear transformation. We have shown convergence of the proposed inexact algorithm to a local minimizer of the $\ell_0$ model under the hypothesis that the sequence of the numerical errors of the inner iteration is $\ell_1$ summable. The analysis is performed through analyzing the convergence of the related convex optimization problem, constrained on the support of the sparse solution, with applying the theory of non-expansive operators. 

We have applied the proposed algorithm to solve the problems of regression, classification, and image deblurring. 
Numerical experiments show that the proposed inexact fixed-point proximity algorithm can effectively solve the $\ell_0$ models, leading to convergent solutions, which exhibit either higher sparsity or accuracy than those obtained from solving the related $\ell_1$ models. In particular, numerical results for the regression problem demonstrate that the local minimizer obtained using our proposed algorithm for the $\ell_0$ model has higher sparsity while maintaining similar accuracy, compared to the global minimizer of the related $\ell_1$ model. The inexact algorithm for the $\ell_0$ models displays elevated sparsity and accuracy for the classification problem, and improved image restoration quality for the image deblurring problem, when compared to the related $\ell_1$ models. The numerical examples also demonstrate that the inner iteration plays a crucial role in regression and classification problems, while it has a limited impact on image deblurring problems, depending on the magnitude of the norm of the matrix $\mathbf{B}$ in the objective function.

The proposed algorithm significantly broadens the applicability of the exact fixed-point proximity algorithm.
The convergence analysis methodology employed in this study may be extendable to other sparse regularization problems involving the $\ell_0$ norm.

\bigskip

\noindent{\bf 
Data Availability Statement:} The computer codes that generate the numerical results presented in this paper can be found in the following website:
https://github.com/msyan2023/InexactFPPA-L0
% \noindent
% \textbf{Funding: }Y. Xu is supported in part by the US National Science Foundation under grants DMS-1912958 and DMS-2208386, and by the US National Institutes of Health under grant R21CA263876. 

%\begin{acknowledgements}
%If you'd like to thank anyone, place your comments here
%and remove the percent signs.
%\end{acknowledgements}

% Authors must disclose all relationships or interests that 
% could have direct or potential influence or impart bias on 
% the work: 
%
% \section*{Conflict of interest}
%
% The authors declare that they have no conflict of interest.

% BibTeX users please use one of
%\bibliographystyle{spbasic}      % basic style, author-year citations
\bibliographystyle{plain}      % mathematics and physical sciences
\bibliography{references}   % name your BibTeX data base

\begin{thebibliography}{10}

\bibitem{attouch2013convergence}
Hedy Attouch, J{\'e}r{\^o}me Bolte, and Benar~Fux Svaiter.
\newblock Convergence of descent methods for semi-algebraic and tame problems: proximal algorithms, forward--backward splitting, and regularized gauss--seidel methods.
\newblock {\em Math. Program.}, 137:91--129, 2013.
\newblock \url{https://doi.org/10.1007/s10107-011-0484-9}.

\bibitem{bauschke2011convex}
Heinz~H Bauschke and Patrick~L Combettes.
\newblock {\em Convex Analysis and Monotone Operator Theory in Hilbert Spaces}.
\newblock Springer, New York, 2011.

\bibitem{beck2017first}
Amir Beck.
\newblock {\em First-Order Methods in Optimization}.
\newblock SIAM, Philadelphia, 2017.

\bibitem{boche2015survey}
Holger Boche, Robert Calderbank, Gitta Kutyniok, and Jan Vyb{\'\i}ral.
\newblock A survey of compressed sensing.
\newblock In {\em Compressed Sensing and its Applications}, pages 1--39. Springer, New York, 2015.

\bibitem{boyd2011distributed}
Stephen Boyd, Neal Parikh, Eric Chu, Borja Peleato, and Jonathan Eckstein.
\newblock Distributed optimization and statistical learning via the alternating direction method of multipliers.
\newblock {\em Found. Trends Mach. Learn.}, 3(1):1--122, 2011.
\newblock \url{https://doi.org/10.1561/2200000016}.

\bibitem{1542412}
E.J. Candes and T.~Tao.
\newblock Decoding by linear programming.
\newblock {\em IEEE Trans. Inf.}, 51:4203--4215, 2005.
\newblock \url{https://doi.org/10.1109/TIT.2005.858979}.

\bibitem{4016283}
Emmanuel~J. Candes and Terence Tao.
\newblock Near-optimal signal recovery from random projections: Universal encoding strategies?
\newblock {\em IEEE Trans. Inf.}, 52:5406--5425, 2006.
\newblock \url{https://doi.org/10.1109/TIT.2006.885507}.

\bibitem{chambolle2011first}
Antonin Chambolle and Thomas Pock.
\newblock A first-order primal-dual algorithm for convex problems with applications to imaging.
\newblock {\em J. Math. Imaging Vis.}, 40:120--145, 2011.
\newblock \url{https://doi.org/10.1007/s10851-010-0251-1}.

\bibitem{chan2004tight}
Raymond~H Chan, Sherman~D Riemenschneider, Lixin Shen, and Zuowei Shen.
\newblock Tight frame: an efficient way for high-resolution image reconstruction.
\newblock {\em Appl. Comput. Harmon. Anal.}, 17:91--115, 2004.
\newblock \url{https://doi.org/10.1016/j.acha.2004.02.003}.

\bibitem{chang2011libsvm}
Chih-Chung Chang and Chih-Jen Lin.
\newblock {LIBSVM}: a library for support vector machines.
\newblock {\em ACM Trans. Intell.}, 2:1--27, 2011.
\newblock \url{https://doi.org/10.1145/1961189.1961199}.

\bibitem{combettes2004solving}
Patrick~L Combettes*.
\newblock Solving monotone inclusions via compositions of nonexpansive averaged operators.
\newblock {\em Optimization.}, 53:475--504, 2004.
\newblock \url{https://doi.org/10.1080/02331930412331327157}.

\bibitem{Courant1943VariationalMF}
Richard Courant.
\newblock Variational methods for the solution of problems of equilibrium and vibrations.
\newblock {\em Bull. Am. Math.}, 49:1--23, 1943.
\newblock \url{https://api.semanticscholar.org/CorpusID:16547277}.

\bibitem{daubechies1992ten}
Ingrid Daubechies.
\newblock {\em Ten Lectures on Wavelets}.
\newblock SIAM, Philadelphia, 1992.

\bibitem{donoho2006compressed}
David~L Donoho.
\newblock Compressed sensing.
\newblock {\em IEEE Trans. Inf.}, 52:1289--1306, 2006.
\newblock \url{https://doi.org/10.1109/TIT.2006.871582}.

\bibitem{elad2006image}
Michael Elad and Michal Aharon.
\newblock Image denoising via learned dictionaries and sparse representation.
\newblock In {\em 2006 IEEE Computer Society Conference on Computer Vision and Pattern Recognition (CVPR'06)}, volume~1, pages 895--900. IEEE, 2006.
\newblock \url{https://doi.org/10.1109/CVPR.2006.142}.

\bibitem{fan2001variable}
Jianqing Fan and Runze Li.
\newblock Variable selection via nonconcave penalized likelihood and its oracle properties.
\newblock {\em J. Am. Stat. Assoc.}, 96:1348--1360, 2001.
\newblock \url{https://doi.org/10.1198/016214501753382273}.

\bibitem{gabay1976dual}
Daniel Gabay and Bertrand Mercier.
\newblock A dual algorithm for the solution of nonlinear variational problems via finite element approximation.
\newblock {\em Comput. Math. with Appl.}, 2(1):17--40, 1976.

\bibitem{garey1979computers}
Michael~R Garey and David~S Johnson.
\newblock Computers and intractability. {A} guide to the theory of {NP}-completeness.
\newblock {\em J. Symb. Log.}, 48, 1983.
\newblock \url{https://doi.org/10.2307/2273574}.

\bibitem{glowinski1975approximation}
Roland Glowinski and Americo Marroco.
\newblock Sur l'approximation, par {\'e}l{\'e}ments finis d'ordre un, et la r{\'e}solution, par p{\'e}nalisation-dualit{\'e} d'une classe de probl{\`e}mes de dirichlet non lin{\'e}aires.
\newblock {\em Revue fran{\c{c}}aise d'automatique, informatique, recherche op{\'e}rationnelle. Analyse num{\'e}rique}, 9(R2):41--76, 1975.

\bibitem{hansen2006deblurring}
Per~Christian Hansen, James~G Nagy, and Dianne~P O'leary.
\newblock {\em Deblurring images: Matrices, Spectra, and Filtering}.
\newblock SIAM, Philadelphia, 2006.

\bibitem{kurdyka1998gradients}
Krzysztof Kurdyka.
\newblock On gradients of functions definable in o-minimal structures.
\newblock In {\em Annales de l'institut Fourier}, volume~48, pages 769--783, 1998.

\bibitem{lecun1998gradient}
Yann LeCun, L{\'e}on Bottou, Yoshua Bengio, and Patrick Haffner.
\newblock Gradient-based learning applied to document recognition.
\newblock {\em Proc. IEEE.}, 86:2278--2324, 1998.
\newblock \url{https://doi.org/10.1109/5.726791}.

\bibitem{lefkimmiatis2013poisson}
Stamatios Lefkimmiatis and Michael Unser.
\newblock Poisson image reconstruction with hessian schatten-norm regularization.
\newblock {\em IEEE Trans. Image Process.}, 22(11):4314--4327, 2013.

\bibitem{li2015multi}
Qia Li, Lixin Shen, Yuesheng Xu, and Na~Zhang.
\newblock Multi-step fixed-point proximity algorithms for solving a class of optimization problems arising from image processing.
\newblock {\em Adv. Comput. Math.}, 41:387--422, 2015.
\newblock \url{https://doi.org/10.1007/s10444-014-9363-2}.

\bibitem{li2018fixed}
Zheng Li, Guohui Song, and Yuesheng Xu.
\newblock A fixed-point proximity approach to solving the support vector regression with the group lasso regularization.
\newblock {\em Int. J. Numer. Anal. Model.}, 15:154--169, 2018.

\bibitem{li2019two}
Zheng Li, Guohui Song, and Yuesheng Xu.
\newblock A two-step fixed-point proximity algorithm for a class of non-differentiable optimization models in machine learning.
\newblock {\em J. Sci. Comput.}, 81:923--940, 2019.
\newblock \url{https://doi.org/10.1007/s10915-019-01045-7}.

\bibitem{li2018sparse}
Zheng Li, Yuesheng Xu, and Qi~Ye.
\newblock Sparse support vector machines in reproducing kernel {B}anach spaces.
\newblock In {\em Contemporary Computational Mathematics-A Celebration of the 80th Birthday of Ian Sloan}, pages 869--887. Springer, 2018.

\bibitem{lian2011filters}
Qiaofang Lian, Lixin Shen, Yuesheng Xu, and Lihua Yang.
\newblock Filters of wavelets on invariant sets for image denoising.
\newblock {\em Appl. Anal.}, 90:1299--1322, 2011.
\newblock \url{https://doi.org/10.1080/00036811.2010.490524}.

\bibitem{liang2016convergence}
Jingwei Liang, Jalal Fadili, and Gabriel Peyr{\'e}.
\newblock Convergence rates with inexact non-expansive operators.
\newblock {\em Math. Program.}, 159:403--434, 2016.
\newblock \url{https://doi.org/10.1007/s10107-015-0964-4}.

\bibitem{lojasiewicz1963propriete}
Stanislaw Lojasiewicz.
\newblock Une propri{\'e}t{\'e} topologique des sous-ensembles analytiques r{\'e}els.
\newblock {\em Les {\'e}quations aux d{\'e}riv{\'e}es partielles}, 117:87--89, 1963.

\bibitem{mallat1999wavelet}
St{\'e}phane Mallat.
\newblock {\em A Wavelet Tour of Signal Processing}.
\newblock Elsevier, Amsterdam, 1999.

\bibitem{micchelli2011proximity}
Charles~A Micchelli, Lixin Shen, and Yuesheng Xu.
\newblock Proximity algorithms for image models: denoising.
\newblock {\em Inverse Probl.}, 27:045009, 2011.
\newblock \url{https://doi.org/10.1088/0266-5611/27/4/045009}.

\bibitem{natarajan1995sparse}
Balas~Kausik Natarajan.
\newblock Sparse approximate solutions to linear systems.
\newblock {\em SIAM J. Comput.}, 24:227--234, 1995.

\bibitem{ren2023}
Jin Ren and Yuesheng Xu.
\newblock Inexact fixed-point iterations with applications to convex optimization.
\newblock 2023, preprint.

\bibitem{rockafellar1976monotone}
R~Tyrrell Rockafellar.
\newblock Monotone operators and the proximal point algorithm.
\newblock {\em SIAM J. Control Optim.}, 14:877--898, 1976.
\newblock \url{https://doi.org/10.1137/0314056}.

\bibitem{ron1997affine}
Amos Ron and Zuowei Shen.
\newblock Affine systems in${L}_2(\mathbb{R}^d)$: the analysis of the analysis operator.
\newblock {\em J. Funct. Anal.}, 148:408--447, 1997.
\newblock \url{https://doi.org/10.1006/jfan.1996.3079}.

\bibitem{shen2016wavelet}
Lixin Shen, Yuesheng Xu, and Xueying Zeng.
\newblock Wavelet inpainting with the $\ell_0$ sparse regularization.
\newblock {\em Appl. Comput. Harmon. Anal.}, 41:26--53, 2016.
\newblock \url{https://doi.org/10.1016/j.acha.2015.03.001}.

\bibitem{shen2014approximate}
Lixin Shen, Yuesheng Xu, and Na~Zhang.
\newblock An approximate sparsity model for inpainting.
\newblock {\em Appl. Comput. Harmon. Anal.}, 37(1):171--184, 2014.
\newblock \url{https://doi.org/10.1016/j.acha.2013.11.002}.

\bibitem{solodov1999hybrid}
Mikhail~V Solodov and Benar~F Svaiter.
\newblock A hybrid approximate extragradient--proximal point algorithm using the enlargement of a maximal monotone operator.
\newblock {\em Set-Valued Anal.}, 7:323--345, 1999.
\newblock \url{https://doi.org/10.1023/A:1008777829180}.

\bibitem{song2013reproducing}
Guohui Song, Haizhang Zhang, and Fred~J Hickernell.
\newblock Reproducing kernel {B}anach spaces with the $\ell_1$ norm.
\newblock {\em Appl. Comput. Harmon. Anal.}, 34:96--116, 2013.

\bibitem{strang1999discrete}
Gilbert Strang.
\newblock The discrete cosine transform.
\newblock {\em SIAM Rev.}, 41:135--147, 1999.

\bibitem{suter1997multirate}
Bruce~W Suter.
\newblock {\em Multirate and Wavelet Signal Processing}.
\newblock Academic Press, San Diego, 1998.

\bibitem{tan2014smoothing}
Zhao Tan, Yonina~C Eldar, Amir Beck, and Arye Nehorai.
\newblock Smoothing and decomposition for analysis sparse recovery.
\newblock {\em IEEE Trans. Signal Process}, 62(7):1762--1774, 2014.
\newblock \url{https://doi.org/10.1109/TSP.2014.2304932}.

\bibitem{villa2013accelerated}
Silvia Villa, Saverio Salzo, Luca Baldassarre, and Alessandro Verri.
\newblock Accelerated and inexact forward-backward algorithms.
\newblock {\em SIAM J. Optim.}, 23:1607--1633, 2013.
\newblock \url{https://doi.org/10.1137/110844805}.

\bibitem{wang2008new}
Yilun Wang, Junfeng Yang, Wotao Yin, and Yin Zhang.
\newblock A new alternating minimization algorithm for total variation image reconstruction.
\newblock {\em SIAM J. Imaging Sci.}, 1(3):248--272, 2008.

\bibitem{wang2019global}
Yu~Wang, Wotao Yin, and Jinshan Zeng.
\newblock Global convergence of {ADMM} in nonconvex nonsmooth optimization.
\newblock {\em J. Sci. Comput.}, 78:29--63, 2019.

\bibitem{willett2010poisson}
Rebecca~M Willett, Zachary~T Harmany, and Roummel~F Marcia.
\newblock Poisson image reconstruction with total variation regularization.
\newblock In {\em 2010 Proc. IEEE Int. Conf. Image Process.}, pages 4177--4180. IEEE, 2010.

\bibitem{wu2022inverting}
Tingting Wu and Yuesheng Xu.
\newblock Inverting incomplete {F}ourier transforms by a sparse regularization model and applications in seismic wavefield modeling.
\newblock {\em J. Sci. Comput.}, 92:1--35, 2022.

\bibitem{Xu2023Sparse}
Yuesheng Xu.
\newblock Sparse machine learning in {B}anach spaces.
\newblock {\em Appl. Numer. Math.}, 187:138--157, 2023.
\newblock \url{https://doi.org/10.1016/j.apnum.2023.02.011}.

\bibitem{2021Sparse}
Yuesheng. Xu.
\newblock Sparse regularization with the $\ell_0$ norm.
\newblock {\em Anal. Appl.}, 21:901--929, 2023.
\newblock \url{https://doi.org/10.1142/S0219530522500105}.

\bibitem{zeng2016convergent}
Xueying Zeng, Lixin Shen, and Yuesheng Xu.
\newblock A convergent fixed-point proximity algorithm accelerated by {FISTA} for the $\ell_0$ sparse recovery problem.
\newblock In {\em International Conference on Imaging, Vision and Learning Based on Optimization and PDEs}, pages 27--45. Springer, New York, 2016.

\bibitem{zheng2019sparsity}
Wei Zheng, Si~Li, Andrzej Krol, C~Ross Schmidtlein, Xueying Zeng, and Yuesheng Xu.
\newblock Sparsity promoting regularization for effective noise suppression in {SPECT} image reconstruction.
\newblock {\em Inverse Probl.}, 35(11):115011, 2019.
\newblock \url{https://doi.org/10.1088/1361-6420/ab23da}.

\bibitem{zou2006adaptive}
Hui Zou.
\newblock The adaptive lasso and its oracle properties.
\newblock {\em J. Am. Stat. Assoc.}, 101:1418--1429, 2006.
\newblock \url{https://doi.org/10.1198/016214506000000735}.

\end{thebibliography}

% % Non-BibTeX users please use
% \begin{thebibliography}{}
% %
% % and use \bibitem to create references. Consult the Instructions
% % for authors for reference list style.
% %
% \bibitem{RefJ}
% % Format for Journal Reference
% Author, Article title, Journal, Volume, page numbers (year)
% % Format for books
% \bibitem{RefB}
% Author, Book title, page numbers. Publisher, place (year)
% % etc
% \end{thebibliography}

\end{sloppypar}
\end{document}